\documentclass[11pt]{article}
\usepackage{amsmath,amssymb,amscd}
\usepackage{amsfonts}
\usepackage{microtype}
\usepackage{soul}
\usepackage{tikz-cd}
\usepackage{pb-diagram,pb-xy}
\usepackage[pagewise]{lineno}
\usepackage{enumitem}
\usepackage{cancel}

\usepackage{hyperref}
\hypersetup{
    colorlinks=true,
    linkcolor=blue,
    filecolor=magenta,
    urlcolor=cyan,
}

\pdfoutput=1

\def\N{\mathbb{N}}

\def\R{\mathbb{R}}

\def\C{C^{\infty}(M, \R)}
\def\CN{C^{\infty}(N, \R)}
\newcommand{\Lie}{\boldsymbol{\pounds}}            

\newtheorem{definition}{Definition}[section]
\newtheorem{lemma}[definition]{Lemma}
\newtheorem{proposition}[definition]{Proposition}
\newtheorem{theorem}[definition]{Theorem}
\newtheorem{remark}[definition]{Remark}
\newtheorem{corol}[definition]{Corollary}
\newtheorem{remarks}[definition]{Remarks}
\newtheorem{examples}[definition]{Examples}
\newtheorem{example}[definition]{Example}

\newtheorem{some remarks}[definition]{Some remarks}
\newtheorem{some remarks a}[definition]{Some remarks on the rank of $\alpha$}
\newtheorem{example L-L}[definition]{The Lie algebroid of a Loday algebroid}
\newenvironment{proof}{\noindent{\bf Proof.}}{\hfill $\blacklozenge$}
\newenvironment{proof-Th}{\noindent{\bf Proof of the Theorem}}{\hfill $\blacklozenge$}

\DeclareMathOperator{\tr}{tr}

\def\lcf{\lbrack\! \lbrack}
\def\rcf{\rbrack\! \rbrack}

\def\lbr{\lbrace\!\!\!\lbrace}
\def\rbr{\rbrace\!\!\!\rbrace}

\begin{document}

\title{Loday algebroids cohomology, nonlinear connections and characteristic classes}

\vspace{5mm}

\author{Raquel Caseiro and Fani Petalidou}

\date{}
\maketitle

\vskip 20 mm

\begin{abstract}
\noindent
The purpose of this paper is to contribute to the further study of \emph{Loday algebroids} by introducing, first,
the concept of \emph{action Loday algebroid} and clarifying the notion of a \emph{(co)morphism of Loday algebroids}.
We then focus on the study of \emph{Loday algebroids nonlinear connections} and their related \emph{theory of characteristic classes}.
These classes arise from  multidifferential operators (on the module of smooth sections of the Loday algebroid) in the Loday algebroid cohomology.
In particular, the Chern-Simons differential operators for nonlinear connections on Loday algebroids are considered and a
generalized Chern-Simons formula for such connections is established.
Using representations of Loday algebroids, we define their \emph{secondary characteristic classes}.
\end{abstract}

\vspace{5mm} \noindent {\emph{Keywords: }} Loday algebroid; Action Loday algebroid; Loday algebroid (co)morphisms; Loday algebroid cohomology; Nonlinear connections of Loday algebroids; Characteristic classes of Loday algebroids.

\vspace{3mm} \noindent MSC (2020): 17A32, 53C05, 53D17, 57R20, 58J10

\microtypesetup{protrusion=false}
\tableofcontents
\microtypesetup{protrusion=true}

\section{Introduction}
The concept of \emph{Loday algebroid} discussed here was originally proposed by Grabowski,
Khudaverdyan and Poncin in \cite{grab-k-pon} in order to give a more general, appropriate and geometric definition to
the different concepts of \emph{Leibniz algebroids} that have appeared in the literature since the late '90s.
As explained in \cite{grab-k-pon}, since Leibniz brackets -- a non-commutative version of Lie brackets
on a vector space defined by Loday \cite{lod, lod-pir} (currently they called \emph{Loday brackets}, see Definition \ref{def-Loday algebra} below) --
arise in Geometry and Physics as brackets on
spaces of smooth sections of vector bundles (e.g. Courant algebroids \cite{li-bland-meinr}),
there were several attempts to formalize the concept of \emph{Leibniz algebroid}.
As far as we know, the first definition of this notion was given by Ib\'a\~{n}ez, de Le\'on,
Marrero and Padr\'on \cite{ILMP1999} in relation with Nambu-Poisson structures.
According their formulation, it is a structure on a vector bundle $E\to M$ (of constant rank)
that consists of a Leibniz bracket $\lcf \cdot, \cdot \rcf$ on the space $\Gamma(E)$ of smooth sections of $E$ and a vector bundle map $\rho: E \to TM$
whose induced map $\rho : \Gamma(E) \to \Gamma(TM)$ is a homomorphism of Leibniz algebras satisfying, for all $e_1, e_2 \in \Gamma(E)$ and $f\in \C$, the (left) Leibniz rule
\[ \lcf e_1,fe_2\rcf = f\lcf e_1,e_2\rcf + \rho(e_1)(f)e_2.\]
Thereafter, a ``bloom" of publications appeared on this subject and some of them in connection
with Physics and the study of dynamical systems with nonholonomic constraints (see \cite{grab-k-pon} and references therein).
A weak point of this definition is that the bracket does not have the locality property, i.e.
no relationship is defined between the brackets $\lcf fe_1,e_2\rcf$ and $\lcf e_1, e_2\rcf$. Hence,
as mentioned in \cite{jurco-Vysoky}, $\lcf e_1,e_2\rcf$ can depend on the values of the section
$e_1$ at every point of the manifold $M$. In this case, we cannot localize the bracket and study its
(local) properties by writing it in local frame components, as we can with Lie and Courant brackets on $\Gamma(E)$, which are
differential operators with respect to each argument. We note that the term \emph{Loday algebroid} also appears in \cite{stienon-xu} due to Sti\'enon and Xu.
This term concerns \emph{Leibniz algebroids} endowed with a fiberwise pseudometric on $E$; the resulting notion
is slightly more general than that of Courant algebroids, but does not reduce to a Loday algebra when $M$ is a single point. The purpose of \cite{grab-k-pon} was to suggest a new concept of Leibniz/Loday algebroid, imposing minimal requirements
on the elements that define this structure, so that the new notion
\begin{itemize}
\item[-]
reduces to a Loday algebra when $M$ shrinks to a point.
\item[-]
includes a locality condition on both arguments, i.e. in addition to the left anchor the structure has a right anchor
satisfying a condition of Leibniz rule type.
\item[-]
contains Lie and Courant algebroids as particular cases.
\end{itemize}
Furthermore, the authors of \cite{grab-k-pon}
associate with each Loday algebroid a coboundary operator $\partial_E$ on the graded space of multidifferential operators on $\Gamma(E)$,
thus defining a cochain complex -- and hence a Loday algebroid cohomology -- which allows Loday algebroids interpreted as homological vector fields on supercommutative manifolds, as in Lie \cite{vaintrob} and Courant \cite{roy} cases. The naive cohomology of a Loday algebroid, according to \cite{stienon-xu}, is determined by
a differential on the graded space $\Gamma(\bigwedge^{\bullet} \ker \rho)$ and is completely different from that studied in \cite{grab-k-pon}.

\vspace{1mm}
\noindent
The existence of a Loday algebroid cohomology theory and of Cartan calculus leads us, as an application of this theory,
to study \emph{nonlinear connections of a Loday algebroid $E$ on a vector bundle $B$}. We use the term \emph{nonlinear} in the sense that
the connection has the locality property with respect of both arguments. Just as the theory of linear (classical) connections undoubtedly plays a
fundamental role in Differential Geometry with important applications in Mathematical Physics (Yang-Mills theory, General Relativity, etc) \cite{mang-sard},
so too the theory of nonlinear connections has an increasing interesting in problems of modern Differential Geometry related to nonholonomic systems and higher order differential equations.
We introduce the notion of \emph{nonlinear $E$-connection on a vector bundle $B$} in such a way that the adjoint representation of $E$ makes sense
and we prove that a covariant derivation on the space of $B$-valued multidifferential operators
on the $\C$-module of smooth sections of $E$ can be associated with each \emph{nonlinear $E$-connection on a vector bundle $B$}. This then naturally
allows us to establish the theory of nonlinear $E$-connections together with the theory of their curvature and of their characteristic
classes, which live in the Loday algebroid cohomology groups, in terms of differential operators, and to extend some classical results of connection theory to this setting.
Our construction follows, in general, the basic approach of Fernandes and Crainic \cite{rui, cr-fer-sec-cl} for Lie algebroids (linear) connections and
of Cueca and Mehta \cite{CM2021} for Courant algebroids linear connections.
However, since the left Leibniz rule and the skew-symmetry of the Loday bracket fail -- a fact that produces tensorial problems for the curvature -- we are obliged
to diverse, at certain points, from their techniques.

\vspace{1mm}
\noindent
The paper is structured as follows. In Section \ref{section - loday alg} we first recall the basic notions related to
Loday algebroids and Loday algebroids cohomology, and present some characteristic examples.
The new contributions in this section are the notion of \emph{action Loday algebroid} (subsection \ref{subsection - action Loday alg}),
where we address the problem of defining a Loday algebroid structure on a pullback bundle of a Loday algebroid, and that of
\emph{Loday algebroid (co)morphism} (subsection \ref{subsection - Loday alg comorphisms}). Section \ref{section - nonlinear connections}
is devoted to the theory of \emph{nonlinear connections} in the context of Loday algebroids.
We begin by fixing the notion and introducing the
\emph{connection matrix} and the \emph{curvature matrix} associated with it, which are matrices of differential
operators on the module of smooth sections of the Loday algebroid, needed below for the construction of characteristic classes.
We then discuss how connection and curvature matrices transform under a change of frame of local sections of $B$. The properties of
the induced connections on the dual bundle and the bundle of endomorphisms of $B$
are given afterwards, and the Bianchi identity is derived.
The behavior of Loday algebroids nonlinear connections under Loday algebroids (co)morphisms is also studied, and several
concrete examples of such connections are presented at the end. The main results of Section \ref{section char classes}
concern the \emph{characteristic classes of Loday algebroids}. First, we describe a construction
of the \emph{modular class} of a Loday algebroid, following the classical path \cite{evens-lu-weinstein, stienon-xu},
prove that it is an element of the first Loday algebroid cohomology of $E$, and present some examples.
The \emph{modular class of a base preserving Loday algebroid (co)morphism}
is also introduced, and is shown to be the characteristic class of a line bundle.
The interpretation of a nonlinear connection and its curvature by matrices allows us to extend the usual
Chern-Weil theory in this setting and to define the secondary characteristic classes of nonlinear $E$-connections,
which are useful for study global properties of Loday algebroids.
These classes are, in general, associated with representations, belong to the Loday algebroid cohomology groups,
and yield analogous results about the geometric properties of the manifold. In classical construction \cite{milnor-stasheff, cr-fer-sec-cl}, based on the fact that the initial
Lie algebroid structure of $E$\footnote{In \cite{milnor-stasheff}, $E=TM$.} can be
pulled back to $M\times \Delta^k$, where $\Delta^k$ is the standard $k$-simplex, a (linear) $\pi_1^\ast E$-connection on $\pi_1^\ast B$ is defined --
$\pi_1$ being the projection of $M\times \Delta^k$ on $M$ -- parametrized by $\Delta^k$, and the Chern-Simons theory is then developed.
This approach is not suitable in our case, because differential operators we work with cannot, in general, pulled back over a projection.
We overcame this obstacle by adapting the approach of \cite{balcerzak} for $\R$-linear connections on Lie algebroids.
Then, we establish a generalized Chern-Simons formula for the Chern-Simons transgression operators (Theorem \ref{theorem-chern-simons}) and
introduce the secondary characteristic classes for nonlinear connections of Loday algebroids,
whose basic properties are given in Theorem \ref{theorem - classes metric connect}.
In Section \ref{section - appendices} we conclude the paper with two appendices on: (i) \emph{Differential operators,
jet bundles and jet modules} (Appendix \ref{app-Jet bundles}) and (ii) \emph{Multidifferential operators } (Appendix \ref{app-multidiff}).

\vspace{2mm}
\noindent
\textbf{Acknowledgements:} This research was  partially supported  by the Centre for Mathematics of the University of Coimbra (CMUC, https://doi.org/10.54499/UID/00324/2025) under the Portuguese Foundation for Science and Technology (FCT) (Grants UID/00324/2025 and UID/PRR/00324/2025) and the Division of Geometry of the Department of Mathematics of the Aristotle University of Thessaloniki (AUTH).
The authors would like to thank Jan Vysok\'y for his helpful explanations.

\section{Loday algebroids}\label{section - loday alg}
In this section, we begin by reviewing the notion of Loday algebroids and some of their basic properties, as well as their cohomology, following the treatment in \cite{grab-k-pon}. We then introduce the notion of \emph{actions of Loday algebroids} and study the (co)morphisms between Loday algebroids.

\subsection{Definition and basic examples}
\begin{definition}\label{def-Loday algebra}
A \emph{Loday algebra} over a commutative ring $\mathbb{K}$ is a $\mathbb{K}$-module $\mathcal{E}$ equipped
with a bilinear operation $\lcf \cdot, \cdot \rcf: \mathcal{E} \times \mathcal{E} \to \mathcal{E}$ satisfying the following version of the Jacobi identity: For any $e_1, e_2, e_3 \in \mathcal{E}$,
\begin{equation}\label{Jacobi-identity}
\lcf e_1, \lcf e_2, e_3 \rcf \rcf = \lcf \lcf e_1, e_2 \rcf, e_3 \rcf + \lcf e_2, \lcf e_1, e_3 \rcf \rcf.
\end{equation}
I.e., for each $e\in \mathcal{E}$, $\lcf e, \cdot \rcf$ is a derivation of $\lcf \cdot, \cdot \rcf$.\footnote{This type of algebra is introduced in \cite{lod-pir} under the name \emph{left Leibniz algebra}.
The term \emph{Loday algebra} has prevailed in the last years to avoid collision with other concepts of \emph{Leibniz algebras} in the literature.}
\end{definition}

\begin{definition}\cite{grab-k-pon}
A \emph{Loday algebroid structure} on a vector bundle $E\to M$ (of constant rank) is defined by a Loday algebra bracket $\lcf \cdot, \cdot \rcf$ on the $\C$-module $\Gamma(E)$ of smooth sections of $E$, which is a bidifferential
operator of total order less or equal to $1$ such that, for any $e\in \Gamma(E)$, the adjoint operator $\mathrm{ad}_e : \Gamma(E) \to \Gamma(E)$, $e' \mapsto \mathrm{ad}_e(e'):= \lcf e, e' \rcf$, is a derivative endomorphism of $\Gamma(E)$.
\end{definition}

\noindent
Equivalently, we have:

\begin{theorem}\label{theorem - def - Loday}\cite{grab-k-pon}
A \emph{Loday algebra bracket} $\lcf \cdot, \cdot \rcf$ on the space $\Gamma(E)$ of smooth sections of a vector bundle $E\to M$ (of constant rank) defines a \emph{Loday algebroid structure} if and only if there are vector bundle morphisms
\begin{equation*}
\rho : E \to TM \quad \quad \mathrm{and} \quad \quad \alpha : T^\ast M \otimes E \to \mathrm{End}(E),
\end{equation*}
covering the identity on $M$, such that, for all $e_1, e_2 \in \Gamma(E)$ and all $f\in \C$,
\begin{equation*}\label{left loday-br}
\lcf e_1, fe_2 \rcf = f\lcf e_1, e_2 \rcf + \rho(e_1)(f)e_2
\end{equation*}
and
\begin{equation*}\label{right loday-br}
\lcf fe_1, e_2 \rcf = f\lcf e_1, e_2 \rcf - \rho(e_2)(f)e_1 + \alpha(df \otimes e_1)(e_2).
\end{equation*}
If this is the case, the anchors are uniquely determined and the left anchor $\rho$ induces a
homomorphism between the Loday algebras $(\Gamma(E), \lcf \cdot, \cdot \rcf)$ and
$(\Gamma(TM), [\cdot, \cdot ])$\footnote{The space of smooth vector fields on $M$ with composition law the usual Lie bracket is a special kind of Loday algebra.}:
\begin{equation}\label{rho-homom}
\rho(\lcf e_1, e_2 \rcf) = [\rho(e_1), \rho(e_2)].
\end{equation}
By identifying $T M$ with $T M\otimes \langle \mathrm{Id}_E \rangle$, we have that the generalized right anchor map can be
written as $\rho \otimes \mathrm{Id}_E - \alpha^T$, where $\alpha^T : E \to TM \otimes \mathrm{End}(E)$ is
the `transpose" of $\alpha$, i.e. $\alpha^T(e_2)(df\otimes e_1) = \alpha(df \otimes e_1)(e_2)$.
\end{theorem}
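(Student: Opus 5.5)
We prove the two implications separately, together with the uniqueness of the anchors and the homomorphism property \eqref{rho-homom}.

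\emph{The easy direction.} Assume $\rho$ and $\alpha$ exist with the two displayed rules. For fixed $e_1$, the left rule says that $[\mathrm{ad}_{e_1}, f] = \rho(e_1)(f)\,\mathrm{Id}$ is $\C$-linear. Hence $\mathrm{ad}_{e_1}$ is a first-order differential operator whose symbol is $\rho(e_1)\otimes \mathrm{Id}_E$, i.e.\ a derivative endomorphism. For fixed $e_2$, the right rule gives
\[
[\lcf \cdot, e_2\rcf, f](e_1) = -\rho(e_2)(f)e_1 + \alpha(df\otimes e_1)(e_2).
\]
This is $\C$-linear in $e_1$ because $\alpha$ is a bundle map. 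So the bracket is first order in each argument. To see that the \emph{total} order is $\le 1$, compute the iterated commutator $[[\lcf\cdot,\cdot\rcf, f], g]$ in the bidifferential sense, using both rules. One checks that the terms involving $dfdg$ cancel, because the expressions are tensorial in $df$. By the appendix characterization of multidifferential operators (Appendix \ref{app-multidiff}), this gives total order $\le 1$.

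\emph{The converse.} Suppose $\mathrm{ad}_{e_1}$ is a derivative endomorphism for every $e_1$. Then there is a vector field $X_{e_1}$ with $\lcf e_1, fe_2\rcf = f\lcf e_1,e_2\rcf + X_{e_1}(f)e_2$. Next we show that $e_1\mapsto X_{e_1}$ is $\C$-linear and local, which yields a bundle map $\rho$. For this, use that the bracket is a bidifferential operator of total order $\le 1$. The commutator with $g$ in the first slot must be of order $0$ in the second slot. Expanding $\lcf ge_1, fe_2\rcf$ in two ways then forces $X_{ge_1} = gX_{e_1}$. Symbol considerations also force the bracket to be of order $\le 1$ in $e_1$.

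The right rule is the \emph{main obstacle}. Set $D_f(e_1,e_2) := \lcf fe_1,e_2\rcf - f\lcf e_1,e_2\rcf + \rho(e_2)(f)e_1$. The total-order-$\le 1$ condition says $D_f$ is $\C$-bilinear in $(e_1,e_2)$ and depends only on $df$ (it is derivational in $f$). We must also show it depends only on the value of $df$ pointwise. First we show $D_f$ is $\C$-linear in $e_2$: apply the left rule to $\lcf fe_1, ge_2\rcf$ and use $\rho(fe_1)=f\rho(e_1)$. Linearity in $e_1$ follows from the order count. That $D$ is a derivation in $f$, hence a function of $df$ alone, comes from the identity $D_{fg} = fD_g + gD_f$, obtained from associativity of multiplication. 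Then $\alpha(df\otimes e_1)(e_2) := D_f(e_1,e_2)$ is well defined and tensorial. The delicate point is justifying the $-\rho(e_2)(f)e_1$ term. I would argue that the first-order symbol of $\lcf\cdot,e_2\rcf$ is a map $T^*M\otimes E\to E$. By linearity this decomposes as $-\rho(e_2)\otimes\mathrm{Id}$ plus a remainder, and the remainder is taken as the definition of $\alpha$. Uniqueness of the anchors is then immediate: they are determined by the commutators $[\mathrm{ad}_{e_1},f]$ and $[\lcf\cdot,e_2\rcf,f]$.

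\emph{The homomorphism property \eqref{rho-homom}.} Apply the Jacobi identity \eqref{Jacobi-identity} to $(e_1,e_2,fe_3)$ and expand each term with the left rule. The $f$-terms cancel by Jacobi, and what remains is
\[
\rho(\lcf e_1,e_2\rcf)(f)\,e_3 = \bigl(\rho(e_1)\rho(e_2) - \rho(e_2)\rho(e_1)\bigr)(f)\,e_3 .
\]
Since $E$ has positive rank, we can choose $e_3\neq 0$ locally, which gives \eqref{rho-homom}. The transposed description $\rho\otimes\mathrm{Id}_E - \alpha^T$ of the right anchor is a direct rewriting of the right rule.
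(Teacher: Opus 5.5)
The paper does not prove this statement. It is imported from \cite{grab-k-pon}, and only its consequences (Lemma~\ref{lemma-Raquel}) are derived, so there is no internal proof to compare against. Your argument is a correct, self-contained proof by symbol calculus, and it is the natural route:
\begin{itemize}
\item $\rho$ is read off from the second-argument symbol $\sigma_2(f)$.
\item Its $\C$-linearity comes from the vanishing of the mixed symbol $\sigma_1(g)\circ\sigma_2(f)$. This is exactly where total order $\leq 1$ is needed, rather than mere first order in each slot.
\item $\alpha$ is the first-argument symbol $\sigma_1(f)$ corrected by $\rho(e_2)(f)\,e_1$.
\item The Jacobi computation on $(e_1,e_2,fe_3)$ giving \eqref{rho-homom} is right.
\end{itemize}

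A few points of precision.

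The ``delicate point'' you flag is not one. Since $D_f$ is defined with $+\rho(e_2)(f)e_1$ built in, the right rule holds tautologically. The only content is that $D_f$ is $\C$-linear in $e_1$ and $e_2$ and a derivation in $f$, and you had already shown this. The passage from a derivation $C^\infty(M)\to\Gamma(E^\ast\otimes E^\ast\otimes E)$ to a bundle map on $T^\ast M$ is the standard Hadamard-lemma fact that such derivations are sections of $TM\otimes E^\ast\otimes E^\ast\otimes E$. That fact is what makes ``a function of $df$ alone'' precise, including the extension from exact forms to all $1$-forms.

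In the easy direction, your stated reason is slightly off. All compositions $\sigma_i(g)\circ\sigma_j(f)$, $i,j\in\{1,2\}$, vanish because $\rho(e_1)(f)e_2$ and $-\rho(e_2)(f)e_1+\alpha(df\otimes e_1)(e_2)$ are $\C$-linear in the \emph{section} arguments, since $\rho$ and $\alpha$ are bundle maps. No terms in $dg$ ever appear, so nothing has to ``cancel'', and tensoriality in $df$ is not the relevant property.

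Finally, both $X_{ge_1}=gX_{e_1}$ and the uniqueness of $\rho$ use, as your last step does, that $\mathrm{rank}\,E>0$, i.e.\ faithfulness of $\Gamma(E)$.
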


\noindent
The equality \eqref{rho-homom} means also that $\rho$ is a representation of the Loday algebra $(\Gamma(E), \lcf \cdot, \cdot\rcf)$ by derivations on $\C$.

\vspace{1mm}
\noindent
By duality, the map $\alpha$ induces several associated vector bundle maps, all denoted by $\alpha^T$, as well as the corresponding induced maps on the spaces of smooth sections:
\begin{enumerate}
\item
\[\alpha^T : E\otimes E\otimes E^\ast \to TM\]
such that, for any $e_1, e_2\in \Gamma(E)$, $\mu\in \Gamma(E^\ast)$ and $f\in \C$,
\begin{equation*}
\alpha^T(e_1\otimes e_2 \otimes \mu)(f) : = \langle  \mu, \alpha^T(e_2)(df \otimes e_1)\rangle = \langle  \mu, \alpha(df \otimes e_1)(e_2)\rangle.
\end{equation*}
\item
\[\alpha^T : T^\ast M \otimes E \otimes E \to E \;\;\quad or \quad \;\; \alpha^T : T^\ast M \to E^\ast \otimes E^\ast \otimes E,\]
such that
\begin{equation}\label{alpha - alpha t}
\langle \mu, \alpha^T(df\otimes e_1 \otimes e_2)\rangle = \langle  \mu, \alpha(df \otimes e_1)(e_2)\rangle = \langle  \mu, \alpha^T(df)(e_1,e_2)\rangle.
\end{equation}
\end{enumerate}
Also, $\alpha$ defines a derivation
\[\mathrm{d}: \C \to \Gamma(E^\ast) \otimes \Gamma(E^\ast) \otimes \Gamma(E)  \cong \mathrm{Hom}_{\C}(\Gamma(E)\otimes \Gamma(E), \Gamma(E))\]
such that $\mathrm{d} = \alpha^T \circ d$, where $d$ is the usual derivation of smooth functions on $M$.

\begin{remark}(Locality of the bracket)\label{remark-locality}
\emph{Using an argumentation similar to that developed in \cite{marle-coimbra} for the bracket of a Lie algebroid, one can prove that the bracket of two smooth sections $e_1,e_2$ of a Loday algebroid $E$ depends only on the $1$-jets of $e_1$ and $e_2$. That means that, the bracket $\lcf \cdot, \cdot \rcf$ is a first-order bidifferential operator with respect to each argument. Its symbol with respect to the first argument is the generalized right anchor map $\rho \otimes \mathrm{Id}_E - \alpha^T$, which is a derivation in $f$ and is $\C$-linear in $e_1$ and $e_2$. The symbol with respect to the second argument is the left anchor map, which is also a derivation in $f$, is $\C$-linear in $e_1$, and acts as the identity on $e_2$. Moreover, $\lcf \cdot, \cdot \rcf$ is of total order at most $1$ (see Appendix \ref{app-multidiff}).}
\end{remark}

\vspace{1mm}
\noindent
Some classical examples of Loday algebroids are presented below. Most of them are discussed in detail in \cite{grab-k-pon}.
\begin{enumerate}
\item
\emph{Loday algebras:} A finite dimensional Loday algebra $(\mathcal{E}, \lcf \cdot, \cdot \rcf)$ is a Loday algebroid over a point.
\item
\emph{Bundles of Loday algebras:} Let $E$ be a Loday algebroid with trivial
left and right anchors maps, i.e. $\rho = 0$ and $\alpha = 0$. Then, at each point $x\in M$,
the bracket $[\cdot, \cdot]_x : E_x \times E_x \to E_x$ given, for any $e_1, e_2 \in E_x$,
by $[e_1,e_2]: = \lcf \tilde{e}_1, \tilde{e}_2\rcf (x)$, where $\tilde{e}_1, \tilde{e}_2 \in \Gamma(E)$ such that
$\tilde{e}_1(x) = e_1$ and $\tilde{e}_2(x)=e_2$, defines a Loday algebra structure on $E_x$.
Easily, using the left and right Leibniz rules, we can verify that the value of $[e_1,e_2]$ does not depend on the choice of sections. Thus, we can see $E$ as a
vector bundle whose each fiber $E_x$, $x\in M$, has a Loday algebra structure dependent on the point $x$. The converse is also true.
\item
\emph{Lie algebroids:} A Lie algebroid is a Loday algebroid whose the bracket $\lcf \cdot, \cdot \rcf$ is skew-symmetric and $\alpha = 0$.
\item
\emph{Courant algebroids:} The vector bundle $E$ is furthermore equipped with a compatible fiberwise nondegenerate symmetric bilinear form $\langle \cdot, \cdot \rangle$ and $\alpha : T^\ast M \otimes E \to \mathrm{End}(E)$ is the map $df\otimes e_1 \mapsto \langle e_1, \cdot \rangle d_Ef$, where $d_E = \rho^T \circ d : \C \to \Gamma(E^\ast ) \cong \Gamma(E)$.
\item
\emph{Loday algebroids associated to a Nambu-Poisson structure:} A Nambu-Poisson structure of order $n$ on a smooth $m$-dimensional manifold $M$,
$3\leqslant n \leqslant m$, is a decomposable $n$-vector
field $\Lambda$ \cite{vai-nambu} which defines a vector bundle map $\Lambda^ \sharp : \Gamma(\bigwedge^{n-1} T^\ast M)\to \Gamma(TM)$ given by
\[\Lambda^\sharp (\omega) = i_{\omega}\Lambda, \quad \quad \omega\in \Gamma(\bigwedge^{n-1} T^\ast M).\]
The Loday algebroid attached to $(M, \Lambda)$ is the tetrad $(\bigwedge^{n-1} T^\ast M, \lcf \cdot, \cdot \rcf, \rho, \alpha)$, where, for any
$\omega, \eta \in \Gamma(\bigwedge^{n-1} T^\ast M)$,
\[\lcf \omega, \eta \rcf = \mathcal{L}_{\Lambda^\sharp (\omega)}\eta + (-1)^n \Lambda(d \omega)\eta,\]
\[\rho(\omega) = \Lambda^{\sharp}(\omega) \quad \mathrm{and} \quad \alpha(df \otimes \omega)(\eta) = \rho(\eta)(f)\omega - \rho(\omega)(f)\eta + df \wedge i_{\rho(\omega)}\eta. \]
\item
\emph{Grassmann-Dorfman algebroid:} Consider $E= TM \oplus \bigwedge^p T^\ast M$ with (left) anchor $\rho$ the projection on the tangent bundle and bracket $\lcf \cdot, \cdot \rcf$
defined as follows. For all pairs $(X, \zeta), (Y,\eta)\in \Gamma(E)$ and $f\in \C$,
\[\lcf (X, \zeta), (Y,\eta)\rcf = ([X,Y], \mathcal{L}_X\eta - i_Yd\zeta).\]
This is a Loday algebroid with
\[\alpha(df \otimes (X, \zeta))(Y, \eta) = (0, df \wedge (i_X\eta + i_Y\zeta)),\]
that is neither a Lie nor a Courant algebroid \cite{bi-sheng, grab-k-pon}.
\end{enumerate}

\vspace{1mm}
\noindent
The following lemma summarizes several properties of the structure $(\lcf \cdot, \cdot \rcf, \rho, \alpha)$ on $E$ that will be useful in what follows.
\begin{lemma}\label{lemma-Raquel}
Let $(E, \lcf \cdot, \cdot \rcf, \rho, \alpha)$ be a Loday algebroid over $M$ and $\lcf \cdot, \cdot \rcf_{+}$ the symmetrization of $\lcf \cdot, \cdot \rcf$:
\[\lcf e_1, e_2 \rcf_{+}: = \lcf e_1, e_2 \rcf + \lcf e_2, e_1 \rcf.\]
Then, for any $e_1, e_2, e_3 \in \Gamma(E)$ and $f,g,h\in \C$, the following compatibility conditions between $\lcf \cdot, \cdot \rcf$
and the maps $\rho$ and $\alpha$ are satisfied:
\begin{enumerate}
\item
\begin{eqnarray}\label{cond-lemma-rq-1}
\lefteqn{\alpha(df\otimes e_1)(\lcf e_2, e_3 \rcf) - \lcf \alpha(df\otimes e_1)(e_2), e_3 \rcf -\lcf e_2,  \alpha(df\otimes e_1)(e_3)\rcf = }\nonumber \\
& & - \,\rho(e_3)(f) \lcf e_1, e_2 \rcf_{+} + \alpha (df \otimes  \lcf e_1, e_2 \rcf )(e_3) - \alpha (d\rho(e_2)(f) \otimes e_1 )(e_3);
\end{eqnarray}
\item
\begin{eqnarray}\label{cond-lemma-rq-2}
\lefteqn{\lcf e_1, \alpha(dg\otimes e_2)(e_3) \rcf = \alpha (dg \otimes  \lcf e_1, e_2 \rcf )(e_3)}\nonumber \\
& & + \, \alpha (d\rho(e_1)(g) \otimes e_2 )(e_3) + \alpha (dg \otimes  e_2)(\lcf e_1, e_3 \rcf );
\end{eqnarray}
\item
\begin{eqnarray}\label{cond-lemma-rq-3}
\lefteqn{[\alpha(df \otimes e_1), \alpha(dg \otimes e_2)]_{com}(e_3) = \alpha(dg \otimes \alpha(df \otimes e_1)(e_2))(e_3)}\nonumber \\
& &  + \,\rho(e_1)(g)\alpha(df \otimes e_2)(e_3) - \rho(e_2)(f)\alpha(dg \otimes e_1)(e_3) - \rho(e_3)(f)\alpha(dg \otimes e_2)(e_1),
\end{eqnarray}
where $[\cdot, \cdot]_{com}$ denotes the commutator bracket on $\mathrm{End}(E)$;
\item
\begin{equation}\label{rho-alpha = 0}
\rho(\alpha(df\otimes e_1)(e_2)) = 0;
\end{equation}
\item
\begin{equation}\label{alfa - +}
\lcf fe_1, e_2 \rcf_{+} = f\lcf e_1, e_2 \rcf_+ + \alpha(df \otimes e_1)(e_2).
\end{equation}
\end{enumerate}
\end{lemma}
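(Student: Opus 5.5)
The plan is to derive each identity by inserting functions into the Jacobi identity \eqref{Jacobi-identity} and comparing terms. The tools are the two Leibniz rules of Theorem \ref{theorem - def - Loday} and the homomorphism property \eqref{rho-homom}. All five items are consequences of how the three arguments of the Jacobi identity behave under multiplication by smooth functions. The general principle is this: write the Jacobi identity with one argument replaced by $f e_i$ (or by $f e_i$ and $g e_j$), expand both sides using the left rule $\lcf e_1, fe_2\rcf = f\lcf e_1,e_2\rcf+\rho(e_1)(f)e_2$ and the right rule $\lcf fe_1,e_2\rcf = f\lcf e_1,e_2\rcf-\rho(e_2)(f)e_1+\alpha(df\otimes e_1)(e_2)$, and then cancel the $f$-multiple of the original Jacobi identity. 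What remains is an identity involving $\rho$, $\alpha$ and the bracket, which is the claimed compatibility condition.

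Item 5 is immediate and does not use Jacobi. Adding the right rule for $\lcf fe_1,e_2\rcf$ to the left rule for $\lcf e_2, fe_1\rcf$, the terms $\pm\rho(e_2)(f)e_1$ cancel and \eqref{alfa - +} follows.

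For item 4, I would apply $\rho$ to the right Leibniz rule and use \eqref{rho-homom} on both sides. We have $\rho(\lcf fe_1,e_2\rcf)=[f\rho(e_1),\rho(e_2)] = f[\rho(e_1),\rho(e_2)]-\rho(e_2)(f)\rho(e_1)$, and this must equal $f\rho\lcf e_1,e_2\rcf-\rho(e_2)(f)\rho(e_1)+\rho(\alpha(df\otimes e_1)(e_2))$. Comparing the two gives \eqref{rho-alpha = 0}.

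Items 1 and 2 come from the Jacobi identity with a function placed in the first and second slots. For item 2, I would substitute $e_2\mapsto g e_2$ in \eqref{Jacobi-identity}. The outer brackets are expanded with the left rule when the function sits in the second slot and with the right rule when it sits in the first slot. For instance, $\lcf \lcf e_1, ge_2\rcf, e_3\rcf$ produces $\alpha(dg\otimes\lcf e_1,e_2\rcf)(e_3)$ and $\alpha(d\rho(e_1)(g)\otimes e_2)(e_3)$. Likewise, $\lcf e_1,\lcf ge_2,e_3\rcf\rcf$ produces $\lcf e_1,\alpha(dg\otimes e_2)(e_3)\rcf$ together with $\rho$-terms, and $\lcf ge_2,\lcf e_1,e_3\rcf\rcf$ produces $\alpha(dg\otimes e_2)(\lcf e_1,e_3\rcf)$. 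After removing $g$ times Jacobi, all $\rho$-terms cancel, using $\rho(e_1)\rho(e_3)-\rho(e_3)\rho(e_1)=\rho\lcf e_1,e_3\rcf$, and \eqref{cond-lemma-rq-2} remains. Item 1 is obtained in the same way by substituting $e_1\mapsto fe_1$. This time every bracket containing $fe_1$ in the first slot produces an $\alpha$-term. The symmetrized bracket $\lcf e_1,e_2\rcf_+$ arises from the term $-\rho(e_3)(f)\lcf e_1,e_2\rcf$ combined with $-\rho(e_3)(f)\lcf e_2,e_1\rcf$, the latter coming from $\lcf e_2,\lcf fe_1,e_3\rcf\rcf$.

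Item 3 is the main obstacle. I would obtain it from the Jacobi identity with two functions inserted, namely $e_1\mapsto fe_1$ and $e_2\mapsto ge_2$. Equivalently, and more economically, I would substitute $e_2\mapsto ge_2$ into identity \eqref{cond-lemma-rq-1}, or substitute $e_1\mapsto fe_1$ into \eqref{cond-lemma-rq-2}, and extract the part that is bilinear in $(df,dg)$. In \eqref{cond-lemma-rq-2} with $fe_1$, the left side $\lcf fe_1,\alpha(dg\otimes e_2)(e_3)\rcf$ yields $\alpha(df\otimes e_1)\alpha(dg\otimes e_2)(e_3)$. The right side yields $\alpha(dg\otimes\alpha(df\otimes e_1)(e_2))(e_3)$ and $\alpha(dg\otimes e_2)\alpha(df\otimes e_1)(e_3)$, together with terms involving $\rho(\cdot)(f)$. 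Two further points need checking in this step. First, the term $\alpha(d(f\rho(e_1)(g))\otimes e_2)$ splits by the Leibniz property of $d$ and the $C^\infty$-linearity of $\alpha$. Second, the term $\rho(\alpha(\ldots))(f)$ vanishes by \eqref{rho-alpha = 0}. The delicate bookkeeping is tracking the signs of the three $\rho$-terms and confirming that the leftover $\rho(e_1)(g)\alpha(df\otimes e_2)(e_3)$ term has the form stated in \eqref{cond-lemma-rq-3}. I expect that matching term to require using $\alpha(d\rho(e_1)(g)\ldots)$ from the unperturbed identity once more.
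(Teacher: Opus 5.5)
Your proposal is correct and follows the paper's route: items 1 and 2 come from the Jacobi identity on $(fe_1,e_2,e_3)$ and $(e_1,ge_2,e_3)$, item 4 from applying $\rho$ to the right Leibniz rule, item 5 from adding the two Leibniz rules, and item 3 from the doubly perturbed Jacobi identity, which you carry out equivalently by substituting $fe_1$ into \eqref{cond-lemma-rq-2}. That computation closes exactly as you expect: $\alpha(d(f\rho(e_1)(g))\otimes e_2)(e_3)$ splits as $f\,\alpha(d\rho(e_1)(g)\otimes e_2)(e_3)+\rho(e_1)(g)\,\alpha(df\otimes e_2)(e_3)$, the first piece cancels against $f$ times \eqref{cond-lemma-rq-2}, and what remains is precisely \eqref{cond-lemma-rq-3} with the stated signs.
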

\begin{proof}
The identities \eqref{cond-lemma-rq-1}--\eqref{cond-lemma-rq-3} are obtained, respectively, by applying the Jacobi identity \eqref{Jacobi-identity} to the triples $(fe_1,e_2,e_3)$, $(e_1,ge_2,e_3)$, and $(fe_1,ge_2,e_3)$. In the proof of \eqref{cond-lemma-rq-3}, we also make use of the identities \eqref{cond-lemma-rq-1} and \eqref{cond-lemma-rq-2}. The identity \eqref{rho-alpha = 0} follows directly from \eqref{rho-homom} and shows that $\alpha$ takes values in the subbundle $\ker \rho$ of $E$. It is straightforward to verify that \eqref{alfa - +} holds. Thus, $\alpha$ may be viewed as the obstruction to the $\C$-linearity of the bracket $\lcf \cdot, \cdot \rcf_{+}$ on $\Gamma(E)$.

\end{proof}

\begin{remark}
\emph{The compatibility relation \eqref{cond-lemma-rq-1} makes it clear that, if $\alpha=0$, then either the bracket $\lcf\cdot,\cdot\rcf$ is skew-symmetric, in which case $E$ is necessarily a Lie algebroid, or the anchor $\rho$ also vanishes, in which case $E$ is a bundle of Loday algebras.}
\end{remark}

\vspace{1mm}
\noindent
Before presenting the next example, we recall the following notion.

\begin{definition}\cite{grab-k-pon}\label{pseudoalgebra-Loday}
Let $\mathcal{E}$ be a faithful $\mathcal{A}$-module endowed with a $\mathbb{K}$-bilinear bracket $[\cdot,\cdot] : \mathcal{E}\times \mathcal{E}\to \mathcal{E}$
($\mathcal{A}$ being an associative commutative algebra over the field $\mathbb{K}$ of characteristic 0).
We say that $(\mathcal{E}, [\cdot,\cdot])$ \emph{is a pseudoalgebra}, if $[\cdot,\cdot]$ is a bidifferential operator of total order $\leq 1$ and,
for any $e\in \mathcal{E}$, the adjoint map $\mathrm{ad}_e : = [e, \cdot]$
is a derivative endomorphism of $\mathcal{E}$. If, furthermore, the bracket $[\cdot,\cdot]$
satisfies the Jacobi identity, we say that $(\mathcal{E}, [\cdot,\cdot])$ is a \emph{Loday pseudoalgebra}.
\end{definition}

\begin{example L-L}\cite{grab-k-pon}\label{Loday to Lie}
\emph{A nice result relating Loday algebroids to Lie algebroids is the following. Let $(E, \lcf \cdot, \cdot \rcf, \rho, \alpha)$
be a Loday algebroid over a smooth manifold $M$, $\mathcal{E}=\Gamma(E)$, and
\[\mathcal{E}_0 = \mathrm{span}\{\lcf e,e\rcf \, / \, e\in \mathcal{E}\}\]
the two-sided ideal of $\mathcal{E}$ generated by the symmetrization of $\lcf \cdot ,\cdot\rcf$. We have $\lcf \mathcal{E}_0, \mathcal{E} \rcf = 0$ and
$\lcf \mathcal{E}, \mathcal{E}_0 \rcf \subset \mathcal{E}_0$. Setting $\bar{\mathcal{E}_0} : = \C \cdot \mathcal{E}_0$,
we obtain a $\C$-submodule of $\mathcal{E}$ and may consider the quotient $\C$-module $\bar{\mathcal{E}}= \mathcal{E}/ \bar{\mathcal{E}_0}$.
It is well known \cite{grab-k-pon} that the Loday algebra structure on $\mathcal{E}$ induces a Lie
pseudoalgebra structure on $\bar{\mathcal{E}}$, with bracket $[\cdot, \cdot] : \bar{\mathcal{E}}\times \bar{\mathcal{E}} \to \bar{\mathcal{E}}$,
$[\bar{e}_1, \bar{e}_2]: =\overline{\lcf e_1, e_2 \rcf}$, where $\bar{e}_{\bullet}$ denotes the equivalence class of $e_{\bullet} \in \mathcal{E}$,
and anchor map $\bar{\rho} : \bar{\mathcal{E}} \to \mathfrak{X}(M)$, $\bar{\rho}(\bar{e}):= \rho(e)$.
Moreover, $\rho(\bar{\mathcal{E}_0})=0$ and, for any $e_1,e_2\in \mathcal{E}$ and $f\in \C$, we have $\alpha(df\otimes e_1)(e_2)\in \bar{\mathcal{E}_0}$.
Assume that $\bar{\mathcal{E}_0}$ is the $\C$-module of smooth sections of a vector subbundle $E_0$ of $E$. Then $\bar{\mathcal{E}}$ is the $\C$-module
of smooth sections of the quotient vector bundle $\bar{E}= E/E_0$, which therefore carries a natural Lie algebroid structure.}

\noindent
\emph{Also, there is a short exact sequence of morphisms of Loday pseudoalgebras over $\C$, }
\begin{equation}\label{exact seq loday}
0 \longrightarrow \bar{\mathcal{E}_0} \longrightarrow \mathcal{E} \longrightarrow \bar{\mathcal{E}} \longrightarrow 0,
\end{equation}
\emph{where $\bar{\mathcal{E}_0}$ is a Loday pseudoalgebra with the trivial left anchor and $\bar{\mathcal{E}}$ is a Lie pseudoalgebra. Moreover, since $\bar{\mathcal{E}}$ is free,
as space of smooth sections of a vector bundle of constant rank, the exact sequence \eqref{exact seq loday} splits \cite[Proposition 21, II-1.11]{bourbaki}}.

\vspace{2mm}
\noindent
\emph{If $E$ is a Courant algebroid with anchor $\rho$ of constant rank, $\mathcal{E}_0 = \mathrm{span}\{d_E \langle e,e \rangle \, / \, e\in \mathcal{E}\}$ and
the corresponding Lie algebroid $\bar{E}$ is the quotient bundle $E/\mathrm{Im}\rho^T$ \cite{severa}.}
\end{example L-L}

\begin{remark}\label{rem-loday-popescu}
\emph{At this point, recalling the notion of a \emph{generalized algebroid} introduced by Popescu in \cite{popescu}, we observe that every Loday algebroid can be regarded as a $\bar{\mathcal{E}}_0$-algebroid. A \emph{generalized algebroid} is a pair $(E,\mathcal{S})$, where $E$ is a vector bundle over a smooth manifold $M$, endowed with an anchor map $\rho\colon E\to TM$ and a bracket $[\cdot,\cdot]$ on $\Gamma(E)$, and $\mathcal{S}$ is a $\C$-submodule of $\Gamma(E)$ such that:}
\begin{itemize}
\item[-]
\emph{the Jacobiator of $[\cdot,\cdot]$ takes values in $\mathcal{S}$;}
\item[-]
\emph{$[e_1,fe_2]-f[e_1,e_2]-\rho(e_1)(f)e_2 \in \mathcal{S}$ and $[fe_1,e_2]-f[e_1,e_2]+\rho(e_2)(f)e_1 \in \mathcal{S}$, for any $e_1,e_2\in \Gamma(E)$ and $f\in \C$;}
\item[-]
\emph{$\rho([e_1,e_2]) = [\rho(e_1),\rho(e_2)]$, for any $e_1,e_2\in \Gamma(E)$;}
\item[-]
\emph{$[e_1,e_2]\in \mathcal{S}$, whenever $e_1$ or $e_2$ are in $\mathcal{S}$.}
\end{itemize}
\emph{Taking into account the relations of Lemma \ref{lemma-Raquel} and the fact that $\alpha(df\otimes e_1)(e_2)\in \bar{\mathcal{E}_0}$, we can easily verify that, in the case of Loday algebroids, the above conditions are satisfied for $\mathcal{S}= \bar{\mathcal{E}_0}$.}
\end{remark}

\subsection{Actions of Loday algebroids}\label{subsection - action Loday alg}
By the general theory of vector bundles, it is well known that, for vector bundles over different base manifolds and a vector bundle map $(\Phi,\phi)$
\[\begin{tikzcd} F \arrow{r}{\Phi}\arrow[swap]{d} \arrow{d}[swap]{\pi_{_F}} & E \arrow{d}{\pi_{_E}}\\ N \arrow{r}[swap]{\phi}& M, \end{tikzcd}\]
there is, in general, no induced map $\Gamma(F)\to\Gamma(E)$ between the spaces of smooth sections. The reason is that $\Gamma(E)$ and $\Gamma(F)$ are bimodules over different algebras. If we take $F$ to be the pullback vector bundle $\phi^\ast E$ of $E$ over $\phi$, together with the vector bundle map $\mathrm{pr}\colon\phi^\ast E\to E$ (which is the projection onto the second factor of $\phi^\ast E\subset N\times E$), it is not possible, in general, to pull back an algebroid structure on $\Gamma(E)$ to an algebroid structure of the same type on $\Gamma(\phi^\ast E)$. In \cite{hig-mck}, Higgins and Mackenzie consider the case where $E$ is a Lie algebroid and define a Lie algebroid structure on $\phi^\ast E$ by using the additional structure of an action of $E$ on $\phi$. The same method is used by Li-Bland and Meinrenken \cite{li-bland-meinr} in the case of Courant algebroids. Assuming that $E$ is a Loday algebroid, the purpose of this paragraph is to construct a Loday algebroid structure on $\phi^\ast E$. Our construction uses an \emph{action of the Loday algebroid} $E$ on $\phi$. This notion is defined below by adapting the notion of an \emph{action of a Lie algebroid}.
Before proceeding, we remark that, in the Lie algebroid case, since the left and right anchor maps coincide up to sign, an action is defined by a map $\varrho\colon\Gamma(E)\to\Gamma(TN)$. In the Loday algebroid case, since the left and right anchor maps are different, in addition to $\varrho\colon\Gamma(E)\to\Gamma(TN)$, we need an extra map with values in $\Gamma(TN)$, which will be used to define the right anchor map on $\Gamma(\phi^\ast E)$.

\begin{definition}\label{def-action-loday alg - NEW}
Suppose that $(E, \lcf \cdot, \cdot \rcf, \rho, \alpha)$ is a Loday algebroid on $M$, $N$
a smooth manifold, and $\phi : N \to M$ a smooth map. An \emph{(infinitesimal) action of $E$ on $\phi$ (or on $N$)}
is a pair $(\varrho, \mathfrak{a}^T)$ of $\R$-linear maps, $\varrho : \Gamma(E) \to \Gamma(TN)$ and $\mathfrak{a}^T :
\Gamma(E)\otimes \Gamma(E)\otimes \Gamma(E^\ast) \to \Gamma(TN)$\footnote{The tensor product of $\C$-modules is considered over the algebra $\C$.} such that:
\begin{enumerate}
\item
$\varrho : \Gamma(E) \to \Gamma(TN)$ is a Loday algebra
homomorphism, i.e.
\begin{equation}\label{def-act lod alg - 1 - NEW}
\varrho(\lcf e_1, e_2 \rcf) = [\varrho(e_1), \varrho(e_2)], \quad \mbox{for  all } \, e_1, e_2 \in \Gamma(E),
\end{equation}
which is
\begin{enumerate}
\item
$\C$-linear:
\begin{equation}\label{def-act lod alg - 3 - NEW}
\varrho(ge)=\phi^\ast g \varrho(e) = (g\circ \phi)\varrho(e), \quad \mbox{for  any } \, g\in \C \; and \; e\in \Gamma(E),
\end{equation}
\item
and $\phi$-related, compatible, with the left anchor map $\rho$ of $E$:
\begin{equation}\label{def-act lod alg - 2 - NEW}
(d\phi \circ \varrho)(e) = \rho(e)\circ \phi, \quad \mbox{for  all } \, e\in \Gamma(E).
\end{equation}
\end{enumerate}
\item
$\mathfrak{a}^T : \Gamma(E)\otimes \Gamma(E)\otimes \Gamma(E^\ast) \to \Gamma(TN)$ is
\begin{enumerate}
\item
$\C$-linear: i.e., for any $e_1, e_2 \in \Gamma(E)$, $\mu \in \Gamma(E^\ast)$ and $g\in \C$,
\begin{equation}\label{def-act lod alg - 5 -NEW}
\mathfrak{a}^T(g e_1\otimes e_2 \otimes \mu) = \phi^\ast g \mathfrak{a}^T (e_1\otimes e_2 \otimes \mu)=(g\circ \phi) \mathfrak{a}^T (e_1\otimes e_2 \otimes \mu);
\end{equation}
\item
$\phi$-related, compatible, with $\alpha^T$: for any $e_1, e_2 \in \Gamma(E)$ and $\mu \in \Gamma(E^\ast)$,
\begin{equation}\label{def-act lod alg - 6 -NEW}
(d\phi \circ \mathfrak{a}^T)(e_1\otimes e_2 \otimes \mu) = \alpha^T(e_1\otimes e_2 \otimes \mu)\circ \phi;
\end{equation}
\item
compatible with the bracket $\lcf \cdot, \cdot \rcf$ on $\Gamma(E)$ in the sense that, for any $e_1, e_2, e_3 \in \Gamma(E)$, $\mu \in \Gamma(E^\ast)$ and $f, g\in C^\infty (N, \R)$, the following conditions are satisfied:
\begin{eqnarray}\label{cond-lemma-rq-1 - action}
\lefteqn{\mathfrak{a}(df\otimes e_1)(\lcf e_2, e_3 \rcf) - \lcf \mathfrak{a}(df\otimes e_1)(e_2), e_3 \rcf -\lcf e_2,  \mathfrak{a}(df\otimes e_1)(e_3)\rcf = }\nonumber \\
& & - \varrho(e_3)(f) \lcf e_1, e_2 \rcf_{+} + \mathfrak{a} (df \otimes  \lcf e_1, e_2 \rcf )(e_3) - \mathfrak{a} (d\varrho(e_2)(f) \otimes e_1 )(e_3),
\end{eqnarray}
\begin{eqnarray}\label{cond-lemma-rq-2 - action}
\lefteqn{\lcf e_1, \mathfrak{a}(dg\otimes e_2)(e_3) \rcf = \mathfrak{a}(dg \otimes  \lcf e_1, e_2 \rcf )(e_3)}\nonumber \\
& & + \, \mathfrak{a} (d\varrho(e_1)(g) \otimes e_2 )(e_3) + \mathfrak{a} (dg \otimes  e_2)(\lcf e_1, e_3 \rcf,
\end{eqnarray}
\begin{eqnarray}\label{cond-lemma-rq-3 - action}
\lefteqn{[\mathfrak{a}(df \otimes e_1), \mathfrak{a}(dg \otimes e_2)]_{com}(e_3) = \mathfrak{a}(dg \otimes \mathfrak{a}(df \otimes e_1)(e_2))(e_3)}\nonumber \\
& &  +\, \varrho(e_1)(g)\mathfrak{a}(df \otimes e_2)(e_3) -  \varrho(e_2)(f)\mathfrak{a}(dg \otimes e_1)(e_3) \nonumber \\
& & -\, \varrho(e_3)(f)\mathfrak{a}(dg \otimes e_2)(e_1),
\end{eqnarray}
where $\mathfrak{a} : \Gamma(T^\ast N) \otimes \Gamma(E) \to \mathrm{End}(\Gamma(E))$ denotes the ``transpose" map of $\mathfrak{a}^T$.
\item
Moreover, we require the associated map $\mathfrak{a}^T : \Gamma(T^\ast N)\otimes \Gamma(E)\otimes \Gamma(E) \to \Gamma(E)$ to take values in $\ker \varrho$:
\begin{equation}\label{def-act lod alg - 4 -NEW}
\varrho (\mathfrak{a}^T (df\otimes e_1 \otimes e_2)) = 0.
\end{equation}
\end{enumerate}
\end{enumerate}
\end{definition}

\vspace{2mm}
\noindent
Furthermore, we remark that the transpose $\varrho^T : \Gamma(T^\ast N) \to \Gamma(E^\ast)$ of $\varrho$ defines a natural ``differential" $\underline{d} : C^\infty(N, \R) \to \Gamma(E^\ast)$ given, for any $h\in C^\infty(N, \R)$, by
\begin{equation}\label{varrho t - d}
\underline{d}h = (\varrho^T \circ d)h,
\end{equation}
whence we get
\begin{equation*}
\langle \underline{d}h, e\rangle = \varrho(e)(h), \quad \mathrm{for}\; \mathrm{all}\; e\in \Gamma(E).
\end{equation*}
Also, for any $e_1,e_2 \in \Gamma(E)$, we have
\begin{equation*}
\mathfrak{a}^T(e_1\otimes e_2 \otimes \underline{d}h) = \langle \underline{d}h, \mathfrak{a} (\cdot \otimes e_1)(e_2) \rangle = \langle \varrho^T dh, \mathfrak{a} (\cdot \otimes e_1)(e_2) \rangle = \langle dh, \varrho(\mathfrak{a} (\cdot \otimes e_1)(e_2)) \rangle \stackrel{\eqref{def-act lod alg - 4 -NEW}}{=} 0.
\end{equation*}
The last result is also true for the sections of $\Gamma(E^\ast)$ of type $\varrho^T(\eta)$, for all $\eta\in \Gamma(T^\ast N)$.

\begin{theorem}\label{theorem-action lod alg - new}
Let $(E, \lcf \cdot, \cdot \rcf, \rho, \alpha)$ be a Loday algebroid over $M$, $N$ a smooth manifold, $\phi : N \to M$ a smooth map, $\phi^\ast E$ the pullback vector bundle of $E$ over $N$, $\phi^\ast : \Gamma(E) \to \Gamma(\phi^\ast E)$ the map which assigns to each smooth section of $E$
its pullback section, and $(\varrho, \mathfrak{a}^T)$ an action of $E$ on $\phi$. Then the pullback vector bundle $\phi^\ast E$ carries a Loday algebroid structure with the following data:
\begin{itemize}
\item[-]
\emph{Left anchor map}: The map $\varrho_{_N} : \Gamma(\phi^\ast E)\to \Gamma(TN)$ given, for any $f\phi^\ast e = f\otimes e \in \Gamma(\phi^\ast E)$, where $f\in C^\infty(N, \R)$ and $e\in \Gamma(E)$, by
\begin{equation}\label{l-anchor - act lod alg - new}
\varrho_{_N}(f\phi^\ast e ):= f\varrho(e).
\end{equation}
\item[-]
\emph{Map which controls the right anchor map}: The map $\mathfrak{a}_{_N}^T : \Gamma(\phi^\ast E)\otimes \Gamma(\phi^\ast E)\otimes \Gamma(\phi^\ast E^\ast)\to \Gamma(TN)$ defined, for any $f_1\phi^\ast e_1, f_2\phi^\ast e_2 \in \Gamma(\phi^\ast E)$ and $g\phi^\ast \mu \in \Gamma(\phi^\ast E^\ast) = \Gamma((\phi^\ast E)^\ast)$, where $f_1,f_2,g\in C^\infty(N, \R)$, by
\begin{equation}\label{r-anchor - act lod alg - new}
\mathfrak{a}_{_N}^T (f_1\phi^\ast e_1\otimes f_2\phi^\ast e_2\otimes g\phi^\ast \mu): = f_1f_2g\,\mathfrak{a}^T(e_1\otimes e_2 \otimes \mu).
\end{equation}
\item[-]
\emph{$\R$-Bilinear bracket on $\Gamma(\phi^\ast E)$}: The bracket $\lbr \cdot, \cdot \rbr$ defined, for any $e_1, e_2 \in \Gamma(E)$ and $f_1, f_2 \in C^\infty(N, \R)$, by
\begin{eqnarray}\label{bracket action - new}
\lbr f_1\phi^\ast e_1, f_2\phi^\ast e_2 \rbr & = & f_1f_2 \phi^\ast \lcf e_1, e_2\rcf + f_1\varrho_{_N}(\phi^\ast e_1)(f_2)\phi^\ast e_2 - f_2\varrho_{_N}(\phi^\ast e_2)(f_1)\phi^\ast e_1 \nonumber \\
& & + \, f_2\mathfrak{a}_{_N}(df_1\otimes \phi^\ast e_1)(\phi^\ast e_2),
\end{eqnarray}
where $\mathfrak{a}_{_N} : \Gamma(T^\ast N) \otimes \Gamma(\phi^\ast E)\to \mathrm{End}(\Gamma(\phi^\ast E))$ is the map related with $\mathfrak{a}_{_N}^T$ as follows:
\begin{equation*}
\langle g\phi^\ast \mu, \, \mathfrak{a}_{_N}(df_1\otimes \phi^\ast e_1)(f_2\phi^\ast e_2)\rangle = \mathfrak{a}_{_N}^T(\phi^\ast e_1 \otimes f_2\phi^\ast e_2\otimes g\phi^\ast \mu)(f_1).
\end{equation*}
\end{itemize}
\end{theorem}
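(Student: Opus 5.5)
The plan is to use Theorem \ref{theorem - def - Loday} on $\phi^\ast E$. That reduces the claim to three things: the bracket $\lbr\cdot,\cdot\rbr$ of \eqref{bracket action - new} is well defined, it satisfies the left and right Leibniz rules with anchors $\varrho_{_N}$ and $\mathfrak{a}_{_N}$, and it satisfies the Jacobi identity \eqref{Jacobi-identity}.

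\emph{Well-definedness.} Sections of the form $f\phi^\ast e$ generate $\Gamma(\phi^\ast E)=C^\infty(N,\R)\otimes_{\C}\Gamma(E)$. Hence we must show that \eqref{l-anchor - act lod alg - new}, \eqref{r-anchor - act lod alg - new} and \eqref{bracket action - new} are biadditive and respect the relation $f\phi^\ast(ge)=(f\,\phi^\ast g)\phi^\ast e$.
\begin{itemize}
\item For $\varrho_{_N}$ this is exactly \eqref{def-act lod alg - 3 - NEW}.
\item For $\mathfrak{a}_{_N}^T$ it is \eqref{def-act lod alg - 5 -NEW}, together with $\C$-linearity in the other two slots, which I take to be implicit in the tensor products over $\C$.
\item For the bracket, replace $e_1$ by $ge_1$ and use the right Leibniz rule of Theorem \ref{theorem - def - Loday} in $\phi^\ast\lcf ge_1,e_2\rcf$. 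The correction terms $-\rho(e_2)(g)e_1+\alpha(dg\otimes e_1)(e_2)$ must cancel against the terms produced by $\varrho_{_N}(\phi^\ast e_2)(f_1\phi^\ast g)$ and $\mathfrak{a}_{_N}(d(f_1\phi^\ast g)\otimes\phi^\ast e_1)$. This cancellation uses \eqref{def-act lod alg - 2 - NEW}, which gives $\varrho(e)(\phi^\ast g)=\phi^\ast(\rho(e)g)$, and \eqref{def-act lod alg - 6 -NEW}, which gives $\mathfrak{a}(d\phi^\ast g\otimes e_1)(e_2)=\phi^\ast(\alpha(dg\otimes e_1)(e_2))$.
\item The second slot is handled the same way with the left Leibniz rule.
\end{itemize}

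\emph{Leibniz rules.} With well-definedness in hand, both Leibniz rules follow directly from the form of \eqref{bracket action - new}. Multiplying $f_2$ by $h$ yields $f_1\varrho(e_1)(h)f_2\phi^\ast e_2$, i.e.\ the term $\varrho_{_N}(f_1\phi^\ast e_1)(h)$. Multiplying $f_1$ by $h$ yields $-\varrho_{_N}(\cdot)(h)$ together with the $\mathfrak{a}_{_N}(dh\otimes\cdot)$ term, by the derivation property of $d$. It follows that the bracket is bidifferential of total order $\le1$ and that $\mathrm{ad}$ is a derivative endomorphism. The anchors are then $\varrho_{_N}$ and $\mathfrak{a}_{_N}$, both vector bundle maps because they are $C^\infty(N,\R)$-linear.

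\emph{Jacobi identity.} This is the main obstacle. The approach is to define the Jacobiator $J(s_1,s_2,s_3)=\lbr s_1,\lbr s_2,s_3\rbr\rbr-\lbr\lbr s_1,s_2\rbr,s_3\rbr-\lbr s_2,\lbr s_1,s_3\rbr\rbr$ and study how it changes when one argument is multiplied by a function $h$.
\begin{itemize}
\item Expanding with the Leibniz rules, the defect $J(s_1,s_2,hs_3)-hJ(s_1,s_2,s_3)$ is governed by the homomorphism property of $\varrho_{_N}$. That property follows from \eqref{def-act lod alg - 1 - NEW} and the Leibniz rule for vector fields, once one notes $\varrho_{_N}\circ\mathfrak{a}_{_N}=0$ by \eqref{def-act lod alg - 4 -NEW}.
\item The defect in the second slot is governed by \eqref{cond-lemma-rq-2 - action}.
\item The defect in the first slot is governed by \eqref{cond-lemma-rq-1 - action}.
\item The mixed terms, from multiplying two slots at once, are governed by \eqref{cond-lemma-rq-3 - action}.
\end{itemize}
These computations mirror, in reverse, the derivation of Lemma \ref{lemma-Raquel}: there the identities were extracted from Jacobi on $(fe_1,e_2,e_3)$, $(e_1,ge_2,e_3)$ and $(fe_1,ge_2,e_3)$, and here they are fed back in to show $J$ is $C^\infty(N,\R)$-multilinear. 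Once $J$ is shown multilinear, it suffices to check $J(\phi^\ast e_1,\phi^\ast e_2,\phi^\ast e_3)=0$. Since $\lbr\phi^\ast e_1,\phi^\ast e_2\rbr=\phi^\ast\lcf e_1,e_2\rcf$, this vanishing follows from the Jacobi identity in $\Gamma(E)$.

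I expect the bookkeeping in the $C^\infty(N,\R)$-multilinearity of $J$ in the first slot to be the hardest part. In that slot the symmetrized bracket and the second-order term $\mathfrak{a}(d\varrho(e_2)(f)\otimes e_1)$ appear, and the signs must match \eqref{cond-lemma-rq-1 - action} exactly. I would first verify it for $h=\phi^\ast g$, where it reduces to Lemma \ref{lemma-Raquel}, and then for general $h$.
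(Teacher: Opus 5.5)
Your proposal is correct and follows the paper's route. You check that $\varrho_{_N}$, $\mathfrak{a}_{_N}$ and the bracket are well defined (you via the balanced relation $f\phi^\ast(ge)=(f\,\phi^\ast g)\,\phi^\ast e$, the paper via agreement on overlaps of pulled-back local frames), derive the homomorphism property of $\varrho_{_N}$ from $\varrho_{_N}\circ\mathfrak{a}_{_N}=0$, verify the Jacobi identity from the action axioms, and conclude with Theorem~\ref{theorem - def - Loday}. The one difference is that you make explicit the Jacobi step the paper only calls a ``long but straightforward calculation'': you prove the Jacobiator is $C^\infty(N,\R)$-trilinear, with the slot defects controlled by \eqref{cond-lemma-rq-1 - action}, \eqref{cond-lemma-rq-2 - action} and the mixed term by \eqref{cond-lemma-rq-3 - action}, and then reduce to pullback sections, which is a valid way to carry it out.
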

\begin{proof}
First, we check that $\varrho_{N}$ and $\mathfrak{a}_{_N}$ are well defined. Condition \eqref{def-act lod alg - 3 - NEW} of Definition \ref{def-action-loday alg - NEW} ensures that $\varrho_{_N}$ defines a vector bundle map, also denoted by $\varrho_{_N}$, $\varrho_{_N} : \phi^\ast E \to TN$.
Indeed, for any $e\in \Gamma(E)$, $h\in \C$ and $f, f_1, f_2, g \in C^\infty(N, \R)$,
\begin{equation*}
\varrho_{_{N}}(f\phi^\ast (he) ) \stackrel{\eqref{l-anchor - act lod alg - new}}{=} f\varrho(he)\stackrel{\eqref{def-act lod alg - 3 - NEW}}{=} f\phi^\ast h \varrho(e)
\end{equation*}
and
\begin{equation*}
\varrho_{_{N}}(f\phi^\ast (he) ) = \varrho_{_N}(f\phi^\ast h \phi^\ast e)\stackrel{\eqref{l-anchor - act lod alg - new}}{=} f\phi^\ast h \varrho(e).
\end{equation*}
So,
\[\varrho_{_{N}}(f\phi^\ast h \phi^\ast e) = f\phi^\ast h\varrho_{_N}(\phi^\ast e).\]
Also, for use in the following, we note that
\begin{equation*}\label{phi and rho}
\varrho_{_N}(\phi^\ast e)(\phi^\ast h) \stackrel{\eqref{l-anchor - act lod alg - new}}{=} \langle \phi^\ast dh, \varrho(e) \rangle = \langle dh, d\phi(\varrho(e))\rangle \circ \phi \stackrel{\eqref{def-act lod alg - 2 - NEW}}{=} \langle dh, \rho(e)  \rangle \circ \phi = \phi^\ast (\rho(e)(h)).
\end{equation*}
With the above notation, the map $\mathfrak{a}_{_N}^T$ satisfies
\begin{eqnarray*}
\mathfrak{a}_{_N}^T (f_1\phi^\ast (h e_1)\otimes f_2\phi^\ast e_2\otimes g\phi^\ast \mu) &  \stackrel{\eqref{r-anchor - act lod alg - new}}{=} & f_1f_2g\,\mathfrak{a}^T(he_1\otimes e_2 \otimes \mu) \nonumber \\
& \stackrel{\eqref{def-act lod alg - 5 -NEW}}{=} & f_1f_2g\phi^{\ast}h \mathfrak{a}^T(e_1\otimes e_2 \otimes \mu) \nonumber \\
&\stackrel{\eqref{r-anchor - act lod alg - new}}{=} & \mathfrak{a}_{_N}^T (f_1\phi^\ast h \phi^\ast e_1\otimes f_2\phi^\ast e_2\otimes g\phi^\ast \mu).
\end{eqnarray*}
So, $\mathfrak{a}_{_N}^T $ is well defined and by construction is $C^\infty(N, \R)$-linear. Hence, it induces a vector bundle map, also denoted by $\mathfrak{a}_{_N}^T$, $\mathfrak{a}_{_N}^T : \phi^\ast E\otimes \phi^\ast E \otimes \phi^\ast E^\ast \to TN $
and the corresponding vector bundle map $\mathfrak{a}_{_N} : T^\ast N \otimes \phi^\ast E \to \mathrm{End}(\phi^\ast E)$. Also, the following diagram is commutative:
\begin{equation}\label{diag - alpha_N - phi}
\begin{tikzcd}[column sep=80]
\Gamma(T^\ast N)\otimes \Gamma(E) \otimes \Gamma(E) \arrow{r}{\mathrm{Id}\otimes \phi^\ast \otimes \phi^\ast} \arrow{d}[swap]{\mathfrak{a}^T}& \Gamma(T^\ast N)\otimes \Gamma(\phi^\ast E) \otimes \arrow{d}{\mathfrak{a}_{_N}^T} \Gamma(\phi^\ast E) \\
\Gamma(E) \arrow{r}[swap]{\phi^\ast} & \Gamma(\phi^\ast E).
\end{tikzcd}
\end{equation}
Therefore, for any $f\in C^\infty(N, \R)$ and $e_1, e_2 \in \Gamma(E)$,
\begin{eqnarray}\label{rho_N-alpha_N}
\varrho_{_N}(\mathfrak{a}_{_N}(df \otimes \phi^\ast e_1)(\phi^\ast e_2)) & \stackrel{\eqref{alpha - alpha t}}{=} &  \varrho_{_N}(\mathfrak{a}_{_N}^T(df \otimes \phi^\ast e_1 \otimes \phi^\ast e_2)) \nonumber \\
& \stackrel{\eqref{diag - alpha_N - phi}}{=} & \varrho_{_N}(\phi^\ast (\mathfrak{a}^T(df\otimes e_1 \otimes e_2)))\nonumber \\
& \stackrel{\eqref{l-anchor - act lod alg - new}}{=}& \varrho (\mathfrak{a}^T(df\otimes e_1 \otimes e_2)) \nonumber \\
&\stackrel{\eqref{def-act lod alg - 4 -NEW}}{=}& 0.
\end{eqnarray}

\vspace{1mm}
\noindent
The bracket on pullback sections is defined by
\begin{equation}\label{phi*-br}
\lbr \phi^\ast e_1, \phi^\ast e_2 \rbr = \phi^\ast \lcf e_1, e_2 \rcf
\end{equation}
and then extended to any sections by requiring the left and the right Leibniz identity to hold. So, we obtain the expression \eqref{bracket action - new}, which is well defined. By a straightforward calculation, we prove that, for two local frames $(U,e_1,\ldots,e_r)$ and $(U',e'_1,\ldots,e'_r)$ of smooth sections of $E$, with $U\cap U'\neq\emptyset$, the brackets \eqref{bracket action - new} of two smooth sections of $\phi^\ast E$, defined with respect to $(\phi^{-1}(U),\phi^\ast e_1,\ldots,\phi^\ast e_r)$ and $(\phi^{-1}(U'),\phi^\ast e'_1,\ldots,\phi^\ast e'_r)$, coincide on the overlap $\phi^{-1}(U) \cap \phi^{-1}(U')=\phi^{-1}(U\cap U')$.
The relation \eqref{rho_N-alpha_N} guarantees that $\varrho_{_N}\colon \Gamma(\phi^\ast E)\to\Gamma(TN)$ is a Loday algebra homomorphism (it is enough to apply $\varrho_{_N}$ to \eqref{bracket action - new}). After a long but straightforward calculation, we verify that the Jacobi identity remains valid. In the proof, besides the definition and the properties of the triple $(\lbr \cdot,\cdot \rbr,\varrho_{_N},\mathfrak{a}_{_N})$, we use the compatibility conditions of the action $(\varrho,\mathfrak{a}^T)$ of $E$ on $\phi$ given in Definition \ref{def-action-loday alg - NEW}. Therefore, by Theorem \ref{theorem - def - Loday}, the triple $(\lbr \cdot,\cdot \rbr,\varrho_{_N},\mathfrak{a}_{_N})$ defines a Loday algebroid structure on $\phi^\ast E$.
\end{proof}

\vspace{1mm}
\noindent
The quadruplet $(\phi^\ast E, \lbr \cdot, \cdot \rbr, \varrho_{_N}, \mathfrak{a}_{_N})$ is called \emph{action Loday algebroid} associated to the infinitesimal action $(\varrho, \mathfrak{a}^T)$ of $E$ on $\phi$.

\begin{examples}\label{examples LA actions}
\end{examples}
\begin{enumerate}
\item\label{example: action Loday algebras}
\emph{Action Loday algebroid of a Loday algebra action.}
Let $(\mathcal{E}, \lcf \cdot, \cdot \rcf)$ be a Loday algebra, viewed as a Loday algebroid
over an one-point set $\{p\}$. An action of $\mathcal{E}$ over a smooth manifold $M$ is a pair of maps $(\varrho, \mathfrak{a}^T)$ such that
$\varrho : (\mathcal{E}, \lcf \cdot, \cdot \rcf) \to (\Gamma(TM), [\cdot, \cdot])$ is a homomorphism of Loday algebras and
$\mathfrak{a}^T:\mathcal{E}\otimes \mathcal{E}\otimes \mathcal{E}^*\to \Gamma(TM)$ verifies the compatibility conditions \eqref{cond-lemma-rq-1 - action} - \eqref{def-act lod alg - 4 -NEW}. The remaining conditions \eqref{def-act lod alg - 5 -NEW} and \eqref{def-act lod alg - 6 -NEW} are trivially satisfied.

\vspace{1mm}
\noindent
Let $(\varrho, \mathfrak{a}^T)$ be an action of $\mathcal{E}$ on $M$.
Then, according to Theorem \ref{theorem-action lod alg - new}, the trivial vector bundle $M \times \mathcal{E}$ over $M$, considered as the pullback vector bundle of $\mathcal{E} \to \{p\}$ by the projection map $M\to \{p\}$, acquires a Loday algebroid structure $(\lbr \cdot, \cdot \rbr, \varrho_{_M}, \mathfrak{a}_{_M})$
defined as follows. The maps $\varrho_{_M} : M \times \mathcal{E} \to TM$ and $\mathfrak{a}_{_M}^T : M \times (\mathcal{E}\otimes \mathcal{E} \otimes \mathcal{E}^\ast) \to TM$
are defined by the action $(\varrho, \mathfrak{a}^T)$: for any $m \in M$, $e, e_1, e_2 \in \mathcal{E}$ and $\mu \in \mathcal{E}^\ast$,
\[\varrho_{_M}(m, e): = \varrho(e)(m), \quad \quad \mathfrak{a}_{_M}^T(m, e_1\otimes e_2 \otimes \mu) = \mathfrak{a}^T(e_1\otimes e_2 \otimes \mu)(m).\]
While the bracket is given, for any $x,y\in C^\infty(M, \mathcal{E})$, by the formula
\begin{equation}\label{bracket action Loday algebra}
\lbr x, y \rbr : = \lcf x, y \rcf_{_\mathcal{E}} + \mathcal{L}_{\varrho_{_M}(x)}y - \mathcal{L}_{\varrho_{_M}(y)}x + \mathfrak{a}_{_M}(dx)(y),
\end{equation}
where $\lcf x, y \rcf_{_\mathcal{E}}$ is the pointwise bracket of maps into $\mathcal{E}$, $\mathcal{L}_{\varrho_{_M}(x)}y$
(resp. $\mathcal{L}_{\varrho_{_M}(y)}x$) denotes the Lie derivative with respect to $\varrho_{_M}(x)$ (resp. $\varrho_{_M}(y)$)
of the vector valued function $y$ (resp. $x$), and $dx$ denotes the differential of $x$ which is a section of $T^\ast M \otimes (M\times \mathcal{E})$.
As in the Lie algebroids \cite{mck} and Courant algebroids \cite{li-bland-meinr} cases, if $x, y$ are constant maps, then $\lbr x, y \rbr$
is constant also, equal to the pointwise bracket in $\mathcal{E}$ of the values of $x$ and $y$. Hence, \eqref{bracket action Loday algebra} is an extension of the Loday bracket
on constant sections and a bidifferential operator of total order less or equal to 1 such that, for any $x\in C^\infty (M, \mathcal{E})$,
the adjoint map $\mathrm{ad}_x : C^\infty (M, \mathcal{E}) \to C^\infty (M, \mathcal{E})$, $y\mapsto \mathrm{ad}_x (y)= \lbr x, y \rbr$ is a
derivative endomorphism of $C^\infty (M, \mathcal{E})$.

\item\label{example: Loday action on projection}
\emph{Loday algebroid action on a projection.} Let $(E, \lcf \cdot, \cdot \rcf, \rho, \alpha)$ be a Loday algebroid over a smooth manifold $M_1$, $M_2$ a smooth manifold, and $\pi_i : M_1\times M_2 \to M_i$ the projection of $M_1\times M_2$ on the $i$-factor, $i=1,2$. We set $N=M_1\times M_2$ and we note that $TN \cong \pi_1^\ast TM_1 \oplus \pi_2^\ast TM_2$ and $T^\ast N \cong \pi_1^\ast T^\ast M_1 \oplus \pi_2^\ast T^\ast M_2$. We claim that the pair of maps $(\varrho , \mathfrak{a}^T)$, $\varrho : \Gamma(E) \to \Gamma(TN)$ and $\mathfrak{a}^T : \Gamma(E)\otimes \Gamma(E)\otimes \Gamma(E^\ast) \to \Gamma(TN)$, defined, for any $e, e_1, e_2\in \Gamma(E)$ and $\mu\in \Gamma(E^\ast)$, by
\begin{equation}\label{action pi_1}
\varrho(e) =\pi_1^\ast \rho(e) \quad \mathrm{and} \quad \mathfrak{a}^T(e_1 \otimes e_2 \otimes \mu) = \pi_1^\ast \alpha^T (e_1 \otimes e_2 \otimes \mu),
\end{equation}
is a Loday algebroid action of $E$ on $\pi_1$.
Before we proceed, we note the following. Any section $\eta$ of $T^\ast N$ is of the form
$\sum (f_i \otimes \eta^i_1) \oplus \sum (g_j \otimes \eta^j_2) = \sum (f_i \pi_1^\ast\eta^i_1) \oplus \sum (g_j \pi_2^\ast \eta^j_2)$, where $\eta^i_1\in \Gamma(T^\ast M_1)$, $\eta^i_2\in \Gamma(T^\ast M_2)$, and $f_i,g_j\in \CN$. Then, for all $e_1, e_2 \in \Gamma(E)$ and $\mu \in \Gamma(E^\ast)$, we have
\begin{eqnarray*}
\langle \mu, \mathfrak{a}(\eta)(e_1\otimes e_2)\rangle & = & \langle \eta, \mathfrak{a}^T(e_1 \otimes e_2 \otimes \mu) \rangle \nonumber \\
& \stackrel{\eqref{action pi_1}}{=}& \langle \sum (f_i \pi_1^\ast\eta^i_1) \oplus \sum (g_j \pi_2^\ast \eta^j_2), \pi_1^\ast \alpha^T (e_1 \otimes e_2 \otimes \mu)\rangle \nonumber \\
& = & \sum f_i \pi_1^\ast \langle \eta^i_1, \alpha^T (e_1 \otimes e_2 \otimes \mu) \rangle = \sum f_i \pi_1^\ast \langle \mu, \alpha^T (\eta^i_1)(e_1 \otimes e_2)) \rangle
\end{eqnarray*}
Hence,
\begin{equation}\label{alpha - pi_1}
\mathfrak{a}(\eta)(e_1\otimes e_2) = \sum f_i \pi_1^\ast\alpha^T (\eta_1^i)(e_1\otimes e_2) = \sum f_i \pi_1^\ast \alpha^T (\eta_1^i\otimes e_1\otimes e_2).
\end{equation}
Since
\begin{equation}\label{cond-on bracket of pullbacks}
[\pi_1^*X, \pi_1^* Y]_{TN}=\pi_1^* [X,Y]_{TM_1}, \quad \quad X,Y \in \Gamma(TM_1),
\end{equation}
for any $e_1,e_2 \in \Gamma(E)$, we have
\begin{eqnarray*}
\varrho(\lcf e_1, e_2 \rcf) & = & \pi_1^\ast \rho(\lcf e_1, e_2 \rcf)=\pi_1^\ast [\rho(e_1), \rho(e_2)]
\stackrel{\eqref{cond-on bracket of pullbacks}}{=}[\pi_1^\ast \rho(e_1), \pi_1^\ast \rho(e_2)] = [\varrho(e_1),\varrho(e_2)].
\end{eqnarray*}
The other two conditions \eqref{def-act lod alg - 3 - NEW}--\eqref{def-act lod alg - 2 - NEW} on $\varrho$ are easily verified. Regarding $\mathfrak{a}^T$, the relations \eqref{def-act lod alg - 5 -NEW}--\eqref{def-act lod alg - 4 -NEW} are also readily verified. Their proof relies on the results established above (in particular, the relation \eqref{alpha - pi_1}) and on the properties of the maps $(\rho,\alpha)$ of $E$ (see Lemma \ref{lemma-Raquel}).

\item\label{example: Courant alg action}
\emph{Courant algebroid action.} Let $(E, \lcf \cdot, \cdot \rcf, \rho, \langle \cdot, \cdot \rangle_{_E})$ be a Courant algebroid over $M$, $N$ a smooth manifold, and $\phi: N\to M$ a smooth map. Let $\varrho : \Gamma(E) \to \Gamma(TN)$ be a map satisfying the requirements \eqref{def-act lod alg - 1 - NEW}-\eqref{def-act lod alg - 2 - NEW} and, for any $e\in \Gamma(E)$ and $\eta\in \Gamma(T^*N)$, the claims:
\[\varrho \circ {\mathcal{G}_{_E}^\flat}^{-1}\circ \varrho^T = 0,\]
\[\lcf e, ({\mathcal{G}_{_E}^\flat}^{-1}\circ \varrho^T) \eta \rcf = ({\mathcal{G}_{_E}^\flat}^{-1}\circ \varrho^T) \mathcal{L}_{\varrho(e)}\eta \;\, \mathrm{and} \;\, \lcf ({\mathcal{G}_{_E}^\flat}^{-1}\circ \varrho^T) \eta, e\rcf = - ({\mathcal{G}_{_E}^\flat}^{-1}\circ \varrho^T)(i_{\varrho(e)}d\eta),\]
where $\varrho^T : \Gamma(T^\ast N) \to \Gamma(E^\ast)$ is the transpose of $\varrho$ and ${\mathcal{G}_{_E}^\flat} : \Gamma(E)\to \Gamma(E^\ast)$ is the map defined by the fiberwise nondegenerate symmetric bilinear form $\langle \cdot, \cdot \rangle_{_E}$ on $E$. Let, also, $\mathfrak{a}^T : \Gamma(E)\otimes \Gamma(E)\otimes \Gamma(E^\ast)\to \Gamma(TN)$ be the map given, for any $e_1,e_2\in \Gamma(E)$ and $\mu \in \Gamma(E^\ast)$, by
\begin{equation*}
\mathfrak{a}^T(e_1\otimes e_2 \otimes \mu) : = \phi^\ast(\langle e_1, e_2\rangle)\varrho({\mathcal{G}^\flat}^{-1}\mu).
\end{equation*}
Then, for any $f\in \CN$, $\langle df, \mathfrak{a}^T(e_1\otimes e_2 \otimes \mu)\rangle = \phi^\ast (\langle \mu, ({\mathcal{G}_{_E}^\flat}^{-1}\circ \varrho^T)df\rangle \langle e_1, e_2\rangle_{_E})$. A straightforward computation shows that the pair $(\varrho,\mathfrak{a}^T)$ defines an action of $E$ on $\phi$. Hence, according to Theorem \ref{theorem-action lod alg - new}, $(\varrho,\mathfrak{a}^T)$ defines a Loday algebroid structure
$(\lbr \cdot, \cdot \rbr, \varrho_{_N}, \mathfrak{a}_{_N})$ on $\phi^\ast E$. By introducing on $\phi^\ast E$ the fiberwise nondegenerate symmetric bilinear form $\langle \cdot, \cdot \rangle_{_{\phi^\ast E}}$ given, for any $f_1,f_2\in \CN$, $e_1,e_2\in \Gamma(E)$ and $y\in N$, by
\[\langle f_1\phi^\ast e_1, f_2\phi^\ast e_2 \rangle_{_{\phi^\ast E}}(y): = f_1(y)f_2(y)\phi^\ast \langle e_1, e_2 \rangle_{_E} (y),\]
it is straightforward to show that the action algebroid $(\phi^\ast E, \lbr \cdot, \cdot \rbr, \varrho_{_N}, \mathfrak{a}_{_N})$ is a Courant algebroid. For the proof of the above assertions we use, in addition to the supposed conditions, the commutativity of the following diagrams:
\begin{equation*}
\begin{tikzcd}
\Gamma(\phi^\ast E) \arrow{d}[swap]{\varrho_{_N}}] & \Gamma(E) \arrow[dl,swap, "\varrho"]\arrow{d}{\rho_{_E}} \arrow{l}[swap]{\phi^\ast}   \\
\Gamma(TN) \arrow[swap]{r}{d\phi} & \Gamma(TM),
\end{tikzcd}
\quad \quad
\begin{tikzcd}
\Gamma(\phi^\ast E^\ast ) & \arrow{l}[swap]{\phi^\ast}\Gamma(E^\ast) \\
\Gamma(T^\ast N) \arrow{u}{\varrho_{_N}^T} \arrow{ur}{\varrho^T} & \arrow{l}{d\phi^T}\arrow[swap]{u}{\rho^T}\Gamma(T^\ast M),
\end{tikzcd}
\quad \quad
\begin{tikzcd}
\Gamma(E ) \arrow[r, shift left, "\mathcal{G}_{_E}^\flat"] \arrow{d}[swap]{\phi^\ast} & \arrow{d}{\phi^\ast} \arrow[l, shift left, "\mathcal{G}_{_E}^{\flat^{-1}}"]\Gamma(E^\ast) \\
\Gamma(\phi^\ast E) \arrow[r, shift left, "\mathcal{G}_{_{\phi^\ast E}}^\flat"] & \arrow[l, shift left, "\mathcal{G}_{_{\phi^\ast E}}^{\flat^{-1}}"]\Gamma(\phi^\ast E^\ast),
\end{tikzcd}
\end{equation*}
where $\mathcal{G}_{_{\phi^\ast E}}^\flat : \Gamma(\phi^\ast E) \to \Gamma(\phi^\ast E^\ast)$ is the map defined by the product $\langle \cdot, \cdot \rangle_{_{\phi^\ast E}}$. \\
\noindent
We note that the notion of an action of a Courant algebroid was first introduced in \cite{li-bland-meinr} by Li-Bland and Meinrenken. Their construction differs from the one presented above.
\end{enumerate}

\begin{remark}\label{Loday alg action - Lie alg action}
\emph{We are in the setting of Example \ref{Loday to Lie}.
Then, we can easily verify that an action $(\varrho,\mathfrak{a}^T)$ of $E$ on $\phi\colon N\to M$ induces an action
$\bar{\varrho} : \bar{\mathcal{E}} \to \Gamma(TN)$, $\bar{e} \mapsto \bar{\varrho}(\bar{e}): = \varrho(e)$, of the Lie algebroid $\bar{E}$ on $\phi$.
Hence, according to \cite{hig-mck}, there is an associated Lie algebroid structure on the pullback vector bundle $\phi^\ast \bar{E}$
whose module of smooth sections is $\Gamma(\phi^\ast \bar{E})\cong \CN \otimes \Gamma(\bar{E})$.
Regarding $\CN$ as a $\C$-module under $f\cdot h = (f\circ \phi)h$, $f\in \C$ and $h\in \CN$,
we obtain, from \eqref{exact seq loday}, the split exact sequence}
\begin{equation*}\label{exact seq phi - loday}
0 \longrightarrow \CN \otimes \bar{\mathcal{E}_0} \longrightarrow \CN \otimes \mathcal{E} \longrightarrow \CN \otimes \bar{\mathcal{E}} \longrightarrow 0,
\end{equation*}
\emph{which is equivalent to the split exact sequence}
\begin{equation*}\label{exact seq phi - loday1}
0 \longrightarrow \Gamma(\phi^\ast E_0) \longrightarrow \Gamma(\phi^\ast E) \longrightarrow \Gamma(\phi^\ast \bar{E}) \longrightarrow 0.
\end{equation*}
\emph{Therefore, since $\Gamma(\phi^\ast E_0)$ coincides with the $\CN$-submodule of $\Gamma(\phi^\ast E)$ generated by $\lbr x,x\rbr$, where $x\in\Gamma(\phi^\ast E)$, the action Lie algebroid structure on $\phi^\ast\bar{E}$ constructed from $\bar{\varrho}$ is precisely the Lie algebroid structure induced by the action Loday algebroid
structure $(\lbr\cdot,\cdot\rbr,\varrho_{_N},\mathfrak{a}_{_N})$ on $\phi^\ast E$.}

\vspace{1mm}
\noindent
\emph{Setting $\mathcal{S}_{N}:=\CN\otimes\bar{\mathcal{E}}_0$, which is a submodule of $\Gamma(\phi^\ast E)$, the action Loday algebroid $(\phi^\ast E,\lbr\cdot,\cdot\rbr,\varrho_{_N},\mathfrak{a}_{_N})$ is a $\mathcal{S}_{N}$-algebroid in the sense of \cite{popescu}. The same conclusion holds if we remove from Definition \ref{def-action-loday alg - NEW} the compatibility conditions \eqref{cond-lemma-rq-1 - action}--\eqref{cond-lemma-rq-3 - action} relating $\mathfrak{a}^T$ to the bracket $\lcf\cdot,\cdot\rcf$ on $\Gamma(E)$. In this case, the resulting action algebroid is only a $\mathcal{S}_{N}$-algebroid.}
\end{remark}

\subsection{Loday algebroid comorphisms}\label{subsection - Loday alg comorphisms}
We introduce the concept of a \emph{Loday algebroid comorphism} covering maps between manifolds
by adapting the notion of Lie algebroid (co)morphism given in \cite{rui}, which is based on the following observation. Let $\pi_{_E} : E\to M$ and $\pi_{_F} : F\to N$ be
two vector bundles with dual bundles $\pi_{_{E^\ast}} : E^\ast \to M$ and $\pi_{_{F^\ast}} : F^\ast \to N$, respectively.
Any vector bundle map $(\hat{\Phi}, \phi)$,
\begin{equation*}
\begin{tikzcd}
F^\ast \arrow{r}{\hat{\Phi}} \arrow{d}[swap]{\pi_{_{F^\ast}}} & E^\ast \arrow{d}{\pi_{_{E^\ast}}}\\
N \arrow{r}[swap]{\phi}& M,
\end{tikzcd}
\end{equation*}
induces a map $\Phi : \Gamma(E) \to \Gamma(F)$ which assigns to each section $e\in \Gamma(E)$ the section $\Phi(e) \in \Gamma(F)$ defined, for any $y\in N$, by
\[\Phi(e)_y = \hat{\Phi}^T_ye_{\phi(y)},\]
where we have denoted by $\hat{\Phi}^T_y : E_{\phi(y)} \to F_y$ the transpose of the linear map $\hat{\Phi}_y : F^\ast_y \to E^\ast _{\phi(y)}$ on the fibers.

\vspace{1mm}
\noindent
Since any ring homomorphism $\phi^\ast:\C\longrightarrow \CN$, defines a map $\phi : N\to M$ \cite{{jet-nestr}},
the above identity provides a one-to-one correspondence between the set of module morphisms
$\Phi : \Gamma(E)\longrightarrow\Gamma(F)$ over a ring homomorphism $\phi^\ast :\C\longrightarrow \CN$,
and the set of vector bundle morphisms $(\hat{\Phi},\phi):F^*\longrightarrow E^*$.

\begin{definition}\label{def-Loday-comorphism}
Let $(E, \lcf \cdot, \cdot \rcf_{_E}, \rho_{_E}, \alpha_{_E})$ and $(F, \lcf \cdot, \cdot \rcf_{_F}, \rho_{_F}, \alpha_{_F})$ be two Loday algebroids over $M$ and $N$, respectively.
A \emph{Loday algebroid comorphism} from $E$ to $F$ is a vector bundle map $\hat{\Phi} : F^\ast \to E^\ast$ over $\phi : N \to M$ such that:
\begin{enumerate}
\item
The induced map $\Phi : \Gamma(E) \to \Gamma(F)$ is a Loday algebra homomorphism: For any $e_1,e_2 \in \Gamma(E)$,
\begin{equation}\label{def - Loday algebra homomorphism}
\Phi(\lcf e_1, e_2 \rcf_{_E}) = \lcf \Phi(e_1), \Phi(e_2) \rcf_{_F}.
\end{equation}
\item
For any $e\in \Gamma(E)$, the vector fields $\rho_{_F}(\Phi(e))$ and $\rho_{_E}(e)$ are $\phi$-related: For any $f\in \C$,
\begin{equation}\label{comorphism - left anchor}
\rho_{_F}(\Phi(e)) (f\circ \phi) = \rho_{_E}(e)(f)\circ \phi.
\end{equation}
\end{enumerate}
\end{definition}

\vspace{1mm}
\noindent
An immediate consequence of \eqref{def - Loday algebra homomorphism} and \eqref{comorphism - left anchor} concerning the maps $\alpha_{_E}$ and $\alpha_{_F}$ is the following.
Taking into account that $(\Phi, \phi^\ast)$ is an homomorphism from the $\C$-bimodule $\Gamma(E)$ to the $\CN$-bimodule $\Gamma(F)$, namely, for any $e\in \Gamma(E)$ and $f\in \C$,
\begin{equation}\label{homomorphism Phi}
\Phi(fe) = \phi^\ast f\Phi(e),
\end{equation}
and applying the identity \eqref{def - Loday algebra homomorphism} to the pair $(fe_1,e_2)$, $f\in \C$ and $e_1, e_2 \in \Gamma(E)$,
we get that the module maps $\alpha_{_E}^T : \Gamma(T^\ast M) \otimes \Gamma(E) \otimes \Gamma(E) \to \Gamma(E)$ and
$\alpha_{_F}^T : \Gamma(T^\ast N) \otimes \Gamma(F) \otimes \Gamma(F) \to \Gamma(F)$ induced,
respectively, by the corresponding vector bundle maps $\alpha^T_{_E}$ and $\alpha^T_{_F}$ (see \eqref{alpha - alpha t}), are related as follows:
\begin{equation*}
\alpha_{_F}^T (\phi^\ast df \otimes \Phi(e_1)\otimes \Phi(e_2)) = \Phi(\alpha^T_{_E}(df \otimes e_1 \otimes e_2)).
\end{equation*}
The last equation means that the following diagram is commutative:
\begin{equation*}
\begin{CD}
\Gamma(T^\ast M) \otimes \Gamma(E) \otimes \Gamma(E) @>\phi^\ast \otimes \Phi \otimes \Phi >> \Gamma(T^\ast N)\otimes \Gamma(F) \otimes \Gamma(F)\\
@V \alpha_{_E}^T VV @VV \alpha_{_F}^T V\\
\Gamma(E) @>>\Phi> \Gamma(F).
\end{CD}
\end{equation*}

\vspace{1mm}
\noindent
In the particular case where $(E, \lcf \cdot, \cdot \rcf_{_E}, \rho_{_E}, \alpha_{_E})$ and $(F, \lcf \cdot, \cdot \rcf_{_F}, \rho_{_F}, \alpha_{_F})$
are Loday algebroids over the same manifold $M$ or $\phi : N \to M$ is a diffeomorphism,
then a comorphism $\hat{\Phi} : F^\ast \to E^\ast$ over the identity or over $\phi$ induces a map between the spaces
of smooth sections $\Gamma(F^\ast) \to \Gamma(E^\ast)$, also denoted by $\hat{\Phi}$, and the induced map $\Phi : \Gamma(E) \to \Gamma(F)$ is the transpose of $\hat{\Phi}$.

\vspace{1mm}
\noindent
Regarding the composition of Loday algebroid comorphisms, we can easily establish the following proposition.

\begin{proposition}\label{composition Loday comorphisms}
Let $(E_i, \lcf \cdot, \cdot \rcf_{_{E_i}}, \rho_{_{E_i}}, \alpha_{_{E_i}})$ be a Loday algebroid over a smooth manifold $M_i$ for any $i=1,2,3$. If $(\hat{\Phi}, \phi)$, $\hat{\Phi} : E_2^\ast \to E_1^\ast$, is a comorphism of Loday algebroids over $\phi: M_2 \to M_1$, and $(\hat{\Psi}, \psi)$, $\hat{\Psi} : E_3^\ast \to E_2^\ast$, is a comorphism of Loday algebroids over $\psi: M_3 \to M_2$, then $(\hat{\Phi}\circ \hat{\Psi}, \phi\circ \psi)$, $\hat{\Phi}\circ \hat{\Psi} : E_3^\ast \to E_1^\ast$, is also a comorphism of Loday algebroids over $\phi\circ \psi$.
\end{proposition}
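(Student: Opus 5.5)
The plan is to check the two defining conditions of Definition \ref{def-Loday-comorphism} for the composite, after first identifying which map on sections it induces.

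\textbf{Step 1: the induced map of the composite.} I will show that $(\hat{\Phi}\circ\hat{\Psi},\phi\circ\psi)$ is a vector bundle map $E_3^\ast\to E_1^\ast$ covering $\phi\circ\psi$, and that its induced map $\Gamma(E_1)\to\Gamma(E_3)$ equals $\Psi\circ\Phi$. Let $e\in\Gamma(E_1)$ and $z\in M_3$, and put $y=\psi(z)$. The fiberwise transpose of a composite is the composite of transposes in reverse order:
\[
(\hat{\Phi}\circ\hat{\Psi})^T_z=\hat{\Psi}^T_z\circ\hat{\Phi}^T_{y}.
\]
Therefore
\[
\big((\Psi\circ\Phi)(e)\big)_z=\hat{\Psi}^T_z\,\Phi(e)_{y}=\hat{\Psi}^T_z\hat{\Phi}^T_{y}\,e_{\phi(y)},
\]
which is exactly the value at $z$ of the section induced by $\hat{\Phi}\circ\hat{\Psi}$. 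For completeness I will also note that $\Psi\circ\Phi$ is a module morphism over the ring homomorphism $(\phi\circ\psi)^\ast=\psi^\ast\circ\phi^\ast$. This follows by applying \eqref{homomorphism Phi} twice:
\[
\Psi(\Phi(fe))=\Psi(\phi^\ast f\,\Phi(e))=\psi^\ast\phi^\ast f\,\Psi(\Phi(e)).
\]

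\textbf{Step 2: the bracket condition.} I will verify \eqref{def - Loday algebra homomorphism} for the composite. Applying \eqref{def - Loday algebra homomorphism} for $\Phi$ and then for $\Psi$ gives
\[
\Psi\Phi(\lcf e_1,e_2\rcf_{E_1})=\Psi(\lcf\Phi e_1,\Phi e_2\rcf_{E_2})=\lcf\Psi\Phi e_1,\Psi\Phi e_2\rcf_{E_3}.
\]

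\textbf{Step 3: the anchor condition.} I will verify \eqref{comorphism - left anchor} for the composite. Let $f\in C^\infty(M_1,\R)$. Apply \eqref{comorphism - left anchor} for $\Psi$ to the section $\Phi(e)$ and the function $f\circ\phi\in C^\infty(M_2,\R)$, and then apply it for $\Phi$:
\[
\rho_{E_3}(\Psi\Phi e)(f\circ\phi\circ\psi)=\rho_{E_2}(\Phi e)(f\circ\phi)\circ\psi=\big(\rho_{E_1}(e)(f)\circ\phi\big)\circ\psi.
\]
The right-hand side is $\rho_{E_1}(e)(f)\circ(\phi\circ\psi)$, as required.

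The only point that needs care is Step 1, namely identifying the induced map with $\Psi\circ\Phi$, which rests on the order reversal of pointwise transposes. Steps 2 and 3 are then immediate substitutions.
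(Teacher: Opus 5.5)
Your proposal is correct and matches the paper's intended approach: the paper gives no written proof and only says the statement is easily established. Your argument is the natural direct verification — identify the map induced by $\hat{\Phi}\circ\hat{\Psi}$ with $\Psi\circ\Phi$, then check the two axioms of Definition~\ref{def-Loday-comorphism} by substitution.
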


\begin{proposition}
Let $(E, \lcf \cdot, \cdot \rcf_{_E}, \rho_{_E}, \alpha_{_E})$ and $(F, \lcf \cdot, \cdot \rcf_{_F}, \rho_{_F}, \alpha_{_F})$
be two Loday algebroids over $M$ and $N$, respectively, $(\bar{E}, [\cdot, \cdot]_{\bar{E}}, \bar{\rho}_{_E})$ and $(\bar{F}, [\cdot, \cdot]_{\bar{F}}, \bar{\rho}_{_F})$
the corresponding Lie algebroids (under the appropriate assumptions, see paragraph \ref{Loday to Lie}),
and $(\hat{\Phi},\phi)$ a Loday algebroid comorphism. Then, $\Phi : \Gamma(E) \to \Gamma(F)$ induces a Lie
algebroid (co)morphism $\bar{\Phi} : \Gamma(\bar{E}) \to \Gamma(\bar{F})$ over $\phi$, in the sense of \cite{rui}.
\end{proposition}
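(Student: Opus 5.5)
The natural plan is to show that $\Phi$ descends to the quotients. Concretely, I will prove that $\Phi(\bar{\mathcal{E}_0})\subset \bar{\mathcal{F}_0}$, where $\mathcal{E}=\Gamma(E)$, $\mathcal{F}=\Gamma(F)$, and $\bar{\mathcal{E}_0}=\C\cdot\mathcal{E}_0$, $\bar{\mathcal{F}_0}=\CN\cdot\mathcal{F}_0$ are the submodules of Example \ref{Loday to Lie}.

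\emph{Step 1 (descent).} On the generators $\lcf e,e\rcf_{_E}$ of $\mathcal{E}_0$, the homomorphism property \eqref{def - Loday algebra homomorphism} gives
\[
\Phi(\lcf e,e\rcf_{_E})=\lcf \Phi(e),\Phi(e)\rcf_{_F}\in \mathcal{F}_0 .
\]
Hence $\Phi(\mathcal{E}_0)\subset\mathcal{F}_0$. A general element of $\bar{\mathcal{E}_0}$ is a finite sum $\sum f_i x_i$ with $f_i\in\C$ and $x_i\in\mathcal{E}_0$. By \eqref{homomorphism Phi},
\[
\Phi\Big(\sum f_i x_i\Big)=\sum \phi^\ast f_i\,\Phi(x_i)\in \CN\cdot\mathcal{F}_0=\bar{\mathcal{F}_0}.
\]
Therefore $\bar\Phi(\bar e):=\overline{\Phi(e)}$ is a well-defined $\R$-linear map $\Gamma(\bar E)\to\Gamma(\bar F)$. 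It is a module morphism over the ring homomorphism $\phi^\ast$, since $\bar\Phi(f\bar e)=\phi^\ast f\,\bar\Phi(\bar e)$ follows directly from \eqref{homomorphism Phi}.

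\emph{Step 2 (comorphism structure).} By the correspondence recalled before Definition \ref{def-Loday-comorphism}, any module morphism $\Gamma(\bar E)\to\Gamma(\bar F)$ over $\phi^\ast$ comes from a unique vector bundle map $\hat{\bar\Phi}:\bar F^\ast\to\bar E^\ast$ over $\phi$. Geometrically, $\bar E^\ast=E_0^{\circ}\subset E^\ast$ and $\bar F^\ast=F_0^\circ$, and $\hat{\bar\Phi}$ is the restriction of $\hat\Phi$. Step 1 is exactly the statement that $\hat\Phi(F_0^\circ)\subset E_0^\circ$. To check this pointwise I would use that $\bar{\mathcal{E}_0}$ is the module of sections of the subbundle $E_0$, so that $E_0$ is spanned fiberwise by values of elements of $\bar{\mathcal{E}_0}$. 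This is where the standing assumption that $\bar{\mathcal{E}_0},\bar{\mathcal{F}_0}$ come from subbundles is used.

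\emph{Step 3 (the axioms of \cite{rui}).} The bracket condition is immediate:
\[
\bar\Phi([\bar e_1,\bar e_2]_{\bar E})=\overline{\Phi\lcf e_1,e_2\rcf_{_E}}=\overline{\lcf\Phi e_1,\Phi e_2\rcf_{_F}}=[\bar\Phi\bar e_1,\bar\Phi\bar e_2]_{\bar F}.
\]
For the anchors, $\bar\rho_{_F}(\bar\Phi\bar e)=\rho_{_F}(\Phi e)$ and $\bar\rho_{_E}(\bar e)=\rho_{_E}(e)$. These are well defined because $\rho$ vanishes on $\bar{\mathcal{E}_0}$ (Example \ref{Loday to Lie}). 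Their $\phi$-relatedness is then exactly \eqref{comorphism - left anchor}. These are precisely the conditions for a Lie algebroid comorphism in \cite{rui}.

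The only step requiring care is Step 2, namely the passage from the module-level inclusion to a genuine bundle map on the duals. I expect this to be settled by the fiberwise spanning argument together with the module–bundle correspondence already quoted in the paper.
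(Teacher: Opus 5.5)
Your proposal is correct and follows essentially the same route as the paper: you show $\Phi(\bar{\mathcal{E}}_0)\subset\bar{\mathcal{F}}_0$ via \eqref{def - Loday algebra homomorphism} and \eqref{homomorphism Phi}, pass to $\bar\Phi(\bar e)=\overline{\Phi(e)}$, and read off the bracket and anchor conditions from those of $\Phi$. Your Step 2, which identifies $\bar\Phi$ with the restriction of $\hat\Phi$ to $F_0^\circ\to E_0^\circ$ using that $E_0$ is fiberwise spanned by values of sections in $\bar{\mathcal{E}}_0$, is a correct extra detail that the paper leaves implicit in the module--bundle correspondence.
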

\begin{proof}
Because $\Phi(\bar{\mathcal{E}}_0) = \Phi(\C \cdot \mathcal{E}_0 ) = \phi^\ast (\C)\cdot \Phi(\mathcal{E}_0) \subset \bar{\mathcal{F}}_0$, $\Phi$ induces a morphism $\bar{\Phi} : \Gamma(\bar{E}) \to \Gamma(\bar{F})$, $\bar{\Phi}(\bar{e}): = \overline{\Phi(e)}$, which is a homomorphism of Lie algebras:
\[\bar{\Phi}([\bar{e}_1, \bar{e}_2]_{\bar{E}}) = [\bar{\Phi}(\bar{e}_1), \bar{\Phi}(\bar{e}_2)]_{\bar{F}},\quad \quad \bar{e}_1, \bar{e}_2\in \Gamma(\bar{E}),\]
and
\[d\phi \circ \bar{\rho}_{_F} \circ \bar{\Phi} = \bar{\rho}_{_E}.\]
\end{proof}

\vspace{2mm}
\noindent
In what follows, we work within the framework of Theorem \ref{theorem-action lod alg - new}.
The pullback map $\phi^* : \Gamma(E)\to \Gamma(\phi^*E)$ (over the ring homomorphism $\phi^*:\C\to \CN$) corresponds to the vector bundle morphism
$\mathrm{pr} : \phi^*E^*\to E^*$ over $\phi : N\to M$:
\begin{equation*}
\begin{tikzcd}
\phi^\ast E^\ast \arrow{r}{\mathrm{pr}}\arrow[swap]{d} \arrow{d}& E^\ast \arrow{d}\\
N \arrow{r}[swap]{\phi}& M,
\end{tikzcd}
\end{equation*}
where $\mathrm{pr}: \phi^\ast E^\ast \to E^\ast $ is the canonical projection of $\phi^\ast E^\ast \subseteq N\times E^\ast$ on the second factor.
This vector bundle map is a Loday algebroid comorphism.

\vspace{2mm}
\noindent
We continue to work in the above framework and consider:
\begin{enumerate}
\item
a Loday algebroid $(F, \lcf \cdot, \cdot \rcf_{_F}, \rho_{_F}, \alpha_{_F})$ over $N$ and the dual bundle $F^\ast$ of $F$;
\item
a vector bundle map $\hat{\Phi} : F^\ast \to E^\ast$ over $\phi : N \to M$ and the corresponding map between the modules of smooth sections $\Phi : \Gamma(E) \to \Gamma(F)$.
\end{enumerate}
By the universal property for vector bundle morphisms over $\phi : N \to M$ into $E^\ast$ \cite{mck},
there exists a unique vector bundle morphism $\hat{\Psi}: F^\ast \to \phi^\ast E^\ast$ over the identity $\mathrm{id}_{_N}$ such that $\hat{\Phi} =\mathrm{pr} \circ \hat{\Psi}$. We denote by $\Psi : \Gamma(\phi^\ast E) \to \Gamma(F)$ the corresponding map on the spaces of smooth sections.

\begin{proposition}\label{prop-Phi-Psi}
Under the above assumptions, the vector bundle map $(\hat{\Phi}, \phi)$ is a comorphism of Loday algebroids if the vector bundle map $(\hat{\Psi}, \mathrm{id}_{_N})$
is a comorphism of Loday algebroids over the identity map of $N$.
\begin{equation*}
\begin{tikzcd}
F^\ast  \arrow[ddr,bend right] \arrow[drr, bend left,"\hat{\Phi}"] \arrow[dr,dashed, "\hat{\Psi}"] \\
& \phi^\ast E^\ast \arrow{d}[swap]{} \arrow[r, "\mathrm{pr}"] & E^\ast \arrow{d}{}  \\
& N \arrow[swap]{r}{\phi} & M
\end{tikzcd}
\quad \quad
\begin{tikzcd}
\Gamma(F) \arrow[bend right,swap]{ddr}{\rho_{_F}} \\
& \Gamma(\phi^\ast E) \arrow{d}[swap]{\varrho_{_N}} \arrow[ul, swap, dashed,"\Psi"] & \Gamma(E) \arrow[dl,swap, "\varrho"]\arrow{d}{\rho_{_E}} \arrow{l}[swap]{\phi^\ast} \arrow[bend right,swap]{ull}{\Phi}  \\
& \Gamma(TN) \arrow[swap]{r}{d\phi} & \Gamma(TM).
\end{tikzcd}
\end{equation*}
\end{proposition}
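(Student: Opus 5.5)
The natural route is to write $\hat{\Phi}=\mathrm{pr}\circ\hat{\Psi}$ and read it at the level of sections as $\Phi=\Psi\circ\phi^\ast$. Then $\hat{\Phi}$ is a composition of two comorphisms: $\mathrm{pr}$, which the paper has already observed to be a Loday algebroid comorphism from $E$ to $\phi^\ast E$ over $\phi$ (with $\phi^\ast E$ carrying the action Loday algebroid structure of Theorem \ref{theorem-action lod alg - new}), and $\hat{\Psi}$, a comorphism from $\phi^\ast E$ to $F$ over $\mathrm{id}_{_N}$ by hypothesis. Proposition \ref{composition Loday comorphisms} then gives directly that $(\hat{\Phi},\phi)=(\mathrm{pr}\circ\hat{\Psi},\phi\circ\mathrm{id}_{_N})$ is a comorphism. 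To make this self-contained, I would also verify the two conditions of Definition \ref{def-Loday-comorphism} by hand, as follows.

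\textbf{Identity $\Phi=\Psi\circ\phi^\ast$.} First I check this at the level of fibres. For $y\in N$ and $e\in\Gamma(E)$,
\[
\Phi(e)_y=\hat{\Phi}^T_y e_{\phi(y)}=\hat{\Psi}_y^T\,\mathrm{pr}_y^T e_{\phi(y)}=\hat{\Psi}^T_y(\phi^\ast e)_y=\Psi(\phi^\ast e)_y .
\]
This uses $\mathrm{pr}_y\colon(\phi^\ast E^\ast)_y=E^\ast_{\phi(y)}\to E^\ast_{\phi(y)}$ being the identity.

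\textbf{Bracket condition \eqref{def - Loday algebra homomorphism}.} Using \eqref{phi*-br} and the fact that $\Psi$ is a Loday algebra homomorphism from $(\Gamma(\phi^\ast E),\lbr\cdot,\cdot\rbr)$ to $(\Gamma(F),\lcf\cdot,\cdot\rcf_{_F})$, I get
\[
\Phi(\lcf e_1,e_2\rcf_{_E})=\Psi(\phi^\ast\lcf e_1,e_2\rcf_{_E})=\Psi(\lbr\phi^\ast e_1,\phi^\ast e_2\rbr)=\lcf\Psi(\phi^\ast e_1),\Psi(\phi^\ast e_2)\rcf_{_F}=\lcf\Phi(e_1),\Phi(e_2)\rcf_{_F}.
\]

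\textbf{Anchor condition \eqref{comorphism - left anchor}.} Since $\hat{\Psi}$ covers the identity, condition \eqref{comorphism - left anchor} for $\hat{\Psi}$ reads $\rho_{_F}\circ\Psi=\varrho_{_N}$ as vector fields on $N$. Therefore
\[
\rho_{_F}(\Phi(e))=\varrho_{_N}(\phi^\ast e)=\varrho(e).
\]
Applying this to $f\circ\phi$ and using \eqref{def-act lod alg - 2 - NEW} (equivalently the computation $\varrho_{_N}(\phi^\ast e)(\phi^\ast h)=\phi^\ast(\rho(e)(h))$ in the proof of Theorem \ref{theorem-action lod alg - new}) gives $\rho_{_F}(\Phi(e))(f\circ\phi)=\rho_{_E}(e)(f)\circ\phi$.

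The main obstacle is less a computation than getting the framework right. The universal factorization must be compatible with the identification $\Gamma(\phi^\ast E)\cong\CN\otimes\Gamma(E)$. In particular, $\Psi$ is determined by $\Phi$ through $\Psi(f\phi^\ast e)=f\,\Phi(e)$, and this formula has to be $\C$-balanced, which holds by \eqref{homomorphism Phi}. Once that is settled, both conditions reduce to the two short computations above.
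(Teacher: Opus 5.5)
Your proposal is correct and follows the paper's own argument. The paper also writes $\hat{\Phi}=\mathrm{pr}\circ\hat{\Psi}$, uses that $(\mathrm{pr},\phi)$ is a Loday algebroid comorphism, and concludes by Proposition~\ref{composition Loday comorphisms}; your extra hand check of the bracket and anchor conditions, via $\Phi=\Psi\circ\phi^\ast$, the relation $\lbr\phi^\ast e_1,\phi^\ast e_2\rbr=\phi^\ast\lcf e_1,e_2\rcf$ and \eqref{def-act lod alg - 2 - NEW}, is sound and simply unpacks that composition in this special case.
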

\begin{proof}
We assume that $(\hat{\Psi}, \mathrm{id}_{_N})$ is a Loday algebroid comorphism and we see
the pair $(\hat{\Phi}, \phi)$ as the composition of $(\mathrm{pr}, \phi)$ with $(\hat{\Psi}, \mathrm{id}_{_N})$,
$\hat{\Phi} = \mathrm{pr}\circ \hat{\Psi}$. Hence, according to Proposition \ref{composition Loday comorphisms}, it is a Loday algebroid comorphism.
\end{proof}

\subsection{Loday algebroid morphisms in the sense of Higgins-Mackenzie}
In this paragraph, we introduce the concept of \emph{Loday algebroid morphisms} covering maps between manifolds by adapting the notion of Lie algebroid morphisms from \cite{hig-mck,mck}, which is based on the following observations. Given a vector bundle $\pi_E : E \to M$ and a smooth map $\psi : M' \to M$,
we consider the pullback vector bundle $\psi^\ast E$ of $E$ over $M'$ and the canonical projection $\mathrm{pr} : \psi^\ast E \to E$
of $\psi^\ast E \subset M' \times E$ to the second factor. It is well known that $\Gamma(\psi^\ast E) \cong C^\infty(M',\R)\otimes\Gamma(E)$ and that
the pair $(\psi^\ast E, \mathrm{pr})$
has the universal property for vector bundle morphisms over $\psi$ into $E$,
in the sense that, if $\Psi: E' \to E$ is any vector bundle morphism over $\psi$, then there is a unique vector
bundle map $\Psi^\ast : E' \to \psi^\ast E$ over the $\mathrm{id}_{M'}$ such that $\Psi = \mathrm{pr}\circ \Psi^\ast$.
Thus, given $e'\in \Gamma(E')$, for suitable $h^i\in C^\infty(M',\R)$ and $e_i\in \Gamma(E)$, we can write
\begin{equation}\label{Psi-decomposition}
\Psi^\ast(e') = \sum_{i} h^i\otimes e_i.
\end{equation}
The equation \eqref{Psi-decomposition} is referred as the \emph{$\Psi$-decomposition of $e'$}.
While, if $\Psi^\ast(e') = 1 \otimes e$, for $e\in \Gamma(E)$, we say that \emph{$e'$ is $\Psi$-related to $e$}.

\vspace{2mm}
\noindent
Let $(E, \lcf \cdot, \cdot \rcf, \rho, \alpha)$ and $(E', \lcf \cdot, \cdot \rcf', \rho', \alpha')$ be Loday
algebroids on base manifolds $M$ and $M'$, respectively. We construct, as in the paragraph \emph{The Lie algebroid of a Loday algebroid} \ref{Loday to Lie},
the submodules $\bar{\mathcal{E}}_0$ of $\Gamma(E)$ and $\bar{\mathcal{E}}_0'$ of $\Gamma(E')$, and we view
$(E, \lcf \cdot, \cdot \rcf, \rho, \alpha)$ as a $\bar{\mathcal{E}}_0$-algebroid and
$(E', \lcf \cdot, \cdot \rcf', \rho', \alpha')$ as a $\bar{\mathcal{E}}_0'$-algebroid. Then, in this framework, we define as in \cite{popescu}:

\begin{definition}\label{def-Loday morphism Mac-Higg}
A \emph{morphism of Loday algebroids} is a vector bundle map $(\Psi, \psi)$,
\begin{equation*}
\begin{tikzcd}
E' \arrow{r}{\Psi}\arrow[swap]{d} \arrow{d}[swap]{\pi_{_{E'}}} & E \arrow{d}{\pi_{_E}}\\
M' \arrow{r}[swap]{\psi}& M,
\end{tikzcd}
\end{equation*}
such that
\begin{enumerate}
\item
$d\psi \circ \rho' = \rho \circ \Psi$,
\item
for any $e_1', e_2'\in \Gamma(E')$ with $\Psi$-decompositions
\begin{equation*}
\Psi^\ast(e_1') = \sum_{i} h_1^i\otimes e_i, \quad \quad \Psi^\ast(e_2') = \sum_{j} h_2^j\otimes e_j,
\end{equation*}
\begin{equation*}\label{mor-Loday Psi Mac-Higg}
\Psi^\ast(\lcf e_1', e_2' \rcf') - \sum_{i,j}h_1^ih_2^j\otimes \lcf e_i, e_j \rcf -
\sum_{j}\rho'(e_1')(h_2^j)\otimes e_j + \sum_{i}\rho'(e_2')(h_1^i)\otimes e_i
\in C^\infty(M', \R)\otimes\bar{\mathcal{E}}_0,
\end{equation*}
requiring,
\item
for any $f\in \C$ and $e_1', e_2' \in \Gamma(E')$ which are $\Psi$-related to $e_1, e_2 \in \Gamma(E)$, respectively,
\begin{equation*}\label{requir - Psi - mor HM}
\Psi^\ast(\alpha'(d(\psi^\ast f) \otimes e_1')(e_2')) = \psi^\ast (\alpha(df \otimes e_1)(e_2)).
\end{equation*}
\end{enumerate}
\end{definition}

\begin{remarks}
\end{remarks}
\begin{enumerate}
\item
If the submodules $\bar{\mathcal{E}}_0$ and $\bar{\mathcal{E}}_0'$ are
the modules of sections of vector subbundles $E_0$ and $E_0'$ of $E$ and $E'$, respectively, and if $\Psi(E_0') \subset E_0$, we have that $\Psi$
induces a vector bundle map $\bar{\Psi} : \bar{E}' \to \bar{E}$, where $\bar{E}' = E'/E_0'$ and $\bar{E}=E/E_0$,
which is a Lie algebroid morphism, from $(\bar{E}', [\cdot, \cdot]', \bar{\rho}')$ to $(\bar{E}, [\cdot, \cdot], \bar{\rho})$,
in the sense of Higgins-Mackenzie \cite{hig-mck}.
\item
If $\Psi: E'\to E$ is a fiberwise bijection over $\psi : M'\to M$, it has the universal property, hence $E'$
is identified with $\psi^\ast E$ and the vector bundle map $\Psi$ is identified with $\mathrm{pr}: \psi^\ast E \to E$,
which, in the context of Theorem \ref{theorem-action lod alg - new}, is a Loday algebroid comorphism.
\end{enumerate}

\subsection{Loday algebroid cohomology}\label{subsection Loday cohomology}
Let $(E, \lcf \cdot, \cdot \rcf, \rho, \alpha)$ be a Loday algebroid over a smooth manifold $M$ and $\mathcal{D}^p(\mathcal{E})$, $p\in \N$, the space of $p$-differential operators on the $\C$-bimodule $\mathcal{E} = \Gamma(E)$ with values in $\C$:
\begin{equation*}
D : \underbrace{\mathcal{E} \times \cdots \times \mathcal{E}}_{p - times} \to \C.
\end{equation*}
By convention, $\mathcal{D}^0(\mathcal{E})=\C$. We endow the space $\mathcal{D}(\mathcal{E}): = (\mathcal{D}^p(\mathcal{E}))_{p\in \N}$
with the \emph{suffle product} $\diamond$ defined in \cite{eil-ml} as follows.
For any $D\in \mathcal{D}^p(\mathcal{E})$ and $D'\in \mathcal{D}^q(\mathcal{E})$, we define the $(p+q)$-differential operator
$D \diamond D'$ by setting, for any $e_1,\ldots , e_{p+q} \in \mathcal{E}$,
\begin{equation*}\label{diamond prod}
(D \diamond D')(e_1, \ldots, e_{p+q}) = \sum_{\tau \in Sh(p,\,q)} (-1)^{|\tau|}D(e_{ \tau(1)},\ldots, e_{ \tau(p)})D'(e_{ \tau(p+1)},\ldots, e_{ \tau(p+q)}),
\end{equation*}
where $Sh(p,q)$ is the set of $(p,q)$-shuffle permutations of $(1,\ldots, p+q)$, i.e., of permutations $\tau$
such that $\tau(1)< \ldots < \tau(p)$ and $ \tau(p+1)<\ldots < \tau(p+q)$, and $(-1)^{|\tau|}$ is the signature of $\tau$.

\begin{proposition}\cite{grab-k-pon}
The pair $(\mathcal{D}(\mathcal{E}), \diamond)$ is a graded commutative associative unital $\R$-algebra, referred as \emph{shuffle algebra of $E$.}
\end{proposition}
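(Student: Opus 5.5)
The plan is to check, in turn, well-definedness, bilinearity and unitality, graded commutativity, and associativity of $\diamond$.

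\emph{Well-definedness.} For $D\in\mathcal{D}^p(\mathcal{E})$ and $D'\in\mathcal{D}^q(\mathcal{E})$, each summand
\[
(e_1,\ldots,e_{p+q})\mapsto D(e_{\tau(1)},\ldots,e_{\tau(p)})\,D'(e_{\tau(p+1)},\ldots,e_{\tau(p+q)})
\]
is a product of two multidifferential operators depending on disjoint sets of arguments. It is therefore a $(p+q)$-differential operator: in each argument $e_i$ it is a differential operator (of the order $D$ or $D'$ has in the corresponding slot), with the other factor acting as a coefficient. This uses the facts on multidifferential operators recalled in Appendix \ref{app-multidiff}. A finite sum of such operators stays in $\mathcal{D}^{p+q}(\mathcal{E})$. $\R$-bilinearity of $\diamond$ is immediate. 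The constant function $1\in\mathcal{D}^0(\mathcal{E})=\C$ is a unit, because $Sh(0,q)$ and $Sh(p,0)$ consist only of the identity.

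\emph{Graded commutativity.} The idea is to pair shuffles. To each $\tau\in Sh(p,q)$ associate $\tau'=\tau\circ c\in Sh(q,p)$, where $c$ is the block permutation sending the last $q$ slots to the first $q$. The assignment $\tau\mapsto\tau'$ is a bijection $Sh(p,q)\to Sh(q,p)$, and $\mathrm{sgn}(c)=(-1)^{pq}$. Since $\C$ is commutative, the products $D(\cdots)D'(\cdots)$ and $D'(\cdots)D(\cdots)$ agree, and summing gives
\[
D\diamond D'=(-1)^{pq}D'\diamond D.
\]

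\emph{Associativity.} This is the main bookkeeping step. Take $D\in\mathcal{D}^p(\mathcal{E})$, $D'\in\mathcal{D}^q(\mathcal{E})$ and $D''\in\mathcal{D}^r(\mathcal{E})$. Expanding $(D\diamond D')\diamond D''$ gives a double sum over $\sigma\in Sh(p+q,r)$ and $\tau\in Sh(p,q)$, where $\tau$ acts on the first block. I will use the standard bijection $(\sigma,\tau)\mapsto\sigma\circ(\tau\times\mathrm{id}_r)$ onto the set $Sh(p,q,r)$ of $(p,q,r)$-shuffles, i.e. permutations increasing on each of the three consecutive blocks. Signs are multiplicative under this bijection. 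Likewise, $(\sigma,\tau)\mapsto\sigma\circ(\mathrm{id}_p\times\tau)$ for $\sigma\in Sh(p,q+r)$ and $\tau\in Sh(q,r)$ is a bijection onto $Sh(p,q,r)$. Both triple products therefore equal
\[
\sum_{\kappa\in Sh(p,q,r)}(-1)^{|\kappa|}D(e_{\kappa(1)},\ldots)\,D'(\ldots)\,D''(\ldots,e_{\kappa(p+q+r)}).
\]
The only real check is that these maps are bijections. Injectivity follows from recovering the pair $(\sigma,\tau)$ from the image sets of the blocks. Surjectivity follows by sorting: a $(p,q,r)$-shuffle $\kappa$ determines $\sigma$ by the sorted union of its first two blocks. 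Cardinalities then agree, since both sides count multinomial coefficients.
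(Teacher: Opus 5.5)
Your proof is correct, and the paper proves nothing here itself: it simply cites \cite{grab-k-pon}, where $\diamond$ is the classical shuffle product of \cite{eil-ml}. Your argument (slotwise differentiality of each summand, the block permutation of sign $(-1)^{pq}$ for graded commutativity, and the two bijections onto $Sh(p,q,r)$ for associativity) is the standard verification behind that citation.
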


\vspace{2mm}
\noindent
In the above framework, the data $(\lcf \cdot, \cdot \rcf, \rho)$ defines the map
\[\partial_{E} : \mathcal{D}^{\bullet}(\mathcal{E}) \to \mathcal{D}^{\bullet +1}(\mathcal{E})\]
given, for any $e_1, \ldots, e_{p+1}\in \Gamma(E)$ and $D\in \mathcal{D}^p(E)$, by
\begin{eqnarray} \label{def:Loday:cohomology:operator}
\partial_E D (e_1, \ldots, e_{p+1}) & = & \sum_{i=1}^{p+1}(-1)^{i+1}\rho (e_i)D(e_1, \ldots, \hat{e}_i, \ldots, e_{p+1}) \nonumber  \\
& & + \, \sum_{i<j}(-1)^i D(e_1, \ldots, \hat{e}_i, \ldots, \hat{e}_j, \lcf e_i, e_j\rcf, e_{j+1}, \ldots, e_{p+1}),
\end{eqnarray}
where $\hat{\cdot}$ indicates that the corresponding term is omitted. In particular, for any $f\in \mathcal{D}^0(\mathcal{E})=\C$,
\begin{equation*}\label{der_E f}
\partial_{E}f(e)=\rho(e)(f),
\end{equation*}
whence we conclude that
\begin{equation}\label{der_E f - transpose}
\partial_{E} = \rho^T \circ d : \C \to \Gamma(E^\ast) \subset \mathcal{D}^1(\mathcal{E}).
\end{equation}

\begin{remark}\label{comparaison courant algebra}
\emph{In \cite{grab-k-pon} was also introduced the notion of \emph{reduced shuffle algebra} $\mathrm{D}^\bullet(E)$ of a Loday algebroid $E$ which concerns the multidifferential operators that are
linear with respect to the last variable. In our context, $\partial_E$ leaves invariant the $\mathrm{D}(E)$. In the case where $E$ is a Courant algebroid, the reduced shuffle algebra
coincide with the Keller-Waldmann algebra of $E$ \cite{keller-waldmann}.}
\end{remark}

\begin{proposition}\cite{grab-k-pon}
The operator $\partial_E$ is a degree $1$ graded derivation of the shuffle algebra $(\mathcal{D}(\mathcal{E}), \diamond)$ of $E$ and squares to zero. I.e., for any $D\in \mathcal{D}^p(\mathcal{E})$ and $D'\in \mathcal{D}^q(\mathcal{E})$,
\begin{equation}\label{d_E cobound}
\partial_E(D\diamond D') = \partial_E D\diamond D' + (-1)^p D \diamond \partial_ED' \quad \quad \mathrm{and} \quad \quad \partial_E^2 = 0.
\end{equation}
So, $\partial_E$ is a coboundary operator.
\end{proposition}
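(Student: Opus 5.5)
The first task is to check that $\partial_E$ actually maps $\mathcal{D}^p(\mathcal{E})$ into $\mathcal{D}^{p+1}(\mathcal{E})$. Each term in \eqref{def:Loday:cohomology:operator} is a composition of multidifferential operators. $D$ is one, the bracket $\lcf\cdot,\cdot\rcf$ is a bidifferential operator of order at most $1$ in each argument (Remark \ref{remark-locality}), and $\rho(e_i)$ acts as a first order operator whose coefficients depend $\C$-linearly on $e_i$. By the composition rules for multidifferential operators in Appendix \ref{app-multidiff}, the right-hand side is therefore a $(p+1)$-differential operator. This step uses only the two Leibniz rules of Theorem \ref{theorem - def - Loday}.

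For $\partial_E^2=0$, the plan is to avoid any $\C$-linearity in the arguments, since the cochains are only differential operators. I would treat $\partial_E$ as the Loday (Leibniz) cohomology differential of the Loday algebra $(\Gamma(E),\lcf\cdot,\cdot\rcf)$ with coefficients in the module $\C$, where the action is $e\cdot f=\rho(e)(f)$. This action is a representation by \eqref{rho-homom}, that is, $\rho(\lcf e_1,e_2\rcf)=[\rho(e_1),\rho(e_2)]$. Formula \eqref{def:Loday:cohomology:operator} is exactly the Loday–Pirashvili coboundary for left Leibniz algebras with a symmetric representation. The identity $\partial^2=0$ for that coboundary is purely algebraic and holds for arbitrary $\R$-multilinear maps, so it holds in particular on the subspace $\mathcal{D}(\mathcal{E})$. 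To verify it directly, I would expand $\partial_E^2D(e_1,\dots,e_{p+2})$ and collect three groups of terms:
\begin{itemize}
\item the $\rho\rho$ terms, which cancel against the $\rho(\lcf e_i,e_j\rcf)$ terms by \eqref{rho-homom};
\item the mixed $\rho$–bracket terms, which cancel pairwise by sign;
\item the double bracket terms, which reduce to the Jacobi identity \eqref{Jacobi-identity} applied to $(e_i,e_j,e_k)$ and placed in position $k$.
\end{itemize}
I expect this sign bookkeeping to be the main obstacle. I would first check $p=0$ (this is \eqref{rho-homom}) and $p=1$, and then run a general index argument organised by the positions $i<j<k$.

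For the derivation property, I would prove it on generators and then use induction on degree. For $f,g\in\mathcal{D}^0$ it is the Leibniz rule of the vector field $\rho(e)$. In general, I would substitute the shuffle sum into \eqref{def:Loday:cohomology:operator}. The $\rho(e_i)$ terms distribute over the product $D(\dots)D'(\dots)$ by the Leibniz rule for vector fields. For a bracket term $\lcf e_i,e_j\rcf$ placed at position $j$, the shuffles split according to whether $e_i$ and $e_j$ land in the same factor or in different factors. Same-factor terms give the bracket parts of $\partial_ED\diamond D'$ and $(-1)^pD\diamond\partial_ED'$. Different-factor terms cancel in pairs: the shuffle that puts $e_i$ in the first factor pairs with the one that puts $e_i$ in the second factor, and these carry opposite shuffle signs with the bracket landing in the same slot. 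The sign $(-1)^p$ comes from moving $e_i$ past the $p$ arguments of $D$. As a cross-check, I would compare with the Eilenberg–Mac Lane result that the shuffle product is compatible with the Leibniz differential.
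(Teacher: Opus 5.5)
The paper gives no proof of this proposition; it is quoted from \cite{grab-k-pon}. Your overall route is the standard one: restrict the Loday--Pirashvili coboundary of $(\Gamma(E),\lcf\cdot,\cdot\rcf)$ with coefficients in $\C$ via $\rho$, and expand the shuffle product directly. Your arguments for $\partial_E(\mathcal{D}^p(\mathcal{E}))\subset\mathcal{D}^{p+1}(\mathcal{E})$ and for $\partial_E^2=0$ are fine.

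\textbf{The bracket terms of the derivation identity.} Your account of these is wrong. On the left-hand side, $e_i$ has been deleted and $\lcf e_i,e_j\rcf$ occupies the single slot of $e_j$. So it is one argument of $D\diamond D'$, and $e_i$ is not an argument at all. There are no terms with $e_i$ and $e_j$ in different factors, and nothing cancels.

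What actually happens is a bijection of terms:
\begin{itemize}
\item left-hand terms with the bracket in $D$ (resp.\ $D'$) correspond to the bracket terms of $\partial_E D\diamond D'$ (resp.\ $(-1)^pD\diamond\partial_E D'$) whose shuffle puts both $e_i,e_j$ into $D$ (resp.\ $D'$);
\item the arguments appear in the same order on both sides;
\item both sides are indexed by a $(p+1)$-subset (resp.\ $(q+1)$-subset) of $\{1,\dots,p+q+1\}$ together with a pair inside it.
\end{itemize}

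\textbf{The sign check.} The real content is that this bijection preserves signs. Your remark about ``moving $e_i$ past the $p$ arguments of $D$'' does not establish this. Here is the check.
\begin{itemize}
\item Let $S$ be the set of positions of the reduced list sent to $D$, and let $\Sigma S$ be the sum of its elements. Positions $s\ge i$ correspond to original index $s+1$. The left sign is $(-1)^{i+\Sigma S-p(p+1)/2}$.
\item Case 1: the bracket (at position $j-1$) lies in $S$. Let $T$ be the corresponding original indices together with $i$, and let $k=1+\#\{s\in S:s<i\}$ be the position of $e_i$ in $T$. Then $\Sigma T=i+\Sigma S+p-k+1$, and the right sign $(-1)^{\Sigma T-(p+1)(p+2)/2+k}$ equals the left one.
\item Case 2: the bracket is not in $S$. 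Let $T$ be the original indices for $D$, so $\Sigma T=\Sigma S+\#\{s\in S:s\ge i\}$. Let $k=i-\#\{s\in S:s<i\}$ be the position of $e_i$ among the arguments of $D'$. The right sign $(-1)^{p+\Sigma T-p(p+1)/2+k}$ again agrees, because $\#\{s\ge i\}-\#\{s<i\}\equiv p \pmod 2$.
\end{itemize}
The $\rho$-terms match through the same bijection plus the Leibniz rule for $\rho(e_i)$, as you say.

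\textbf{The generators-plus-induction reduction.} Drop it. $(\mathcal{D}(\mathcal{E}),\diamond)$ is not generated by $\mathcal{D}^0\oplus\mathcal{D}^1$, because shuffle products of degree-one elements are antisymmetric. For example, $(e_1,e_2)\mapsto\langle\mu,e_1\rangle\langle\mu,e_2\rangle$ is never reached. The direct expansion is necessary, and with the sign check above it suffices.
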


\begin{definition} \cite{grab-k-pon}\label{def:Loday:algebroid:cohomology}
Let $(E, \lcf \cdot, \cdot \rcf, \rho, \alpha)$ be a Loday algebroid over a smooth manifold $M$. The cohomology of the Loday cochain complex $(\mathcal{D}^{\bullet}(\mathcal{E}),\partial_{E})$, associated with the Loday algebra structure $\lcf \cdot, \cdot \rcf$ on $\mathcal{E}$ represented by $\rho$ on $\C$, is called the \emph{Loday algebroid cohomology of $E$} and its $p^{th}$-cohomology group is denoted by $H^p(\mathcal{E})$.
\end{definition}
We note that, for $p=0$, $H^0(\mathcal{E})$ is the set of the functions $f\in \C$ which are constants on the leaves of the involutive distribution $\mathrm{Im}\rho$.

\vspace{2mm}
\noindent
For any two $\partial_E$-closed differential operators $D_1\in \mathcal{D}^p(\mathcal{E})$ and $D_2\in \mathcal{D}^q(\mathcal{E})$, we verify that
\[(D_1 + \partial_E D_1' )\diamond (D_2 + \partial_E D_2')\stackrel{\eqref{d_E cobound}}{=} D_1\diamond D_2 + \partial_E (D_1' \diamond D_2 + D_1'\diamond \partial_E D_2' + (-1)^p D_1 \diamond D_2'),\]
where $D_1'\in \mathcal{D}^{p-1}(\mathcal{E})$ and $D_2'\in \mathcal{D}^{q-1}(\mathcal{E})$. Thus, the cohomology class $[D_1 \diamond D_2]$ of $D_1 \diamond D_2$ depends only on $[D_1]$ and $[D_2]$. We denote it by $[D_1 \diamond D_2]: = [D_1]\diamond [D_2]$, and call it the \emph{product} of $[D_1]$ and $[D_2]$. Therefore, the product $\diamond$
defined above endows $H^{\bullet}(\mathcal{E})$ with an algebra structure.

\vspace{2mm}
\noindent
For the graded shuffle algebra $(\mathcal{D}^{\bullet}(\mathcal{E}),\partial_{E})$ we can develop the corresponding Cartan calculus. Let $[\cdot, \cdot]_{com}$
be the graded commutator on the space of graded endomorphisms of $\mathcal{D}^{\bullet}(\mathcal{E})$. If $P$ and $Q$ are two graded endomorphisms of degree $p$ and $q$, respectively, then the graded endomorphism
\[[P,Q]_{com} = P\circ Q - (-1)^{pq}Q\circ P \]
is of degree $p+q$. For any $e\in \mathcal{E}$, we introduce the operators:
\begin{itemize}
\item[-]
\emph{interior product} $i_e : \mathcal{D}^{p}(\mathcal{E}) \to \mathcal{D}^{p-1}(\mathcal{E})$, $D \mapsto i_eD$, defined, for any $e_1, \ldots, e_{p-1} \in \mathcal{E}$, by
\begin{equation*}
(i_eD)(e_1, \ldots, e_{p-1}) = D(e, e_1, \ldots, e_{p-1}),
\end{equation*}
which is a graded derivation of $(\mathcal{D}^{\bullet}(\mathcal{E}),\partial_{E})$ of degree $-1$, i.e.,
for any $D\in \mathcal{D}^{p}(\mathcal{E})$ and $D'\in \mathcal{D}^{\bullet}(\mathcal{E})$, satisfies the Leibniz rule:
\[i_e(D\diamond D') = (i_eD)\diamond D' + (-1)^p D \diamond (i_e D').\]
\item[-]
\emph{Lie derivative} $\mathcal{L}_e : \mathcal{D}^{p}(\mathcal{E}) \to \mathcal{D}^{p}(\mathcal{E})$,
$\mathcal{L}_e = [\partial_E, i_e]_{com} = \partial_E \circ i_e + i_e \circ \partial_E$, which is of degree 0.
\end{itemize}
\begin{lemma}\cite{grab-k-pon}\label{lemma Cartan}
The following Cartan's commutation relations hold in the space of graded endomorphisms of $\mathcal{D}^{\bullet}(\mathcal{E})$:
\begin{enumerate}
\item
$[\partial_E, \partial_E]_{com} = 2\partial_E^2 = 0$;
\item
$[\partial_E, \mathcal{L}_e]_{com} = 0$;
\item
$[\mathcal{L}_{e_1}, i_{e_2}]_{com}=i_{\lcf e_1, e_2\rcf}$;
\item
$[\mathcal{L}_{e_1}, \mathcal{L}_{e_2}]_{com}=\mathcal{L}_{\lcf e_1, e_2\rcf}$.
\end{enumerate}
\end{lemma}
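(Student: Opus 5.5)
The plan is to derive all four relations from two basic facts: $\partial_E^2=0$ and $i_{e_1}i_{e_2}$ being a graded composition of odd operators. Relation 1 is immediate from \eqref{d_E cobound}: since $\partial_E$ has degree $1$, $[\partial_E,\partial_E]_{com}=\partial_E\circ\partial_E-(-1)^{1}\partial_E\circ\partial_E=2\partial_E^2=0$. Relation 2 then follows purely formally: $\mathcal{L}_e=\partial_E i_e+i_e\partial_E$ has degree $0$, so $[\partial_E,\mathcal{L}_e]_{com}=\partial_E\partial_E i_e+\partial_E i_e\partial_E-\partial_E i_e\partial_E-i_e\partial_E\partial_E=0$, using $\partial_E^2=0$. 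Equivalently, this is the graded Jacobi identity for $[\cdot,\cdot]_{com}$ applied to $(\partial_E,\partial_E,i_e)$.

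The genuine content is relation 3. I will verify it by direct evaluation on $D\in\mathcal{D}^p(\mathcal{E})$ and arguments $e_3,\dots,e_{p+1}$. Since $\mathcal{L}_{e_1}$ has degree $0$ and $i_{e_2}$ degree $-1$, the commutator is $\mathcal{L}_{e_1}i_{e_2}-i_{e_2}\mathcal{L}_{e_1}$. Expanding $\mathcal{L}_{e_1}=\partial_Ei_{e_1}+i_{e_1}\partial_E$, I first derive an explicit formula for $\mathcal{L}_eD$ from \eqref{def:Loday:cohomology:operator}. The terms $\partial_E(i_eD)$ and $i_e(\partial_ED)$ combine so that the anchor terms and the brackets not involving $e$ cancel pairwise. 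The sign $(-1)^i$ convention attached to the left slot of $\lcf e_i,e_j\rcf$, with insertion at position $j$, is what makes this work. What should remain is the Loday-type formula
\[
(\mathcal{L}_eD)(e_1,\dots,e_p)=\rho(e)D(e_1,\dots,e_p)-\sum_{k=1}^p D(e_1,\dots,\lcf e,e_k\rcf,\dots,e_p).
\]
Only left adjoint actions $\mathrm{ad}_e$ appear, because in $\partial_E$ the slot $e$ occupies position $1$ and therefore only enters brackets as the left argument. I expect this bookkeeping of signs and index shifts after removing $e$ to be the main obstacle, and I will check it carefully in low degrees ($p=1,2$) first. With the formula in hand, $\mathcal{L}_{e_1}(i_{e_2}D)-i_{e_2}(\mathcal{L}_{e_1}D)$ reduces to $D(\lcf e_1,e_2\rcf,e_3,\dots)$, since the only unmatched term is the one where $\mathrm{ad}_{e_1}$ hits the slot containing $e_2$. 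The sign works out because of the minus in the formula for $\mathcal{L}_e$. So relation 3 holds. One must also note that $\mathcal{L}_eD$ is again a multidifferential operator, which is clear since it is built from $\partial_E$ and $i_e$.

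Finally, relation 4 follows formally from relations 2 and 3. Since $[\mathcal{L}_{e_1},\partial_E]_{com}=0$ by relation 2, the graded Jacobi identity gives
\[
[\mathcal{L}_{e_1},\mathcal{L}_{e_2}]_{com}=[\mathcal{L}_{e_1},[\partial_E,i_{e_2}]_{com}]_{com}=[\partial_E,[\mathcal{L}_{e_1},i_{e_2}]_{com}]_{com}=[\partial_E,i_{\lcf e_1,e_2\rcf}]_{com}=\mathcal{L}_{\lcf e_1,e_2\rcf}.
\]
Note that the Jacobi identity \eqref{Jacobi-identity} of the bracket is not used beyond its role in $\partial_E^2=0$.
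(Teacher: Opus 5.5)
Your proof is correct. The paper itself gives no proof of this lemma and simply imports it from \cite{grab-k-pon}. Your argument is the standard one: relations 1, 2 and 4 are formal consequences of $\partial_E^2=0$ and the graded Jacobi identity for $[\cdot,\cdot]_{com}$, and relation 3 comes from the explicit formula
\[
(\mathcal{L}_eD)(e_1,\dots,e_p)=\rho(e)\big(D(e_1,\dots,e_p)\big)-\sum_{k=1}^p D(e_1,\dots,\lcf e,e_k\rcf,\dots,e_p).
\]
This formula is right with the paper's sign convention in \eqref{def:Loday:cohomology:operator}:
\begin{itemize}
\item the anchor terms $(-1)^{k}\rho(e_k)D(e,\dots,\hat e_k,\dots)$ from $i_e\partial_ED$ cancel against $(-1)^{k+1}\rho(e_k)D(e,\dots,\hat e_k,\dots)$ from $\partial_E i_eD$;
\item the bracket terms not involving $e$ cancel in the same way, with signs $(-1)^{a+1}$ versus $(-1)^a$;
\item the $i=1$ bracket terms of $i_e\partial_ED$ are exactly $-\sum_k D(\dots,\lcf e,e_k\rcf,\dots)$, with the bracket sitting in the $k$-th slot.
\end{itemize}
The commutator computation for relation 3 and the Jacobi argument for relation 4, with $[\partial_E,i_{e}]_{com}=\partial_E i_e+i_e\partial_E=\mathcal{L}_e$, are also correct.

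One caution concerns your opening sentence, which names a fact about $i_{e_1}i_{e_2}$ as a basic ingredient. No such fact is needed. You also must not use any relation of the form $[i_{e_1},i_{e_2}]_{com}=0$. Loday cochains are not alternating, so in general
\[
[i_{e_1},i_{e_2}]_{com}D=D(e_2,e_1,\dots)+D(e_1,e_2,\dots)\neq 0,
\]
which is one of the genuine differences from the Lie case. Your argument as actually carried out uses only $\partial_E^2=0$, the graded Jacobi identity, and the explicit formula above, so it goes through. Your closing remark is also accurate: the Jacobi identity \eqref{Jacobi-identity} and \eqref{rho-homom} enter relation 4 only through $\partial_E^2=0$. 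A direct verification of relation 4 from the explicit formula would use both of them explicitly.
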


\vspace{1mm}
\noindent
The properties of the operator $\mathcal{L}$ allow us to define
a Loday algebroid structure on the Whitney sum $E\oplus E^\ast$.
\begin{example}
\emph{Let $(E, \lcf \cdot, \cdot \rcf, \rho, \alpha)$ be a Loday algebroid over $M$ and $E^\ast$ its dual vector bundle.
Then the triple $(\lcf \cdot, \cdot \rcf_{_{E\oplus E^\ast}}, \rho_{_{E\oplus E^\ast}}, \alpha_{_{E\oplus E^\ast}})$, where, for any
$e+\eta, e_1 + \eta_1, e_2 + \eta_2 \in \Gamma(E\oplus E^\ast)$ and $f\in \C$, $\rho_{_{E\oplus E^\ast}}(e+\eta) = \rho(e)$, }
\[\lcf e_1 + \eta_1, e_2 + \eta_2 \rcf_{_{E\oplus E^\ast}} = \lcf e_1, e_2 \rcf + \mathcal{L}_{e_1}\eta_2, \]
and
\[\alpha_{_{E\oplus E^\ast}} (df \otimes (e_1 + \eta_1))(e_2 + \eta_2) = \alpha(df\otimes e_1)(e_2) +
\rho(e_2)(f)\eta_1 + \eta_2(e_1)\partial_Ef - \eta_2(\alpha(df \otimes e_1)(\cdot)),\]
\emph{defines a Loday algebroid structure on $E\oplus E^\ast$.}
\end{example}

\vspace{1mm}
\noindent
In the following, we study the behavior of the coboundary operator in relation to a comorphism of Loday algebroids.
Let $(E, \lcf \cdot, \cdot \rcf_{_E}, \rho_{_E}, \alpha_{_E})$ and $(F, \lcf \cdot, \cdot \rcf_{_F}, \rho_{_F}, \alpha_{_F})$ be two Loday algebroids over $M$ and $N$,
respectively, $(\hat{\Phi}, \phi)$ a Loday algebroid comorphism from $E$ to $F$ over a
diffeomorphism $\phi: N \to M$, and $\Phi : \Gamma(E) \to \Gamma(F)$ the induced map on the modules of smooth sections.
We restrict our attention to the case where $\phi$ is a diffeomorphism, because the \emph{pullback differential operator} can be defined only if its symbol is evaluated on $1$-forms on $N$ of the type $\eta=\phi^\ast\xi$, with $\xi\in\Gamma(T^\ast M)$. This is possible only when $\phi$ is a diffeomorphism.
We consider also the Loday cochain complexes $(\mathcal{D}^{\bullet}(\mathcal{E}),\partial_{E})$ and $(\mathcal{D}^{\bullet}(\mathcal{F}),\partial_{F})$,
where $\mathcal{F}$ is the $\CN$-bimodule $\Gamma(F)$. The map $\Phi$ induces, for any $p\in \N^{^\ast}$:
\begin{itemize}
\item[-]
a map $\Phi^p$ between the product modules $\underbrace{\mathcal{E}\times \ldots \times \mathcal{E}}_{p-times}$ and $\underbrace{\mathcal{F}\times \ldots \times \mathcal{F}}_{p-times}$ given by
\[(e_1, \ldots, e_p)\mapsto \Phi^p(e_1, \ldots, e_p): = (\Phi(e_1), \ldots, \Phi(e_p));\]
\item[-]
a map $\Phi^\star_p : \mathcal{D}^p(\mathcal{F}) \to \mathcal{D}^p(\mathcal{E})$ which assigns to each $p$-differential
operator $D\in \mathcal{D}^p(\mathcal{F})$ a $p$-differential operator $\Phi^\star_p D$
on $\mathcal{E}$ defined in such a way that the following diagram is commutative:
\begin{equation*}
\begin{tikzcd}
\underbrace{\mathcal{E}\times \ldots \times \mathcal{E}}_{p-times} \arrow{r}{\Phi^p} \arrow[swap]{d}{\Phi^\star_pD} & \underbrace{\mathcal{F}\times \ldots \times \mathcal{F}}_{p-times} \arrow{d}{D} \\
\C \arrow{r}[swap]{\phi^\ast} & \CN.
\end{tikzcd}
\end{equation*}
I.e., for any $p$-tuple $(e_1, \ldots, e_p)$ of elements of $\mathcal{E}$,
\begin{eqnarray}\label{def - Phi^star}
\lefteqn{(\phi^\ast \circ \Phi_p^\star D)(e_1, \ldots, e_p) = (D\circ \Phi^p)(e_1, \ldots, e_p) \Leftrightarrow}\nonumber \\
 & \Phi_p^\star D(e_1, \ldots, e_p)\circ \phi = D(\Phi(e_1), \ldots, \Phi(e_p)).
\end{eqnarray}
\end{itemize}
While, for $p=0$, since $\phi$ is a diffeomorphism, for any $g\in \CN$ there exists unique $f\in \C$ such that $g=\phi^\ast f$, then we
define $\Phi_0^\star : \CN \to \C$ to be the map which assigns to each $g\in \CN$ the unique $f\in \C$
such that $g = \phi^\ast f$, that is, $\Phi_0^\star = (\phi^{-1})^\ast$. Also, we can easily show that
\begin{equation*}\label{Phi^star f}
\Phi^\star _p (\phi^\ast f D) = f\Phi^\star _p D, \quad \quad f\in \C.
\end{equation*}

\vspace{2mm}
\noindent
Because of \eqref{homomorphism Phi}, $\Phi$ can be considered as a $0$-order differential operator from $\mathcal{E}$ to $\mathcal{F}$ and
its composition with a $p$-differential operator $D\in \mathcal{D}^p(\mathcal{F})$ produces a $p$-differential operator $\Phi_p^\star D$ on $\mathcal{E}$ such that:
\begin{enumerate}
\item
$\Phi_p^\star(\sigma_i^D(\phi^\ast f)) = \sigma_i^{\Phi_p^\star D}(f)$, i.e. the map $\Phi^\star_p$ sends the $i$-symbol
of $D$ to the $i$-symbol of $\Phi^\star_pD$, $i=1,\ldots,p$.
\item
$D$ and $\Phi_p^\star D$ are differential operators of the same order with respect to the corresponding arguments.
\end{enumerate}
The above assertions can be proved directly.

\begin{proposition}\label{Phi LA morph --> Phi* chain map}
The family $\Phi^\star = (\Phi_p^\star )_{p\in \N}$ of maps is a homomorphism from the graded shuffle algebra $((\mathcal{D}^p(\mathcal{F}))_{p\in \N}, \diamond)$ of $F$
to the graded shuffle algebra $((\mathcal{D}^p(\mathcal{E}))_{p\in \N}, \diamond)$ of $E$ which commute with the coboundary operators:
\begin{equation}\label{Phi^star commutation}
\partial_E \circ \Phi^\star_p = \Phi_{p+1}^\star \circ \partial_F.
\end{equation}
Thus, $\Phi^\star = (\Phi_p^\star )_{p\in \N}$ induces a sequence of maps, also denoted by $\Phi^\star = (\Phi_p^\star )_{p\in \N}$,
such that $\Phi_p^\star$ is a homomorphism from the $p^{th}$-cohomology group $H^p (\mathcal{F})$ of
$(\mathcal{D}(\mathcal{F}), \partial_F)$ to the $p^{th}$-cohomology group $H^p(\mathcal{E})$
of $(\mathcal{D}(\mathcal{E}), \partial_E)$.
\end{proposition}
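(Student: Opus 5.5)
The proof reduces to three direct verifications, each using only the defining identity \eqref{def - Phi^star}, i.e. $\Phi_p^\star D(e_1,\ldots,e_p)\circ\phi = D(\Phi(e_1),\ldots,\Phi(e_p))$, together with the fact that $\phi$ is a diffeomorphism, so that a function on $M$ is determined by its pullback to $N$. First, $\Phi^\star$ is well defined, unital and $\R$-linear. For $p=0$ we have $\Phi_0^\star=(\phi^{-1})^\ast$, and $\Phi_0^\star(1)=1$. Linearity in $D$ is immediate.

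\textbf{Step 1: $\Phi^\star$ respects the shuffle product.} Take $D\in\mathcal{D}^p(\mathcal{F})$ and $D'\in\mathcal{D}^q(\mathcal{F})$, and evaluate $(\Phi^\star_{p+q}(D\diamond D'))(e_1,\ldots,e_{p+q})\circ\phi$. By definition this equals
\[
\sum_{\tau\in Sh(p,q)}(-1)^{|\tau|}\,D(\Phi(e_{\tau(1)}),\ldots)\,D'(\Phi(e_{\tau(p+1)}),\ldots).
\]
Each factor equals $(\Phi_p^\star D)(e_{\tau(1)},\ldots)\circ\phi$ or $(\Phi_q^\star D')(e_{\tau(p+1)},\ldots)\circ\phi$, respectively. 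Since $\phi^\ast$ is a ring homomorphism, the sum equals $\big((\Phi_p^\star D\diamond\Phi_q^\star D')(e_1,\ldots,e_{p+q})\big)\circ\phi$. Injectivity of $\phi^\ast$ then gives $\Phi^\star(D\diamond D')=\Phi^\star D\diamond\Phi^\star D'$. The cases where $p$ or $q$ is $0$ are handled the same way, using $\Phi_0^\star=(\phi^{-1})^\ast$.

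\textbf{Step 2: the chain map identity \eqref{Phi^star commutation}.} Expand $(\partial_E\Phi_p^\star D)(e_1,\ldots,e_{p+1})\circ\phi$ using \eqref{def:Loday:cohomology:operator}. There are two kinds of terms.

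\emph{Bracket terms.} These involve $\Phi_p^\star D(\ldots,\lcf e_i,e_j\rcf_E,\ldots)\circ\phi = D(\ldots,\Phi(\lcf e_i,e_j\rcf_E),\ldots)$. By \eqref{def - Loday algebra homomorphism} this equals $D(\ldots,\lcf\Phi(e_i),\Phi(e_j)\rcf_F,\ldots)$, which is exactly the corresponding term of $\partial_F D(\Phi(e_1),\ldots,\Phi(e_{p+1}))$.

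\emph{Anchor terms.} These are $\big(\rho_E(e_i)(g)\big)\circ\phi$ with $g=\Phi_p^\star D(e_1,\ldots,\hat e_i,\ldots)$. By \eqref{comorphism - left anchor} this equals $\rho_F(\Phi(e_i))(g\circ\phi)=\rho_F(\Phi(e_i))\big(D(\Phi(e_1),\ldots,\widehat{\Phi(e_i)},\ldots)\big)$.

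Summing the two kinds of terms with their signs gives $\partial_F D(\Phi^{p+1}(e_1,\ldots,e_{p+1}))=(\Phi^\star_{p+1}\partial_F D)(e_1,\ldots)\circ\phi$, and injectivity of $\phi^\ast$ concludes. For $p=0$, the identity reduces directly to \eqref{comorphism - left anchor} applied to $f=\Phi_0^\star g$.

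\textbf{Step 3: passage to cohomology.} By Step 2, $\Phi^\star$ maps cocycles to cocycles and coboundaries to coboundaries, since $\Phi^\star\partial_F D'=\partial_E\Phi^\star D'$. Hence it induces group homomorphisms $H^p(\mathcal{F})\to H^p(\mathcal{E})$. By Step 1, these are also compatible with the induced product on cohomology.

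The only step I expect to be a genuine obstacle is the implicit claim that $\Phi_p^\star D$ is really a $p$-differential operator on $\mathcal{E}$. This does not come from the identities above; it comes from the assertions stated just before the proposition: $\Phi$ is a $0$-order operator by \eqref{homomorphism Phi}, and $\phi$ is a diffeomorphism, so symbols can be transported. I would invoke those assertions rather than reprove them. Everything else is bookkeeping of signs, which match term by term because $\Phi^{p+1}$ preserves the order of the arguments.
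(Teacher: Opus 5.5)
Your proposal is correct and follows the paper's route. The paper also proves the proposition by direct verification of $\Phi^\star_{p+q}(D\diamond D')=\Phi^\star_pD\diamond\Phi^\star_qD'$ and of $\partial_E\circ\Phi^\star_p=\Phi^\star_{p+1}\circ\partial_F$, then passes to cohomology; your term-by-term expansion using the homomorphism condition, the anchor condition and injectivity of $\phi^\ast$ spells out the computations the paper calls straightforward, and the well-definedness of $\Phi_p^\star D$ is indeed covered by the symbol remarks preceding the proposition.
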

\begin{proof}
The first claim is proved by a straightforward verification of the equality,
for any $D\in \mathcal{D}^p(\mathcal{F})$ and $D'\in \mathcal{D}^q(\mathcal{F})$, $p,q \in \N$,
\begin{equation*}\label{Phi* - p - q}
\Phi_{p+q}^\star(D \diamond D') = \Phi_p^\star D \diamond \Phi_q^\star D'.
\end{equation*}
The second claim follows also from a straightforward verification.
It means that $\Phi_p^\star$ maps cocycles to cocycles and coboundaries to coboundaries.
Thus, it induces a homomorphism $\Phi_p^\star: H^p (\mathcal{F}) \to H^p (\mathcal{E})$.
\end{proof}

\vspace{2mm}
\noindent
The following proposition proves the converse of the above result.

\begin{proposition}\label{Phi* chain map --> Phi LA morph}
Let $(E, \lcf \cdot, \cdot \rcf_{_E}, \rho_{_E}, \alpha_{_E})$ and $(F, \lcf \cdot, \cdot \rcf_{_F}, \rho_{_F}, \alpha_{_F})$
be two Loday algebroids over $M$ and $N$, respectively, $\hat{\Phi} : F^\ast \to E^\ast$ a vector bundle map over a
diffeomorphism $\phi : N \to M$, $\Phi : \Gamma(E) \to \Gamma(F)$ the induced map on the modules of smooth sections,
and $\Phi^\star = (\Phi^\star _p)_{p\in \N^{^\ast}}$ the corresponding map from $(\mathcal{D}(\mathcal{F}), \diamond)$ to
$(\mathcal{D}(\mathcal{E}), \diamond)$. If the sequence of maps $\Phi^\star = (\Phi^\star _p)_{p\in \N^{^\ast}}$ is such that
\eqref{Phi^star commutation} holds, then $(\hat{\Phi}, \phi)$ is a Loday algebroid morphism.
\end{proposition}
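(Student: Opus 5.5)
We need to show that \eqref{Phi^star commutation} forces the two conditions of Definition \ref{def-Loday-comorphism}: the anchor condition \eqref{comorphism - left anchor} and the bracket-homomorphism condition \eqref{def - Loday algebra homomorphism}. We test the identity $\partial_E \circ \Phi^\star_p = \Phi^\star_{p+1}\circ \partial_F$ in low degrees ($p=0$ and $p=1$), using the defining relation \eqref{def - Phi^star} and the explicit formula \eqref{def:Loday:cohomology:operator} for the coboundaries.

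\emph{Step 1 ($p=0$): the anchors.} Take $g\in \CN$ and write $g=\phi^\ast f$ with $f=\Phi_0^\star g\in\C$, which is possible because $\phi$ is a diffeomorphism. For $e\in\Gamma(E)$ the left-hand side gives
\[
(\partial_E\Phi_0^\star g)(e)=\rho_{_E}(e)(f).
\]
By \eqref{def - Phi^star}, the right-hand side satisfies
\[
(\Phi_1^\star\partial_F g)(e)\circ\phi=(\partial_F g)(\Phi(e))=\rho_{_F}(\Phi(e))(f\circ\phi).
\]
Equating the two yields $\rho_{_F}(\Phi(e))(f\circ\phi)=\rho_{_E}(e)(f)\circ\phi$, which is exactly \eqref{comorphism - left anchor}.

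\emph{Step 2 ($p=1$): the bracket.} Take $D\in\mathcal{D}^1(\mathcal{F})$ and $e_1,e_2\in\Gamma(E)$. Composing with $\phi$ and expanding both sides of $(\partial_E\Phi_1^\star D)(e_1,e_2)=(\Phi_2^\star\partial_F D)(e_1,e_2)$ gives
\begin{align*}
&\rho_{_E}(e_1)(\Phi_1^\star D(e_2))\circ\phi-\rho_{_E}(e_2)(\Phi_1^\star D(e_1))\circ\phi-D(\Phi\lcf e_1,e_2\rcf_{_E})\\
&\quad=\rho_{_F}(\Phi e_1)(D(\Phi e_2))-\rho_{_F}(\Phi e_2)(D(\Phi e_1))-D(\lcf\Phi e_1,\Phi e_2\rcf_{_F}).
\end{align*}
Since $\Phi_1^\star D(e_i)\circ\phi=D(\Phi e_i)$, Step 1 shows that the anchor terms on the two sides coincide. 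What remains is
\[
D\bigl(\Phi\lcf e_1,e_2\rcf_{_E}-\lcf\Phi e_1,\Phi e_2\rcf_{_F}\bigr)=0 \quad\text{for every } D\in\mathcal{D}^1(\mathcal{F}).
\]
Choosing $D=\langle\nu,\cdot\rangle$ with $\nu\in\Gamma(F^\ast)$ arbitrary (a $0$-order operator) and using that $F^\ast$ separates points of $F$, we conclude $\Phi\lcf e_1,e_2\rcf_{_E}=\lcf\Phi e_1,\Phi e_2\rcf_{_F}$, which is \eqref{def - Loday algebra homomorphism}.

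\emph{Step 3: the relation for $\alpha$.} The relation between $\alpha_{_E}$ and $\alpha_{_F}$ stated after Definition \ref{def-Loday-comorphism} then follows automatically. One applies \eqref{def - Loday algebra homomorphism} to $(fe_1,e_2)$ and uses \eqref{homomorphism Phi} together with Step 1, exactly as in the text. No extra condition needs to be checked, so $(\hat\Phi,\phi)$ is a Loday algebroid (co)morphism.

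\emph{Expected obstacle.} The main subtlety is making sure $\Phi_0^\star$ and $\Phi_1^\star$ are well defined and that the right-hand sides really land in functions of the form $\cdot\circ\phi$. This is where the hypothesis that $\phi$ is a diffeomorphism enters, since it lets us identify $\CN$ with $\C$ via $\phi^\ast$. Everything else is bookkeeping with signs in \eqref{def:Loday:cohomology:operator} for $p=1$: the sign $(-1)^i$ with $i=1$ gives the minus in front of the bracket term, and the same sign appears on both sides, so it cancels.
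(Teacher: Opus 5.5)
Your proof is correct and follows the paper's argument essentially verbatim: the degree-$0$ identity $\Phi_1^\star\circ\partial_F=\partial_E\circ\Phi_0^\star$ yields the anchor condition \eqref{comorphism - left anchor}. The degree-$1$ identity, after the anchor terms cancel by Step 1, gives $D(\Phi\lcf e_1,e_2\rcf_{_E})=D(\lcf\Phi e_1,\Phi e_2\rcf_{_F})$ for all $D$, in particular for $D\in\Gamma(F^\ast)$, which is exactly the paper's route. Your Step 3 on $\alpha$ is a harmless extra, since that relation is a consequence of, not an axiom in, Definition \ref{def-Loday-comorphism}.
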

\begin{proof}
We suppose that $\Phi^\star : (\mathcal{D}(\mathcal{F}), \partial_F) \to (\mathcal{D}(\mathcal{E}), \partial_E)$
satisfies \eqref{Phi^star commutation}. Our purpose is to show that $\Phi$ satisfies the axioms of Definition \ref{def-Loday-comorphism}.

\vspace{1mm}
\noindent
From the identity $\Phi_1^\star \circ \partial_F = \partial_E \circ \Phi_0^\star$ 
we get that,
for any $f\in \C$, $\Phi_1^\star \partial_F\phi^\ast f = \partial_Ef$, and for any $e\in \mathcal{E}$,
\begin{equation*}
\Phi_1^\star \partial_F\phi^\ast f (e) \circ \phi = \partial_Ef(e) \circ \phi \stackrel{\eqref{def - Phi^star}}{\Leftrightarrow} \partial_F\phi^\ast f (\Phi(e)) =  \partial_Ef(e) \circ \phi \Leftrightarrow \rho_{_F}(\Phi(e))(f\circ \phi) = \rho_{_E}(e)(f)\circ \phi,
\end{equation*}
which means that \eqref{comorphism - left anchor} is true.

\vspace{1mm}
\noindent
Now, from the identity $\partial_E \circ \Phi^\star_1 = \Phi_{2}^\star \circ \partial_F$ we obtain that $\Phi$ is a Loday algebra homomorphism. In fact, let $D\in \mathcal{D}^1(\mathcal{F})$ and $e_1, e_2 \in \mathcal{E}$, then
\begin{eqnarray*}
(\partial_E \circ \Phi^\star_1)(D)(e_1,e_2)\circ \phi & = & (\partial_E  \Phi^\star_1D)(e_1,e_2)\circ \phi \nonumber \\
& = & \rho_{_E}(e_1) (\Phi^\star_1D(e_2))\circ \phi - \rho_{_E}(e_2) (\Phi^\star_1D(e_1))\circ \phi \nonumber \\
& & - \, \Phi^\star_1D(\lcf e_1, e_2 \rcf_{_E})\circ \phi \nonumber \\
& \stackrel{\eqref{comorphism - left anchor}, \eqref{def - Phi^star}}{=} & \rho_{_F}(\Phi(e_1)) (\Phi^\star_1D(e_2)\circ \phi ) - \rho_{_F}(\Phi(e_2)) (\Phi^\star_1D(e_1)\circ \phi ) \nonumber \\
& & - \, D(\Phi \lcf e_1, e_2 \rcf_{_E} ) \nonumber \\
& = &  \rho_{_F}(\Phi(e_1)) (D(\Phi(e_2)) - \rho_{_F}(\Phi(e_2)) (D(\Phi(e_1)) \nonumber \\
& & - \,D(\Phi \lcf e_1, e_2 \rcf_{_E}) \nonumber \\
& = & \partial_F D (\Phi(e_1),\Phi(e_2)) + D(\lcf \Phi(e_1), \Phi(e_2) \rcf_{_F}) - D(\Phi \lcf e_1, e_2 \rcf_{_E}) \nonumber \\
& = & (\Phi_{2}^\star \circ \partial_F)(D)(e_1,e_2)\circ \phi \nonumber \\
& & +\, D(\lcf \Phi(e_1), \Phi(e_2) \rcf_{_F}) - D(\Phi \lcf e_1, e_2 \rcf_{_E}).
\end{eqnarray*}
Because of our hypothesis, the last equation implies that, for any $D\in \mathcal{D}^1(\mathcal{F})$,
\[D(\lcf \Phi(e_1), \Phi(e_2) \rcf_{_F}) - D(\Phi \lcf e_1, e_2 \rcf_{_E}) = 0.\]
Hence, for any $D\in \Gamma(F^\ast) \subset \mathcal{D}^1(\mathcal{F})$, the above equality is also true. So,
\[\Phi \lcf e_1, e_2 \rcf_{_E} = \lcf \Phi(e_1), \Phi(e_2) \rcf_{_F},\]
whence we conclude that $\Phi$ is a Loday algebra homomorphism.
\end{proof}

\section{Nonlinear connections of Loday algebroids}\label{section - nonlinear connections}
\subsection{Nonlinear $E$-connections and connection differential ope\-ra\-tors}\label{LA connection}
According to \cite{cr-fer-sec-cl}, a nonlinear connection of a Lie algebroid $A$
on a vector bundle $B$, both over a smooth manifold $M$, is a $\R$-bilinear map $\nabla : \Gamma(A) \times \Gamma(B) \to \Gamma(B)$ such that,
for any $a\in \Gamma(A)$, $\nabla_a$ is a derivative endomorphism of $B$, local in $a$. The last expression means
that the value at any point $x\in M$ of $\nabla_ab$, $b\in \Gamma(B)$, depends on the jet of order $1$ of $a$ at $x$, that is, $\nabla_a$ is not $\C$-linear with respect of $a$.
Also, a representation of $A$ is a flat vector bundle $B$ with respect to an $A$-connection.
Adopting this point of view, we define a Loday algebroid connection on a vector bundle in such a way that the adjoint representation of a Loday algebroid is well defined.
Since a Loday algebroid bracket $\lcf \cdot, \cdot \rcf$ on a vector bundle $E$ is a bidifferential operator of total order $\leq 1$
and has the locality property (Remark \ref{remark-locality}), we consider nonlinear connections in the above sense.

\begin{definition}\label{def:nonlinear:connection}
Let $(E, \lcf \cdot, \cdot \rcf, \rho, \alpha)$ be a Loday algebroid over a smooth manifold $M$ and $B$ a smooth vector bundle on $M$. A \emph{nonlinear $E$-connection on $B$} is a $\R$-bilinear map
\[\nabla : \Gamma(E)\times \Gamma(B)\to \Gamma(B)\]
such that, for all $e \in \Gamma(E)$, $b\in \Gamma(B)$ and $f\in \C$, the following properties hold:
\begin{enumerate}
\item
$\nabla_{e} (fb)=f\nabla_{e}b+\rho(e)(f)\,b$,
\item
$\nabla_{fe}b = f \nabla_e b + \sigma (df \otimes e)(b)$,
\end{enumerate}
where $\sigma : \Gamma(T^\ast M) \otimes \Gamma(E) \to \mathrm{End}(\Gamma(B))$ is the homomorphism on the $\C$-modules of smooth sections induced by a vector bundle map, also denoted by $\sigma$, $\sigma : T^\ast M \otimes E \to \mathrm{End}(B)$. In the following, we will call the map $\sigma$: \emph{symbol of $\nabla$}.
\end{definition}

\begin{remarks}\label{rems-dif op con}

\vspace{1mm}
\noindent
\begin{enumerate}
\item[\emph{1}.]
\emph{The above conditions ensure that $\nabla$ is a bidifferential operator of total order $\leq 1$ on the $\C$-bimodule $\Gamma(E)\times\Gamma(B)$ with values in $\Gamma(B)$.}
\item[\emph{2}.]
\emph{If $\nabla b$ is a $0$-order differential operator on $\Gamma(E)$, i.e. $\nabla b : \Gamma(E) \to \Gamma(B)$ is $\C$-linear or, equivalently, $\sigma = 0$, then we are dealing with a linear $E$-connection on $B$.
This remark explain the term ``symbol" for the vector bundle map $\sigma$.}
\end{enumerate}
\end{remarks}

\begin{proposition}\label{set nl conn - affine}
Let $(E,B)$ be as in Definition \ref{def:nonlinear:connection}. The set $\mathfrak{C}_{nl}(E,B)$ of nonlinear $E$-connections on $B$ is not
empty and is an affine space modeled on the linear space $\mathcal{D}_1^1(\mathcal{E}, \mathrm{End}(\Gamma(B)))$.
\end{proposition}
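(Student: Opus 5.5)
Two things have to be shown: $\mathfrak{C}_{nl}(E,B)$ is non-empty, and the difference of two nonlinear $E$-connections is exactly an element of $\mathcal{D}_1^1(\mathcal{E}, \mathrm{End}(\Gamma(B)))$. I read this space as the first-order differential operators $\Gamma(E)\to \mathrm{End}(\Gamma(B))$ whose values are $\C$-linear endomorphisms, and whose symbol is given by a vector bundle map $T^\ast M\otimes E\to \mathrm{End}(B)$. Conversely, every element of that space added to a connection should again give a connection.

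\textbf{Non-emptiness.} I will not build a genuinely nonlinear object; the point is that any linear $E$-connection is a nonlinear one with symbol $\sigma=0$ (Remarks \ref{rems-dif op con}, item 2). So it suffices to produce one linear $E$-connection on $B$.
\begin{itemize}
\item Choose an ordinary linear $TM$-connection $\nabla^B$ on $B$; it exists by a partition of unity argument.
\item Set $\nabla_e b := \nabla^B_{\rho(e)} b$.
\end{itemize}
This is $\C$-linear in $e$ because $\rho$ is a bundle map. It satisfies condition (1) of Definition \ref{def:nonlinear:connection} by the Leibniz rule of $\nabla^B$. Condition (2) holds with $\sigma=0$.

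\textbf{Affine structure.} Let $\nabla,\nabla'$ be two nonlinear $E$-connections with symbols $\sigma,\sigma'$, and put $\Theta:=\nabla-\nabla'$.
\begin{itemize}
\item Subtracting the two instances of condition (1) gives $\Theta_e(fb)=f\Theta_e b$. So each $\Theta_e$ is $\C$-linear, i.e. $\Theta_e\in \mathrm{End}(\Gamma(B))$, and it comes from a bundle endomorphism.
\item Subtracting the two instances of condition (2) gives $\Theta_{fe}=f\Theta_e+(\sigma-\sigma')(df\otimes e)$.
\item Iterating, $[[\Theta,f],g]=0$. Hence $e\mapsto \Theta_e$ is a first-order differential operator in the sense of Appendix \ref{app-Jet bundles}, with symbol $\sigma-\sigma'$ induced by a vector bundle map.
\end{itemize}
Therefore $\Theta\in\mathcal{D}_1^1(\mathcal{E},\mathrm{End}(\Gamma(B)))$.

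Conversely, take $\Theta\in\mathcal{D}_1^1(\mathcal{E},\mathrm{End}(\Gamma(B)))$ with symbol $\tau$, and consider $\nabla+\Theta$.
\begin{itemize}
\item Since the values of $\Theta$ are $\C$-linear, condition (1) still holds.
\item Condition (2) holds with symbol $\sigma+\tau$.
\item $\sigma+\tau$ is a sum of vector bundle maps $T^\ast M\otimes E\to\mathrm{End}(B)$, so it is again a symbol of the required type.
\end{itemize}
So $\Theta$ acts freely and transitively on $\mathfrak{C}_{nl}(E,B)$ by $\nabla\mapsto\nabla+\Theta$, and $\mathfrak{C}_{nl}(E,B)$ is an affine space modelled on $\mathcal{D}_1^1(\mathcal{E},\mathrm{End}(\Gamma(B)))$.

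\textbf{Main obstacle.} The only delicate point is matching the definition of $\mathcal{D}_1^1$. I must check that a first-order operator whose symbol is a derivation in $f$ and $\C$-linear in $e$ is the same thing as one whose symbol is induced by a bundle map $T^\ast M\otimes E\to\mathrm{End}(B)$. I will settle this using locality, as in Remark \ref{remark-locality}: the symbol $(f,e)\mapsto[\Theta,f](e)$ is a derivation in $f$ and $\C$-linear in $e$, hence it factors through $df\otimes e$ pointwise.
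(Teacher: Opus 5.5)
Your proof is correct. It differs from the paper's in two places, and in both it is somewhat more complete.

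\textbf{Non-emptiness.} The paper does not construct a connection. It cites Theorems 3.4 and 5.2 of Cantrijn--Langerock for the existence of linear connections over the anchored bundle $(E,\rho)$. You write one down explicitly, $\nabla_e b=\nabla^B_{\rho(e)}b$, which is the elementary mechanism behind that citation. This makes the argument self-contained, assuming only the existence of an ordinary connection on $B$ (partition of unity).

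\textbf{Affine structure.} The paper checks two things:
\begin{itemize}
\item $\mathfrak{C}_{nl}(E,B)$ is closed under $\C$-affine combinations $(1-g)\nabla^0+g\nabla^1$;
\item a difference $\nabla^1-\nabla^0$ lies in $\mathcal{D}^1_1(\mathcal{E},\mathrm{End}(\Gamma(B)))$.
\end{itemize}
It leaves implicit the converse: that $\nabla+\Theta$ is again a nonlinear $E$-connection for every $\Theta$ in that space. Your torsor argument supplies this. Your derivation step is exactly what that converse needs. Once $[\Theta,f]$ is $\C$-linear, you get $[\Theta,fg]=f[\Theta,g]+g[\Theta,f]$. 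Hence $[\Theta,f]$ factors through $df$, and the new symbol $\sigma+\tau$ comes from a bundle map $T^\ast M\otimes E\to\mathrm{End}(B)$, as the definition requires.

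Nothing is lost by your route. The paper's $\C$-affine combinations follow immediately from it, because $\mathcal{D}^1_1(\mathcal{E},\mathrm{End}(\Gamma(B)))$ is a $\C$-module and $(1-g)\nabla^0+g\nabla^1=\nabla^0+g(\nabla^1-\nabla^0)$. The constant-coefficient homotopies $\nabla^t$ and $\nabla^{0,\ldots,k}$ used later in the paper are special cases.
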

\begin{proof}
Since $E$ is an anchored vector bundle, according to Theorems 3.4 and 5.2 in \cite{Cantr-Lan},
the subset $\mathfrak{C}_{lin}(E,B)\subset\mathfrak{C}_{nl}(E,B)$ of linear $E$-connections on $B$ is not empty. Hence, $\mathfrak{C}_{nl}(E,B)$ is not empty.
For any two nonlinear $E$-connections $\nabla^0$ and $\nabla^1$ on $B$ with symbols $\sigma^0$ and $\sigma^1$, respectively, and any $g\in \C$, the affine combination
\[\nabla: = (1-g)\nabla^0 + g\nabla^1\]
is a nonlinear $E$-connection on $B$ with symbol $\sigma = (1-g)\sigma^0 + g\sigma^1$. Thus, $\mathfrak{C}_{nl}(E,B)$ is an affine space.
The difference $A = \nabla^1 - \nabla^0$ is a differential operator of order $\leq 1$ on the first entry and linear on the second entry, that is,
it is an element of the $\C$-module $\mathcal{D}_1^1(\mathcal{E}, \mathrm{End}(\Gamma(B)))$.
\end{proof}

\begin{remark}
\emph{Let $\mathbb{A}(B)\to M$ be the Atiyah algebroid of $B$ whose sections are the derivative endomorphisms of $\Gamma(B)$ \cite{mck}.
Then, we can consider the space $\mathfrak{C}_{nl}(E,B)$ as an affine subspace of the $\C$-module $\mathcal{D}_1^1(\mathcal{E}, \mathrm{Der}(\Gamma(B)))$, where $\mathrm{Der}(\Gamma(B))=\Gamma(\mathbb{A}(B))$.}
\end{remark}

\vspace{1mm}
\noindent
An equivalent description of a nonlinear $E$-connection $\nabla$ on $\Gamma(B)$ is given by the notion of \emph{covariant derivation}. It is a map
\[d^{\nabla} : \Gamma(B)  \to  \mathcal{D}^1_1 (\mathcal{E}, \Gamma(B)), \]
called \emph{the covariant derivation of $\nabla$}, satisfying the Leibniz rule
\begin{equation}\label{leibniz rule - nabla}
d^{\nabla}(fb) = \partial_E(f) \otimes b + f d^{\nabla}b,
\end{equation}
for all $f\in \C$ and $b\in \Gamma(B)$. It is related with $\nabla$ by the identity
\[d^{\nabla}b = \nabla_{\cdot} b.\]

\vspace{1mm}
\noindent
Relative to a local frame for the vector bundle $B$, a nonlinear $E$-connection $\nabla$ on $B$ can be represented by a matrix of $1$-differential operators,
which describes $\nabla$ locally. Let $U$ be an open subset of $M$ over which the vector bundle $B$
is trivial, $(b_1, \ldots, b_s)$, $s = \mathrm{rank} B$, a local frame of smooth sections of $B$ over $U$, and $e\in \Gamma(E\vert_U)$. On $U$,
since any section $b$ of $B$ is written as a $C^\infty(U,\R)$-linear combination $b = \sum_{i =1}^{s}f^ib_i$, the section $\nabla_eb$ is
computed from $\nabla_e b_i$ by linearity and the Leibniz rule. As section of $B\vert _U$, $\nabla_e b_i$ is a $C^\infty(U,\R)$-linear
combination of $(b_1, \ldots, b_s)$ with coefficients $\theta_i^j$ depending on $e$:
\begin{equation*}
\nabla_e b_i = \sum_{j=1}^s\theta_i^j(e)b_j.
\end{equation*}
A direct computation proves that $\theta_i^j$, $i, j=1, \ldots,s$, are $1$-differential operators on $\mathcal{E}$ of first order,
i.e. $\theta_i^j\in \mathcal{D}_1^1(\mathcal{E})$, called \emph{connection $1$-differential operators of $\nabla$ relative to the frame $(b_1,\ldots,b_s)$}.
The matrix $\theta = [\begin{array}{c}
                                                                             \theta_i^j \\
                                                                           \end{array}]$
will be called \emph{connection matrix of $\nabla$ relative to the frame $(b_1,\ldots,b_s)$}. Hence

\begin{proposition}\label{prop-nabla-theta-nabla}
Under the above assumptions and notations, a nonlinear $E$-connection $\nabla$ on $B\vert_U$ determines a unique connection matrix  $\theta = [
                       \begin{array}{c}
                         \theta_i^j \\
                       \end{array}]
$ of $1$-differential operators of first order relative to the frame $(b_1, \ldots,b_s)$. Conversely, any matrix $\theta = [
                       \begin{array}{c}
                         \theta_i^j \\
                       \end{array}]$
of such operators determines a nonlinear $E$-connection on $B\vert_U$.
\end{proposition}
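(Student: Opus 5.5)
We prove the two directions separately. For the direct statement, we fix the frame $(b_1,\ldots,b_s)$ of $B\vert_U$. The components $\theta_i^j(e)$ are uniquely determined by $\nabla_e b_i=\sum_j\theta_i^j(e)b_j$, since $(b_1,\ldots,b_s)$ is a basis of the free $C^\infty(U,\R)$-module $\Gamma(B\vert_U)$. Let $(b^1,\ldots,b^s)$ be the dual coframe of $B^\ast\vert_U$; then $\theta_i^j(e)=\langle b^j,\nabla_eb_i\rangle$. We check that $\theta_i^j$ is a first order $1$-differential operator on $\mathcal{E}$. By property 2 of Definition \ref{def:nonlinear:connection},
\[\theta_i^j(fe)=f\,\theta_i^j(e)+\langle b^j,\sigma(df\otimes e)(b_i)\rangle .\]
The commutator $[\theta_i^j,f](e)=\langle b^j,\sigma(df\otimes e)(b_i)\rangle$ is $C^\infty(U,\R)$-linear in $e$, because $\sigma$ is induced by a vector bundle map. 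It follows that every second commutator $[[\theta_i^j,f],g]$ vanishes, so $\theta_i^j\in\mathcal{D}_1^1(\mathcal{E})$. The symbol of $\theta_i^j$ is $\sigma_i^j(df\otimes e):=\langle b^j,\sigma(df\otimes e)(b_i)\rangle$, which is a derivation in $f$. Locality in $e$ follows in the same way, exactly as in Remark \ref{remark-locality}. This gives the existence and uniqueness of $\theta$.

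For the converse, let $\theta=[\theta_i^j]$ be any matrix of first order operators in $\mathcal{D}_1^1(\mathcal{E}\vert_U)$. For $b=\sum_if^ib_i$ we define
\[\nabla_eb:=\sum_{i}\rho(e)(f^i)\,b_i+\sum_{i,j}f^i\,\theta_i^j(e)\,b_j .\]
$\R$-bilinearity is clear.

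Property 1 of Definition \ref{def:nonlinear:connection} holds because $\rho(e)$ is a derivation. Indeed, for $fb=\sum(ff^i)b_i$ we get
\[\nabla_e(fb)=\sum_i\big(f\rho(e)(f^i)+f^i\rho(e)(f)\big)b_i+f\sum_{i,j}f^i\theta_i^j(e)b_j=f\nabla_eb+\rho(e)(f)\,b .\]

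Property 2 uses $\rho(fe)=f\rho(e)$ together with the first order property of $\theta_i^j$. The operator $\theta_i^j(fe)-f\theta_i^j(e)=:\tau_i^j(f,e)$ is $C^\infty(U,\R)$-linear in $e$ and a derivation in $f$. Hence it is the symbol of $\theta_i^j$, so there is a bundle map $\tau_i^j:T^\ast U\otimes E\vert_U\to\R$ with $\tau_i^j(f,e)=\tau_i^j(df\otimes e)$. We then set
\[\sigma(df\otimes e)(b_i):=\sum_j\tau_i^j(df\otimes e)\,b_j,\]
extended $C^\infty(U,\R)$-linearly in $b$. This defines a vector bundle map $\sigma:T^\ast U\otimes E\to\mathrm{End}(B\vert_U)$, and with it $\nabla_{fe}b=f\nabla_eb+\sigma(df\otimes e)(b)$. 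Finally, applying the direct construction to this $\nabla$ returns $\theta$ itself, so the correspondence is bijective.

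The main obstacle is the step identifying the symbol of $\theta_i^j$ with a genuine vector bundle map $T^\ast M\otimes E\to\R$. This requires $\tau_i^j(f,e)$ to depend on $f$ only through $df$ and to be tensorial in both arguments. We plan to handle it using the characterization of first order differential operators and their symbols from Appendix \ref{app-Jet bundles}: $[[\theta,f],g]=0$ gives $C^\infty(U,\R)$-linearity in $e$, and the Leibniz property in $f$ then gives factorization through $df$. If needed, we will shrink $U$ so that $E\vert_U$ is also trivial and write everything explicitly in coordinates.
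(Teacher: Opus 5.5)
Your proof is correct and follows essentially the same route as the paper: you extend $\nabla_e b_i=\sum_j\theta_i^j(e)b_j$ to all of $\Gamma(B\vert_U)$ by the Leibniz rule, and you obtain $\sigma$ by assembling the symbols of the $\theta_i^j$, viewed as sections of $(TM\otimes E^\ast)\vert_U$, into an $\mathrm{End}(B\vert_U)$-valued matrix. You also fill in two points the paper leaves implicit: that $\theta_i^j(e)=\langle b^j,\nabla_eb_i\rangle$ has order $\leq 1$ in the direct direction, and that $[\theta_i^j,f]$ is $C^\infty(U,\R)$-linear in $e$ and a derivation in $f$, so it factors through $df$.
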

\begin{proof}
The direct implication of the above proposition is immediate. We prove the converse. Given a matrix $\theta = [
                       \begin{array}{c}
                         \theta_i^j \\
                       \end{array}]
$ of elements in $\mathcal{D}^1_1(\mathcal{E})$, $e\in \Gamma(E\vert_U)$ and $b\in \Gamma(B\vert_U)$, we set
\[\nabla_e b_i = \sum_{j=1}^s\theta_i^j(e)b_j.\]
Define $\nabla_eb$ by applying linearity and the Leibniz rule to $b= \sum_{i=1}^sf^ib_i$:
\begin{eqnarray*}
\nabla_eb & = & \nabla_e(f^ib_i) = f^i \nabla_eb_i + \rho(e)(f^i)b_i \nonumber \\
& = & f^i\theta_i^j(e)b_j + \rho(e)(f^i)b_i = [ f^i\theta_i^j(e) + \rho(e)(f^j)]b_j.
\end{eqnarray*}
On the other hand, for any $g\in \C$, we have
\[\nabla_{ge}b_i = \theta_i^j(ge)b_j = g\theta_i^j(e)b_j + \sigma_i^j(dg\otimes e)b_j,\]
where $\sigma_i^j$ is the symbol of $\theta_i^j$ viewed as a section of $(TM \otimes E^\ast)\vert_U$. Set
$\sigma = [\begin{array}{c}\sigma_i^j \\\end{array}]$, which is a matrix of linear differential operators on
$\Gamma(E\vert_U)$ and $\sigma(dg \otimes e) = [\begin{array}{c}
\sigma_i^j(dg \otimes e) \\
\end{array}]$. The last matrix can be considered as the associated one to an endomorphism of $B\vert_U$ with respect to the frame $(b_1,\ldots,b_s)$.
Under these considerations, $\nabla$ is a nonlinear $E$-connection on $B\vert_U$.
\end{proof}

\subsection{Curvature of a nonlinear $E$-connection and curvature differential operators}
Let $\mathcal{D}^p(\mathcal{E}, \Gamma(B))$, $p\in \N$, be the $\C$-module of $p$-differential operators on $\mathcal{E}$ with values in $\Gamma(B)$. We set $\mathcal{D}(\mathcal{E}, \Gamma(B)): = \big(\mathcal{D}^p(\mathcal{E}, \Gamma(B))\big)_{p\in \N}$ and we note that $\mathcal{D}^0(\mathcal{E}, \Gamma(B))\cong\Gamma(B)$.

\vspace{2mm}
\noindent
We extend the shuffle product $\diamond$ in $\mathcal{D}(\mathcal{E})$ to a product,
also denoted by $\diamond$, between elements of $\mathcal{D}(\mathcal{E})$ and of $\mathcal{D}(\mathcal{E}, \Gamma(B))$,
by setting, for any $D \in \mathcal{D}^p(\mathcal{E})$ and $\Delta \in \mathcal{D}^q(\mathcal{E}, \Gamma(B))$, $D \diamond \Delta$
to be the element in $\mathcal{D}^{p+q}(\mathcal{E}, \Gamma(B))$ given, for any $e_1, \ldots, e_{p+q} \in \Gamma(E)$, by
\begin{equation*}
D \diamond \Delta (e_1, \ldots, e_{p+q}) = \sum_{\tau \in Sh(p,q)}(-1)^{|\tau|}D(e_{\tau(1)}, \ldots, e_{\tau(p)})\Delta(e_{\tau(p+1)}, \ldots, e_{\tau(p+q)}).
\end{equation*}
In the above definition we take into account that $\Gamma(B)$ is a $\C$-bimodule.
The extended product $\diamond : \mathcal{D}(\mathcal{E}) \times \mathcal{D}(\mathcal{E}, \Gamma(B)) \to \mathcal{D}(\mathcal{E}, \Gamma(B))$
endows $\mathcal{D}(\mathcal{E}, \Gamma(B))$ with the structure of a $\mathcal{D}(\mathcal{E})$-module.

\vspace{2mm}
\noindent
As in the classical theory of linear connections, the covariant derivative $d^\nabla$ of a nonlinear $E$-connection
$\nabla$ on $B$ can be uniquely extended to an operator of degree $+1$, also denoted by $d^{\nabla}$,
\begin{equation*}
d^{\nabla} : \mathcal{D}^{\bullet}(\mathcal{E}, \Gamma(B)) \to \mathcal{D}^{\bullet + 1}(\mathcal{E}, \Gamma(B)),
\end{equation*}
given, for any $\Delta \in \mathcal{D}^{p}(\mathcal{E}, \Gamma(B))$ and $e_1, \ldots, e_{p+1} \in \Gamma(E)$, by
\begin{eqnarray*}
d^{\nabla}\Delta (e_1, \ldots, e_{p+1}) & = & \sum_{i=1}^{p+1}(-1)^{i+1}\nabla_{e_i}(\Delta (e_1, \ldots, \hat{e}_i, \ldots, e_{p+1})) \nonumber \\
& & +\, \sum_{i<j} (-1)^i \Delta (e_1, \ldots, \hat{e}_i, \ldots, \hat{e}_j, \lcf e_i, e_j\rcf, e_{j+1}, \ldots, e_{p+1}),
\end{eqnarray*}
where $\hat{\cdot}$ indicates that the corresponding term is omitted. The above operator $d^{\nabla}$ is a generalized derivation of $\mathcal{D}(\mathcal{E}, \Gamma(B))$. For any $D \in \mathcal{D}^p(\mathcal{E})$ and $\Delta \in \mathcal{D}(\mathcal{E}, \Gamma(B))$, it satisfies the generalized Leibniz rule:
\begin{equation}\label{general-Leibniz}
d^{\nabla} (D \diamond \Delta) = \partial_E D \diamond \Delta + (-1)^p D\diamond d^{\nabla} \Delta.
\end{equation}

\vspace{1mm}
\noindent
The map $(d^{\nabla})^2 : \mathcal{D}^{\bullet}(\mathcal{E}, \Gamma(B)) \to \mathcal{D}^{\bullet + 2}(\mathcal{E}, \Gamma(B))$ is $\C$-linear
on the sections of $B$, so it can be considered as an element $R^{\nabla}$ of the space $\mathcal{D}^2(\mathcal{E}, \mathrm{End}(\Gamma(B)))$
of $2$-differential operators on $\mathcal{E}$ with values in $\mathrm{End}(\Gamma(B))$.

\begin{definition}
The \emph{curvature} of a nonlinear $E$-connection $\nabla$ on $B$ is the operator $(d^{\nabla})^2 \in \mathcal{D}^2(\mathcal{E}, \mathrm{End}(\Gamma(B)))$.
If the curvature vanishes identically, then $\nabla$ is said to be \emph{flat}.
\end{definition}
If $\nabla$ is flat, $d^{\nabla}$ is a differential on $\mathcal{D}^{\bullet}(\mathcal{E}, \Gamma(B))$ and the cohomology $H^{\bullet}(\mathcal{E}, \Gamma(B))$ of
the cochain complex $\big(\mathcal{D}^{\bullet}(\mathcal{E}, \Gamma(B)), d^{\nabla} \big)$ is called \emph{Loday algebroid cohomology with coefficients in $\Gamma(B)$}. In this case, we say that \emph{$B$ is a representation of the Loday algebroid $E$}.

\vspace{2mm}
\noindent
On $\mathcal{D}^{\bullet}(\mathcal{E}, \Gamma(B))$, it is also defined the operator \emph{interior product of a differential operator by a section of $E$} given, for any $e\in \Gamma(E)$, by
\begin{eqnarray*}
i_e : \mathcal{D}^{\bullet}(\mathcal{E}, \Gamma(B)) & \to & \mathcal{D}^{\bullet - 1}(\mathcal{E}, \Gamma(B)) \\
 \Delta &\mapsto & i_e \Delta : = \Delta(e, \ldots).
\end{eqnarray*}
Hence, by considering the graded commutator on the space of graded endomorphisms of $\mathcal{D}^{\bullet}(\mathcal{E}, \Gamma(B))$,
\begin{equation*}
[P,Q]_{com} = P\circ Q - (-1)^{pq}Q\circ P,
\end{equation*}
where $P$ and $Q$ are graded endomorphisms of $\mathcal{D}^{\bullet}(\mathcal{E}, \Gamma(B))$ of degree $p$ and $q$, respectively, we obtain the operator
\begin{equation*}
\nabla_e  =  [i_e, d^{\nabla}]_{com} = i_e \circ d^{\nabla} + d^{\nabla} \circ i_e
\end{equation*}
and the identity, for any $e_1, e_2 \in \Gamma(E)$,
\begin{equation*}
[\nabla_{e_1}, i_{e_2}]_{com}  =  \nabla_{e_1} \circ i_{e_2} - i_{e_2} \circ \nabla_{e_1} = i_{\lcf e_1, e_2\rcf}.
\end{equation*}

\begin{proposition}
The curvature $(d^{\nabla})^2 : \Gamma(B) \to \mathcal{D}^{2}(\mathcal{E}, \Gamma(B))$ of a nonlinear $E$-connection $\nabla$ on $B$ satisfies the identity
\begin{equation}\label{curvat-d2}
i_{e_2}\circ i_{e_1} ((d^{\nabla})^2 b) = \nabla_{e_1}\nabla_{e_2}b -  \nabla_{e_2}\nabla_{e_1}b- \nabla_{\lcf e_1, e_2 \rcf}b.
\end{equation}
As a bidifferential operator on $\mathcal{E}$ with values in $\mathrm{End}(\Gamma(B))$, it is of second order with respect to the first argument
and of first order with respect to the second argument. For this reason, we write $R^{\nabla}\in \mathcal{D}^2_{(2,1)}(\mathcal{E}, \mathrm{End}(\Gamma(B)))$.
\end{proposition}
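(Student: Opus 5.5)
The plan is to compute $(d^{\nabla})^2 b$ directly from the defining formula of the extended operator $d^{\nabla}$, and then to read off the orders of the resulting bidifferential operator from the two Leibniz-type properties in Definition \ref{def:nonlinear:connection}.

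\textbf{Step 1: the curvature identity \eqref{curvat-d2}.} For $b\in\Gamma(B)$ we have $d^{\nabla}b=\nabla_{\cdot}b\in \mathcal{D}^1_1(\mathcal{E},\Gamma(B))$. Apply the formula for $d^{\nabla}$ on $\mathcal{D}^1(\mathcal{E},\Gamma(B))$ with $p=1$ to $\Delta=d^{\nabla}b$. The first sum gives $\nabla_{e_1}(\Delta(e_2))-\nabla_{e_2}(\Delta(e_1))$. The second sum has the single term $i=1$, $j=2$, which contributes $(-1)^1\Delta(\lcf e_1,e_2\rcf)$. Altogether
\[(d^{\nabla})^2b\,(e_1,e_2)=\nabla_{e_1}\nabla_{e_2}b-\nabla_{e_2}\nabla_{e_1}b-\nabla_{\lcf e_1,e_2\rcf}b.\]
Since $i_{e_2}\circ i_{e_1}$ evaluates a $2$-operator at $(e_1,e_2)$, this is exactly \eqref{curvat-d2}. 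One could instead use the Cartan relations $\nabla_e=[i_e,d^{\nabla}]_{com}$ and $[\nabla_{e_1},i_{e_2}]_{com}=i_{\lcf e_1,e_2\rcf}$, but the direct computation is shorter.

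\textbf{Step 2: $\C$-linearity in $b$.} Expanding with property 1 of the definition, $R^{\nabla}(e_1,e_2)(fb)=fR^{\nabla}(e_1,e_2)b$. The terms involving $\rho(e_i)(f)$ cancel, and the terms $\rho(e_1)\rho(e_2)f-\rho(e_2)\rho(e_1)f-\rho(\lcf e_1,e_2\rcf)f$ vanish by \eqref{rho-homom}. So $R^{\nabla}$ takes values in $\mathrm{End}(\Gamma(B))$.

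\textbf{Step 3: orders in $e_1$ and $e_2$.} For the second argument, compute $R^{\nabla}(e_1,fe_2)-fR^{\nabla}(e_1,e_2)$ using property 2 and the left Leibniz rule $\lcf e_1,fe_2\rcf=f\lcf e_1,e_2\rcf+\rho(e_1)(f)e_2$. After the $\rho(e_1)(f)\nabla_{e_2}$ terms cancel, what remains is
\[\nabla_{e_1}\circ\sigma(df\otimes e_2)-\sigma(df\otimes e_2)\circ\nabla_{e_1}-\sigma(d\rho(e_1)(f)\otimes e_2)-\sigma(df\otimes\lcf e_1,e_2\rcf),\]
where the terms $\pm\sigma(df\otimes e_2)\nabla_{e_1}$ arising from the product rule combine into the commutator. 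This expression contains $f$ only through its first derivatives, while $d\rho(e_1)(f)$ still involves the second derivatives of $f$. To see that the order is nonetheless only $1$, I would take the further commutator with multiplication by a second function $g$ and check that it vanishes. In that commutator the second-derivative contributions of $e_2$ cancel, since $\sigma$ is tensorial and $\nabla_{e_1}$ is first order in $e_1$. This gives order $\le 1$ in $e_2$.

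For the first argument, compute $R^{\nabla}(fe_1,e_2)$ using property 2 twice together with the right anchor formula of Theorem \ref{theorem - def - Loday}. The expansion produces terms such as $\sigma(d\rho(e_2)(f)\otimes e_1)$ and $\nabla_{e_2}\circ\sigma(df\otimes e_1)$. These involve second derivatives of $f$ that do not cancel, so the iterated commutator with two functions is nonzero but the threefold one vanishes. This gives order $2$ in $e_1$.

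The main obstacle is the bookkeeping in Step 3: one must check that in the $e_2$-slot every second-derivative contribution cancels, while in the $e_1$-slot the second-order symbol built from $\sigma$ and $\alpha$ is generically nonzero. I would organize the computation in local coordinates, writing everything through the symbols $\sigma$, $\rho$ and $\alpha$.
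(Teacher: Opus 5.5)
Your route is the paper's own: evaluate the extended $d^{\nabla}$ on $d^{\nabla}b=\nabla_{\cdot}b$ to get \eqref{curvat-d2}, then read off the orders from the symbols. The paper says no more than ``simple computation'' and ``computing its symbols''. Steps 1 and 2 are correct, and so is your conclusion for the $e_2$-slot, but the explanations in Step 3 are off.

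In the $e_2$-slot, $R^{\nabla}(e_1,fe_2)-fR^{\nabla}(e_1,e_2)$ is a derivation in $f$, hence depends only on $df$. The second derivatives of $f$ in $[\nabla_{e_1},\sigma(df\otimes e_2)]_{com}$ and in $\sigma(d\rho(e_1)(f)\otimes e_2)$ cancel each other. The next symbol vanishes because, under $e_2\mapsto ge_2$, two terms $\rho(e_1)(g)\sigma(df\otimes e_2)$ cancel: one comes from property 1 inside the commutator, the other from the left Leibniz rule in $\sigma(df\otimes\lcf e_1,ge_2\rcf)$. The first-orderness of $\nabla$ in $e_1$ plays no role.

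In the $e_1$-slot you blame the wrong terms. The expansion gives
\begin{align*}
T_f(e_1,e_2)&:=R^{\nabla}(fe_1,e_2)-fR^{\nabla}(e_1,e_2)\\
&=-[\nabla_{e_2},\sigma(df\otimes e_1)]_{com}-\sigma(df\otimes\lcf e_1,e_2\rcf)+\sigma(d\rho(e_2)(f)\otimes e_1)-\nabla_{\alpha(df\otimes e_1)(e_2)},
\end{align*}
where the last operator is $\C$-linear on $\Gamma(B)$ since $\rho\circ\alpha=0$ by \eqref{rho-alpha = 0}.

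The terms $\nabla_{e_2}\circ\sigma(df\otimes e_1)$ and $\sigma(d\rho(e_2)(f)\otimes e_1)$ that you single out behave exactly as in the $e_2$-slot. Their second derivatives of $f$ cancel. Under $e_1\mapsto ge_1$, the commutator's contribution $-\rho(e_2)(g)\sigma(df\otimes e_1)$ is cancelled by the $-\rho(e_2)(g)e_1$ term of the right Leibniz rule inside $-\sigma(df\otimes\lcf ge_1,e_2\rcf)$.

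The only part of $T_f$ that fails to be a derivation in $f$ is $-\nabla_{\alpha(df\otimes e_1)(e_2)}$. The actual second symbol is
\[
T_f(ge_1,e_2)-gT_f(e_1,e_2)=-\sigma\big(df\otimes\alpha(dg\otimes e_1)(e_2)\big)-\sigma\big(dg\otimes\alpha(df\otimes e_1)(e_2)\big).
\]
One term comes from the $\alpha$-part of $\lcf ge_1,e_2\rcf$, the other from property 2 applied to $\nabla_{g\,\alpha(df\otimes e_1)(e_2)}$.

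This symbol is tensorial in $e_1$, so the third symbol vanishes. It is symmetric in $(f,g)$ and is zero whenever $\alpha=0$ or $\sigma=0$. So the second order in $e_1$ is produced precisely by the interaction of $\alpha$ with the symbol of the nonlinear connection; for Lie algebroids or linear connections, $R^{\nabla}$ is first order in both slots. With this correction, which your closing sentence already anticipates, the argument is complete.
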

\begin{proof}
A simple computation shows that \eqref{curvat-d2} is true, from which we obtain the well known curvature formula:
\begin{equation}\label{expr-curvature}
R^{\nabla}(e_1,e_2) = \nabla_{e_1}\nabla_{e_2} - \nabla_{e_2}\nabla_{e_1}- \nabla_{\lcf e_1, e_2 \rcf}.
\end{equation}
The assertion that $R^{\nabla}\in \mathcal{D}^2_{(2,1)}(\mathcal{E}, \mathrm{End}(\Gamma(B)))$ can be justified by computing its symbols.
\end{proof}

\begin{remark}
\emph{Viewing $\nabla_e$ as a derivative endomorphism of $\Gamma(B)$, the curvature $R^{\nabla}$ can be written as
\[R^{\nabla}(e_1,e_2) = [\nabla_{e_1},\nabla_{e_2}]_{com} - \nabla_{\lcf e_1, e_2 \rcf},\]
where $[\cdot, \cdot]_{com}$ is the Lie algebra bracket on the space $\mathrm{Der}(\Gamma(B))$. The flatness of $\nabla$ implies that
\[\nabla_{\lcf e_1, e_2 \rcf} = [\nabla_{e_1},\nabla_{e_2}]_{com},\]
which means that $\nabla$ is a Loday algebra homomorphism from $(\Gamma(E), \lcf \cdot, \cdot \rcf)$ to $(\mathrm{Der}(\Gamma(B)), [\cdot, \cdot]_{com})$.
For the last conclusion, we use the fact that every Lie algebra is a Loday algebra.}
\end{remark}

\vspace{1mm}
\noindent
As we have seen in Subsection \ref{LA connection}, with respect to a local frame of smooth sections of $B$, a nonlinear $E$-connection $\nabla$ on $B$ can be represented by a matrix $\theta$ of first order differential operators, which describes $\nabla$ locally.
In the following, we will see that the curvature of $\nabla$ is represented by a matrix of $2$-differential operators and that the relationship between these two matrices yields the structural equation of $\nabla$.
Using the notation introduced in the corresponding paragraph of Subsection \ref{LA connection}, we have that,
for any $e_1, e_2 \in \Gamma(E\vert_U)$, the section $R^{\nabla}(e_1,e_2)b_i$ of $B\vert_U$ is a $C^\infty(U,\R)$-linear combination of $(b_1, \ldots, b_s)$ with coefficients $\Theta_i^j$ depending on $(e_1,e_2)$:
\begin{equation}\label{curvat-Theta}
R^{\nabla}(e_1, e_2)b_i = \sum_{j=1}^{s}\Theta_i^j(e_1, e_2)b_j.
\end{equation}
From \eqref{expr-curvature} and the properties of $\nabla$, we obtain
\begin{equation}\label{second-struct-eq}
\Theta_i^j = \partial_E \theta_i^j - \sum_{m=1}^s\theta^m_i \diamond \theta^j_m, \quad \quad i,j =1, \ldots, s.
\end{equation}
So, $\Theta_i^j$, $i,j =1, \ldots, s$, are $2$-differential operators on $\Gamma(E\vert_U)$ of second order, called the \emph{curvature $2$-differential operators of $\nabla$}. Precisely, $\Theta_i^j \in \mathcal{D}^2_{(2,1)}(\Gamma(E\vert_U))$. The matrix $\Theta = [\begin{array}{c}
                                                                             \Theta_i^j \\
                                                                           \end{array}]$
is called the \emph{curvature matrix of $\nabla$ relative to the frame $(b_1, \ldots, b_s)$} and the relations \eqref{second-struct-eq}
are called \emph{second structural equations of $\nabla$}. The set of equations \eqref{second-struct-eq} can be written in matrix form
\begin{equation}\label{THETA}
\Theta = \partial_E\theta - \theta \diamond \theta,
\end{equation}
where $\partial_E\theta$ is defined to be $[\begin{array}{c}
                                               \partial_E \theta_i^j \\
                                             \end{array}]$
and $\theta \diamond \theta$ is defined to be the matrix of differential operators whose $(i,j)$-entry is $(\theta \diamond \theta)_i^j = \sum_{m=1}^s \theta^m_i \diamond \theta^j_m$.

\vspace{2mm}
\noindent
At this point, it is of interest to derive the transformation rules for the local expressions of $\nabla$ and its curvature $R^{\nabla}$.

\begin{proposition}\label{prop - theta - theta'}
Suppose that $(b_1, \ldots, b_s)$ and $(b_1', \ldots, b_s')$ are two local frames of the vector bundle $B$ over the open sets $U$ and $U'$ of $M$, respectively, such that $b_i' = a_i^jb_j$, $a_i^j \in C^\infty (U\cap U', \R)$, for some nondegenerate matrix $a=[\begin{array}{c}
                                               a_i^j \\
                                             \end{array}]$. If $\theta$ and $\theta'$ are the connection matrices and $\Theta$ and $\Theta'$ are the curvature matrices of a nonlinear $E$-connection $\nabla$ on $B$ relative to $(b_1, \ldots, b_s)$ and $(b_1', \ldots, b_s')$, respectively, on $U\cap U'\neq \emptyset$, then
\begin{enumerate}
\item
\begin{equation}\label{theta - transf - rule}
\theta' = a\diamond \theta \diamond a^{-1} + \partial_E a \diamond a^{-1},
\end{equation}
\item
\begin{equation*}\label{THETA - transf - rule}
\Theta' = a \diamond \Theta \diamond a^{-1}.
\end{equation*}
\end{enumerate}
\end{proposition}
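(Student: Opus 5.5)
The plan is to compute $\nabla_e b_i'$ directly using only the left Leibniz rule (property 1 of Definition \ref{def:nonlinear:connection}). The right-hand property involving the symbol $\sigma$ never enters, because the frame change acts only on the $\Gamma(B)$-argument. The products involving $a$ need a convention. $a$ is a matrix of functions, that is, of $0$-differential operators, so $a\diamond \theta$ is the ordinary matrix product. $\partial_E a=[\partial_E a_i^j]$ is a matrix of elements of $\Gamma(E^\ast)\subset\mathcal{D}^1(\mathcal{E})$, by \eqref{der_E f - transpose}. Throughout I use the index convention of \eqref{THETA}, namely $(\theta\diamond\theta)_i^j=\sum_m\theta_i^m\diamond\theta_m^j$, so that matrices multiply with row index $i$ and column index $j$.

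\textbf{Step 1 (connection matrices).} For $e\in\Gamma(E\vert_{U\cap U'})$, the Leibniz rule gives
\[
\nabla_e b_i' = \nabla_e(a_i^jb_j) = a_i^j\theta_j^k(e)b_k + \rho(e)(a_i^k)\,b_k = \big(a_i^j\theta_j^k(e)+\partial_E a_i^k(e)\big)b_k .
\]
Next I substitute $b_k=(a^{-1})_k^l b_l'$. Comparing with $\nabla_e b_i'=\theta_i'^{\,l}(e)b_l'$ yields $\theta'=(a\theta+\partial_E a)a^{-1}$. In $\diamond$ notation this is exactly \eqref{theta - transf - rule}.

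\textbf{Step 2 (curvature).} There are two routes; I would use the first.

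\emph{First route.} Since $R^\nabla$ is $C^\infty$-linear in $b$, \eqref{curvat-Theta} gives
\[
R^\nabla(e_1,e_2)b_i' = a_i^jR^\nabla(e_1,e_2)b_j = a_i^j\Theta_j^k(e_1,e_2)(a^{-1})_k^lb_l' .
\]
Reading off coefficients gives $\Theta'=a\Theta a^{-1}$ immediately. This is rigorous because $(d^\nabla)^2$ is $\C$-linear on sections of $B$.

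\emph{Second route, as a consistency check.} Alternatively, one can insert Step 1 into \eqref{THETA} and expand $\partial_E\theta'-\theta'\diamond\theta'$. This uses the derivation property \eqref{d_E cobound} together with $\partial_E^2=0$. It also uses $\partial_E(a^{-1})=-a^{-1}\diamond\partial_E a\diamond a^{-1}$, which follows by applying $\partial_E$ to $a\,a^{-1}=I$ via \eqref{d_E cobound} in degree $0$. Along this route the terms $\partial_E a\diamond\theta\diamond a^{-1}$ and $\partial_E a\diamond a^{-1}\diamond\partial_E a\diamond a^{-1}$ cancel pairwise.

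\textbf{Main obstacle.} The main difficulty is sign and ordering bookkeeping. The shuffle product of two degree-$1$ operators is anticommutative, since $(D\diamond D')(e_1,e_2)=D(e_1)D'(e_2)-D(e_2)D'(e_1)$. For matrices this means $\theta\diamond\theta$ is not zero, and the placement of $a$, which has degree $0$, can be moved freely only across scalars and not across matrix indices. For this reason I rely on the first, tensorial route for the curvature, and use the expansion only as a check that it agrees with \eqref{THETA}.
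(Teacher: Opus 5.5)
Your proof is correct. For part 1 you argue exactly as the paper does: apply the left Leibniz rule to $b_i'=a_i^jb_j$, then re-expand in the primed frame.

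For part 2 the routes differ. The paper's proof is the one you present only as a consistency check. It substitutes \eqref{theta - transf - rule} into the structural equation \eqref{THETA} and expands using the Leibniz rule, $\partial_E^2=0$ and $\partial_E(a^{-1})=-a^{-1}\diamond\partial_E a\diamond a^{-1}$.

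Your main argument reads the transformation law directly from the definition \eqref{curvat-Theta}. It uses only that $R^{\nabla}(e_1,e_2)$ is $C^\infty$-linear in $b$, which follows from the left Leibniz rule together with $\rho(\lcf e_1,e_2\rcf)=[\rho(e_1),\rho(e_2)]$, i.e. \eqref{rho-homom}. This is shorter and avoids all sign bookkeeping. It also makes clear that $\Theta$ transforms tensorially, and that the symbol $\sigma$ plays no role.

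What the paper's route buys is something else. It shows that the transformation of $\Theta$ is a formal consequence of the gauge rule for $\theta$, which confirms that \eqref{THETA} is consistent with \eqref{theta - transf - rule}.

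If you carry out that check, three pairs of terms cancel, not two. Besides $\partial_E a\diamond\theta\diamond a^{-1}$ and $\partial_E a\diamond a^{-1}\diamond\partial_E a\diamond a^{-1}$, there is also $a\diamond\theta\diamond a^{-1}\diamond\partial_E a\diamond a^{-1}$. It appears once in $\partial_E\theta'$, coming from $-a\theta\diamond\partial_E(a^{-1})$, and once in $\theta'\diamond\theta'$.
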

\begin{proof}
Simple computations. We use the Leibniz rule \eqref{leibniz rule - nabla} to deduce the transformation rule for the connection matrix under a change of frame of $B$. In order to establish the transformation rule for the curvature matrix, in addition to the Leibniz rule \eqref{general-Leibniz}, we also use the second structural equation \eqref{THETA}.
\end{proof}

\subsection{Induced nonlinear $E$-connections}
\subsubsection{Connections on the dual bundle}
Let $\nabla$ be a nonlinear $E$-connection on $B$ and $B^\ast$ the dual bundle of $B$. We consider the map
\begin{equation*}
\nabla^{\ast} : \Gamma(E) \times \Gamma(B^{\ast}) \to \Gamma(B^{\ast}),
\end{equation*}
which is completely determined by the relation
\begin{equation}\label{def dual connection}
\rho(e)\langle b^\ast, b\rangle = \langle  \nabla^\ast_e b^\ast, b \rangle + \langle b^\ast, \nabla_eb \rangle, \quad \quad e\in \Gamma(E), \quad b\in \Gamma(B), \quad b^\ast \in \Gamma(B^\ast),
\end{equation}
$\langle \cdot, \cdot \rangle$ being the duality pairing between elements of $\Gamma(B^\ast)$ and of $\Gamma(B)$.

\begin{proposition}
The map $\nabla^{\ast}$ defines a nonlinear $E$-connection on $B^\ast$. For each $e\in \Gamma(E)$, $b^{\ast}\in \Gamma(B^{\ast})$ and $f\in \C$, we have
\begin{enumerate}
\item
$\nabla^{\ast}_{e}(fb^\ast) = f\nabla^{\ast}_{e}b^\ast + \rho(e)(f)b^\ast$,
\item
$\nabla^{\ast}_{fe}b^\ast = f\nabla^{\ast}_{e}b^\ast - \sigma(df \otimes e)^{t} (b^\ast)$,
\end{enumerate}
where $\sigma(df \otimes e)^{t}$ is the transpose map of the endomorphism $\sigma(df \otimes e)$ of $\Gamma(B)$. The differential operator $\nabla^{\ast}$ is also of total order $\leq 1$. The curvature $R^{\nabla^\ast}$ of $\nabla^\ast$ verifies the relation
\begin{equation}\label{dual curvature}
\langle R^{\nabla^\ast}(e_1,e_2)b^\ast, b\rangle + \langle b^\ast, R^{\nabla}(e_1,e_2)b\rangle = 0.
\end{equation}
\end{proposition}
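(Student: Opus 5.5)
The plan is to use the defining relation \eqref{def dual connection}, solved for $\nabla^\ast$, as the only input. For fixed $e$ and $b^\ast$, the assignment $b\mapsto \rho(e)\langle b^\ast,b\rangle-\langle b^\ast,\nabla_e b\rangle$ is $\C$-linear in $b$. Indeed, replacing $b$ by $fb$ gives
\[f\rho(e)\langle b^\ast,b\rangle+\rho(e)(f)\langle b^\ast,b\rangle-f\langle b^\ast,\nabla_eb\rangle-\rho(e)(f)\langle b^\ast,b\rangle,\]
by property 1 of Definition \ref{def:nonlinear:connection}, and the $\rho(e)(f)$ terms cancel. Hence the assignment defines a unique section $\nabla^\ast_eb^\ast\in\Gamma(B^\ast)$, and $\nabla^\ast$ is well defined and $\R$-bilinear.

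The two properties then follow by direct substitution into \eqref{def dual connection}.

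\emph{Property 1.} Replacing $b^\ast$ by $fb^\ast$ and using that $\rho(e)$ is a derivation, we get
\[\rho(e)(f\langle b^\ast,b\rangle)-f\langle b^\ast,\nabla_eb\rangle=f\langle\nabla^\ast_eb^\ast,b\rangle+\rho(e)(f)\langle b^\ast,b\rangle.\]
This gives $\nabla^\ast_e(fb^\ast)=f\nabla^\ast_eb^\ast+\rho(e)(f)b^\ast$.

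\emph{Property 2.} Replacing $e$ by $fe$, we use $\rho(fe)=f\rho(e)$ and property 2 of Definition \ref{def:nonlinear:connection}:
\[\langle\nabla^\ast_{fe}b^\ast,b\rangle=f\rho(e)\langle b^\ast,b\rangle-f\langle b^\ast,\nabla_eb\rangle-\langle b^\ast,\sigma(df\otimes e)b\rangle.\]
This equals $\langle f\nabla^\ast_eb^\ast-\sigma(df\otimes e)^t b^\ast,b\rangle$. Since $\sigma\mapsto-\sigma^t$ comes from a vector bundle map $T^\ast M\otimes E\to\mathrm{End}(B^\ast)$, the map $\nabla^\ast$ is a nonlinear $E$-connection with symbol $-\sigma^t$. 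The total order $\le1$ statement then follows from Remark \ref{rems-dif op con}(1).

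For the curvature, I would apply \eqref{def dual connection} twice, using formula \eqref{expr-curvature}. Expanding $\rho(e_1)\rho(e_2)\langle b^\ast,b\rangle$ by two applications gives
\[\rho(e_1)\rho(e_2)\langle b^\ast,b\rangle=\langle\nabla^\ast_{e_1}\nabla^\ast_{e_2}b^\ast,b\rangle+\langle\nabla^\ast_{e_2}b^\ast,\nabla_{e_1}b\rangle+\langle\nabla^\ast_{e_1}b^\ast,\nabla_{e_2}b\rangle+\langle b^\ast,\nabla_{e_1}\nabla_{e_2}b\rangle.\]
Antisymmetrizing in $(e_1,e_2)$ makes the mixed terms cancel. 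We then subtract the relation \eqref{def dual connection} for $e=\lcf e_1,e_2\rcf$ and use $\rho(\lcf e_1,e_2\rcf)=[\rho(e_1),\rho(e_2)]$ from \eqref{rho-homom}. The left-hand side becomes zero, and the right-hand side is exactly
\[\langle R^{\nabla^\ast}(e_1,e_2)b^\ast,b\rangle+\langle b^\ast,R^\nabla(e_1,e_2)b\rangle,\]
which proves \eqref{dual curvature}.

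The only delicate point is the well-definedness step: checking that the defining expression is tensorial in $b$, so that $\nabla^\ast_eb^\ast$ really is a section of $B^\ast$. This check is handled by the cancellation computed at the start. Everything else is routine bookkeeping, with the anchor homomorphism property \eqref{rho-homom} being the one structural input needed for the curvature identity.
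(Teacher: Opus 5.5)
Your proposal is correct and follows essentially the same route as the paper, whose proof is only ``Simple computations''. You check tensoriality in $b$ to get well-definedness, substitute into \eqref{def dual connection} to obtain the two Leibniz-type rules with symbol $-\sigma^t$, and apply \eqref{def dual connection} twice together with \eqref{rho-homom} to obtain \eqref{dual curvature}; each of these steps checks out.
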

\begin{proof}
Simple computations.
\end{proof}

\subsubsection{Connections on the bundle of endomorphisms}
The pair $(\nabla, \nabla^\ast)$ of nonlinear $E$-connections on $B$ and $B^\ast$, respectively, induces tensor product nonlinear $E$-connections on any tensor bundle constructed from $B$ and $B^\ast$. In particular, it induces on $B^\ast \otimes B \cong \mathrm{End}(B)$ the nonlinear $E$-connection
$\tilde{\nabla} = \nabla^\ast \otimes \mathrm{Id}_{\Gamma(B)} + \mathrm{Id}_{\Gamma(B^\ast)}\otimes \nabla$. The last, in an alternative way, is given, for any $e\in \Gamma(E)$ and $\Phi \in \mathrm{End}(\Gamma(B))$, by
\begin{equation*}
\tilde{\nabla}_e\Phi = [\nabla_e, \Phi]_{com},
\end{equation*}
where $[\cdot, \cdot]_{com}$ is the commutator bracket on $\mathrm{Der}(\Gamma(B))$\footnote{We are obliged to
consider the space $\mathrm{Der}(\Gamma(B))$ of derivative endomorphisms of $\Gamma(B)$, which contains the space $\mathrm{End}(\Gamma(B))$,
because $\nabla_e \in \mathrm{Der}(\Gamma(B))$. But, the bracket of $\nabla_e$ with an endomorphism of $\Gamma(B)$ is
an endomorphism of $\Gamma(B)$.}. For any $f\in \C$, $\tilde{\nabla}$ verifies the relations
\begin{enumerate}
\item
$\tilde{\nabla}_e(f\Phi) = f \tilde{\nabla}_e\Phi + \rho(e)(f)\Phi$,
\item
$\tilde{\nabla}_{fe}\Phi = f\tilde{\nabla}_e\Phi + [\sigma(df \otimes e), \Phi]_{com}$,
\end{enumerate}
whence we conclude that it is also a bidifferential operator of total order $\leq 1$.

\vspace{1mm}
\noindent
For any $\Phi, \Psi \in \mathrm{End}(\Gamma(B))$ and $e\in \Gamma(E)$,
\begin{equation}\label{formula der circ}
\tilde{\nabla}_e (\Phi \circ \Psi) = (\tilde{\nabla}_e \Phi) \circ \Psi + \Phi \circ (\tilde{\nabla}_e \Psi).
\end{equation}

\vspace{2mm}
\noindent
Let $\mathcal{D}^{p}(\mathcal{E}, \mathrm{End}(\Gamma(B)))$ be the space
of multidifferential operators, with $p$ entries, on $\mathcal{E}$ with values in $\mathrm{End}(\Gamma(B))$.
The nonlinear $E$-connection $\tilde{\nabla}$ defines a covariant derivation operator $d^{\tilde{\nabla}}$ of degree $+1$
on $\mathcal{D}^{\bullet}(\mathcal{E}, \mathrm{End}(\Gamma(B)))$ defined,
for any $\Delta \in \mathcal{D}^{p}(\mathcal{E}, \mathrm{End}(\Gamma(B)))$ and $e_1, \ldots, e_{p+1} \in \mathcal{E}$, by
\begin{eqnarray}\label{dif napla tilde}
d^{\tilde{\nabla}}\Delta (e_1, \ldots, e_{p+1}) & = & \sum_{i=1}^{p+1}(-1)^{i+1} \tilde{\nabla}_{e_i}(\Delta(e_1, \ldots, \hat{e}_i, \ldots, e_{p+1})) \nonumber \\
& & + \, \sum_{i<j}(-1)^i \Delta (e_1, \ldots, \hat{e}_i,\ldots, \hat{e}_j, \lcf e_i, e_j\rcf, e_{j+1}, \ldots, e_{p+1}).
\end{eqnarray}
The curvature $R^{\tilde{\nabla}} = (d^{\tilde{\nabla}})^2$ of $\tilde{\nabla}$ is given, for any $e_1,e_2 \in \Gamma(E)$ and $\Phi \in \mathrm{End}(\Gamma(B))$, by
\begin{equation*}
R^{\tilde{\nabla}}(e_1,e_2)\Phi = R^{\nabla}(e_1,e_2) \circ \Phi - \Phi \circ R^{\nabla}(e_1,e_2)
\end{equation*}
or, alternatively, by
\[R^{\tilde{\nabla}}(e_1,e_2)\Phi = [R^{\nabla}(e_1,e_2), \Phi]_{com}.\]
Hence, the flatness of $\nabla$ implies the flatness of $\tilde{\nabla}$ and, in this case, $d^{\tilde{\nabla}}$ is a
differential. Then, the pair $\big(\mathcal{D}(\mathcal{E}, \mathrm{End}(\Gamma(B))), \,  d^{\tilde{\nabla}}\big)$,
where $\mathcal{D}(\mathcal{E}, \mathrm{End}(\Gamma(B))) = \big(\mathcal{D}^{p}(\mathcal{E}, \mathrm{End}(\Gamma(B)))\big)_{p\in \N}$,
is a cochain complex with cohomology $H^{\bullet}(\mathcal{E}, \mathrm{End}(\Gamma(B)))$.

\vspace{2mm}
\noindent
In the above framework, the \emph{Bianchi identity}
\begin{equation}\label{bianchi id}
d^{\tilde{\nabla}}R^{\nabla} =0
\end{equation}
holds.

\vspace{1mm}
\noindent
In terms of the connection matrix $\theta$ and the curvature matrix $\Theta$ of $\nabla$, the Bianchi identity can be written as
\begin{equation}\label{bianchi id - theta}
\partial_E \Theta - \theta \diamond \Theta + \Theta \diamond \theta = 0,
\end{equation}
where $\theta \diamond \Theta$ (resp. $\Theta \diamond \theta$) is the matrix of which the $(i,j)$-entry
is the $3$-differential operator $\theta_i^l\diamond \Theta_l^j$ (resp. $\Theta_i^l \diamond \theta_l^j$). It is an immediate consequence of \eqref{THETA}.

\vspace{3mm}
\noindent
Also, for use in Section \ref{section char classes}, we note that the shuffle product $\diamond$ in $\mathcal{D}(\mathcal{E})$ can be extended to a product,
also denoted by $\diamond$, on the space $\mathcal{D}(\mathcal{E}, \mathrm{End}(\Gamma(B)))=\big( \mathcal{D}^p(\mathcal{E}, \mathrm{End}(\Gamma(B)))\big)_{p\in \N} $
by setting, for any $\Delta \in \mathcal{D}^p(\mathcal{E}, \mathrm{End}(\Gamma(B)))$ and $\Delta' \in \mathcal{D}^q(\mathcal{E}, \mathrm{End}(\Gamma(B)))$,
\begin{equation}\label{sh-prod-values endomorphisms}
\Delta \diamond \Delta'(e_1, \ldots, e_{p+q}) = \sum_{\tau \in Sh(p,q)}(-1)^{|\tau|}\Delta(e_{\tau(1)}, \ldots, e_{\tau(p)})\circ \Delta'(e_{\tau(p+1)}, \ldots, e_{\tau(p+q)}).
\end{equation}
Then the next result is true.
\begin{proposition}\label{prop-grad-end-alg}
The space $\big(\mathcal{D}(\mathcal{E}, \mathrm{End}(\Gamma(B))), \diamond \big)$ is a graded associative unital $\R$-algebra.
\end{proposition}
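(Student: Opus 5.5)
The plan is to verify the axioms directly from the defining formula \eqref{sh-prod-values endomorphisms}, in three steps. First, $\diamond$ is well defined and bilinear. Second, there is a unit. Third, $\diamond$ is associative. Graded commutativity is not claimed, and indeed fails, since composition in $\mathrm{End}(\Gamma(B))$ is noncommutative.

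\emph{Well-definedness and bilinearity.} For $\Delta\in\mathcal{D}^p(\mathcal{E},\mathrm{End}(\Gamma(B)))$ and $\Delta'\in\mathcal{D}^q(\mathcal{E},\mathrm{End}(\Gamma(B)))$, each summand $\Delta(e_{\tau(1)},\ldots)\circ\Delta'(e_{\tau(p+1)},\ldots)$ is again a multidifferential operator in $(e_1,\ldots,e_{p+q})$. Composition of endomorphisms is $\C$-bilinear, so the order of $\Delta\diamond\Delta'$ in each variable is the order of whichever factor contains that variable. The result therefore lies in $\mathcal{D}^{p+q}(\mathcal{E},\mathrm{End}(\Gamma(B)))$; I would invoke Appendix \ref{app-multidiff} for the fact that such products are multidifferential operators. $\R$-bilinearity is immediate.

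\emph{Unit.} The unit is the constant operator $\mathrm{Id}_{\Gamma(B)}\in\mathcal{D}^0(\mathcal{E},\mathrm{End}(\Gamma(B)))\cong\mathrm{End}(\Gamma(B))$. Since $Sh(0,q)$ and $Sh(p,0)$ each consist only of the identity permutation, we get $\mathrm{Id}\diamond\Delta=\Delta=\Delta\diamond\mathrm{Id}$.

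\emph{Associativity.} This is the main step, and I would argue it exactly as for the shuffle algebra $(\mathcal{D}(\mathcal{E}),\diamond)$ of \cite{eil-ml, grab-k-pon}, replacing the commutative multiplication in $\C$ by composition. Take $\Delta_1,\Delta_2,\Delta_3$ of degrees $p,q,r$ and expand both $(\Delta_1\diamond\Delta_2)\diamond\Delta_3$ and $\Delta_1\diamond(\Delta_2\diamond\Delta_3)$ on $(e_1,\ldots,e_{p+q+r})$. Each expansion becomes a sum over $(p,q,r)$-shuffles $\kappa$, meaning permutations that are increasing on the three consecutive blocks. The summand is
\[(-1)^{|\kappa|}\,\Delta_1(e_{\kappa(1)},\ldots)\circ\Delta_2(\ldots)\circ\Delta_3(\ldots).\]
To see this, use the standard bijections
\[Sh(p+q,r)\times Sh(p,q)\to Sh(p,q,r),\qquad (\tau,\tau')\mapsto \tau\circ(\tau'\times\mathrm{id}_r),\]
and
\[Sh(p,q+r)\times Sh(q,r)\to Sh(p,q,r),\qquad (\tau,\tau'')\mapsto\tau\circ(\mathrm{id}_p\times\tau'').\]
Signs are multiplicative under these compositions, and composition of endomorphisms is associative, so the two expansions coincide.

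The only point that needs care is that the factors must always appear in the fixed order $\Delta_1,\Delta_2,\Delta_3$ in the composition. This holds because a shuffle never reorders the blocks; it only distributes the arguments among them. That is precisely why no commutativity of $\mathrm{End}(\Gamma(B))$ is needed and why the resulting algebra is associative but not graded commutative.
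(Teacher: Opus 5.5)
Your proof is correct. The degree-$0$ element $\mathrm{Id}_{\Gamma(B)}$ is a two-sided unit, and the bijections $Sh(p+q,r)\times Sh(p,q)\cong Sh(p,q,r)\cong Sh(p,q+r)\times Sh(q,r)$ multiply signatures; together with the associativity and $\C$-bilinearity of composition in $\mathrm{End}(\Gamma(B))$, these give associativity and compatibility with the grading, exactly as in the Eilenberg--Mac Lane argument. The paper states this proposition without proof, treating it as the evident analogue of the shuffle algebra $(\mathcal{D}(\mathcal{E}),\diamond)$ of \cite{grab-k-pon} with products in $\C$ replaced by composition, so your argument is precisely the routine verification it leaves implicit.
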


\begin{lemma}
Under the above assumptions, we have
\begin{equation}\label{eq der diamond}
d^{\tilde{\nabla}}(\Delta \diamond \Delta') = (d^{\tilde{\nabla}} \Delta)\diamond \Delta' + (-1)^p \Delta \diamond (d^{\tilde{\nabla}} \Delta').
\end{equation}
\end{lemma}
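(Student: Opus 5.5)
We verify \eqref{eq der diamond} by evaluating both sides on $p+q+1$ sections $e_1,\ldots,e_{p+q+1}\in\Gamma(E)$ and comparing terms. We use the explicit formula \eqref{dif napla tilde} for $d^{\tilde{\nabla}}$, the definition \eqref{sh-prod-values endomorphisms} of $\diamond$, and the derivation property \eqref{formula der circ} of $\tilde{\nabla}_e$ with respect to composition. The argument is the standard one that makes a Chevalley--Eilenberg (here Loday-type) differential a graded derivation of the shuffle product. In particular it parallels the proof, taken from \cite{grab-k-pon}, that $\partial_E$ is a derivation of $(\mathcal{D}(\mathcal{E}),\diamond)$, and the proof of \eqref{general-Leibniz}. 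The only new feature is that the coefficients are non-commutative, so the relative order of the $\Delta$-factor and the $\Delta'$-factor must be kept throughout. Each shuffle preserves that order, so non-commutativity causes no difficulty.

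\emph{First-order terms.} Expanding $d^{\tilde{\nabla}}(\Delta\diamond\Delta')(e_1,\ldots,e_{p+q+1})$, the terms of the first kind are
\[(-1)^{i+1}\tilde{\nabla}_{e_i}\big((\Delta\diamond\Delta')(\ldots\hat e_i\ldots)\big).\]
Into each of these we substitute the shuffle expansion and apply \eqref{formula der circ}:
\[\tilde{\nabla}_{e_i}(\Delta(\cdots)\circ\Delta'(\cdots))=(\tilde{\nabla}_{e_i}\Delta(\cdots))\circ\Delta'(\cdots)+\Delta(\cdots)\circ\tilde{\nabla}_{e_i}\Delta'(\cdots).\]
There is then a bijection between pairs (index $i$, $(p,q)$-shuffle of the remaining indices) and pairs ($(p+1,q)$-shuffle, choice of $e_i$ among the first $p+1$ slots), or ($(p,q+1)$-shuffle, choice of $e_i$ among the last $q+1$ slots). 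Under this bijection, the first summand reorganizes into the $\tilde{\nabla}$-part of $(d^{\tilde{\nabla}}\Delta)\diamond\Delta'$. The second summand reorganizes into the $\tilde{\nabla}$-part of $(-1)^p\Delta\diamond d^{\tilde{\nabla}}\Delta'$; the factor $(-1)^p$ comes from moving $e_i$ past the $p$ arguments fed to $\Delta$.

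\emph{Bracket terms.} These are $(-1)^i(\Delta\diamond\Delta')(\ldots,\hat e_i,\ldots,\hat e_j,\lcf e_i,e_j\rcf,\ldots)$. After expanding the shuffle, we split according to where $\lcf e_i,e_j\rcf$ lands. If it is an argument of $\Delta$ and $e_i$ also precedes within the $\Delta$-block, or both would be in the $\Delta'$-block, we obtain exactly the bracket terms of $d^{\tilde{\nabla}}\Delta\diamond\Delta'$ and of $(-1)^p\Delta\diamond d^{\tilde{\nabla}}\Delta'$. The remaining \emph{mixed} terms are those where $e_i$ would be sent to one factor and $e_j$ to the other, and these must cancel among themselves.

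\emph{Main obstacle.} We expect the cancellation of the mixed bracket terms to be the hardest step. The Loday bracket is not skew-symmetric, and the bracket $\lcf e_i,e_j\rcf$ is placed in the slot of $e_j$ rather than of $e_i$, so the usual Lie-algebra pairing $(i,j)\leftrightarrow(j,i)$ is unavailable. We would first try to reuse verbatim the combinatorial identity underlying \eqref{d_E cobound} in \cite{grab-k-pon}. That identity depends only on the shuffle signs and on the slot convention of \eqref{def:Loday:cohomology:operator}, not on commutativity of the coefficient algebra, because the $\Delta$-value always stays to the left of the $\Delta'$-value. Formally, one can view $\Delta$ and $\Delta'$ as $\mathrm{End}(\Gamma(B))$-valued cochains and observe that the bracket part of $d^{\tilde{\nabla}}$ is exactly the bracket part of $\partial_E$ applied entrywise in a local frame of $B$; the matrix-valued Leibniz rule for $\partial_E$ then already gives that the mixed terms cancel. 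As a sanity check, we would verify the cases $p=q=1$ and $p=1,q=0$ by hand. For $p=1,q=0$ the identity reduces to \eqref{formula der circ} together with \eqref{general-Leibniz}.
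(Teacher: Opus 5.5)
Your proposal is correct and is essentially the paper's argument, which is exactly this direct expansion, with \eqref{formula der circ} handling the $\tilde{\nabla}$-terms.

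One correction: the ``mixed'' bracket terms you anticipate do not exist. Since $\lcf e_i,e_j\rcf$ enters $\Delta\diamond\Delta'$ as a single argument, adjoining the index $i$ to whichever block receives the bracket gives a bijection onto the bracket terms of $(d^{\tilde{\nabla}}\Delta)\diamond\Delta'$ and $(-1)^p\Delta\diamond(d^{\tilde{\nabla}}\Delta')$. The signs agree because the number of indices of the other block preceding $i$ equals $i$ minus the position of $i$ in its own block, together with the factor $(-1)^p$ in the second case. Your detour through \cite{grab-k-pon} and local frames is therefore harmless but unnecessary.
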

\begin{proof}
Simple computations, using (\ref{formula der circ}).
\end{proof}

\vspace{2mm}
\noindent
Moreover, we have that the trace map $\tr : \mathrm{End}(\Gamma(B)) \to \C$ naturally induces a trace map
\[\tr: \mathcal{D}^{p}(\mathcal{E}, \mathrm{End}(\Gamma(B))) \to \mathcal{D}^{p}(\mathcal{E}), \quad \quad p\in \N,\]
defined, for any $\Delta \in \mathcal{D}^{p}(\mathcal{E}, \mathrm{End}(\Gamma(B)))$ and $e_1, \ldots, e_p \in \mathcal{E}$, by
\begin{equation}\label{def-Tr}
\tr \Delta(e_1,\ldots,e_p) = \tr (\Delta(e_1,\ldots,e_p)).
\end{equation}

\vspace{2mm}
\noindent
For any $\Phi \in \mathrm{End}(B)$ and any $e\in \Gamma(E)$,
\begin{equation}\label{tr-rho}
\tr (\tilde{\nabla}_e \Phi) = \rho(e)(\tr \Phi).
\end{equation}

\begin{proposition}
The trace map $\tr : \mathcal{D}^{\bullet}(\mathcal{E}, \mathrm{End}(\Gamma(B))) \to \mathcal{D}^{\bullet}(\mathcal{E})$  satisfies, for any $\Delta \in \mathcal{D}^{\bullet}(\mathcal{E}, \mathrm{End}(\Gamma(B)))$, the identity
\begin{equation}\label{dif - Tr}
\tr d^{\tilde{\nabla}}\Delta = \partial_E (\tr\Delta).
\end{equation}
\end{proposition}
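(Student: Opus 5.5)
The identity \eqref{dif - Tr} will be checked directly, by evaluating both sides on arbitrary arguments. Since the trace map \eqref{def-Tr} is applied pointwise, it is $\R$-linear and $\C$-linear, and it commutes with evaluation at sections of $E$. The proof then reduces to comparing the two defining formulas term by term: formula \eqref{dif napla tilde} for $d^{\tilde{\nabla}}$, and formula \eqref{def:Loday:cohomology:operator} for $\partial_E$.

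Fix $\Delta \in \mathcal{D}^{p}(\mathcal{E}, \mathrm{End}(\Gamma(B)))$ and $e_1, \ldots, e_{p+1}\in\mathcal{E}$. Apply $\tr$ to \eqref{dif napla tilde}. The result splits into two sums.

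\emph{First sum.} Its terms are $(-1)^{i+1}\tr\big(\tilde{\nabla}_{e_i}(\Delta(e_1,\ldots,\hat e_i,\ldots,e_{p+1}))\big)$. For each fixed $i$, the value $\Phi_i:=\Delta(e_1,\ldots,\hat e_i,\ldots,e_{p+1})$ is an endomorphism of $\Gamma(B)$. By \eqref{tr-rho},
\[
\tr(\tilde{\nabla}_{e_i}\Phi_i)=\rho(e_i)(\tr\Phi_i)=\rho(e_i)\big((\tr\Delta)(e_1,\ldots,\hat e_i,\ldots,e_{p+1})\big).
\]
This is exactly the $i$-th anchor term of $\partial_E(\tr\Delta)$.

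\emph{Second sum.} Its terms are $(-1)^i\tr\Delta(e_1,\ldots,\hat e_i,\ldots,\hat e_j,\lcf e_i,e_j\rcf,\ldots)$. By \eqref{def-Tr}, each equals the corresponding bracket term of $\partial_E(\tr\Delta)$.

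Adding the two sums gives $\tr\, d^{\tilde\nabla}\Delta=\partial_E(\tr\Delta)$. One should also note that $\tr\Delta$ is indeed a $p$-differential operator. This holds because $\tr$ is a zero-order, $\C$-linear bundle map $\mathrm{End}(B)\to M\times\R$, and composing a multidifferential operator with such a map preserves the orders in each argument.

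The only substantive ingredient is \eqref{tr-rho}, so that is the step to justify carefully. The plan is to use $\tilde\nabla_e\Phi=[\nabla_e,\Phi]_{com}$ and work in a local frame $(b_1,\ldots,b_s)$ of $B$ over $U$, writing $\Phi b_i=\Phi_i^jb_j$. Using $\nabla_eb_i=\theta_i^j(e)b_j$ and the Leibniz rule in $b$, one computes
\[
[\nabla_e,\Phi]b_i=\big(\rho(e)(\Phi_i^j)+\Phi_i^k\theta_k^j(e)-\theta_i^k(e)\Phi_k^j\big)b_j .
\]
Taking the trace, the two $\theta$-terms cancel by cyclicity of the matrix trace, since $\sum_{i,k}\Phi_i^k\theta_k^i(e)=\sum_{i,k}\theta_i^k(e)\Phi_k^i$. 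What remains is $\rho(e)(\tr\Phi)$.

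The symbol $\sigma$ plays no role in this argument, because $e$ is fixed throughout. The result is independent of the chosen frame, since $\tr$ is invariant under a change of frame. No further obstacle is expected.
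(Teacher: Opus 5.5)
Your proposal is correct and matches the paper's proof: apply $\tr$ to \eqref{dif napla tilde}, then match the bracket terms using \eqref{def-Tr} and the anchor terms using \eqref{tr-rho}. Your local-frame check of \eqref{tr-rho}, where the $\theta$-terms cancel by cyclicity of the trace, adds a verification that the paper states without proof just before the proposition.
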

\begin{proof}
Taking into account the expressions \eqref{dif napla tilde} and \eqref{def:Loday:cohomology:operator}
of the differentials $d^{\tilde{\nabla}}$ and $\partial_E$, respectively, and the relations \eqref{def-Tr} and \eqref{tr-rho},
we can easily establish the identity \eqref{dif - Tr}.
\end{proof}

\vspace{2mm}
\noindent
Furthermore, a straightforward computation gives:
If $(\nabla_t)_{t\in \R}$ is an $1$-parameter family of nonlinear $E$-connections on $B$ and $R^{\nabla_t}$ the curvature of $\nabla_t$, we have
\begin{equation}\label{der R^t}
\frac{d R^{\nabla_t}}{dt} = d^{\tilde{\nabla}_t}\frac{d \nabla_t}{dt}.
\end{equation}
\
\subsection{Examples of nonlinear $E$-connections}
Next, we present some characteristic examples of nonlinear $E$-connections on a vector bundle $B$ over $M$, $(E, \lcf \cdot, \cdot \rcf, \rho, \alpha)$ being a
Loday algebroid over $M$.

\begin{examples}\label{examples - nonlinear connections}

\vspace{1mm}
\noindent
\begin{enumerate}
\item[\emph{1}.]
\emph{By the Jacobi identity, the Loday bracket on $\Gamma(E)$ defines a flat nonlinear $E$-connection $\nabla^{\mathrm{ad}}$ on $E$
called the \emph{adjoint representation of $E$}:}
\[\nabla^{\mathrm{ad}}_{e_1}e_2 : = \lcf e_1, e_2 \rcf, \quad \quad e_1, e_2 \in \Gamma(E).\]
\emph{Its symbol is the vector bundle map $\alpha - \rho^{t} \otimes \mathrm{Id}_{E} : T^\ast M \otimes E \to \mathrm{End}(E)$,
while its connection matrix $\theta^{\mathrm{ad}}$ satisfies the identity}
\begin{equation}\label{curv ad-connection}
\partial_E\theta^{\mathrm{ad}} = \theta^{\mathrm{ad}} \diamond \theta^{\mathrm{ad}}.
\end{equation}
\item[\emph{2}.]
\emph{The adjoint representation can be extended, by derivation, to a representation $\Lie$ of $E$ on $\bigwedge^{k}E$ given, for any $e, e_1,\ldots,e_k \in \Gamma(E)$,
$k=1, \ldots, r=\mathrm{rank}\,E$, by}
\begin{equation*}
\Lie_e (e_1\wedge \ldots \wedge e_k) : = \sum_{i=1}^ke_1\wedge \ldots \wedge \lcf e, e_i\rcf \wedge \ldots \wedge e_k.
\end{equation*}

\vspace{1mm}
\noindent
\emph{Suppose that $E$ is a trivial vector bundle and consider the action $\nabla^{\mathrm{top}} : \Gamma(E) \times \Gamma(\bigwedge^{\mathrm{top}}E) \to \Gamma(\bigwedge^{\mathrm{top}}E)$ given, for any $e\in \Gamma(E)$ and $s \in \Gamma(\bigwedge^{\mathrm{top}}E)$, by
\[\nabla^{\mathrm{top}}_e s = \Lie_es.\]
Let $(e_1, \ldots, e_r)$ be a local frame of smooth sections of $E$ and $\theta^{\mathrm{ad}}$ the connection matrix of $\nabla^{\mathrm{ad}}$ relative to the above frame. A simple computation yields}
\begin{equation*}
\nabla^{\mathrm{top}}_e (e_1\wedge \ldots \wedge e_r) = \tr (\theta^{\mathrm{ad}}(e))e_1\wedge \ldots \wedge e_r,
\end{equation*}
\emph{whence we get $\theta^{\mathrm{top}} =  \tr(\theta^{\mathrm{ad}})$ and, taking into account \eqref{curv ad-connection}, we conclude that $\Theta^{\mathrm{top}}=0$. So, $\bigwedge^{\mathrm{top}}E$ is a representation of $E$ relatively to $\nabla^{\mathrm{top}}$. Also, we have}
\begin{equation*}
\nabla^{\mathrm{top}}_{fe} (e_1\wedge \ldots \wedge e_r) = f\nabla^{\mathrm{top}}_e (e_1\wedge \ldots \wedge e_r) + [\tr(\alpha(df \otimes e))-\rho(e)(f)]e_1\wedge \ldots \wedge e_r.
\end{equation*}
\emph{The last means that the symbol of $\nabla^{\mathrm{top}}$ is the vector bundle map $\sigma : T^\ast M \otimes E \to \mathrm{End}(\bigwedge^{\mathrm{top}}E)$
given by $\sigma(df \otimes e) = [\tr(\alpha(df \otimes e))-\rho(e)(f)]\otimes \mathrm{Id}_{\bigwedge^{\mathrm{top}}E}$.}
\item[\emph{3}.]
\emph{In the same spirit, if $E$ is a trivial vector bundle and $\nabla : \Gamma(E) \times \Gamma(E) \to \Gamma(E)$ a nonlinear $E$-connection on $E$
with connection matrix $\theta = [\begin{array}{c}\theta_i^j \\ \end{array}]$,
curvature matrix $\Theta =  [\begin{array}{c}\Theta_i^j \\ \end{array}]$,
and symbol $\sigma$, then its extension $\nabla' : \Gamma(E) \times \Gamma(\bigwedge^{\mathrm{top}}E) \to \Gamma(\bigwedge^{\mathrm{top}}E)$ on $\bigwedge^{\mathrm{top}}E$,
given by}
\begin{equation*}
\nabla'_e (e_1\wedge \ldots \wedge e_r) = \sum_{i=1}^r e_1 \wedge \ldots \wedge e_{i-1} \wedge \nabla_ee_i \wedge e_{i+1}\wedge \ldots \wedge e_r,
\end{equation*}
\emph{is also a nonlinear $E$-connection on $\bigwedge^{\mathrm{top}}E$. The connection matrix of $1$-differential operators of $\nabla'$
with respect to the frame $(s)=(e_1\wedge \ldots \wedge e_r)$ of $\bigwedge^{\mathrm{top}}E$ is $\theta' = [\begin{array}{c}\tr \theta \\ \end{array}]$ and
its symbol is the vector bundle map
$\sigma' : T^{\ast} M \otimes E \to \mathrm{End}(\bigwedge^{\mathrm{top}}E)$, $\sigma' = (\tr\circ \sigma)\otimes  \mathrm{Id}_{\bigwedge^{\mathrm{top}}E}$.
The curvature matrix of $\nabla'$ is:}
\[\Theta' = \partial_E \theta'= \partial_E(\tr \theta)=\tr (\partial_E\theta)=\tr \Theta.\]
\emph{Hence, if $E$ is a representation of $E$ with respect of $\nabla$, $\bigwedge^{\mathrm{top}}E$ is also a representation of $E$ relatively to $\nabla'$. }
\item[\emph{4}.]
\emph{The cotangent bundle $T^\ast M$ of $M$ is a representation of $E$ relatively to the nonlinear $E$-connection $\nabla : \Gamma(E) \times \Gamma(T^\ast M) \to \Gamma(T^\ast M)$ given, for any $e\in \Gamma(E)$ and $\eta \in \Gamma(T^\ast M)$ by}
\begin{equation}\label{conn on T*M}
\nabla_e \eta : = \mathcal{L}_{\rho(e)}\eta,
\end{equation}
\emph{where $\mathcal{L}$ denotes the classical Lie derivative of differential forms along a vector field.
Its symbol is the vector bundle map
$\sigma = \mathrm{Id}_{T^\ast M}\otimes \rho : T^\ast M \otimes E \to \mathrm{End}(T^\ast M)$ with $\sigma(df \otimes e)(\eta) = \langle \eta, \rho(e)\rangle df$.
While, its connection matrix at a section $e\in \Gamma(E)$, relatively to a coordinate system $(x^1, \ldots, x^n)$ on $M$, is the matrix $\theta(e) = [
  \begin{array}{c}
   \displaystyle{\frac{\partial \rho^i(e)}{\partial x^j}}\\
    \end{array}
   ]$, where $\rho^i(e)$ denotes the $i$-coordinate function of the vector field $\rho(e)$.}
\item[\emph{5}.]
\emph{Let $M$ be an orientable manifold of dimension $n$ and $(x^1, \ldots, x^n)$ a local coordinate system of $M$.
The extension $\nabla^M$ of \eqref{conn on T*M} on $\bigwedge^{n}T^\ast M$ given, for any volume form $\mu \in \Gamma(\bigwedge^{n}T^\ast M)$ and $e \in \Gamma(E)$, by}
\begin{equation*}
\nabla^M_e\mu : = \mathcal{L}_{\rho(e)}\mu,
\end{equation*}
\emph{defines also a flat nonlinear $E$-connection on the trivial real line bundle $\bigwedge^{n}T^\ast M$. Its symbol is the vector bundle map
$\sigma : T^\ast M \otimes E \to \mathrm{End}(\bigwedge^{n}T^\ast M)$, $\sigma(df\otimes e) = \rho(e)(f)\otimes \mathrm{Id}_{\bigwedge^n T^\ast M}$.
Since, for each volume form $\mu \in \Gamma(\bigwedge^{n}T^\ast M)$, $\mathcal{L}_{\rho(e)}\mu = \mathrm{div}_{\mu}(\rho(e))\mu$, where $\mathrm{div}_{\mu}(\rho(e))$ is the divergence of the vector field $\rho(e)$ with respect to the volume form $\mu$, the connection matrix $\theta^M$ of $\nabla^M$ relatively to $(\mu)$ is the matrix
$\theta^M_{\mu} = [\begin{array}{c}\mathrm{div}\circ \rho \\\end{array}]$.}

\vspace{1mm}
\noindent
\emph{The flatness of $\nabla^M$ arises from the fact that,
for any $e_1, e_2 \in \Gamma(E)$, $[\mathcal{L}_{\rho(e_1)}, \mathcal{L}_{\rho(e_2)}]_{com}= \mathcal{L}_{[\rho(e_1), \rho(e_2)]}= \mathcal{L}_{\rho(\lcf e_1, e_2 \rcf)}$. As a consequence, we have that $\bigwedge^{n}T^\ast M$ is a representation of $E$ relatively to $\nabla^M$. Furthermore, the operator $\mathrm{div}_{\mu} \circ \rho$ is a $1$-cocycle of the Loday cochain complex $(\mathcal{D}^{\bullet}(\mathcal{E}), \partial_E)$. Indeed, the curvature matrix of $\nabla^M$ is $\Theta_{\mu}^M = \partial_E \theta^M_{\mu} = [\begin{array}{c}
\partial_E (\mathrm{div}_{\mu}\circ \rho)\\
 \end{array}]$. So, $\partial_E (\mathrm{div}_{\mu}\circ \rho) =0$.}
 \item[\emph{6}.]
\emph{Let $\triangle : \Gamma(E)\times \Gamma(E) \to \Gamma(E)$ be a linear $E$-connection on $E$ (see Remarks \ref{rems-dif op con}). Then the map}
\begin{eqnarray*}
\nabla : \Gamma(E) \times \Gamma(E) & \to & \Gamma(E) \nonumber \\
(e_1, e_2) & \mapsto & \nabla_{e_1}e_2 : = \lcf e_1, e_2\rcf + \triangle_{e_2}e_1
\end{eqnarray*}
\emph{defines a nonlinear $E$-connection on $E$ whose symbol is the vector bundle map $\alpha$.}
\item[\emph{7}.]
Some (non)linear $E$-connections constructed with use of a linear $TM$-connection on $E$.
\emph{Let $\triangle : \Gamma(TM)\times \Gamma(E) \to \Gamma(E)$ be a linear $TM$-connection on $E$.}
\begin{enumerate}
\item
\emph{The map}
\begin{eqnarray*}
\nabla : \Gamma(E) \times \Gamma(E) & \to & \Gamma(E) \nonumber \\
(e_1, e_2) & \mapsto & \nabla_{e_1}e_2 : = \lcf e_1, e_2\rcf + \triangle_{\rho(e_2)}e_1
\end{eqnarray*}
\emph{is a nonlinear $E$-connection on $E$ whose symbol is the vector bundle map $\alpha$.}
\item
\emph{Viewing, for any $e\in \Gamma(E)$, $\triangle_{\cdot} e$ as a section of $T^\ast M \otimes E$, we take that the map}
\begin{eqnarray*}
\nabla' : \Gamma(E) \times \Gamma(E) & \to & \Gamma(E)  \nonumber \\
(e_1, e_2) & \mapsto & \nabla'_{e_1}e_2 :=  \nabla_{e_1}e_2 - \alpha (\triangle_{\cdot} e_1)(e_2) \nonumber \\
& & \quad \quad \quad =  \lcf e_1, e_2\rcf + \triangle_{\rho(e_2)}e_1 - \alpha (\triangle_{\cdot} e_1)(e_2)
\end{eqnarray*}
\emph{is a linear $E$-connection on $E$.}
\item
\emph{The map}
\begin{eqnarray*}
\nabla : \Gamma(E) \times \Gamma(TM) & \to & \Gamma(TM) \nonumber \\
(e, X) & \mapsto & \nabla_eX : = [ \rho(e), X] + \rho(\triangle_Xe)
\end{eqnarray*}
\emph{is a linear $E$-connection on $TM$.}
\end{enumerate}
\item[\emph{8}.]
\emph{The Dorfman connections presented in \cite{bat-pet} when $E$ is a Courant algebroid.}
\end{enumerate}
\end{examples}

\begin{example}[The Lie algebroid of a Loday algebroid as a representation]\emph{We \\work in the setting of the paragraph \emph{The Lie algebroid of the Loday algebroid} \ref{Loday to Lie}. Under the assumptions of that paragraph, the Lie algebroid $\bar{E}=E/E_0$ is a representation of $E$ with respect to the nonlinear $E$-connection (Bott connection)}
\begin{eqnarray*}
\nabla : \Gamma(E) \times \Gamma(\bar{E}) & \to & \Gamma(\bar{E}) \nonumber \\
(e, \bar{e'}) & \mapsto & \nabla_e \bar{e'}: = \overline{\lcf e, e'\rcf}.
\end{eqnarray*}
\emph{Its symbol is the vector bundle map $\sigma : T^\ast M \otimes E \to \mathrm{End}(\bar{E})$ given, for any $e, e'\in \Gamma(E)$ and $f\in \C$,
\[\sigma(df \otimes e)(\bar{e'}) = - \mathrm{pr}\big((\bar{\rho}^T \otimes \mathrm{Id}_E)(df \otimes e)(\bar{e'})\big) = - \bar{\rho}(\bar{e'})(f)\bar{e} = -\rho(e')(f)\bar{e}, \]
where $\mathrm{pr}: E \to \bar{E}$ is the natural projection map.}
\end{example}

\subsection{Nonlinear connections and Loday algebroids comorphisms}
Let $(E, \lcf \cdot, \cdot \rcf_{_E}, \rho_{_E}, \alpha_{_E})$ and $(F, \lcf \cdot, \cdot \rcf_{_F}, \rho_{_F}, \alpha_{_F})$
be two Loday algebroids over $M$ and $N$, respectively, $(\hat{\Phi},\phi)$ a Loday algebroid comorphism with $\phi$ a diffeomorphism, and $B$ a vector bundle over $M$.
Suppose that $M$ is covered by framed open sets $(U, b_1, \ldots , b_s)$ for $B$, $s= \mathrm{rank}B$, and let $(\phi^{-1}(U), \phi^\ast b_1, \ldots, \phi^\ast b_s)$
be the corresponding covering of $N$ by framed open sets for $\phi^\ast B$.

\begin{proposition}\label{from F-conn to E-conn}
Under the above assumptions and notations, a nonlinear $F$-connection $\nabla^F$ on $\phi^\ast B$ defines a (unique) nonlinear $E$-connection $\nabla^E$ on $B$ such that,
for any $e\in \Gamma(E)$ and $b\in \Gamma(B)$,
\begin{equation}\label{compatible E-F-connections}
\phi^\ast \nabla^E_eb = \nabla^F_{\Phi(e)}\phi^\ast b.
\end{equation}
The corresponding connection and curvature matrices, with respect to suitable frames on open sets of $N$ and $M$, respectively, are related as follows:
\[ \Phi^\star_1 \theta^F = \theta^E \quad \quad \mathrm{and} \quad \quad \Phi^\star_2 \Theta^F = \Theta^E.\]
While, for the symbols $\sigma^E$ and $\sigma^F$ of $\nabla^E$ and $\nabla^F$, respectively, we have:
\[\phi^\ast(\sigma^E(df \otimes e)(b)) = \sigma^F(d\phi^\ast f \otimes \Phi(e))(\phi^\ast b), \quad \quad f\in \C.\]
\end{proposition}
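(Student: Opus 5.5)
Since $\phi$ is a diffeomorphism, the map $\phi^\ast:\Gamma(B)\to\Gamma(\phi^\ast B)$ is a bijection, $\phi^\ast(fb)=(f\circ\phi)\,\phi^\ast b$. I would therefore \emph{define} $\nabla^E$ by \eqref{compatible E-F-connections}:
\[\nabla^E_e b := (\phi^{-1})^\ast\big(\nabla^F_{\Phi(e)}\phi^\ast b\big),\qquad e\in\Gamma(E),\ b\in\Gamma(B).\]
Uniqueness is immediate from the injectivity of $\phi^\ast$, and $\R$-bilinearity follows from that of $\nabla^F$ and the linearity of $\Phi$ and $\phi^\ast$.

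Next I check the two axioms of Definition \ref{def:nonlinear:connection}. For the first, using the Leibniz rule of $\nabla^F$ and \eqref{comorphism - left anchor},
\[\nabla^F_{\Phi(e)}\phi^\ast(fb)=\phi^\ast f\,\nabla^F_{\Phi(e)}\phi^\ast b+\rho_{_F}(\Phi(e))(\phi^\ast f)\,\phi^\ast b=\phi^\ast\big(f\nabla^E_eb+\rho_{_E}(e)(f)b\big).\]
For the second, I use \eqref{homomorphism Phi}, $\Phi(fe)=\phi^\ast f\,\Phi(e)$, together with axiom 2 for $\nabla^F$:
\[\nabla^F_{\Phi(fe)}\phi^\ast b=\phi^\ast f\,\nabla^F_{\Phi(e)}\phi^\ast b+\sigma^F(d\phi^\ast f\otimes\Phi(e))(\phi^\ast b).\]
So I set $\sigma^E(df\otimes e)(b):=(\phi^{-1})^\ast\big(\sigma^F(d\phi^\ast f\otimes\Phi(e))(\phi^\ast b)\big)$, which is exactly the stated symbol relation. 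I must then verify that $\sigma^E$ comes from a vector bundle map $T^\ast M\otimes E\to\mathrm{End}(B)$. This holds because $\sigma^F$ is $C^\infty(N)$-multilinear, $\Phi$ is $\phi^\ast$-semilinear, and $d\phi^\ast f=\phi^\ast df$ with $\phi^\ast:\Gamma(T^\ast M)\to\Gamma(T^\ast N)$ an isomorphism. Consequently $\sigma^E$ is $\C$-linear in $df$, $e$ and $b$; equivalently $\sigma^E=\Phi^\star$ applied to the symbol, matching the remark that $\Phi^\star_p$ sends symbols to symbols.

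For the matrices, I take the frames $(b_i)$ on $U$ and $(\phi^\ast b_i)$ on $\phi^{-1}(U)$. Then
\[\phi^\ast\nabla^E_eb_i=\nabla^F_{\Phi(e)}\phi^\ast b_i=\theta^{F\,j}_i(\Phi(e))\,\phi^\ast b_j ,\]
while $\phi^\ast\nabla^E_eb_i=(\theta^{E\,j}_i(e)\circ\phi)\,\phi^\ast b_j$. Comparing coefficients and using \eqref{def - Phi^star} gives $\theta^E=\Phi^\star_1\theta^F$.

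For the curvature, I would avoid recomputing $R$ directly and instead use the second structural equation \eqref{THETA} together with Proposition \ref{Phi LA morph --> Phi* chain map}. Since $\Phi^\star$ is a shuffle-algebra homomorphism commuting with the coboundaries,
\[\Phi^\star_2\Theta^F=\Phi^\star_2(\partial_F\theta^F-\theta^F\diamond\theta^F)=\partial_E\Phi^\star_1\theta^F-\Phi^\star_1\theta^F\diamond\Phi^\star_1\theta^F=\Theta^E .\]
As a cross-check, \eqref{expr-curvature}, the homomorphism property \eqref{def - Loday algebra homomorphism} and the definition of $\nabla^E$ give directly $\phi^\ast R^E(e_1,e_2)b=R^F(\Phi e_1,\Phi e_2)\phi^\ast b$.

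The main obstacle I expect is the localization: $\theta^F$ lives on $\phi^{-1}(U)$ and $\Phi$ is only a global module map. I would handle this via the locality of differential operators (Remark \ref{remark-locality}) together with the diffeomorphism $\phi$, which ensures that $\Phi$ restricts to $\Gamma(E\vert_U)\to\Gamma(F\vert_{\phi^{-1}(U)})$ and that $\Phi^\star$ is well defined on local operators.
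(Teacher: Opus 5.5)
Your proposal is correct and follows the same route as the paper: define $\nabla^E$ through \eqref{compatible E-F-connections}, using that $\phi^\ast:\Gamma(B)\to\Gamma(\phi^\ast B)$ is bijective when $\phi$ is a diffeomorphism, and then check the connection axioms, the symbol relation and the matrix relations, which the paper only asserts are ``easy to verify'' and which you actually carry out. The localization concern you raise is harmless, because $\Phi(e)_y=\hat{\Phi}^T_y e_{\phi(y)}$ is pointwise and so restricts to open sets; also, your direct cross-check $\phi^\ast R^E(e_1,e_2)b=R^F(\Phi e_1,\Phi e_2)\phi^\ast b$ already yields $\Theta^E=\Phi^\star_2\Theta^F$ on its own, without the chain-map proposition.
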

\begin{proof}
Since $\phi$ is supposed to be a diffeomorphism, $\Gamma(\phi^\ast B) \cong \phi^\ast (\C)\otimes \Gamma(E)$.
In this framework, it is easy to verify that the operator $\nabla^E$ defined by \eqref{compatible E-F-connections}
is a nonlinear $E$-connection on $B$ with the mentioned properties.
\end{proof}

\begin{definition}
We say that a nonlinear $E$-connection $\nabla^E$ on $B$ and a nonlinear $F$-connection $\nabla^F$ on $\phi^\ast B$
are $(\hat{\Phi},\phi)$-compatible if, for any $e \in \Gamma(E)$ and $b\in \Gamma(B)$, \eqref{compatible E-F-connections} holds.
\end{definition}

\noindent
Also, the map $(\hat{\Phi}, \phi)$, where $\phi$ is a diffeomorphism, defines a family of maps, also denoted by $ \Phi^\star = (\Phi_p^\star)_{p\in \N}$,
between the spaces $\mathcal{D}^p(\mathcal{F}, \Gamma(\phi^\ast B))$ and $\mathcal{D}^p(\mathcal{E}, \Gamma(B))$ which assigns,
to each $p$-differential operator $\Delta$ on $\mathcal{F}$ with values in $\Gamma(\phi^\ast B)$, $p\in \N$,
a differential operator $\Phi^\star_p \Delta$ on $\mathcal{E}$ with values in $\Gamma(B)$ such that the next diagram is commutative:
\begin{equation*}
\begin{tikzcd}
\underbrace{\mathcal{E}\times \ldots \times \mathcal{E}}_{p-times} \arrow{r}{\Phi^p} \arrow[swap]{d}{\Phi^\star_p \Delta} & \underbrace{\mathcal{F}\times \ldots \times \mathcal{F}}_{p-times} \arrow{d}{\Delta} \\
\Gamma(B) \arrow{r}{\cong}[swap]{\phi^\ast} & \Gamma(\phi^\ast B).
\end{tikzcd}
\end{equation*}
Namely, for any $p$-tuple $(e_1,\ldots, e_p)$ of $\mathcal{E}$,
\begin{equation*}\label{def - Phi^star - B}
\phi^\ast(\Phi_p^\star \Delta(e_1, \ldots, e_p)) = \Delta(\Phi(e_1), \ldots, \Phi(e_p)).
\end{equation*}
Then, using the compatibility condition \eqref{compatible E-F-connections} between $\nabla^E$ and $\nabla^F$,
a straightforward computation yields the following relation between the covariant derivatives
$d^{\nabla^E} : \mathcal{D}^\bullet (\mathcal{E}, \Gamma(B)) \to \mathcal{D}^{\bullet+1} (\mathcal{E}, \Gamma(B))$ and $d^{\nabla^F} :
\mathcal{D}^\bullet (\mathcal{F}, \Gamma(\phi^\ast B)) \to \mathcal{D}^{\bullet+1} (\mathcal{F}, \Gamma(\phi^\ast B))$.
For any $\Delta \in \mathcal{D}^p(\mathcal{F}, \Gamma(\phi^\ast B))$, $p\in \N^{^\ast}$,
\begin{equation}\label{compatibility d^E-d^F}
d^{\nabla^E} \Phi^\star_p \Delta = \Phi^\star_{p+1}d^{\nabla^F}\Delta.
\end{equation}

\section{Characteristic classes of Loday algebroids}\label{section char classes}
In this section, as an application of our earlier results on Loday algebroid cohomology, we develop a theory of characteristic classes for Loday algebroids, following an approach inspired by previous works on the characteristic classes of Lie \cite{rui, cr-fer-sec-cl, balcerzak} and Courant \cite{CM2021} algebroids.

\subsection{The modular class of Loday algebroids and of Loday algebroid comorphisms}
\subsubsection{The modular class of Loday algebroids}
The concept of the \emph{modular class of a Loday algebroid} first appeared in \cite{stienon-xu},
but in a (completely) different setting from ours. Sti\'enon and Xu \cite{stienon-xu} following the classical procedure of \cite{evens-lu-weinstein},
consider linear connections of Loday algebroids (in their sense) $E$
and work in the naive cochain complex $(\Gamma(\bigwedge^{\bullet}\ker \rho), \check{d})$ of $E$.
The latter is a complex that is unrelated to the one considered in this paper.
Here, we also follow the approach of \cite{evens-lu-weinstein},
but our construction uses nonlinear representations of $E$ on (real) line bundles and defines a characteristic class in the first cohomology group
$H^1(\mathcal{E})$ of the Loday algebroid cohomology of $E$.

\vspace{2mm}
\noindent
Let $(E, \lcf \cdot, \cdot \rcf, \rho, \alpha)$ be a Loday algebroid over a smooth manifold $M$ and $\nabla$ a nonlinear $E$-connection on a real line bundle $\mathbb{L}$ over $M$.
Since $\mathbb{L}$ is a line bundle, the generating set of the $\C$-module $\mathrm{End}(\Gamma(\mathbb{L}))$ is $\{\mathrm{Id}_{\Gamma(\mathbb{L})}\}$ and the symbol $\sigma : \Gamma(T^\ast M\otimes E) \to \mathrm{End}(\Gamma(\mathbb{L}))$ of $\nabla$ is of type
\begin{equation*}\label{symb-line bundle}
\sigma(df \otimes e) = g_{_{df \otimes e}}\otimes \mathrm{Id}_{\Gamma(\mathbb{L})},
\end{equation*}
where $g_{_{df \otimes e}}\in \C$.

\vspace{1mm}
\noindent
Suppose that $\mathbb{L}$ is a trivial real line bundle and choose a nonvanishing section $s$ of $\mathbb{L}$,
which consists a (global) frame of smooth sections of $\mathbb{L}$. Then, for any $e\in \Gamma(E)$,
\begin{equation*}
\nabla_e s = \theta_s(e)s,
\end{equation*}
where $\theta_s \in \mathcal{D}^1(\mathcal{E})$ is the connection $1$-differential operator of $\nabla$ relative to the frame $(s)$.

\begin{proposition}
Under the above assumptions,
\begin{enumerate}
\item
$R^{\nabla}= \partial_E \theta_s $. Thus, $\mathbb{L}$ is a representation of $E$ if and only if $\theta_s$ is a $1$-cocycle with respect to $\partial_E$.
\item
If $s'$ is another nonvanishing section of $\mathbb{L}$, then $s' = f s$, for a nonvanishing function $f\in \C$, and
\begin{equation*}
\theta_{s'} = \theta_s + \partial_E \ln|f|.
\end{equation*}
Therefore, if $\mathbb{L}$ is a representation of E, the class $[\theta_s]\in H^1(\mathcal{E})$ is well defined and independent of the chosen section $s\in \Gamma(\mathbb{L})$.
\end{enumerate}
\end{proposition}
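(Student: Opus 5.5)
The plan is to specialize the general frame calculus of the previous section to a rank-one bundle with global frame $(s)$. Here $s=1$, so the connection matrix is the $1\times 1$ matrix $[\theta_s]$ and the curvature matrix is $[\Theta_s]$, where $R^{\nabla}(e_1,e_2)s=\Theta_s(e_1,e_2)s$.

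For item 1, I will apply the second structural equation \eqref{THETA}, which gives $\Theta_s=\partial_E\theta_s-\theta_s\diamond\theta_s$. The key observation is that $\theta_s$ has degree $1$ in the graded commutative shuffle algebra $(\mathcal{D}(\mathcal{E}),\diamond)$. Hence $\theta_s\diamond\theta_s=-\theta_s\diamond\theta_s$, i.e. $\theta_s\diamond\theta_s=0$. One can also check this directly: $(\theta_s\diamond\theta_s)(e_1,e_2)=\theta_s(e_1)\theta_s(e_2)-\theta_s(e_2)\theta_s(e_1)=0$, since the values are commuting functions.

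As a cross-check via \eqref{expr-curvature}, I would expand $\nabla_{e_1}(\theta_s(e_2)s)$ using the Leibniz rule. This gives $R^\nabla(e_1,e_2)s=\bigl(\rho(e_1)\theta_s(e_2)-\rho(e_2)\theta_s(e_1)-\theta_s(\lcf e_1,e_2\rcf)\bigr)s=(\partial_E\theta_s)(e_1,e_2)\,s$, where the quadratic terms cancel. Since $R^\nabla$ is $\C$-linear in the section of $\mathbb{L}$, it is determined by its value on $s$. This identifies $R^\nabla$ with $\partial_E\theta_s\otimes\mathrm{Id}$, so flatness is equivalent to $\partial_E\theta_s=0$.

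For item 2, I will use the transformation rule \eqref{theta - transf - rule} with the $1\times1$ matrix $a=[f]$. Because everything is scalar and $a$ has degree $0$, the conjugation term gives $f\theta_sf^{-1}=\theta_s$, and $\partial_Ea\diamond a^{-1}=f^{-1}\partial_Ef$. It is safer to verify this directly. We have $\nabla_e(fs)=f\theta_s(e)s+\rho(e)(f)s$, so $\theta_{fs}(e)=\theta_s(e)+f^{-1}\rho(e)(f)$. Since $f$ is nonvanishing, $\ln|f|$ is smooth, and by \eqref{der_E f - transpose} we get $\partial_E\ln|f|(e)=\rho(e)(\ln|f|)=f^{-1}\rho(e)(f)$. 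This gives the stated formula.

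Finally, if $\mathbb{L}$ is a representation, item 1 shows that $\theta_s$ is a cocycle for every choice of $s$, and the two cocycles differ by the coboundary $\partial_E\ln|f|$. So $[\theta_s]\in H^1(\mathcal{E})$ is independent of $s$.

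The only point needing care is the vanishing of $\theta_s\diamond\theta_s$, which relies on the sign convention of the shuffle product. The direct computation above settles it, so I do not expect a real obstacle.
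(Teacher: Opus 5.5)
Your proposal is correct and follows the paper's proof. Item 1 uses the second structural equation $\Theta_s=\partial_E\theta_s-\theta_s\diamond\theta_s$ with $\theta_s\diamond\theta_s=0$, which the paper uses tacitly and you justify explicitly, and item 2 uses the same direct computation $\nabla_e(fs)=f\theta_s(e)s+\rho(e)(f)s$. Your cross-check through the curvature formula and the identity $\partial_E\ln|f|(e)=f^{-1}\rho(e)(f)$ are harmless additions.
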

\begin{proof} Under the above notations, we have:
\begin{enumerate}
\item
Let $\Theta_s$ be the curvature $2$-differential operator of $\nabla$ relative to $s$. Then, according to relations \eqref{curvat-Theta} and \eqref{THETA}, for any $e_1,e_2 \in \Gamma(E)$,
\[R^{\nabla}(e_1,e_2)s = \Theta_s(e_1,e_2)s\]
and
\[\Theta_s(e_1,e_2) = \partial_E \theta_s(e_1,e_2) - \theta_s \diamond\theta_s (e_1,e_2) = \partial_E \theta_s(e_1,e_2).\]
Hence, $\mathbb{L}$ is a representation of $E$ relatively to $\nabla$ if and only if $\theta_s$ is a $1$-cocycle with respect to $\partial_E$.
\item
Let $s'$ and $f$ be as in our hypothesis and $e\in \Gamma(E)$. Then
\begin{equation*}
\theta_{s'}(e)s' = \nabla_es' = \nabla_e(fs) = f\nabla_es + \rho(e)(f)s = \theta_s(e)s' + \frac{\rho(e)(f)}{f}s',
\end{equation*}
and the results follow.
\end{enumerate}
\end{proof}

\vspace{2mm}
\noindent
We call $\theta_s$ the \emph{characteristic cocycle associated to the representation $\mathbb{L}$ and the section $s$}. In the following, we omit the index $s$ and write $[\theta]$,
since the class $[\theta_s]$ does not depend on $s$ but rather depends  only on the representation $\mathbb{L}$.

\begin{definition}
We call the cohomology class $[\theta]$ of a trivial real line bundle representation $\mathbb{L}$ of a nonlinear $E$-connection $\nabla$ \emph{characteristic class of $\mathbb{L}$ relatively to $\nabla$} and write
\[\mathrm{char}^{\nabla}(\mathbb{L}) = [\theta].\]
\end{definition}
If $\mathrm{char}^{\nabla}(\mathbb{L}) = 0$, then $\theta = \partial_Ef$ for a function $f\in \C$, which means that $\theta$ is a $\C$-linear operator.

\vspace{2mm}
\noindent
Let $\mathbb{L}_1$ and $\mathbb{L}_2$ be two real line bundles representations of $E$; $\mathbb{L}_1$ with respect of a nonlinear $E$-connection $\nabla^1$ and
$\mathbb{L}_2$ with respect of a such connection $\nabla^2$. We equip their tensor product $\mathbb{L}_1 \otimes \mathbb{L}_2$
with the tensor product nonlinear $E$-connection $\nabla = \nabla^1\otimes \mathrm{Id}_{\mathbb{L}_2} + \mathrm{Id}_{\mathbb{L}_1}\otimes \nabla^2$:
\[\nabla_e (s_1 \otimes s_2) = (\nabla^1_e s_1) \otimes s_2 + s_1 \otimes (\nabla^2 s_2), \quad \quad s_1 \in \Gamma(\mathbb{L}_1), \, s_2 \in \Gamma(\mathbb{L}_2).\]

\begin{proposition}

\vspace{1mm}
\noindent
\begin{enumerate}
\item
Let $\mathbb{L}_1$ and $\mathbb{L}_2$ be trivial line bundles. Then, the characteristic class of the trivial real line bundle $\mathbb{L}_1 \otimes \mathbb{L}_2$
and those of $\mathbb{L}_1$ and $\mathbb{L}_2$ are related as follows:
\begin{equation}\label{char-tensor}
\mathrm{char}^{\nabla}(\mathbb{L}_1\otimes \mathbb{L}_2) = \mathrm{char}^{\nabla^1}(\mathbb{L}_1) + \mathrm{char}^{\nabla^2}(\mathbb{L}_2).
\end{equation}
\item
In the particular case where $(\mathbb{L}_1, \nabla^1)=(\mathbb{L}, \nabla)$ and $(\mathbb{L}_2, \nabla^2)=(\mathbb{L}^\ast, \nabla^\ast)$, we have
\begin{equation*}\label{char-dual}
\mathrm{char}^{\nabla}(\mathbb{L}) + \mathrm{char}^{\nabla^\ast}(\mathbb{L}^\ast)=0.
\end{equation*}
\item
Let $(E, \lcf \cdot, \cdot \rcf_{_E}, \rho_{_E}, \alpha_{_E})$ and $(F, \lcf \cdot, \cdot \rcf_{_F}, \rho_{_F}, \alpha_{_F})$
be two Loday algebroids over $M$ and $N$, respectively, $(\hat{\Phi},\phi)$ a Loday algebroid comorphism, $\phi$ being a diffeomorphism,
$\mathbb{L}$ a trivial vector bundle over $M$ and $\phi^\ast \mathbb{L}$ the pullback line bundle over $N$. Let, also, $\nabla^E$ and $\nabla^F$ be $(\hat{\Phi},\phi)$-compatible nonlinear connections on $\mathbb{L}$ and $\phi^\ast \mathbb{L}$, respectively. If $\phi^\ast \mathbb{L}$ is a representation of $F$ relatively to $\nabla^F$, then $\mathbb{L}$ is a representation of $E$ relatively to $\nabla^E$ and
\begin{equation}\label{Phi-characteristic classes}
\Phi_1^\star (\mathrm{char}^{\nabla^F}(\phi^\ast \mathbb{L})) = \mathrm{char}^{\nabla^E}(\mathbb{L}).
\end{equation}
\end{enumerate}
\end{proposition}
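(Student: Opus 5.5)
The plan is to work with global frames and characteristic cocycles throughout, using the previous proposition: for a trivial line bundle representation, the class is represented by $\theta_s$ for any nonvanishing section $s$, and it is independent of $s$.

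For (1), choose nonvanishing sections $s_1\in\Gamma(\mathbb{L}_1)$ and $s_2\in\Gamma(\mathbb{L}_2)$. Then $s_1\otimes s_2$ is a nonvanishing section of $\mathbb{L}_1\otimes\mathbb{L}_2$, and by the definition of the tensor product connection
\[\nabla_e(s_1\otimes s_2)=\theta_{s_1}(e)\,s_1\otimes s_2+\theta_{s_2}(e)\,s_1\otimes s_2 .\]
Hence $\theta_{s_1\otimes s_2}=\theta_{s_1}+\theta_{s_2}$. Flatness of $\nabla$ follows because $\partial_E\theta_{s_1\otimes s_2}=0$. Passing to classes gives \eqref{char-tensor}. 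Before that, I would check quickly that $\nabla$ is indeed a nonlinear $E$-connection. This holds because its symbol is $\sigma^1\otimes\mathrm{Id}+\mathrm{Id}\otimes\sigma^2$, and on line bundles this is $(g^1+g^2)\,\mathrm{Id}$.

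For (2), take $s$ nonvanishing and let $s^\ast$ be the dual section, so $\langle s^\ast,s\rangle=1$. From \eqref{def dual connection}, $0=\rho(e)(1)=\langle\nabla^\ast_e s^\ast,s\rangle+\theta_s(e)$, so $\theta_{s^\ast}=-\theta_s$. Flatness of $\nabla^\ast$ also follows from \eqref{dual curvature}. An alternative route is to apply (1) and observe that $\mathbb{L}\otimes\mathbb{L}^\ast$ with the canonical section $s\otimes s^\ast$ has zero connection operator, since $\theta_s+\theta_{s^\ast}=0$. Either way the sum of the classes vanishes.

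For (3), I use Proposition \ref{from F-conn to E-conn}. Take a nonvanishing section $s$ of $\mathbb{L}$ on $M$; then $\phi^\ast s$ is a frame of $\phi^\ast\mathbb{L}$. By compatibility \eqref{compatible E-F-connections},
\[\phi^\ast(\theta^E_s(e))\,\phi^\ast s=\phi^\ast(\nabla^E_e s)=\nabla^F_{\Phi(e)}\phi^\ast s=\theta^F_{\phi^\ast s}(\Phi(e))\,\phi^\ast s .\]
This gives $\theta^E_s=\Phi_1^\star\theta^F_{\phi^\ast s}$ by \eqref{def - Phi^star}. The same proposition gives $\Theta^E=\Phi_2^\star\Theta^F$, so $\Theta^F=0$ implies $\Theta^E=0$, and therefore $\mathbb{L}$ is a representation. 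Alternatively, $\partial_E\Phi^\star_1=\Phi^\star_2\partial_F$ from Proposition \ref{Phi LA morph --> Phi* chain map} applied to the cocycle $\theta^F$ gives the same conclusion. Finally, $\Phi_1^\star$ descends to cohomology by that proposition, which yields \eqref{Phi-characteristic classes}.

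The main obstacle is purely bookkeeping in (3). One must check that the class on the $F$ side may be computed with the particular frame $\phi^\ast s$; this is legitimate because the class is frame-independent. One must also check that $\Phi_1^\star$ really acts on classes, which needs $\phi$ to be a diffeomorphism so that $\Phi^\star$ is defined.
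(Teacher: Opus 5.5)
Your proposal is correct and follows the paper's proof essentially step for step. For (1) you use the additivity $\theta_{s_1\otimes s_2}=\theta_{s_1}+\theta_{s_2}$; for (2) the pairing relation \eqref{def dual connection}, either directly or through (1) applied to $\mathbb{L}\otimes\mathbb{L}^\ast$; and for (3) the compatibility identity $\theta^E_s=\Phi^\star_1\theta^F_{\phi^\ast s}$ together with Proposition \ref{from F-conn to E-conn} and the chain-map property of $\Phi^\star$. Your explicit checks that the tensor product and dual connections are again flat nonlinear $E$-connections fill in details the paper leaves implicit.
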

\begin{proof}

\vspace{1mm}
\noindent
\begin{enumerate}
\item
By a simple computation, we obtain that the characteristic cocycles associated with the referred representations are related by the following relation:
\[\theta_{s_1 \otimes s_2} = \theta_{s_1} + \theta_{s_2},\]
for some global sections $s_1 \in \Gamma(\mathbb{L}_1)$ and $s_2 \in \Gamma(\mathbb{L}_2)$. So, the identity \eqref{char-tensor} is true. In the particular case where $(\mathbb{L}_1, \nabla^1) = (\mathbb{L}_2, \nabla^2) = (\mathbb{L}, \nabla)$, we have\footnote{We write also $\nabla$
for denote the tensor product nonlinear $E$-connection $\nabla \otimes \mathrm{Id}_{\mathbb{L}} + \mathrm{Id}_{\mathbb{L}}\otimes \nabla$.}
\[\mathrm{char}^{\nabla}(\mathbb{L}) = \frac{1}{2}\mathrm{char}^{\nabla}(\mathbb{L}\otimes \mathbb{L}).\]
\item
It is an immediate consequence of the relations \eqref{char-tensor} and \eqref{def dual connection}.
\item
By Proposition \ref{from F-conn to E-conn}, if $\phi^\ast \mathbb{L}$ is a representation of $F$ relatively to $\nabla^F$, then $\mathbb{L}$ is a representation of $E$ relatively to $\nabla^E$. Let $s$ be a nonvanishing section of $\mathbb{L}$, $\phi^\ast s$ the corresponding generator of $\Gamma(\phi^\ast \mathbb{L})$, and $\theta^F_{\phi^\ast s}$ a representative of $\mathrm{char}^{\nabla^F}(\phi^\ast \mathbb{L})$. Then, $\Phi_1^\star \theta^F_{\phi^\ast s}$ is a representative of $\mathrm{char}^{\nabla^E}(\mathbb{L})$ since, for any $e\in \Gamma(E)$,
\begin{equation*}
\phi^\ast(\nabla^E_es) = \nabla^F_{\Phi(e)} \phi^\ast s = \theta^F_{\phi^\ast s}(\Phi(e)) \phi^\ast s =  \phi^\ast (\Phi_1^\star \theta^F (e)s)
\end{equation*}
and, taking into account the Proposition \ref{Phi LA morph --> Phi* chain map}, we have that \eqref{Phi-characteristic classes} holds.
\end{enumerate}
\end{proof}

\vspace{2mm}
\noindent
If $\mathbb{L}$ is a non trivial real line bundle representation of $E$, because it is isomorphic to its dual line
bundle $\mathbb{L}^\ast$\footnote{This is true under the common assumption that $M$, as topological manifold, is paracompact and Hausdorff \cite{husemoller}.},
we have that $\mathbb{L}\otimes \mathbb{L} \cong \mathbb{L}^\ast \otimes \mathbb{L} \cong \mathrm{End}(\mathbb{L}) = \langle \mathrm{Id}_{\mathbb{L}}\rangle$,
which is a trivial real line bundle over $M$. Thus, its characteristic class $\mathrm{char}^{\nabla}(\mathbb{L}\otimes \mathbb{L})$ is well defined. Therefore, in this case, we define the characteristic class of $\mathbb{L}$ by setting
\[\mathrm{char}^{\nabla}(\mathbb{L}): = \frac{1}{2}\mathrm{char}^{\nabla}(\mathbb{L}\otimes \mathbb{L}).\]

\begin{example}
The characteristic class of an orientable smooth manifold $M$. \emph{Let $(E, \lcf \cdot, \cdot \rcf, \rho, \alpha)$
be a Loday algebroid, $\mu$ a volume form on $M$ and $\mathbb{L} = \bigwedge^n T^\ast M$ the real line bundle representation of $E$
with respect to the nonlinear $E$-connection $\nabla^M = \mathcal{L}_{\rho(\cdot)}$ (see paragraph \ref{examples - nonlinear connections} - Example 5).
The characteristic cocycle of $E$ is the first order differential operator $\theta \in \mathcal{D}^1(\mathcal{E})$, $\theta(e) = (\mathrm{div}\circ \rho)(e)$,
and $\mathrm{char}^{\nabla^M}(\bigwedge^n T^\ast M) = [\mathrm{div}\circ \rho]$.}
\end{example}

\vspace{1mm}
\noindent
As we have shown in Example 2 of Examples \ref{examples - nonlinear connections}, the real line vector bundle $\mathbb{L}= \bigwedge^{\mathrm{top}}E$ is a
representation of $E$ with respect to the extended adjoint representation
$\nabla^{\mathrm{top}}=\Lie$ and $\mathrm{char}^{\nabla^{\mathrm{top}}}(\bigwedge^{\mathrm{top}}E) = [\tr(\theta^{\mathrm{ad}})]$.
\begin{definition}\label{def-modular class}
The characteristic class $\mathrm{char}^{\nabla^{\mathrm{top}}}(\bigwedge^{\mathrm{top}}E)$ is called the \emph{modular class of
the Loday algebroid $E$} and is denoted by $\mathrm{mod}(E)$, while a representative of $\mathrm{mod}(E)$ is called modular cocycle of $E$.
\end{definition}

\vspace{2mm}
\noindent
We say that a Loday algebroid $E$ is \emph{unimodular} if $\mathrm{mod}(E)=0$. In this case, $\nabla^{\mathrm{top}}$
is linear, i.e. the symbol of the connection operator $\theta^{\mathrm{top}} = \tr(\theta^{\mathrm{ad}})$ vanishes.
So, we have that
\begin{equation}\label{unimodularity}
\tr(\alpha(df\otimes e))-\rho(e)(f)=0.
\end{equation}

\begin{examples}

\vspace{1mm}
\noindent
\begin{enumerate}
\item
The modular class of a Loday algebra. \emph{Let $(\mathcal{E}, \lcf \cdot, \cdot \rcf)$ be a Loday algebra. The modular cocycle of $\mathcal{E}$ is the $\R$-linear map $\theta : \mathcal{E} \to \R$ given by $\theta(e) = \tr(\mathrm{ad}_e)$ and}
\[\mathrm{mod}(\mathcal{E})=[\tr(\mathrm{ad})].\]
\item
The modular class of the Lie algebroid $(TM, [\cdot,\cdot], \mathrm{Id})$. \emph{With respect the adjoint representation,
it is the class $[\mathrm{div}]$ in the Loday algebroid cohomology.}
\item
The modular class of a Courant algebroid. \emph{From \eqref{unimodularity}, we obtain that every Courant algebroid is unimodular in our cohomology, also. The unimodularity of the Courant algebroids is established in \cite{CM2021} in the context of Keller-Waldmann cohomology.}
\item
The modular class of the Loday algebroid associated to a Nambu-Poisson structure. \emph{We consider the Loday algebroid
$(\bigwedge^{n-1} T^\ast M, \lcf \cdot, \cdot \rcf, \rho, \alpha)$ associated to a Nambu-Poisson manifold $(M, \Lambda)$ of dimension $m$ (see Section \ref{section - loday alg}). According to Definition \ref{def-modular class}, $\mathrm{mod}(\bigwedge^{n-1} T^\ast M) = [\tr(\theta^{\mathrm{ad}})]$.
We go to compute the $\tr(\theta^{\mathrm{ad}})$.}
\emph{Suppose that $\Lambda$ is decomposable and non-degenerated of order $n\geq 3$, and consider a chart $(U, x^1,\ldots,x^m)$ of $M$ such that
$\Lambda=\displaystyle{\frac{\partial}{\partial x_1}\wedge \ldots \wedge \frac{\partial}{\partial x_n}}$.
Also, each $\omega\in \Gamma(\bigwedge^{n-1} T^\ast M)$ can be decomposed as
$\omega=\sum_{i=1}^n \omega_i d x^1\wedge \ldots \wedge \widehat{d x^i} \wedge \ldots \wedge d x^n + \omega'$, with $\omega_i \in C^\infty(U,\R)$,
and $\omega'\in \Gamma(\bigwedge^{n-1} T^\ast M\vert_U)$ being the rest part of $\omega$. Then,
$\rho(\omega)=\Lambda^{\sharp}(\omega)=\displaystyle{\sum_{i=1}^n(-1)^{n-i}\omega_i\frac{\partial}{\partial x^i}}$ and
$\Lambda^{\sharp}(d\omega)=\displaystyle{\sum_{i=1}^n (-1)^{i-1}\frac{\partial \omega_i}{\partial x^i}}$.
A frame of $\Gamma(\bigwedge^{n-1} T^\ast M\vert_U)$ is given by the
$\displaystyle{\binom{m}{n-1}}$ elements $d x^I=d x^{i_1}\wedge \ldots \wedge d x^{i_{n-1}}$,
where $I=(i_1,\ldots, i_{n-1})\in \{1, \ldots,m\}^{n-1}$ with $i_1< \ldots< i_{n-1}$, while its dual frame is given by the $(n-1)$-vector fields
$\displaystyle{\frac{\partial}{\partial x^I}=\frac{\partial}{\partial x^{i_1}}\wedge \ldots \wedge \frac{\partial}{\partial x^{i_{n-1}}}}$.
Notice that each $d x^i$, $i=1,\ldots,m$, takes part into $\displaystyle{\binom{m-1}{n-2}}$ elements of the frame.
Let us denote by $I_k$ the set of indexes $(i_1,\ldots,i_{n-2})$ (of length $n-2$) that don't contain $k\in \{1,\ldots,m\}$. Hence,}
\begin{eqnarray*}
\lefteqn{\tr(\theta^{\mathrm{ad}}(\omega)) = \sum_{I}\langle \lcf \omega, dx^I \rcf,\,\frac{\partial}{\partial x^I}  \rangle }\nonumber \\
& = & \sum_I \Big(\langle \mathcal{L}_{\Lambda^{\sharp}(\omega)}dx^I, \, \frac{\partial}{\partial x^I}\rangle
+ (-1)^n \Lambda(d \omega)\langle dx^I, \, \frac{\partial}{\partial x^I}\rangle \Big)\nonumber \\
& = & \sum_I \sum_{i=1}^n (-1)^{n-i} \langle d(i_{\omega_i\frac{\partial}{\partial x^i}}dx^I), \, \frac{\partial}{\partial x^I}  \rangle
+ (-1)^n \binom{m}{n-1}\Lambda(d \omega) \nonumber \\
& = &  \sum_{i=1}^n\sum_{I_i} (-1)^{n-i} \frac{\partial \omega_i}{\partial x^i} \langle dx^i \wedge dx^{I_i},\,
\frac{\partial}{\partial x^i}\wedge\frac{\partial}{\partial x^{I_i}}\rangle + (-1)^n \binom{m}{n-1}\sum_{i=1}^n(-1)^{i-1}\frac{\partial \omega_i}{\partial x^i}\nonumber \\
& = & \sum_{i=1}^n\Big( (-1)^{n-i}\binom{m-1}{n-2} + (-1)^{n+i-1}\binom{m}{n-1} \Big)\frac{\partial \omega_i}{\partial x^i} \nonumber \\
& = & \binom{m-1}{n-1}\sum_{i=1}^{n}(-1)^{n-i+1}\frac{\partial \omega_i}{\partial x^i} \nonumber \\
& = & - \binom{m-1}{n-1}\mathrm{div}_{\mu}(\rho(\omega)),
\end{eqnarray*}
\emph{where $\mathrm{div}_{\mu}(\rho(\omega))$ is the divergence of the vector field $\rho(\omega)$ with respect to the volume form $\mu = dx^1\wedge \ldots \wedge dx^m$ of $M$. So}
\[\mathrm{mod}(\bigwedge^{n-1} T^\ast M) = [- \binom{m-1}{n-1}\mathrm{div}_{\mu}\circ \rho ].\]
\item
The modular class of the Grassmann-Dorfman algebroid.
\emph{We consider the Grassmann-Dorfmann algebroid $(TM \oplus \bigwedge^p T^\ast M, \lcf \cdot, \cdot \rcf, \rho, \alpha)$ over a $n$-dimensional manifold
presented in Section \ref{section - loday alg}. We have $\mathrm{mod}(TM \oplus \bigwedge^p T^\ast M) = [\tr(\theta^{\mathrm{ad}})]$. We use the notation introduced in the
previous example and we develop similar computations. For a vector field $X = \sum_{i=1}^n\chi^i \displaystyle{\frac{\partial}{\partial x^i}}$ on $M$ and for a
multi-index $I=(i_1,\ldots,i_p)$, $1\leq i_1<\ldots < i_p \leq n$, we note by $X_I$ the $I$-component of $X$, i.e.
$X_I= \sum_{k=1}^p\chi^{i_k}\displaystyle{\frac{\partial}{\partial x^{i_k}}}$, and by $\pi_{_I} : \Gamma(TM) \to \Gamma(TM)$ the projection of $X$ on $X_I$. Then, we compute that }
\[\tr(\theta^{\mathrm{ad}}(X, \zeta)) = -\mathrm{div}_{\mu}(X) + \sum_I(\frac{\partial \chi^{i_1}}{\partial x^{i_1}}+ \ldots + \frac{\partial \chi^{i_p}}{\partial x^{i_p}}).\]
\emph{Thus,}
\[\mathrm{mod}(TM \oplus \bigwedge^p T^\ast M) = [-\mathrm{div}_{\mu}\circ \rho + \sum_I \mathrm{div}_{\mu}\circ \pi_{_I}\circ \rho ].\]
\end{enumerate}
\end{examples}

\subsubsection{Modular class of Loday algebroid comorphisms}
We study the behavior of the modular class of a Loday algebroid under Loday algebroid comorphisms over the same manifold
by defining the modular class of a Loday algebroid comorphism. Our results extend similar results for Lie algebroid morphisms
(see \cite{kos-cam-wein} and references cited therein.)

\vspace{1mm}
\noindent
Let $(E, \lcf \cdot, \cdot \rcf_{_E}, \rho_{_E}, \alpha_{_E})$ and $(F, \lcf \cdot, \cdot \rcf_{_F}, \rho_{_F}, \alpha_{_F})$
be two Loday algebroids over the same manifold $M$ and $(\hat{\Phi}, id)$ a Loday algebroid
comorphism from $E$ to $F$ (Definition \ref{def-Loday-comorphism}), which is equivalent
to a chain map $\Phi^\star : (\mathcal{D}(\mathcal{F}),\partial_F)\to (\mathcal{D}(\mathcal{E}),\partial_E)$
(Propositions \ref{Phi LA morph --> Phi* chain map} and \ref{Phi* chain map --> Phi LA morph}).

\begin{definition}
Let $\mathrm{mod}(E)$ and $\mathrm{mod}(F)$ be the modular classes of $E$ and $F$, respectively, and $(\hat{\Phi}, id)$ a Loday algebroid comorphism from $E$ to $F$.
The \emph{modular class of $(\hat{\Phi}, id)$} or simply the \emph{modular class of $\Phi : \mathcal{E} \to \mathcal{F}$}
is the $\partial_E$-cohomology class $\mathrm{mod}^{\Phi}(E, F)$ defined by
\begin{equation}\label{mod Phi}
\mathrm{mod}^{\Phi}(E, F) = \mathrm{mod}(E)-\Phi^\star \mathrm{mod}(F).
\end{equation}
\end{definition}

\vspace{1mm}
\noindent
Placed in the context of Proposition \ref{composition Loday comorphisms}, and assuming that the Loday algebroids are defined over the same base manifold and that the comorphisms cover the identity, we obtain that
\begin{equation*}
\mathrm{mod}^{\Psi\circ \Phi}(E_1, E_3) = \mathrm{mod}^{\Phi}(E_1, E_2) + \Phi^\star \mathrm{mod}^{\Psi}(E_2, E_3).
\end{equation*}

\vspace{1mm}
\noindent
The modular class $\mathrm{mod}^{\Phi}(E, F)$ can be represented as the characteristic class of a representation of $E$.
Precisely, we consider the line bundle $\mathbb{L}^{(E,F)}: = \bigwedge^{\mathrm{top}}E\otimes \bigwedge^{\mathrm{top}}F$ and the map
\begin{eqnarray*}\label{modular repres - Phi}
\nabla^{\Phi} : \Gamma(E)\times \Gamma(\mathbb{L}^{(E,F)}) & \to & \Gamma(\mathbb{L}^{(E,F)}) \nonumber \\
(e, s_{_E}\otimes s_{_F}) & \mapsto & \nabla^{\Phi}_e (s_{_E}\otimes s_{_F}) = (\Lie^E_e s_{_E})\otimes s_{_F} - s_{_E}\otimes (\Lie^F_{\Phi(e)}s_{_F}),
\end{eqnarray*}
where $s_{_E}\otimes s_{_F}$ is a smooth section of $\mathbb{L}^{(E,F)}$, $s_{_E}$ and $s_{_F}$ being smooth
sections of $\bigwedge^{\mathrm{top}}E$ and $\bigwedge^{\mathrm{top}}F$, respectively, and, $\Lie^E$ and $\Lie^F$
are the extensions of the adjoint representations of $E$ and $F$ on $\bigwedge^{\mathrm{top}}E$ and $\bigwedge^{\mathrm{top}}F$,
respectively (Examples \ref{examples - nonlinear connections}).
\begin{proposition}
Under the above assumptions,
\begin{enumerate}
\item
the line bundle $\mathbb{L}^{(E,F)}$ is a representation of $E$ relatively to the map $\nabla^{\Phi}$.
\item
the modular class $\mathrm{mod}^{\Phi}(E, F)$ is the characteristic class of the representation $\mathbb{L}^{(E,F)}$ relatively to $\nabla^{\Phi}$.
\end{enumerate}
\end{proposition}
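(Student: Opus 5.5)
The plan is to reduce both assertions to computations with connection matrices, working first with trivial top exterior powers and then passing to the general case. Choose nonvanishing sections $s_{_E}$ of $\bigwedge^{\mathrm{top}}E$ and $s_{_F}$ of $\bigwedge^{\mathrm{top}}F$ (locally, or globally if the bundles are trivial). By Example 2 of Examples \ref{examples - nonlinear connections},
\[\Lie^E_e s_{_E} = \theta^E(e)\, s_{_E}, \qquad \Lie^F_{e'} s_{_F} = \theta^F(e')\, s_{_F},\]
where $\theta^E=\tr(\theta^{\mathrm{ad}}_E)\in\mathcal{D}^1(\mathcal{E})$ and $\theta^F=\tr(\theta^{\mathrm{ad}}_F)\in\mathcal{D}^1(\mathcal{F})$ are the modular cocycles. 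Hence
\[\nabla^{\Phi}_e(s_{_E}\otimes s_{_F}) = \big(\theta^E(e) - \theta^F(\Phi(e))\big)\, s_{_E}\otimes s_{_F} = \big(\theta^E - \Phi_1^\star\theta^F\big)(e)\, s_{_E}\otimes s_{_F},\]
where we use \eqref{def - Phi^star} with $\phi=\mathrm{id}$. Thus the connection $1$-differential operator of $\nabla^{\Phi}$ relative to the frame $(s_{_E}\otimes s_{_F})$ is $\theta^{\Phi}:=\theta^E-\Phi_1^\star\theta^F$.

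The first step is to check that $\nabla^{\Phi}$ is a nonlinear $E$-connection on $\mathbb{L}^{(E,F)}$, i.e. that it satisfies the two conditions of Definition \ref{def:nonlinear:connection}. For the Leibniz rule in the section variable, I would use that the anchors are $\phi$-related: \eqref{comorphism - left anchor} with $\phi=\mathrm{id}$ gives $\rho_{_F}(\Phi(e))=\rho_{_E}(e)$. The sign in the definition of $\nabla^{\Phi}$ needs care here. A naive tensor-product rule would produce $\rho_{_E}(e)(f)-\rho_{_F}(\Phi(e))(f)=0$, which is not the required Leibniz rule. So I would interpret $\bigwedge^{\mathrm{top}}F$ effectively as its dual, with the $F$-part acting by the dual connection; this is consistent with the class appearing as a difference in \eqref{mod Phi}. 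Concretely, I would define $\nabla^{\Phi}$ on the frame by the formula above and extend it by the Leibniz rule, which is legitimate by Proposition \ref{prop-nabla-theta-nabla}. The symbol condition follows from the fact that $\theta^E$ and $\Phi_1^\star\theta^F$ are first-order operators; for the latter, $\Phi^\star$ preserves order and symbols. Finally, a change of frames $s_{_E}\mapsto g s_{_E}$, $s_{_F}\mapsto h s_{_F}$ changes $\theta^{\Phi}$ by $\partial_E\ln|g/h|$, using $\Phi_1^\star\partial_F=\partial_E\Phi_0^\star$. So the local definitions glue, and $\nabla^{\Phi}$ is globally well defined even if the bundles are nontrivial.

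For the first assertion, I would apply the proposition on line bundles: the curvature is $R^{\nabla^{\Phi}}=\partial_E\theta^{\Phi}$. By \eqref{Phi^star commutation},
\[\partial_E\theta^{\Phi}=\partial_E\theta^E-\Phi_2^\star\partial_F\theta^F .\]
Both terms vanish, because $\theta^E$ and $\theta^F$ are cocycles by flatness of $\nabla^{\mathrm{top}}$ (Example 2). Hence $\nabla^{\Phi}$ is flat and $\mathbb{L}^{(E,F)}$ is a representation of $E$.

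For the second assertion, the characteristic class of $\mathbb{L}^{(E,F)}$ is, in the trivial case,
\[[\theta^{\Phi}]=[\theta^E]-\Phi_1^\star[\theta^F]=\mathrm{mod}(E)-\Phi^\star\mathrm{mod}(F)=\mathrm{mod}^{\Phi}(E,F).\]
The identification $\Phi_1^\star[\theta^F]=\Phi^\star\mathrm{mod}(F)$ uses that $\Phi^\star$ acts on cohomology (Proposition \ref{Phi LA morph --> Phi* chain map}). In the nontrivial case I would pass to $\mathbb{L}\otimes\mathbb{L}$ and use the factor $\tfrac12$ together with \eqref{char-tensor}, which gives the same identity.

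The main obstacle is the first step: making the sign convention in $\nabla^{\Phi}$ consistent with the Leibniz rule and with global well-definedness. I would resolve this by working through the frame description and the coboundary-change formula, rather than through the literal tensor formula.
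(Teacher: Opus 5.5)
Your argument is correct. For the second assertion it is the paper's argument: compute the connection operator $\theta^{\Phi}=\theta^E-\Phi_1^\star\theta^F$ in a frame, then pass to cohomology using that $\Phi^\star$ commutes with the coboundaries.

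For the first assertion the routes differ. The paper only says ``direct calculation'' with the Cartan relations and the morphism properties of $\Phi$. You instead obtain flatness from three facts:
\begin{itemize}
\item $R^{\nabla^{\Phi}}=\partial_E\theta^{\Phi}$, which holds because $\theta\diamond\theta=0$ for a $1\times 1$ matrix of $1$-operators;
\item the commutation $\partial_E\circ\Phi_1^\star=\Phi_2^\star\circ\partial_F$;
\item the closedness of both modular cocycles.
\end{itemize}
This is shorter and shows exactly where the comorphism axioms enter.

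More importantly, you are right that the formula for $\nabla^{\Phi}$ as printed does not define a connection on $\bigwedge^{\mathrm{top}}E\otimes\bigwedge^{\mathrm{top}}F$. The precise defect is that the formula is not $C^\infty(M,\R)$-balanced, rather than that the Leibniz term comes out as $0$:
\begin{itemize}
\item putting $f$ on the $E$-factor produces $+\rho_{_E}(e)(f)\,s_{_E}\otimes s_{_F}$;
\item putting $f$ on the $F$-factor produces $-\rho_{_F}(\Phi(e))(f)\,s_{_E}\otimes s_{_F}=-\rho_{_E}(e)(f)\,s_{_E}\otimes s_{_F}$.
\end{itemize}
The paper passes over this. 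The correct reading, as for Lie algebroid morphisms in \cite{kos-cam-wein}, is $\bigwedge^{\mathrm{top}}E\otimes(\bigwedge^{\mathrm{top}}F)^\ast$, carrying the tensor product of $\Lie^E$ with the dual of $e\mapsto\Lie^F_{\Phi(e)}$. Both the paper's symbol formula and $\theta^{\Phi}$ are consistent with this reading.

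Accordingly, $\theta^{\Phi}$ is the connection operator relative to $s_{_E}\otimes s_{_F}^\ast$, not $s_{_E}\otimes s_{_F}$. Your own change-of-frame factor confirms this: it is $g/h$, whereas the frame $s_{_E}\otimes s_{_F}$ would change by $gh$.

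Your reduction of the nontrivial case to $\mathbb{L}\otimes\mathbb{L}$ via \eqref{char-tensor} also goes beyond the paper. The paper's proof tacitly assumes a global nowhere-vanishing decomposable section $s_{_E}\otimes s_{_F}$, which forces both top powers to be trivial.
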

\begin{proof}
The first assertion can be easily proved by a direct calculation, taking into account the Cartan calculus presented in Lemma
\ref{lemma Cartan} and the morphism properties \eqref{def - Loday algebra homomorphism} and \eqref{comorphism - left anchor} of $\Phi$.
The symbol of $\nabla^{\Phi}$ is the vector bundle map $\sigma: T^\ast M \otimes E\to \mathrm{End}(\mathbb{L}^{(E,F)})$
given by $\sigma(df \otimes e) = [\tr(\alpha_{_E}(df \otimes e)) - \tr(\alpha_{_F}(df \otimes \Phi(e)))]\otimes \mathrm{Id}_{\mathbb{L}^{(E,F)}}$.

\noindent
For the proof of the second assertion, let $s_{_E}\otimes s_{_F}$ be a nowhere vanishing smooth section of $\mathbb{L}^{(E,F)}$, $\theta^{\Phi}$
the connection matrix of $\nabla^{\Phi}$, and $\theta^E$ and $\theta^F$ the corresponding matrices of $\Lie^E$ and $\Lie^F$, respectively. We have
\begin{equation*}
\nabla^{\Phi}_e (s_{_E}\otimes s_{_F}) = \theta^{\Phi}(e)(s_{_E}\otimes s_{_F})
\end{equation*}
and
\begin{eqnarray*}
\nabla^{\Phi}_e (s_{_E}\otimes s_{_F}) & = & (\Lie^E_e s_{_E})\otimes s_{_F} - s_{_E}\otimes (\Lie^F_{\Phi(e)}s_{_F}) =
(\theta^E(e)- \theta^F(\Phi(e)))(s_{_E} \otimes s_{_F}) \nonumber \\
& \stackrel{\eqref{def - Phi^star}}{=}& (\theta^E- \Phi^\star_1\theta^F)(e)(s_{_E} \otimes s_{_F}).
\end{eqnarray*}
So,
\[\theta^{\Phi} = \theta^E- \Phi^\star_1\theta^F\]
and
\[[\theta^{\Phi}] = [\theta^E]- \Phi^\star[\theta^F] = \mathrm{mod}(E)-\Phi^\star \mathrm{mod}(F) = \mathrm{mod}^{\Phi}(E,F),
\footnote{We omit the index $1$ for the map $\Phi^\star$.}\]
which means that $\theta^{\Phi}$ is a representative of $\mathrm{mod}^{\Phi}(E,F)$.
\end{proof}

\subsection{The Chern-Weil homomorphism}
In this subsection, we present some basic results of Chern-Weil theory adapted to nonlinear connections of Loday algebroids that are needed later for the discussion of their secondary characteristic classes.

\noindent
Let $(E, \lcf \cdot, \cdot \rcf, \rho, \alpha)$ be a Loday algebroid over $M$, $B$ a vector bundle over $M$ of constant rank $s$, and $\nabla$ a nonlinear $E$-connection on $B$. Let, also, $\theta$ and $\Theta$ be the connection and the curvature matrix, respectively, of $\nabla$ relative to a frame of smooth sections $(b_1,\ldots,b_s)$ of $B$ on an open subset $U$ of $M$. If $(b_1', \ldots, b_s')$ is another frame of smooth sections of $B$ on $U$, then $b_i'=a_i^jb_j$, $a_i^j \in C^\infty(U,\R)$, and the matrix $a=\left[\begin{array}{c} a_i^j \\ \end{array}\right]$
can be considered as a smooth map $a : U \to \mathrm{GL}(s,\R)$. By Proposition \ref{prop - theta - theta'}, the curvature matrix $\Theta'$ of
$\nabla$ relative to $(b_1', \ldots, b_s')$ is $\mathrm{Ad}_a$-conjugated with $\Theta$: $\Theta' = a \diamond \Theta \diamond a^{-1}$.

\vspace{1mm}
\noindent
For each $x\in M$, we consider the evaluation map $\mathrm{ev}_x : \C \to \R$, $f \mapsto \mathrm{ev}_x(f): = f(x)$.
This map transforms any multidifferential operator $D\in \mathcal{D}^p(\mathcal{E})$ into a function
$D_x : \mathcal{E}\times \ldots \times \mathcal{E} \to \R$ given, for any $e_1,\ldots,e_p\in \mathcal{E}$, by
\[D_x(e_1,\ldots,e_p): = \mathrm{ev}_x(D(e_1,\ldots,e_p)) = D(e_1,\ldots,e_p)(x).\]
Thus, each $1$-differential operator $\theta_i^j$ on $\mathcal{E}$, element of $\theta$,
is transformed to the function $\theta_{i,x}^j : \mathcal{E} \to \R$, $e\mapsto \theta_{i,x}^j(e) : = \mathrm{ev}_x (\theta_i^j(e))=\theta_i^j(e)(x)$, and,
each $2$-differential operator $\Theta_i^j$ on $\mathcal{E}$, element of $\Theta$, is transformed to the
function $\Theta_{i,x}^j: \mathcal{E}\times \mathcal{E} \to \R$,
$(e_1,e_2)\mapsto \Theta_{i,x}^j (e_1,e_2) : = \mathrm{ev}_x(\Theta_i^j (e_1,e_2))= \Theta_i^j (e_1,e_2)(x)$.
Set $\theta_x = \left[\begin{array}{c}\theta_{i,x}^j \\ \end{array}
                \right]$ and $\Theta_x : = \left[\begin{array}{c} \Theta_{i,x}^j \\ \end{array}\right]$; both are elements of $\mathfrak{gl}(s,\R)$.

\vspace{2mm}
\noindent
We recall that \cite{milnor-stasheff, kob-nom}:
\begin{enumerate}
\item
An $\mathrm{Ad}(\mathrm{GL}(s,\R))$-invariant polynomial on the space $\mathfrak{gl}(s,\R)$ is a function $P: \mathfrak{gl}(s,\R)\to \R$
which can be expressed as (real) polynomial in the entries of the matrix and satisfies, for any $A,B\in \mathfrak{gl}(s,\R)$ and $T\in \mathrm{GL}(s,\R)$,
\begin{equation}\label{invariance-sym}
P(AB) = P(BA)
\end{equation}
or, equivalently,
\begin{equation*}\label{invariance-adj}
P(\mathrm{Ad}_TA) = P(TAT^{-1})=P(A).
\end{equation*}
The trace of an endomorphism of a real vector space is an $\mathrm{Ad}(\mathrm{GL}(s,\R))$-invariant polynomial.
\item
The algebra $\mathrm{Inv}(\mathfrak{gl}(s,\R)) =  \sum_{p=0}^{\infty}\mathrm{Inv}^p(\mathfrak{gl}(s,\R))$, where $\mathrm{Inv}^p(\mathfrak{gl}(s,\R))$ is
the space of $\mathrm{Ad}(\mathrm{GL}(s,\R))$-invariant symmetric $p$-multilinear mappings of $\mathfrak{gl}(s,\R)$ into $\R$, may be identified with the algebra of $\mathrm{Ad}(\mathrm{GL}(s,\R))$-invariant (homogeneous) polynomial functions on $\mathfrak{gl}(s,\R)$.
\end{enumerate}

\vspace{1mm}
\noindent
Let $P$ be a symmetric, $\mathrm{Ad}(\mathrm{GL}(s,\R))$-invariant, $p$-multilinear function on $\mathfrak{gl}(s,\R)$, $p\in \N$. Namely, a $p$-linear map
\[P : \underbrace{\mathfrak{gl}(s,\R)\times \ldots \times \mathfrak{gl}(s,\R)}_{p - times}\to \R\]
such that, for all $A_1,\ldots, A_p \in \mathfrak{gl}(s,\R)$ and $T\in \mathrm{GL}(s,\R)$,
\[P(A_1,\ldots,A_i, \ldots,A_j,\ldots, A_p) = P(A_1,\ldots,A_j, \ldots,A_i,\ldots, A_p)\]
and
\[P(\mathrm{Ad}_TA_1, \ldots, \mathrm{Ad}_TA_p) = P(A_1,\ldots, A_p).\]
Fix $x\in U$ and define $P(\Theta_x)$ to be the function
\[P(\Theta_x) : \underbrace{\mathcal{E}\times \ldots \times \mathcal{E}}_{2p - times} \to \R\]
given, for any $e_1, \ldots, e_{2p}\in \mathcal{E}$, by
\begin{equation}\label{P-Theta_x}
P(\Theta_x)(e_1,\ldots,e_{2p}) = \sum_{\tau \in Sh(\underbrace{2, \ldots, 2}_{p-times})}(-1)^{|\tau|}P(\Theta_x(e_{\tau(1)}, e_{\tau(2)}),\, \ldots, \,\Theta_x(e_{\tau(2p-1)}, e_{\tau(2p)})),
\end{equation}
where the summation is taken over all $(\underbrace{2, \ldots, 2}_{p-times})$-shuffle
permutations of $(1,\ldots,2p)$ (see footnote \ref{footnote Sh}). By the invariance of $P$ under conjugation, we have
\[P(\Theta'_x) = P(a(x) \diamond \Theta_x \diamond a^{-1}(x)) = P(\Theta_x).\]
As $x$ varies on $U$, we obtain that the evaluation of the $2p$-multidifferential operator $P(\Theta)$ on $U$
is independent of the frame of smooth sections of $B$ on $U$, i.e. $P(\Theta) = P(\Theta')\in \mathcal{D}^{2p}(\mathcal{E}, C^\infty(U,\R))$.

\noindent
Let $(U_k)_{k\in K}$ be an open cover of $M$ trivializing for $B$, $(b_{k1},\ldots,b_{ks})$ a frame of smooth sections of $B$
on $U_k$, $k\in K$, and $\Theta_k$ the curvature matrix of $\nabla$ relatively to this frame.
Then, $P(\Theta_k)$ is a $2p$-multidifferential operator on $\mathcal{E}$ with values
in $C^\infty(U_k,\R)$. On the overlap $U_k\cap U_l$, $k,l\in K$, the two $2p$-multidifferential operators $P(\Theta_k)$ and $P(\Theta_l)$ coincide.
Therefore, the collection $(P(\Theta_k))_{k\in K}$ defines globally a $2p$-differential operator on $\mathcal{E}$ with values in $\C$ which
we denote by $P(\Theta)$. Then, we have the following fundamental result:

\begin{proposition}\label{prop-closed P(Theta)}
For any $\mathrm{Ad}(\mathrm{GL}(s,\R))$-invariant polynomial function $P$ on $\mathfrak{gl}(s,\R)$, the multidifferential operator $P(\Theta)$ on $\mathcal{E}$ is $\partial_E$-closed. Thus, it represents an element of the Loday algebroid cohomology $H^{\bullet}(\mathcal{E})$ noted by $[P(\Theta)]$.
\end{proposition}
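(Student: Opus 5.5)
The plan is to mimic the classical Chern--Weil argument, working in the shuffle algebra $\big(\mathcal{D}(\mathcal{E},\mathrm{End}(\Gamma(B))),\diamond\big)$ instead of differential forms. Since $\partial_E$ is local (it is built from $\rho$ and the bracket, both local by Remark \ref{remark-locality}), it suffices to prove $\partial_E P(\Theta)=0$ on each trivializing open set $U$ with frame $(b_1,\ldots,b_s)$. It also suffices to treat $P$ homogeneous of degree $p$, viewed as a symmetric invariant $p$-multilinear map, because the invariant polynomials are spanned by such. A convenient further reduction is that $\mathrm{Inv}(\mathfrak{gl}(s,\R))$ is generated as an algebra by the traces $A\mapsto \tr(A^k)$. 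The map $P\mapsto P(\Theta)$, defined through \eqref{P-Theta_x}, is multiplicative for the shuffle product, since the sum over $(2,\ldots,2)$-shuffles factors into shuffles of shuffles. Moreover $\partial_E$ is a graded derivation of $\diamond$ by \eqref{d_E cobound}, and all $P(\Theta)$ have even degree. Hence it is enough to prove $\partial_E\,\tr(\Theta^{\diamond k})=0$, where $\Theta^{\diamond k}=\Theta\diamond\cdots\diamond\Theta$ is the matrix product computed with $\diamond$ as in \eqref{sh-prod-values endomorphisms}. One must check the identification $P_k(\Theta)=\tr(\Theta^{\diamond k})$ for $P_k(A)=\tr(A^k)$, up to the normalizing constant coming from the shuffle signs in \eqref{P-Theta_x}.

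For the key step I would work globally with $R^\nabla\in\mathcal{D}^2(\mathcal{E},\mathrm{End}(\Gamma(B)))$, avoiding matrices. Then $\tr((R^\nabla)^{\diamond k})$ is a global operator whose local expression is $\tr(\Theta^{\diamond k})$, by \eqref{curvat-Theta}. By \eqref{dif - Tr},
\[
\partial_E \tr\big((R^\nabla)^{\diamond k}\big)=\tr\big(d^{\tilde\nabla}(R^\nabla)^{\diamond k}\big).
\]
By the graded Leibniz rule \eqref{eq der diamond}, with $R^\nabla$ of even degree so that no signs appear,
\[
d^{\tilde\nabla}(R^\nabla)^{\diamond k}=\sum_{i}(R^\nabla)^{\diamond (i-1)}\diamond d^{\tilde\nabla}R^\nabla\diamond (R^\nabla)^{\diamond(k-i)} .
\]
Every term contains $d^{\tilde\nabla}R^\nabla$, which vanishes by the Bianchi identity \eqref{bianchi id}. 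Alternatively, one can argue in matrices: \eqref{bianchi id - theta} gives $\partial_E\Theta=\theta\diamond\Theta-\Theta\diamond\theta$, so $\partial_E\tr(\Theta^{\diamond k})$ becomes a sum of traces of graded commutators $\tr(\theta\diamond X-X\diamond\theta)$ with $X$ of even degree. Such traces vanish by the cyclicity $\tr(\Delta\diamond\Delta')=(-1)^{pq}\tr(\Delta'\diamond\Delta)$, which follows from \eqref{invariance-sym} together with the shuffle sign rule. To handle a general symmetric invariant $P$ directly, without passing through generators, I would instead expand $\partial_E P(\Theta)$ as a sum over slots and use the infinitesimal invariance $\sum_i P(A_1,\ldots,[X,A_i],\ldots,A_p)=0$.

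I expect the main obstacle to be sign bookkeeping in the non-skew setting. The shuffle product on $\mathcal{D}(\mathcal{E})$ is graded commutative, but $\partial_E$ involves the non-skew bracket, so it must be checked carefully that \eqref{P-Theta_x} agrees with the iterated $\diamond$-product, including the combinatorial constant. The graded cyclicity of the trace under $\diamond$ for matrix-valued operators also needs verification. I would settle both by first checking $k=1,2$ explicitly. The $k=1$ case is immediate: $\partial_E\tr\Theta=\tr(d^{\tilde\nabla}R^\nabla)=0$.
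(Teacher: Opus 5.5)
Your argument is correct, but it is organized differently from the paper's. The paper gives no details. It transplants the Fundamental Lemma of Milnor--Stasheff: for an arbitrary symmetric $\mathrm{Ad}$-invariant $P$, one expands $\partial_E P(\Theta)$ slot by slot in a local frame, substitutes the matrix Bianchi identity \eqref{bianchi id - theta}, and kills the result using the invariance of $P$. This is the ``direct'' alternative you mention at the end.

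Your main route is different in two steps:
\begin{itemize}
\item You reduce to power sums, using three inputs: the classical fact that $\mathrm{Inv}(\mathfrak{gl}(s,\R))$ is generated by $A\mapsto\tr(A^k)$, the multiplicativity $(P\cdot Q)(\Theta)=P(\Theta)\diamond Q(\Theta)$, and the derivation property \eqref{d_E cobound}.
\item You then prove $\partial_E\tr\big((R^\nabla)^{\diamond k}\big)=0$ without frames, from \eqref{dif - Tr}, \eqref{eq der diamond} and \eqref{bianchi id}. This is exactly the argument the paper itself uses later to show $\partial_E\mathrm{ch}_p(\nabla)=0$.
\end{itemize}

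What your route buys: only the even-degree $R^\nabla$ enters the Leibniz rule, so there is no sign bookkeeping. You also never evaluate $P$ on matrices whose entries have mixed degrees (odd $\theta$, even $\Theta$). Nor do you need the infinitesimal invariance $\sum_i P(A_1,\ldots,[X,A_i],\ldots,A_p)=0$, which only kills the Bianchi terms after summing over all slots, not slot by slot.

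What it costs: an invariant-theory input specific to $\mathfrak{gl}$. The direct argument works for any structure group, e.g.\ the Pfaffian on $\mathfrak{so}(2m)$, which is not a polynomial in traces. It also relies on the multiplicativity, which the paper only asserts later, in the proof of Proposition \ref{prop-Weil}.

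Both checks you flag go through.
\begin{itemize}
\item Since the blocks $\Theta(e_a,e_b)$ have degree $2$, reordering them costs no sign. So \eqref{P-Theta_x} gives $(P\cdot Q)(\Theta)=P(\Theta)\diamond Q(\Theta)$, and $P_k(\Theta)=\tr(\Theta^{\diamond k})$ with constant $1$.
\item The transposition between the matrix of $R^\nabla(e_1,e_2)$ and $\Theta(e_1,e_2)$ only reverses products inside $\tr$, which is harmless for the same reason.
\end{itemize}

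Finally, in a frame the matrix of $d^{\tilde{\nabla}}\Delta$, for $\Delta$ of degree $2$ with matrix $X$, is $\partial_E X-\theta\diamond X+X\diamond\theta$. So \eqref{bianchi id} and \eqref{bianchi id - theta} are the same statement, and both routes ultimately rest on that one identity.
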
The proof of the above proposition is completely analogous to that in the classical case. The exterior product of forms on $M$ is ``replaced" by the shuffle product of multidifferential operators on $\mathcal{E}$. Then, using the properties of $P$ (linearity, symmetry, $\mathrm{Ad}(\mathrm{GL}(s,\R))$-invariance) and the Bianchi identity \eqref{bianchi id - theta}, we establish the above assertion.
For details, see \cite[Fundamental Lemma, p. 296]{milnor-stasheff}.

\vspace{2mm}
\noindent
Before we proceed, we note that, in the same spirit as above, for any $p$-tuple $(D_1, \ldots, D_p)$
of multidifferential operators on $\mathcal{E}$ with values in $\mathrm{End}(\Gamma(B))$, whose the $i$-element $D_i$ is a
$k_i$-multidifferential operator on $\mathcal{E}$, $i=1,\ldots,p$, we can define the $k$-multidifferential operator $P(D_1,\ldots,D_p)$,
$k=k_1+\ldots + k_p$, by setting, for any $e_1,\ldots,e_k \in \mathcal{E}$,
\begin{eqnarray}
\lefteqn{P(D_1,\ldots,D_p)(e_1,\ldots,e_k) =} \nonumber \\
& & \hspace{-8mm} \sum_{\tau \in Sh(k_1,\ldots, k_p)}(-1)^{|\tau|}P(D_1(e_{\tau(1)},\ldots,e_{\tau(k_1)}),\ldots,D_p(e_{\tau(k_1+\ldots + k_{p-1}+1)},\ldots,e_{\tau(k_1+\ldots + k_{p})})),
\end{eqnarray}
where $Sh(k_1,\ldots, k_p)$ is the set of $(k_1,\ldots,k_p)$-shuffle
permutations of $(1,\ldots,k)$.\footnote{That is, the elements of $Sh(k_1,\ldots, k_p)$
are the permutations $\tau$ of $(1,\ldots,k)$, $k=k_1+\ldots + k_p$, that verify the conditions $\tau(1)<\ldots < \tau(k_1)$,
$\tau(k_1+1)<\ldots < \tau(k_1+k_2)$, $\ldots$, $\tau(k_1+\ldots + k_{p-1}+1) < \ldots < \tau(k)$.\label{footnote Sh}}
The $2p$-multidifferential operator $P(\Theta)$ defined, at a point $x\in U$, by \eqref{P-Theta_x}, can be considered as an
abbreviated notation for $P(\underbrace{\Theta, \ldots, \Theta}_{p-times})$. In order to express $P(D_1,\ldots,D_p)$ we remark the following.
The value of $D_i$ on a $k_i$-tuple $(e_1, \ldots, e_{k_i})$ of elements of $\mathcal{E}$, $i=1,\ldots,p$, at a point $x\in M$,
as element of $\mathfrak{gl}(s,\R)$, can be written as
\[D_i(e_1, \ldots, e_{k_i})(x) = \sum_{m=1}^{s^2}D_i^m(e_1, \ldots, e_{k_i})(x) E_m,\]
where $(E_1,\ldots,E_{s^2})$ is a basis of $\mathfrak{gl}(s,\R)$ and $D_i^m \in \mathcal{D}^{k_i}(\mathcal{E})$. Setting
\[\varpi_{m_1\ldots m_p} = P(E_{m_1}, \ldots, E_{m_p}),\quad \quad m_i=1, \ldots, s^2,\]
which are real numbers symmetric with respect to the indices, we obtain
\[P(D_1,\ldots,D_p) = \sum_{m_1,\ldots, m_p}^{s^2}\varpi_{m_1\ldots m_p} D_1^{m_1}\diamond \ldots \diamond D_p^{m_p}.\]
So, after a simple computation, we take
\begin{eqnarray}\label{d_E P}
\partial_E (P(D_1,\ldots,D_p)) & = &  P(\partial_E D_1,\ldots,D_p) + (-1)^{k_1}P(D_1, \partial_ED_2, \ldots, D_p) \nonumber \\
& & +\, \ldots + (-1)^{(k-k_p)}P(D_1, \ldots, D_{p-1}, \partial_ED_p).
\end{eqnarray}

\vspace{2mm}
\noindent
Before we continue to establish the Chern-Weil homomorphism, we prove the following lemma.

\begin{lemma}
Let $D_t\in \mathcal{D}^p(\mathcal{E})$ be a $p$-differential operator depending smoothly on a real parameter $t\in [a,b] \subset \R$. Then
\begin{equation}\label{int-D_t}
\partial_E \int_a^b D_t\,dt = \int_a^b\partial_ED_t \, dt.
\end{equation}
\end{lemma}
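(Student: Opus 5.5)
The plan is to evaluate both sides of \eqref{int-D_t} on an arbitrary $(p+1)$-tuple $(e_1,\ldots,e_{p+1})$ of elements of $\mathcal{E}$ and compare them pointwise on $M$. First we make precise the meaning of the integral. The operator $\int_a^b D_t\,dt$ is defined by
\[
\Big(\int_a^b D_t\,dt\Big)(e_1,\ldots,e_p)(x) = \int_a^b D_t(e_1,\ldots,e_p)(x)\,dt .
\]
Smooth dependence on $t$ means that, in local frames and coordinates, the coefficients of $D_t$ (of bounded order, thanks to compactness of $[a,b]$) are smooth functions of $(t,x)$. Hence $D_t(e_1,\ldots,e_p)(x)$ is jointly smooth in $(t,x)$, and the integral is a smooth function on $M$. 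It is again a $p$-differential operator: its coefficients are the $t$-integrals of those of $D_t$, and $\R$-multilinearity and locality pass through the integral. Thus both sides of \eqref{int-D_t} lie in $\mathcal{D}^{p+1}(\mathcal{E})$.

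Next we expand $\partial_E\int_a^b D_t\,dt$ using formula \eqref{def:Loday:cohomology:operator}. This produces two kinds of terms. The first kind are the terms $(-1)^{i+1}\rho(e_i)\big(\int_a^b D_t(e_1,\ldots,\hat e_i,\ldots,e_{p+1})\,dt\big)$. The second kind are the terms $(-1)^i\int_a^b D_t(\ldots,\lcf e_i,e_j\rcf,\ldots)\,dt$. The bracket terms already match the corresponding terms of $\int_a^b\partial_E D_t\,dt$, since the arguments do not depend on $t$ and the integral is linear.

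For the anchor terms we use that $\rho(e_i)$ is a vector field on $M$, that is, a first-order differential operator in the $x$-variables only. Differentiation under the integral sign (Leibniz's rule) is legitimate here: the integrand $g(t,x)=D_t(\ldots)(x)$ is smooth on $[a,b]\times M$, and $[a,b]$ is compact, so $\partial_x g$ is continuous and locally bounded. It follows that
\[
\rho(e_i)\int_a^b g(t,\cdot)\,dt = \int_a^b \rho(e_i)g(t,\cdot)\,dt .
\]
Summing the terms with their signs and using linearity of the integral gives $\int_a^b(\partial_E D_t)(e_1,\ldots,e_{p+1})\,dt$, which is the claim.

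The only genuine point is this joint smoothness, which justifies swapping $\rho(e_i)$ with $\int dt$. We expect it to be the main obstacle, mild as it is. We would handle it by working in a local chart and local frame of $E$ and writing $D_t$ via its coefficients as in Appendix \ref{app-multidiff}. Joint smoothness in $(t,x)$ then makes the exchange standard.
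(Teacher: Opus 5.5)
Your proof is correct, but it takes a somewhat different and more direct route than the paper. The common analytic core is differentiation under the integral sign for a function jointly smooth on $M\times[a,b]$.

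The paper works in a local family of generators $\varepsilon_1^{j_1}\otimes\cdots\otimes\varepsilon_p^{j_p}$ of $\mathcal{D}^p_{(k_1,\ldots,k_p)}(\mathcal{E})$, so that all the $t$-dependence sits in scalar coefficients $h_{j_1\ldots j_p}(x,t)$. It then:
\begin{itemize}
\item applies the derivation property \eqref{d_E cobound} of $\partial_E$ to $h\,\varepsilon_1^{j_1}\otimes\cdots\otimes\varepsilon_p^{j_p}$;
\item writes $\partial_E h=\frac{\partial h}{\partial x^i}\,\rho^T dx^i$ using \eqref{der_E f - transpose};
\item commutes $\partial/\partial x^i$ with $\int_a^b dt$ on these scalar coefficients, and reassembles.
\end{itemize}

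You instead evaluate both sides on $(e_1,\ldots,e_{p+1})$ and use the explicit formula \eqref{def:Loday:cohomology:operator}. The bracket terms agree by linearity of the integral, and only the anchor terms need the interchange of the vector field $\rho(e_i)$ with $\int_a^b dt$.

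Your version buys the following:
\begin{itemize}
\item It is global and needs neither the Leibniz rule for $\diamond$ nor the local generators.
\item It therefore avoids the slightly delicate step of applying the globally defined $\partial_E$ to locally defined generators.
\item It makes explicit that $\int_a^b D_t\,dt$ is again a $p$-differential operator, which the paper leaves implicit.
\end{itemize}

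The paper's version has a different advantage. It gives the hypothesis ``$D_t$ depends smoothly on $t$'' a concrete meaning: coefficients in $C^\infty(U\times[a,b],\R)$ with respect to $t$-independent generators. That is exactly the input you also invoke to justify the joint smoothness of $(t,x)\mapsto D_t(e_1,\ldots,e_p)(x)$.
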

\begin{proof}
In a local family $(\varepsilon_1^{j_1}\otimes \ldots \otimes \varepsilon_p^{j_p})_{(j_i = 1, \ldots, m_i, \, i=1,\ldots,p)}$ of generators
of $\mathcal{D}^p(\mathcal{E})$ (see Appendix \ref{app-multidiff}), $D_t$ is written as
\begin{equation*}
D_t \stackrel{\eqref{model - D}}{=} \sum_{j_1,\ldots,j_p}h_{j_1\ldots j_p}(x,t)\varepsilon_1^{j_1}\otimes \ldots \otimes \varepsilon_p^{j_p},
\quad \quad h_{j_1\ldots j_p} \in C^\infty(U\times [a,b],\R).
\end{equation*}
Hence, it suffices to prove the equality \eqref{int-D_t} for each term $h_{j_1\ldots j_p}(x,t)\varepsilon_1^{j_1}\otimes \ldots \otimes \varepsilon_p^{j_p}$. We have
\begin{eqnarray*}
\lefteqn{\int_a^b\partial_E(h_{j_1\ldots j_p}\varepsilon_1^{j_1}\otimes \ldots \otimes \varepsilon_p^{j_p})dt} \nonumber \\
& \stackrel{\eqref{d_E cobound}}{=} &
\int_a^b [\partial_E h_{j_1\ldots j_p}\diamond (\varepsilon_1^{j_1}\otimes \ldots \otimes \varepsilon_p^{j_p})+ h_{j_1\ldots j_p}\partial_E
(\varepsilon_1^{j_1}\otimes \ldots \otimes \varepsilon_p^{j_p}) ]dt
\nonumber \\
& = & [\int_a^b \partial_E(h_{j_1\ldots j_p})dt]\diamond (\varepsilon_1^{j_1}\otimes \ldots \otimes \varepsilon_p^{j_p}) + [\int_a^b h_{j_1\ldots j_p}dt]\partial_E
(\varepsilon_1^{j_1}\otimes \ldots \otimes \varepsilon_p^{j_p}) \nonumber \\
&\stackrel{\eqref{der_E f - transpose}}{=}&
[\int_a^b \frac{\partial h_{j_1\ldots j_p} }{\partial x^i}dt](\rho^\ast dx^i)\diamond (\varepsilon_1^{j_1}\otimes \ldots \otimes
\varepsilon_p^{j_p}) + [\int_a^b h_{j_1\ldots j_p}dt]\partial_E
(\varepsilon_1^{j_1}\otimes \ldots \otimes \varepsilon_p^{j_p}) \nonumber \\
& = & \frac{\partial}{\partial x^i}[\int_a^b  h_{j_1\ldots j_p}dt](\rho^\ast dx^i)\diamond (\varepsilon_1^{j_1}\otimes \ldots \otimes
\varepsilon_p^{j_p}) + [\int_a^b h_{j_1\ldots j_p}dt]\partial_E
(\varepsilon_1^{j_1}\otimes \ldots \otimes \varepsilon_p^{j_p}) \nonumber \\
& = & \partial_E [\int_a^b  h_{j_1\ldots j_p}dt]\diamond (\varepsilon_1^{j_1}\otimes \ldots \otimes
\varepsilon_p^{j_p}) + [\int_a^b h_{j_1\ldots j_p}dt]\partial_E
(\varepsilon_1^{j_1}\otimes \ldots \otimes \varepsilon_p^{j_p}) \nonumber \\
& = & \partial_E(\int_a^b h_{j_1\ldots j_p}\varepsilon_1^{j_1}\otimes \ldots \otimes \varepsilon_p^{j_p} dt),
\end{eqnarray*}
whence we conclude that \eqref{int-D_t} is true.
\end{proof}

\begin{proposition}\label{prop-Weil}
In the above framework, let
\begin{eqnarray*}
w : \mathrm{Inv}(\mathfrak{gl}(s,\R)) & \to & H^\bullet(\mathcal{E}) \nonumber \\
P & \mapsto & w(P): = [P(\Theta)]
\end{eqnarray*}
be the map from the algebra $\mathrm{Inv}(\mathfrak{gl}(s,\R))$ of $\mathrm{Ad}(\mathrm{GL}(s,\R))$-invariant polynomials on $\mathfrak{gl}(s,\R)$ to the Loday algebroid cohomology $H^\bullet(\mathcal{E})$, that sends a polynomial $P$ of degree $p$ to the element $[P(\Theta)]$ of the Loday algebroid cohomology group $H^{2p}(\mathcal{E})$.
Then, $w(P)$ is independent of the choice of a connection and $w : \mathrm{Inv}(\mathfrak{gl}(s,\R)) \to  H^\bullet(\mathcal{E})$ is an algebra homomorphism,
called the Chern-Weil homomorphism.
\end{proposition}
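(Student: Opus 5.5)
Independence of the connection will be proved by the classical transgression argument, made to work in the shuffle algebra through the preceding Lemma on $\partial_E\int_a^b = \int_a^b\partial_E$. Let $\nabla^0,\nabla^1$ be two nonlinear $E$-connections on $B$. By Proposition \ref{set nl conn - affine}, $\nabla^t:=(1-t)\nabla^0+t\nabla^1$, $t\in[0,1]$, is a smooth path of nonlinear $E$-connections, with $\frac{d\nabla^t}{dt}=A:=\nabla^1-\nabla^0\in\mathcal{D}^1_1(\mathcal{E},\mathrm{End}(\Gamma(B)))$. Write $R^t:=R^{\nabla^t}$. For an invariant symmetric $p$-linear $P$, symmetry of $P$ together with the time-dependent version of the derivative rule give
\[
\frac{d}{dt}P(R^t,\ldots,R^t)=p\,P\Big(\frac{dR^t}{dt},R^t,\ldots,R^t\Big).
\]
Because $R^t$ has even degree, no reordering signs appear in this step. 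By \eqref{der R^t}, $\frac{dR^t}{dt}=d^{\tilde\nabla^t}A$.

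The main step is the global identity
\[
P(d^{\tilde\nabla^t}A,R^t,\ldots,R^t)=\partial_E\,P(A,R^t,\ldots,R^t).
\]
I will check it locally in a frame $(b_1,\ldots,b_s)$, so that it reduces to matrix identities. There $d^{\tilde\nabla}\Delta=\partial_E\Delta-\theta\diamond\Delta+(-1)^{|\Delta|}\Delta\diamond\theta$, the Bianchi identity \eqref{bianchi id - theta} reads $\partial_E\Theta=\theta\diamond\Theta-\Theta\diamond\theta$, and \eqref{d_E P} expands $\partial_E P(A,\Theta,\ldots,\Theta)$ term by term. After substituting Bianchi, what remains is $P(d^{\tilde\nabla}A,\Theta,\ldots,\Theta)$ plus terms of the form $\pm P(\ldots,\theta\diamond X,\ldots)\mp P(\ldots,X\diamond\theta,\ldots)$. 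These extra terms cancel by the infinitesimal form of $\mathrm{Ad}$-invariance,
\[
\sum_i P(X_1,\ldots,[Y,X_i],\ldots,X_p)=0,
\]
obtained by differentiating the invariance condition and applied pointwise after evaluation at $x\in U$ as in \eqref{P-Theta_x}. This sign bookkeeping with graded commutators in the shuffle product is the only genuinely delicate point. I expect it to be the main obstacle, because the entries $\theta$ and $A$ are of odd degree and $\Theta$ is of even degree. Since both sides are frame independent (by the argument preceding Proposition \ref{prop-closed P(Theta)}, now applied with the transformation $A'=a\diamond A\diamond a^{-1}$), the local identities glue to a global one.

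Integrating over $t$ and using \eqref{int-D_t} then gives
\[
P(\Theta^1)-P(\Theta^0)=\partial_E\int_0^1 p\,P(A,R^t,\ldots,R^t)\,dt.
\]
Hence $[P(\Theta^1)]=[P(\Theta^0)]$ and $w(P)$ is well defined. Linearity of $w$ is clear.

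For multiplicativity, let $P,Q$ be invariant polynomials of degrees $p,q$ and let $PQ$ denote their product, symmetrized as a $(p+q)$-linear form. Evaluating at $x$ and using the decomposition $P(D_1,\ldots,D_p)=\sum\varpi_{m_1\ldots m_p}D_1^{m_1}\diamond\cdots\diamond D_p^{m_p}$, I will show that $(PQ)(\Theta)=P(\Theta)\diamond Q(\Theta)$. This holds because the shuffle product is associative and graded commutative, and $\Theta$ has even degree, so the $(2,\ldots,2)$-shuffle sum over $2(p+q)$ arguments factors as a product of shuffle sums. Passing to cohomology, and using that $\diamond$ descends to $H^\bullet(\mathcal{E})$ as shown in Subsection \ref{subsection Loday cohomology}, this gives $w(PQ)=w(P)\diamond w(Q)$. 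Together with $w(1)=[1]$, this shows that $w$ is an algebra homomorphism.
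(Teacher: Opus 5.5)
Your proposal is correct and follows the same route as the paper: the straight-line path $\nabla^t$, the transgression operator $p\int_0^1 P(\vartheta,\Theta^t,\ldots,\Theta^t)\,dt$ (your $A$ is locally the paper's $\vartheta=\theta^1-\theta^0$), the lemma commuting $\partial_E$ with $\int_0^1$, the Bianchi identity together with invariance of $P$, and multiplicativity via $(PQ)(\Theta)=P(\Theta)\diamond Q(\Theta)$. Your handling of the invariance step is actually the sounder one: the paper claims that $P(\ldots,(\vartheta\diamond\theta^t+\theta^t\diamond\vartheta)(e_1,e_2),\ldots)$ and $P(\ldots,\partial_E\Theta^t(e_1,e_2,e_3),\ldots)$ each vanish separately, which is false for $p\geq 2$; only their combination cancels, and it does so exactly by the summed infinitesimal $\mathrm{Ad}$-invariance you invoke.
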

\begin{proof}
Let $\nabla^0$ and $\nabla^1$ be two different nonlinear $E$-connections on $B$ with connection matrices $\theta^0$ and $\theta^1$, respectively,
and curvature matrices $\Theta^0$ and $\Theta^1$, respectively, relative to a local frame of smooth sections of $B$ on an open subset $U$ of $M$.
We note by $\vartheta$ the difference $\theta^1-\theta^0$. Let, also, $P \in \mathrm{Inv}^p(\mathfrak{gl}(s,\R))$
be a symmetric $\mathrm{Ad}(\mathrm{GL}(s,\R))$-invariant $p$-multilinear function on $\mathfrak{gl}(s,\R)$.
Denote by $P(\Theta^0)$ and $P(\Theta^1)$ the $2p$-multidifferential operators defined through \eqref{P-Theta_x} on $U$.
We construct the $1$-parameter family of nonlinear $E$-connections $\nabla^t=(1-t)\nabla^0 + t\nabla^1$, $t\in [0,1]$, on $B$.
The connection matrix of $\nabla^t$ is $\theta^t = (1-t)\theta^0 + t\theta^1 = \theta^0 + t\vartheta$. We denote by $\Theta^t$ its curvature matrix.
By the transformation rule \eqref{theta - transf - rule}, we get that, for any $e\in \mathcal{E}$,
the difference $\vartheta(e) = \theta^1(e)-\theta^0(e)\in \mathrm{End}(\Gamma(B\vert_U))$.
So, for any $x\in U$, $\vartheta_x = \mathrm{ev}_x \circ \vartheta$ can be considered as a $\mathfrak{gl}(s,\R)$-valued function on $\mathcal{E}$.
Then, for any $P \in \mathrm{Inv}^p(\mathfrak{gl}(s,\R))$, we get a well defined $(2p-1)$-differential operator $\mathfrak{P}$ by setting,
for any $e_1,\ldots,e_{2p-1}\in \mathcal{E}$,
\begin{eqnarray*}
\lefteqn{\mathfrak{P}(e_1,\ldots,e_{2p-1}) = }\nonumber \\
& &  = \sum_{\tau \in Sh (1, \!\!\!\!\underbrace{\scriptstyle 2, \ldots, 2}_{(p-1)-times} \!\!\!\!)}(-1)^{|\tau|}p\int_0^1P(\vartheta(e_{\tau(1)}),\Theta^t(e_{\tau(2)},e_{\tau(3)}), \ldots,
\Theta^t(e_{\tau(2p-2)},e_{\tau(2p-1)}))dt,
\end{eqnarray*}
such that
\begin{equation}\label{class - P}
\partial_E \mathfrak{P} = P(\Theta^1) - P(\Theta^0).
\end{equation}
In order to prove \eqref{class - P}, we remark that
\begin{enumerate}
\item
\begin{equation}\label{d/dt Theta^t}
\frac{d}{dt}\Theta^t = \frac{d}{dt}(\partial_E \theta^t - \theta^t \diamond \theta^t) = \partial_E\vartheta - (\vartheta \diamond \theta^t + \theta^t \diamond \vartheta)
\end{equation}
and, for any $e_1,e_2\in \mathcal{E}$,
\begin{equation*}
(\vartheta \diamond \theta^t + \theta^t \diamond \vartheta)(e_1,e_2)= \vartheta(e_1)\theta^t(e_2)- \vartheta(e_2)\theta^t(e_1) + \theta^t(e_1) \vartheta(e_2) - \theta^t(e_2) \vartheta(e_1).
\end{equation*}
Because of the multilinearity and the symmetry of $P$, \eqref{invariance-sym} is true in each entry of $P$, thus
\begin{equation}\label{invariance - P - 1 - 1}
P(\ldots, (\vartheta \diamond \theta^t + \theta^t \diamond \vartheta)(e_1,e_2), \ldots) = 0.
\end{equation}
\item
\[\partial_E \Theta^t \stackrel{\eqref{bianchi id - theta}}{=}\theta^t \diamond \Theta^t - \Theta^t \diamond \theta^t\]
and, for the same reasons as above, we have that, for any $e_1,e_2, e_3\in \mathcal{E}$,
\begin{equation}\label{invariance - P - 1 - 2}
P(\ldots, \partial_E \Theta^t (e_1,e_2,e_3),\ldots) = P(\ldots, (\theta^t \diamond \Theta^t - \Theta^t \diamond \theta^t)(e_1,e_2,e_3), \ldots) = 0.
\end{equation}
\end{enumerate}
Thus,
\begin{eqnarray*}
\lefteqn{p \partial_E \int_0^1P(\vartheta,\Theta^t, \ldots, \Theta^t)dt \stackrel{\eqref{int-D_t}}{=} p \int_0^1 \partial_E (P(\vartheta,\Theta^t, \ldots, \Theta^t))dt}\nonumber \\
&\stackrel{\eqref{d_E P}}{=} & p \int_0^1 P(\partial_E \vartheta,\Theta^t, \ldots, \Theta^t)dt \nonumber \\
& & - \,p \int_0^1 P(\vartheta, \partial_E \Theta^t, \ldots, \Theta^t)dt -\, \ldots \,- p \int_0^1 P(\vartheta, \Theta^t, \ldots, \partial_E \Theta^t)dt \nonumber \\
& \stackrel{\eqref{d/dt Theta^t},\eqref{invariance - P - 1 - 1},\eqref{invariance - P - 1 - 2}}{=} & p \int_0^1 P(\frac{d}{dt}\Theta^t,\Theta^t, \ldots, \Theta^t)dt \nonumber \\
& = & \int_0^1 \frac{d}{dt}P(\Theta^t, \ldots, \Theta^t)dt = P(\Theta^1, \ldots, \Theta^1) - P(\Theta^0, \ldots, \Theta^0),
\end{eqnarray*}
whence we get that \eqref{class - P} holds.\footnote{We note that,
in the first equality of matrices of differential operators, of the proof of \eqref{class - P}, we apply \eqref{int-D_t} in each entry of the matrices.} Therefore, $[P(\Theta^0)]=[P(\Theta^1)]$ which means that $w(P)$ is independent of the choice of a connection. Finally, we can easily prove that for any pair $(P,Q)$ of $\mathrm{Ad}(\mathrm{GL}(s,\R))$-invariant polynomials on $\mathfrak{gl}(s,\R)$, $(P\cdot Q)(\Theta) = P(\Theta)\diamond Q(\Theta)$, so
\[w(P\cdot Q) = [(P\cdot Q)(\Theta)] = [P(\Theta) \diamond Q(\Theta)] = [P(\Theta)]\diamond [Q(\Theta)],\]
which means that $w : (\mathrm{Inv}(\mathfrak{gl}(s,\R)), \cdot ) \to  (H^\bullet(\mathcal{E}), \diamond)$ is an algebra homomorphism.
\end{proof}

\subsection{The Chern and the Chern-Simons classes of Loday algebroids}
For the purposes of the rest of the present paper, we introduce on the graded associative unital $\R$-algebra
$\big(\mathcal{D}(\mathcal{E}, \mathrm{End}(\Gamma(B))), \diamond \big)$ of a Loday algebroid $E$ on $M$ a new product $\odot$ that \emph{``symmetrizes"}
the existent $\diamond$ given by \eqref{sh-prod-values endomorphisms}.
Using the new product $\odot$, elements of $\mathrm{Inv}(\mathfrak{gl}(s,\R))$
may be evaluated on arguments that are $\odot$-commuting multidifferential operators.
Precisely, for any $\Delta_1 \in \mathcal{D}^{p_1}(\mathcal{E}, \mathrm{End}(\Gamma(B)))$ and
$\Delta_2 \in \mathcal{D}^{p_2}(\mathcal{E}, \mathrm{End}(\Gamma(B)))$, we define
\begin{equation*}\label{odot prod}
\Delta_1 \odot \Delta_2 = \Delta_1 \diamond \Delta_2 + (-1)^{p_1p_2}\Delta_2 \diamond \Delta_1.
\end{equation*}
So,
\begin{equation}\label{sym-odot-prod}
\Delta_1 \odot \Delta_2 = (-1)^{p_1p_2}\Delta_2 \odot \Delta_1
\end{equation}
and the pair $\big(\mathcal{D}(\mathcal{E}, \mathrm{End}(\Gamma(B))), \odot \big)$ is a graded Koszul algebra.
Furthermore, if $\nabla$ is a nonlinear $E$-connection on a vector bundle $B$ over $M$,
we have that the covariant derivative $d^{\tilde{\nabla}}$ verifies the Leibniz rule for the new product also.
From \eqref{eq der diamond}, we obtain
\begin{equation}\label{eq der dot}
d^{\tilde{\nabla}}(\Delta_1 \odot \Delta_2) = (d^{\tilde{\nabla}}\Delta_1) \odot \Delta_2 + (-1)^{p_1} \Delta_1 \odot (d^{\tilde{\nabla}}\Delta_2).
\end{equation}
For any $p\in \N^\ast$, we define the \emph{Chern characters} of $\nabla$ by setting
\begin{equation*}\label{chern-characters}
\mathrm{ch}_p (\nabla) = \tr(R^{\nabla})^p,
\end{equation*}
where $(R^{\nabla})^p = \underbrace{R^{\nabla}\odot \ldots \odot R^{\nabla}}_{p - times} = p!\underbrace{R^{\nabla}\diamond \ldots \diamond
R^{\nabla}}_{p - times} \in \mathcal{D}^{2p}(\mathcal{E}, \mathrm{End}(\Gamma(B)))$.
By construction, they are $\partial_E$-closed elements of $\mathcal{D}^{2p}(\mathcal{E})$ of order $2p$ and of total order $(2p)^2$.
As a consequence of the Bianchi identity \eqref{bianchi id} and the relations \eqref{dif - Tr} and \eqref{eq der dot},
\begin{eqnarray*}
\partial_E \mathrm{ch}_p (\nabla) = 0.
\end{eqnarray*}
So, the class $[\mathrm{ch}_p (\nabla)]\in H^{2p}(\mathcal{E})$ is defined and it is called the \emph{$p$-Chern class of $(E, \nabla)$}.
Clearly, if $\nabla$ is flat, $[\mathrm{ch}_p(\nabla)]=0$. Hence, the nonvanishing of $[\mathrm{ch}_p(\nabla)]$ can be considered
as the obstruction to the existence of a representation of $(E,\nabla)$.

\vspace{2mm}
\noindent
In the theory of linear connections of an algebroid (Lie \cite{rui, cr-fer-sec-cl}, Courant \cite{CM2021}) on a vector bundle, a basic fact is that the Chern classes do not depend on the connection. This is established by using a method similar to the classical Chern-Simons construction \cite{chern-sim} and by proving that the Chern-Simons forms are transgressions of the Chern forms. In the following, in order to prove, in the context of Loday algebroid theory, the independence of the classes $[\mathrm{ch}_p(\nabla)]$ from $\nabla$, we propose a Chern-Simons-type construction and prove that the Chern-Simons multidifferential operators are transgressions of the Chern operators. A generalized Chern-Simons formula is also derived.
\vspace{1mm}
\noindent
Let $\nabla^0,\nabla^1,\ldots,\nabla^k$ be $k+1$ nonlinear $E$-connections on $B$ and
\begin{equation*}
\Delta^k = \{(t_1, \ldots, t_k) \in \R^k \, | \, t_1,\ldots, t_k \geq 0 \quad \mathrm{and} \quad t_1 + \ldots + t_k \leq 1 \}
\end{equation*}
the standard $k$-simplex in $\R^k$. For any $i\in \{1,\ldots,k\}$, we denote by $\Delta^k_i$ the $(k-1)$-simplex
which is the projection of $\Delta^k$ on the hyperplane $t_i=0$, i.e. a "face" of $\Delta^k$:
\begin{equation*}
\Delta^k_i = \{(t_1, \ldots, \hat{t}_i,\ldots, t_k) \in \R^{k-1} \, | \, t_1,\ldots,\hat{t}_i,\ldots, t_k \geq 0
\quad \mathrm{and} \quad t_1 + \ldots + \hat{t}_i + \ldots + t_k \leq 1 \}.
\end{equation*}
For each $(t_1, \ldots, t_k)\in \Delta^k$, we form the affine combination of $(\nabla^0,\nabla^1,\ldots,\nabla^k)$:
\begin{equation*}
\nabla^{0,\ldots,k}:= (1-\sum_{i=1}^kt_i)\nabla^0 + t_1\nabla^1 + \ldots + t_k \nabla^k,
\end{equation*}
which is a nonlinear $E$-connection on $B$, and we denote by $\nabla^{0,\ldots,\hat{i},\ldots,k}$ the affine combination
\begin{equation*}
\nabla^{0,\ldots,\hat{i},\ldots,k} : = (1-\sum_{{j=1}, j\neq i}^kt_j)\nabla^0 + t_1\nabla^1 + \ldots + \widehat{t_i\nabla^i} + \ldots + t_k \nabla^k
\end{equation*}
parametrized by $\Delta^k_i$.
\begin{definition}
We define the $p$-Chern-Simons transgression operators $\mathrm{cs}_p$ of $(\nabla^0,\ldots,\nabla^k)$, for all integers $p$ such that $0<k\leq 2p$, by setting
\begin{equation}\label{chern-simons dif op}
\mathrm{cs}_p(\nabla^0,\ldots,\nabla^k): = \int_{\Delta^k} \tr\Big(\frac{\partial \nabla^{0,\ldots,k}}{\partial t_1}\odot \ldots \odot
\frac{\partial \nabla^{0,\ldots,k}}{\partial t_k}\odot (R^{\nabla^{0,\ldots,k}})^{p-k}\Big)dt_1\ldots dt_k,
\end{equation}
where $R^{\nabla^{0,\ldots,k}}$ indicates the curvature of $\nabla^{0,\ldots,k}$.
\end{definition}
Clearly, $\mathrm{cs}_p(\nabla^0,\ldots,\nabla^k)$ is an element of $\mathcal{D}^{2p-k}(\mathcal{E})$.
By convention, for a point $\nabla \in \mathfrak{C}_{nl}(E,B)$, we set $\mathrm{cs}_p(\nabla) = \mathrm{ch}_p(\nabla)$.

\begin{theorem}\label{theorem-chern-simons}
Under the above assumptions, the Chern-Simons transgression operators satisfy, for all $k, p\in \N$ such that $0<k\leq 2p$, the following identity:
\begin{equation}\label{identity-chern-simons}
(p-k+1) \partial_E \mathrm{cs}_p(\nabla^0,\ldots,\nabla^k) = \sum_{i=0}^k(-1)^i  \mathrm{cs}_p(\nabla^0,\ldots, \widehat{\nabla^i},\ldots, \nabla^k).
\end{equation}
\end{theorem}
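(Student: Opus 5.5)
The plan is to run the classical Chern--Simons argument, in the form used in \cite{balcerzak}, inside the graded Koszul algebra $\big(\mathcal{D}(\mathcal{E}, \mathrm{End}(\Gamma(B))), \odot\big)$. The pullback of $E$ to $M\times\Delta^k$ is not available, so all dependence on $(t_1,\ldots,t_k)$ is treated as parameter dependence of multidifferential operators. The basic tool for commuting $\partial_E$ with the integral is \eqref{int-D_t}, applied iteratively in each variable $t_i$ and entrywise.

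Write $\nabla^t=\nabla^{0,\ldots,k}$ and $\omega_i=\partial\nabla^t/\partial t_i=\nabla^i-\nabla^0$. Each $\omega_i\in\mathcal{D}^1_1(\mathcal{E},\mathrm{End}(\Gamma(B)))$ by Proposition~\ref{set nl conn - affine}, it has degree $1$, and it is independent of $t$. The first step is to compute
\[
\partial_E \tr\big(\omega_1\odot\cdots\odot\omega_k\odot (R^{\nabla^t})^{p-k}\big)
=\tr\, d^{\tilde\nabla^t}\big(\omega_1\odot\cdots\odot\omega_k\odot (R^{\nabla^t})^{p-k}\big).
\]
This uses \eqref{dif - Tr}. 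The Leibniz rule \eqref{eq der dot} and the Bianchi identity \eqref{bianchi id} then kill all terms in which $d^{\tilde\nabla^t}$ hits a curvature factor. What remains is
\[
\sum_{i}(-1)^{i-1}\tr\big(\omega_1\odot\cdots\odot d^{\tilde\nabla^t}\omega_i\odot\cdots\odot(R^{\nabla^t})^{p-k}\big).
\]
Next I will use \eqref{der R^t} in the form $d^{\tilde\nabla^t}\omega_i=\partial R^{\nabla^t}/\partial t_i$. Together with graded commutativity \eqref{sym-odot-prod} and the fact that $R$ has even degree, this gives
\[
\tfrac{\partial}{\partial t_i}(R^{\nabla^t})^{p-k+1}=(p-k+1)\,\tfrac{\partial R^{\nabla^t}}{\partial t_i}\odot(R^{\nabla^t})^{p-k}.
\]
I will also need cyclicity of $\tr$ under the symmetrized product, so that the factors can be moved. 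So the integrand of $(p-k+1)\partial_E\mathrm{cs}_p$ becomes
\[
\sum_i(-1)^{i-1}\frac{\partial}{\partial t_i}\tr\big(\omega_1\odot\cdots\widehat{\omega_i}\cdots\odot\omega_k\odot(R^{\nabla^t})^{p-k+1}\big),
\]
up to the sign bookkeeping from moving the even-degree element $\partial R/\partial t_i$ into position.

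The final step is Stokes' theorem on the simplex, written as $\int_{\Delta^k}\sum_i(-1)^{i-1}\partial_{t_i}F_i\,dt=\int_{\partial\Delta^k}$. The faces $t_i=0$ give $\nabla^{0,\ldots,\hat i,\ldots,k}$, and the integrand there is exactly the integrand of $\mathrm{cs}_p(\nabla^0,\ldots,\widehat{\nabla^i},\ldots,\nabla^k)$. On the slanted face $\sum t_i=1$, reparametrizing by $(t_2,\ldots,t_k)$ turns $\nabla^t$ into the affine combination based at $\nabla^1$. The derivatives there become $\nabla^j-\nabla^1=\omega_j-\omega_1$, and multilinearity collapses these terms into the $i=0$ term $\mathrm{cs}_p(\nabla^1,\ldots,\nabla^k)$. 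The case $k=2p$, where the exponent $p-k$ would be negative, needs separate attention: there $(p-k+1)$ may vanish or be negative. I would check that the convention and the left side are consistent, and restrict to $k\le p$ if needed, as the formula suggests.

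I expect the main obstacle to be the sign and orientation bookkeeping. This has three sources: the signs of $\odot$, the orientation of the faces in Stokes, and the correct identification of the slanted face with the $i=0$ term including its sign $(-1)^0$. There is also a technical point. The integral and Stokes' formula hold entrywise in local generators, as in the proof of \eqref{int-D_t}, so the parameter-dependent coefficients are ordinary smooth functions on $U\times\Delta^k$. I would first verify the case $k=1$ in full, which recovers a transgression formula of the shape \eqref{class - P}, and then do induction-free direct bookkeeping for general $k$.
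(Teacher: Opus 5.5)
Your proposal is correct and is essentially the paper's proof. The paper likewise commutes $\partial_E$ with the integral by \eqref{int-D_t}, passes to $\tr d^{\tilde{\nabla}^{0,\ldots,k}}$ by \eqref{dif - Tr}, expands with \eqref{eq der dot}, and uses \eqref{bianchi id} and \eqref{der R^t} to write the integrand as $\frac{1}{p-k+1}\sum_i(-1)^{i-1}\frac{\partial}{\partial t_i}(\cdots)$; the only cosmetic difference is that it then applies the fundamental theorem of calculus in each $t_i$ and identifies the faces $t_i=0$ and $t_i=1-\sum_{j\neq i}t_j$ through Lemmas~\ref{lemma t=0 chern-simons} and~\ref{lemma chern-simons 1 to k} (the latter using exactly your $\omega_j-\omega_1$ collapse, see \eqref{formula odot parenthese}), instead of invoking Stokes on $\partial\Delta^k$. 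Your caveat about the range is well founded, since the paper's argument also tacitly needs $k\le p$: it uses $(R^{\nabla^{0,\ldots,k}})^{p-k}$ and divides by $p-k+1$.
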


\vspace{2mm}
\noindent
Before proceeding to the proof of Theorem \ref{theorem-chern-simons}, we prove two auxiliary lemmas.

\begin{lemma}\label{lemma t=0 chern-simons}
For each $i\in \{1, \ldots,k\}$, we have
\begin{eqnarray}\label{lemma t=0 chern-simons-eq}
\lefteqn{\mathrm{cs}_p(\nabla^0,\ldots,\widehat{\nabla^i},\ldots,\nabla^k)=}  \\
& & \hspace{-5mm} \int_{\Delta^k_i} \tr\Big(\frac{\partial \nabla^{0,\ldots,k}}{\partial t_1}\odot \ldots \odot
\widehat{\frac{\partial \nabla^{0,\ldots,k}}{\partial t_i}}\odot \ldots \odot
\frac{\partial \nabla^{0,\ldots,k}}{\partial t_k}\odot (R^{\nabla^{0,\ldots,k}})^{p-(k-1)}\Big)\Big|_{t_i = 0}dt_1\ldots \widehat{dt_i}\ldots dt_k. \nonumber
\end{eqnarray}
\end{lemma}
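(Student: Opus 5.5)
The lemma asserts that restricting the $k$-simplex family $\nabla^{0,\ldots,k}$ to the face $t_i=0$ gives exactly the $(k-1)$-simplex family defining $\mathrm{cs}_p(\nabla^0,\ldots,\widehat{\nabla^i},\ldots,\nabla^k)$. I plan to prove it by comparing integrands pointwise on $\Delta^k_i$ and then relabelling variables. No analysis is required beyond the observation that all objects are polynomial in the parameters.

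\textbf{Step 1: the restricted family.} Setting $t_i=0$ in
\[
\nabla^{0,\ldots,k}=(1-\sum_j t_j)\nabla^0+\sum_j t_j\nabla^j
\]
gives $\nabla^{0,\ldots,\hat{i},\ldots,k}$, parametrized by $(t_1,\ldots,\hat t_i,\ldots,t_k)\in\Delta^k_i$. Hence, at $t_i=0$, the curvature $R^{\nabla^{0,\ldots,k}}$ equals $R^{\nabla^{0,\ldots,\hat i,\ldots,k}}$. This holds because $R^\nabla$ is given by \eqref{expr-curvature} (equivalently, by \eqref{THETA} on the connection matrices) and is a polynomial expression in $\nabla$. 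So substituting $t_i=0$ commutes with forming the curvature and its $\odot$-powers.

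\textbf{Step 2: the derivatives.} For $j\neq i$ we have $\partial\nabla^{0,\ldots,k}/\partial t_j=\nabla^j-\nabla^0$. This is independent of $t$, and it coincides with the derivative of the restricted family with respect to $t_j$. By Proposition \ref{set nl conn - affine} each such difference lies in $\mathcal{D}^1_1(\mathcal{E},\mathrm{End}(\Gamma(B)))$, so the $\odot$-products and the trace \eqref{def-Tr} make sense, as in \eqref{chern-simons dif op}.

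\textbf{Step 3: matching the definition.} The integrand on the right-hand side of \eqref{lemma t=0 chern-simons-eq} is therefore exactly the integrand defining $\mathrm{cs}_p$ for the $k$ connections $\nabla^0,\ldots,\widehat{\nabla^i},\ldots,\nabla^k$. Here the power $p-(k-1)$ is the correct exponent for $k-1$ derivative factors. I relabel the remaining variables $(t_1,\ldots,\hat t_i,\ldots,t_k)$ as $(s_1,\ldots,s_{k-1})$, preserving their order, and the relabelled connections as $\nabla'^0=\nabla^0$, $\nabla'^{j}$. Under this relabelling $\Delta^k_i$ becomes the standard $(k-1)$-simplex, and the ordered $\odot$-product of the $\partial/\partial s_j$ factors becomes the ordered product in the definition. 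The measure $dt_1\cdots\widehat{dt_i}\cdots dt_k$ becomes $ds_1\cdots ds_{k-1}$ with no sign change, since the relabelling is order preserving. This yields the identity. When $k=1$ and $i=1$, both sides reduce to $\mathrm{ch}_p(\nabla^0)$ evaluated at $t_1=0$, which matches the convention $\mathrm{cs}_p(\nabla)=\mathrm{ch}_p(\nabla)$.

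\textbf{Main obstacle.} The only delicate point is bookkeeping of order and sign, and it reduces to the choice of relabelling. Because $\odot$ is only graded commutative \eqref{sym-odot-prod}, one must check that deleting the $i$-th factor and keeping the remaining order introduces no sign. Keeping the induced order is exactly what the definition of $\mathrm{cs}_p$ for the shortened tuple prescribes, so no permutation of degree-one factors occurs. I also need integration over the simplex to be compatible with evaluating $\mathrm{tr}$ of multidifferential operators, in the sense of \eqref{int-D_t}. This holds entrywise in local generators, as in the proof of \eqref{int-D_t}.
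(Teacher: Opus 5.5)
Your proposal is correct and follows the same route as the paper, which simply observes that $\nabla^{0,\ldots,k}|_{t_i=0}=\nabla^{0,\ldots,\hat{i},\ldots,k}$ and $R^{\nabla^{0,\ldots,k}}|_{t_i=0}=R^{\nabla^{0,\ldots,\hat{i},\ldots,k}}$, so the integrands coincide on $\Delta^k_i$. Your extra bookkeeping (order-preserving relabelling, the $k=1$ case) is accurate, but the appeal to \eqref{int-D_t} is unnecessary: the trace sits inside the integral on both sides, and you only compare integrands pointwise in the parameters.
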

\begin{proof}
We have $\nabla^{0,\ldots,k}\Big|_{t_i = 0} = \nabla^{0,\ldots,\hat{i},\ldots,k}$ and $R^{\nabla^{0,\ldots,k}}\Big|_{t_i=0} = R^{\nabla^{0,\ldots,\hat{i},\ldots,k}}$.
So, formula \eqref{lemma t=0 chern-simons-eq} is true.
\end{proof}

\begin{lemma}\label{lemma chern-simons 1 to k}
We have
\begin{eqnarray}\label{chern-simons eq 1 to k}
\lefteqn{\mathrm{cs}_p(\nabla^1,\ldots,\nabla^k)=}  \nonumber \\
& & \sum_{i=1}^k(-1)^{i-1}\int_{\Delta^{k-1}_i}
\tr\Big(\frac{\partial \nabla^{0,\ldots,k}}{\partial t_1}\odot \ldots \odot
\widehat{\frac{\partial \nabla^{0,\ldots,k}}{\partial t_i}}\odot \ldots \odot
\frac{\partial \nabla^{0,\ldots,k}}{\partial t_k}\odot \nonumber \\
& & \hspace{3cm} (R^{\nabla^{0,\ldots,k}})^{p-(k-1)}\Big)\Big|_{t_i = 1-\sum_{j\neq i}t_j}dt_1\ldots \widehat{dt_i}\ldots dt_k.
\end{eqnarray}
\end{lemma}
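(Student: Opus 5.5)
The plan is to compute the left-hand side directly from the definition \eqref{chern-simons dif op}. We parametrize the simplex of $(\nabla^1,\ldots,\nabla^k)$ by the face $\{t_1+\ldots+t_k=1\}$ of $\Delta^k$, expand the integrand multilinearly, and then rewrite each resulting term in the coordinates adapted to the face indexed by $i$.

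\emph{Setup.} Set $A_j:=\nabla^j-\nabla^0\in\mathcal{D}^1_1(\mathcal{E},\mathrm{End}(\Gamma(B)))$ for $j=1,\ldots,k$ (Proposition \ref{set nl conn - affine}). Then $\frac{\partial\nabla^{0,\ldots,k}}{\partial t_j}=A_j$, and these are independent of $t$. By definition, $\mathrm{cs}_p(\nabla^1,\ldots,\nabla^k)$ is an integral over $\Delta^{k-1}$ in parameters $(s_2,\ldots,s_k)$ of the affine combination $(1-\sum s_j)\nabla^1+\sum_{j\ge 2}s_j\nabla^j$. Putting $t_j=s_j$ for $j\geq 2$ and $t_1=1-\sum_{j\ge2}t_j$, this combination is exactly $\nabla^{0,\ldots,k}\big|_{t_1=1-\sum_{j\neq1}t_j}$, since the coefficient of $\nabla^0$ vanishes. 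Hence its curvature is $R^{\nabla^{0,\ldots,k}}$ restricted to that face. Moreover, $\frac{\partial}{\partial s_j}$ of the combination equals $\nabla^j-\nabla^1=A_j-A_1$. So
\[
\mathrm{cs}_p(\nabla^1,\ldots,\nabla^k)=\int_{\Delta^{k-1}_1}\tr\Big((A_2-A_1)\odot\ldots\odot(A_k-A_1)\odot (R^{\nabla^{0,\ldots,k}})^{p-(k-1)}\Big)\Big|_{t_1=1-\sum_{j\neq 1}t_j}dt_2\ldots dt_k .
\]

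\emph{Algebraic expansion.} We expand $(A_2-A_1)\odot\ldots\odot(A_k-A_1)$ by multilinearity. The $A_j$ have degree $1$, so by \eqref{sym-odot-prod} they $\odot$-anticommute. In particular $A_1\odot A_1=0$, using associativity of the Koszul algebra $(\mathcal{D}(\mathcal{E},\mathrm{End}(\Gamma(B))),\odot)$, which we take from the preceding discussion. Consequently only terms with at most one factor $-A_1$ survive. For the term with $-A_1$ in the slot formerly occupied by $A_i$ ($i\ge2$), we move $A_1$ to the front across $i-2$ odd factors. This gives $(-1)^{i-1}A_1\odot\ldots\odot\widehat{A_i}\odot\ldots\odot A_k$, and the leading term $A_2\odot\ldots\odot A_k$ corresponds to $i=1$ with sign $+1$. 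Thus
\[
(A_2-A_1)\odot\ldots\odot(A_k-A_1)=\sum_{i=1}^k(-1)^{i-1}A_1\odot\ldots\odot\widehat{A_i}\odot\ldots\odot A_k .
\]
We then insert this identity under $\tr$ and the integral. This is legitimate because $\tr$ and $\int$ are linear, and $\int$ commutes with the operations in the sense of \eqref{int-D_t}.

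\emph{Change of variables.} For the $i$-th term, the integrand is a function on the face $\{\sum t_j=1\}$. We reparametrize that face by $(t_j)_{j\neq i}\in\Delta^{k-1}_i$, with $t_i=1-\sum_{j\neq i}t_j$, instead of by $(t_j)_{j\neq1}$. The map $(t_2,\ldots,t_k)\mapsto(t_1,\ldots,\widehat{t_i},\ldots,t_k)$ is an affine bijection between the two standard simplices. Its linear part has determinant $\pm1$: it is unimodular, since it replaces one coordinate by $1$ minus the sum of the others. The integrals in \eqref{chern-simons dif op} are Lebesgue integrals, so the Jacobian contributes only its absolute value $1$. Applying this to each term yields exactly \eqref{chern-simons eq 1 to k}.

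The main obstacle I expect is the sign bookkeeping. First, one must check that the convention for $\mathrm{cs}_p(\nabla^1,\ldots,\nabla^k)$ really uses $\nabla^1$ as the base point, with derivatives $\nabla^j-\nabla^1$. Second, one must confirm that the anticommutation of the degree-one operators under $\odot$ produces precisely $(-1)^{i-1}$. This requires using that $\odot$ is associative and graded commutative, and in particular that $A_1\odot A_1=0$, which follows from \eqref{sym-odot-prod}. The change of variables is routine once one treats the integrals as unoriented.
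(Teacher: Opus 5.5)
Your proposal is correct and is essentially the paper's proof run in the opposite direction. The paper starts from the right-hand side and applies to the $i=1$ term the same expansion of $(A_2-A_1)\odot\cdots\odot(A_k-A_1)$ (its formula \eqref{formula odot parenthese}, which likewise uses $A_1\odot A_1=0$ and the graded commutativity and associativity of $\odot$ asserted in the paper); it then identifies the face $t_1=1-\sum_{j\neq 1}t_j$ with $\nabla^{1,\ldots,k}$ and cancels the leftover terms against the $i\geq 2$ terms by the same change of coordinates $\Delta^{k-1}_1\to\Delta^{k-1}_i$.
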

\begin{proof}
We compute the sum of the right-hand side of \eqref{chern-simons eq 1 to k}
by first making some remarks on the integrand of the first term (for $i=1$) of the sum.
Since $\displaystyle{\frac{\partial \nabla^{0,\ldots,k}}{\partial t_i}}$ is a $1$-differential operator,
$\displaystyle{\frac{\partial \nabla^{0,\ldots,k}}{\partial t_i}}\odot \displaystyle{\frac{\partial \nabla^{0,\ldots,k}}{\partial t_i}}\stackrel{\eqref{sym-odot-prod}}{=}0$,
for all $i\in \{1,\ldots,k\}$. So,
\begin{eqnarray}\label{formula odot parenthese}
\lefteqn{\frac{\partial \nabla^{0,\ldots,k}}{\partial t_2}\odot \ldots \odot\frac{\partial \nabla^{0,\ldots,k}}{\partial t_k} = }\nonumber \\
& & \big(\frac{\partial \nabla^{0,\ldots,k}}{\partial t_2} - \frac{\partial \nabla^{0,\ldots,k}}{\partial t_1}\big)\odot
\big(\frac{\partial \nabla^{0,\ldots,k}}{\partial t_3} - \frac{\partial \nabla^{0,\ldots,k}}{\partial t_1}\big)\odot \ldots \odot
\big(\frac{\partial \nabla^{0,\ldots,k}}{\partial t_k} - \frac{\partial \nabla^{0,\ldots,k}}{\partial t_1}\big)  \nonumber \\
& & + \, \sum_{i=2}^k (-1)^i \frac{\partial \nabla^{0,\ldots,k}}{\partial t_1}\odot \ldots
\odot \widehat{\frac{\partial \nabla^{0,\ldots,k}}{\partial t_i}}\odot \ldots \odot\frac{\partial \nabla^{0,\ldots,k}}{\partial t_k}.
\end{eqnarray}
Also, we remark that
\begin{equation}\label{nabla - curvature 1 to k}
\nabla^{0,\ldots,k}\Big|_{t_1 = 1 -\sum_{j\neq 1} t_j}= \nabla^{1,\ldots,k}, \quad \quad R^{\nabla^{0,\ldots,k}}\Big|_{t_1 = 1 -\sum_{j\neq 1} t_j}=
R^{\nabla^{1,\ldots,k}},
\end{equation}
and
\begin{equation}\label{nabla 1 to k}
\frac{\partial \nabla^{0,\ldots,k}}{\partial t_i} - \frac{\partial \nabla^{0,\ldots,k}}{\partial t_1}=
\nabla^i - \nabla^1 = \frac{\partial\nabla^{1,\ldots,k}}{\partial t_i}.
\end{equation}
Consequently,
\begin{eqnarray*}
\lefteqn{\sum_{i=1}^k(-1)^{i-1}\int_{\Delta^{k-1}_i}
\tr\Big(\frac{\partial \nabla^{0,\ldots,k}}{\partial t_1}\odot \ldots \odot
\widehat{\frac{\partial \nabla^{0,\ldots,k}}{\partial t_i}}\odot \ldots \odot
\frac{\partial \nabla^{0,\ldots,k}}{\partial t_k}\odot } \nonumber \\
& & \hspace{3cm} (R^{\nabla^{0,\ldots,k}})^{p-(k-1)}\Big)\Big|_{t_i = 1-\sum_{j\neq i}t_j}dt_1\ldots \widehat{dt_i}\ldots dt_k   \nonumber \\
& = & \int_{\Delta^{k-1}_1}
\tr\Big(\frac{\partial \nabla^{0,\ldots,k}}{\partial t_2}\odot \ldots \odot \frac{\partial \nabla^{0,\ldots,k}}{\partial t_k}\odot
(R^{\nabla^{0,\ldots,k}})^{p-(k-1)}\Big)\Big|_{t_1 = 1-\sum_{j\neq 1}t_j}dt_2\ldots dt_k \nonumber \\
& & +\, \sum_{i=2}^k(-1)^{i-1}\int_{\Delta^{k-1}_i}
\tr\Big(\frac{\partial \nabla^{0,\ldots,k}}{\partial t_1}\odot \ldots \odot
\widehat{\frac{\partial \nabla^{0,\ldots,k}}{\partial t_i}}\odot \ldots \odot
\frac{\partial \nabla^{0,\ldots,k}}{\partial t_k}\odot\nonumber \\
& & \hspace{3cm} (R^{\nabla^{0,\ldots,k}})^{p-(k-1)}\Big)\Big|_{t_i = 1-\sum_{j\neq i}t_j}dt_1\ldots \widehat{dt_i}\ldots dt_k   \nonumber \\
& \stackrel{\eqref{formula odot parenthese}}{=} &
\int_{\Delta^{k-1}_1}
\tr\Big(\big(\frac{\partial \nabla^{0,\ldots,k}}{\partial t_2} - \frac{\partial \nabla^{0,\ldots,k}}{\partial t_1}\big)\odot \ldots \odot
\big(\frac{\partial \nabla^{0,\ldots,k}}{\partial t_k} - \frac{\partial \nabla^{0,\ldots,k}}{\partial t_1}\big) \odot \nonumber \\
& & \hspace{3cm} (R^{\nabla^{0,\ldots,k}})^{p-(k-1)}\Big)\Big|_{t_1 = 1-\sum_{j\neq 1}t_j}dt_2\ldots dt_k \nonumber \\
& & + \, \sum_{i=2}^k (-1)^i \int_{\Delta^{k-1}_1}\tr\Big(\frac{\partial \nabla^{0,\ldots,k}}{\partial t_1}\odot \ldots
\odot \widehat{\frac{\partial \nabla^{0,\ldots,k}}{\partial t_i}}\odot \ldots \odot\frac{\partial \nabla^{0,\ldots,k}}{\partial t_k}\odot \nonumber \\
& & \hspace{3cm} (R^{\nabla^{0,\ldots,k}})^{p-(k-1)}\Big)\Big|_{t_1 = 1-\sum_{j\neq 1}t_j}dt_2\ldots \widehat{dt_i}\ldots dt_k   \nonumber \\
& & + \, \sum_{i=2}^k(-1)^{i-1}\int_{\Delta^{k-1}_i}
\tr\Big(\frac{\partial \nabla^{0,\ldots,k}}{\partial t_1}\odot \ldots \odot
\widehat{\frac{\partial \nabla^{0,\ldots,k}}{\partial t_i}}\odot \ldots \odot
\frac{\partial \nabla^{0,\ldots,k}}{\partial t_k}\odot\nonumber \\
& & \hspace{3cm} (R^{\nabla^{0,\ldots,k}})^{p-(k-1)}\Big)\Big|_{t_i = 1-\sum_{j\neq i}t_j}dt_1\ldots \widehat{dt_i}\ldots dt_k   \nonumber \\
& \stackrel{\eqref{nabla - curvature 1 to k}, \eqref{nabla 1 to k}}{=} &
\int_{\Delta^{k-1}_1} \tr\Big(\frac{\partial\nabla^{1,\ldots,k}}{\partial t_2}\odot \ldots \odot
\frac{\partial\nabla^{1,\ldots,k}}{\partial t_k}\odot (R^{\nabla^{1,\ldots,k}})^{p-(k-1)}\Big)dt_2\ldots dt_k \nonumber \\
& & + \, \sum_{i=2}^k (-1)^i \int_{\Delta^{k-1}_1}\tr\Big(\frac{\partial \nabla^{0,\ldots,k}}{\partial t_1}\odot \ldots
\odot \widehat{\frac{\partial \nabla^{0,\ldots,k}}{\partial t_i}}\odot \ldots \odot\frac{\partial \nabla^{0,\ldots,k}}{\partial t_k}\odot \nonumber \\
& & \hspace{3cm} (R^{\nabla^{0,\ldots,k}})^{p-(k-1)}\Big)\Big|_{t_1 = 1-\sum_{j\neq 1}t_j}dt_2\ldots \widehat{dt_i}\ldots dt_k   \nonumber \\
& & + \, \sum_{i=2}^k(-1)^{i-1}\int_{\Delta^{k-1}_i}
\tr\Big(\frac{\partial \nabla^{0,\ldots,k}}{\partial t_1}\odot \ldots \odot
\widehat{\frac{\partial \nabla^{0,\ldots,k}}{\partial t_i}}\odot \ldots \odot
\frac{\partial \nabla^{0,\ldots,k}}{\partial t_k}\odot\nonumber \\
& & \hspace{3cm} (R^{\nabla^{0,\ldots,k}})^{p-(k-1)}\Big)\Big|_{t_i = 1-\sum_{j\neq i}t_j}dt_1\ldots \widehat{dt_i}\ldots dt_k   \nonumber \\
&\stackrel{\eqref{chern-simons dif op}}{=}& \mathrm{cs}_p(\nabla^1,\ldots,\nabla^k).
\end{eqnarray*}
The last equality is obtained by making, for $i=2, \ldots,k$, the change of coordinates
$\Delta^{k-1}_1 \ni (t_2,\ldots, t_k) \mapsto (1- t_2 - \ldots - t_k, t_2, \ldots, \hat{t}_i, \ldots, t_k)\in \Delta^{k-1}_i$.
Then, the terms of the second sum over $i$ become the opposites of those of the first sum over $i$.
\end{proof}

\vspace{2mm}
\noindent

\begin{proof-Th} \ref{theorem-chern-simons}. By the Fundamental Theorem of Integral Calculus and the fact that
$\displaystyle{\frac{\partial^2 \nabla^{0,\ldots,k}}{\partial t_i \partial t_j}} = 0$, we obtain that
\begin{eqnarray}
\lefteqn{\partial_E \mathrm{cs}_p(\nabla^0,\ldots,\nabla^k) \stackrel{\eqref{chern-simons dif op}}{=}
\partial_E \int_{\Delta^k} \tr\Big(\frac{\partial \nabla^{0,\ldots,k}}{\partial t_1}\odot \ldots \odot
\frac{\partial \nabla^{0,\ldots,k}}{\partial t_k}\odot
 (R^{\nabla^{0,\ldots,k}})^{p-k}\Big)dt_1\ldots dt_k } \nonumber \\
& \stackrel{\eqref{int-D_t}}{=} &  \int_{\Delta^k} \partial_E \tr\Big(\frac{\partial \nabla^{0,\ldots,k}}{\partial t_1}\odot \ldots \odot
\frac{\partial \nabla^{0,\ldots,k}}{\partial t_k}\odot (R^{\nabla^{0,\ldots,k}})^{p-k}\Big)dt_1\ldots dt_k \nonumber \\
& \stackrel{\eqref{dif - Tr}}{=}& \int_{\Delta^k} \tr d^{\tilde{\nabla}^{0,\ldots,k}}\Big(\frac{\partial \nabla^{0,\ldots,k}}{\partial t_1}\odot \ldots \odot
\frac{\partial \nabla^{0,\ldots,k}}{\partial t_k}\odot (R^{\nabla^{0,\ldots,k}})^{p-k}\Big)dt_1\ldots dt_k \nonumber \\
& \stackrel{\eqref{eq der dot}}{=}&\sum_{i=1}^k(-1)^{i-1} \int_{\Delta^k} \tr\Big(\frac{\partial \nabla^{0,\ldots,k}}{\partial t_1}\odot
\ldots \odot d^{\tilde{\nabla}^{0,\ldots,k}} \frac{\partial \nabla^{0,\ldots,k}}{\partial t_i}\odot \ldots \odot
\frac{\partial \nabla^{0,\ldots,k}}{\partial t_k}\odot \nonumber \\
& & \hspace{6cm} (R^{\nabla^{0,\ldots,k}})^{p-k}\Big)dt_1\ldots dt_k \nonumber \\
& \stackrel{\eqref{der R^t},\eqref{bianchi id}}{=}&\sum_{i=1}^k(-1)^{i-1} \int_{\Delta^k} \tr\Big(\frac{\partial \nabla^{0,\ldots,k}}{\partial t_1}\odot
\ldots \odot \widehat{\frac{\partial \nabla^{0,\ldots,k}}{\partial t_i}}\odot \ldots \odot
\frac{\partial \nabla^{0,\ldots,k}}{\partial t_k}\odot \nonumber \\
& & \hspace{4cm}\frac{\partial R^{\nabla^{0,\ldots,k}}}{\partial t_i} \odot (R^{\nabla^{0,\ldots,k}})^{p-k}\Big)dt_1\ldots dt_k \nonumber \\
& = & \frac{1}{p-k+1} \sum_{i=1}^k(-1)^{i-1} \int_{\Delta^k} \tr\Big(\frac{\partial \nabla^{0,\ldots,k}}{\partial t_1}\odot
\ldots \odot \widehat{\frac{\partial \nabla^{0,\ldots,k}}{\partial t_i}}\odot \ldots \odot
\frac{\partial \nabla^{0,\ldots,k}}{\partial t_k}\odot \nonumber \\
& & \hspace{6cm}\frac{\partial (R^{\nabla^{0,\ldots,k}})^{p-k+1}}{\partial t_i} \Big)dt_1\ldots dt_k \nonumber \\
& = & \frac{1}{p-k+1} \sum_{i=1}^k(-1)^{i-1} \int_{\Delta^k} \tr\frac{\partial}{\partial t_i}\Big(\frac{\partial \nabla^{0,\ldots,k}}{\partial t_1}\odot
\ldots \odot \widehat{\frac{\partial \nabla^{0,\ldots,k}}{\partial t_i}}\odot \ldots \odot
\frac{\partial \nabla^{0,\ldots,k}}{\partial t_k}\odot \nonumber \\
& & \hspace{6cm} (R^{\nabla^{0,\ldots,k}})^{p-k+1} \Big)dt_1\ldots dt_k \nonumber \\
& = & \frac{1}{p-k+1} \sum_{i=1}^k(-1)^{i-1} \int_{\Delta^{k-1}_i} \tr\Big(\frac{\partial \nabla^{0,\ldots,k}}{\partial t_1}\odot
\ldots \odot \widehat{\frac{\partial \nabla^{0,\ldots,k}}{\partial t_i}}\odot \ldots \odot
\frac{\partial \nabla^{0,\ldots,k}}{\partial t_k}\odot \nonumber \\
& & \hspace{6cm} (R^{\nabla^{0,\ldots,k}})^{p-(k-1)} \Big)\Big|_{t_i = 1 - \sum_{j\neq i}t_j}dt_1\ldots \widehat{dt_i} \ldots dt_k \nonumber \\
& - & \frac{1}{p-k+1} \sum_{i=1}^k(-1)^{i-1} \int_{\Delta^{k-1}_i} \tr\Big(\frac{\partial \nabla^{0,\ldots,k}}{\partial t_1}\odot
\ldots \odot \widehat{\frac{\partial \nabla^{0,\ldots,k}}{\partial t_i}}\odot \ldots \odot
\frac{\partial \nabla^{0,\ldots,k}}{\partial t_k}\odot \nonumber \\
& & \hspace{6cm} (R^{\nabla^{0,\ldots,k}})^{p-(k-1)} \Big)\Big|_{t_i =0}dt_1\ldots \widehat{dt_i} \ldots dt_k \nonumber \\
&\stackrel{\eqref{chern-simons eq 1 to k}, \eqref{lemma t=0 chern-simons-eq}
}{=} & \frac{1}{p-k+1} \big[ \mathrm{cs}_p(\nabla^1,\ldots,\nabla^k) +  \sum_{i=1}^k (-1)^i \mathrm{cs}_p(\nabla^0,\ldots,\widehat{\nabla^i},\ldots,\nabla^k)\big],
\end{eqnarray}
whence we get the identity \eqref{identity-chern-simons}.
\end{proof-Th}

\begin{corol}
In the above context, the Chern cohomology classes $[\mathrm{ch}_p(\nabla)] \in H^{2p}(\mathcal{E})$, $p\in \N^\ast$, are independent of $\nabla$.
\end{corol}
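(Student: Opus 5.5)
The plan is to apply Theorem \ref{theorem-chern-simons} with $k=1$. Take two nonlinear $E$-connections $\nabla^0,\nabla^1$ on $B$ and a fixed $p\in\N^\ast$. The constraint $0<k\leq 2p$ is satisfied for $k=1$. Identity \eqref{identity-chern-simons} then reads
\[
p\,\partial_E \mathrm{cs}_p(\nabla^0,\nabla^1) = \mathrm{cs}_p(\nabla^1) - \mathrm{cs}_p(\nabla^0).
\]
By the convention $\mathrm{cs}_p(\nabla)=\mathrm{ch}_p(\nabla)$ for a single connection, the right-hand side is $\mathrm{ch}_p(\nabla^1)-\mathrm{ch}_p(\nabla^0)$.

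Since $p\geq 1$, we may divide by $p$. This gives
\[
\mathrm{ch}_p(\nabla^1)-\mathrm{ch}_p(\nabla^0)=\partial_E\Big(\tfrac1p\,\mathrm{cs}_p(\nabla^0,\nabla^1)\Big).
\]
Here $\mathrm{cs}_p(\nabla^0,\nabla^1)$ lies in $\mathcal{D}^{2p-1}(\mathcal{E})$, so it is a legitimate cochain. Both Chern characters are already known to be $\partial_E$-closed. Their difference is therefore a coboundary, and $[\mathrm{ch}_p(\nabla^1)]=[\mathrm{ch}_p(\nabla^0)]$ in $H^{2p}(\mathcal{E})$.

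The one point needing care is that $\mathrm{cs}_p(\nabla^0,\nabla^1)$ is a genuine global element of $\mathcal{D}^{2p-1}(\mathcal{E})$. Explicitly, it is
\[
\int_0^1\tr\big((\nabla^1-\nabla^0)\odot(R^{\nabla^t})^{p-1}\big)\,dt, \qquad \nabla^t=(1-t)\nabla^0+t\nabla^1 .
\]
By Proposition \ref{set nl conn - affine}, the difference $\nabla^1-\nabla^0$ belongs to $\mathcal{D}^1_1(\mathcal{E},\mathrm{End}(\Gamma(B)))$. Hence the integrand is a globally defined $\mathrm{End}(\Gamma(B))$-valued multidifferential operator before taking the trace. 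The $t$-integral commutes with $\partial_E$ by \eqref{int-D_t}, so no further obstacle arises.
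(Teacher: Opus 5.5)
Your proposal is correct and follows the paper's proof: you specialize the Chern--Simons identity \eqref{identity-chern-simons} to $k=1$, use the convention $\mathrm{cs}_p(\nabla)=\mathrm{ch}_p(\nabla)$ to obtain $\mathrm{ch}_p(\nabla^1)-\mathrm{ch}_p(\nabla^0)=p\,\partial_E\mathrm{cs}_p(\nabla^0,\nabla^1)$, and conclude that the two classes coincide. Your extra check that $\mathrm{cs}_p(\nabla^0,\nabla^1)$ is a globally defined element of $\mathcal{D}^{2p-1}(\mathcal{E})$ (via Proposition \ref{set nl conn - affine}) is a harmless addition that the paper leaves implicit.
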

\begin{proof}
It is an immediate consequence of the Theorem \ref{theorem-chern-simons}. Because of the convention $\mathrm{ch}_p(\nabla) = \mathrm{cs}_p(\nabla)$, for $k=1$, we take
 \begin{equation*}
\mathrm{ch}_p(\nabla^1) - \mathrm{ch}_p(\nabla^0) \stackrel{\eqref{identity-chern-simons}}{=} p\, \partial_E \mathrm{cs}_p(\nabla^0, \nabla^1)
\end{equation*}
and $[\mathrm{ch}_p(\nabla^0)] = [\mathrm{ch}_p(\nabla^1)]$.
\end{proof}

\begin{remark}\label{remark - cs - indepen of path}
\emph{In the case where $k=1$, $\nabla^{0,1} = (1-t)\nabla^0 + t\nabla^1$ is the line segment in the space $\mathfrak{C}_{nl}(E,B)$ joining $\nabla^0$ and $\nabla^1$,
and $\Delta^1=[0,1]$. If $\nabla^t$ is any path in $\mathfrak{C}_{nl}(E,B)$ from $\nabla^0$ to $\nabla^1$,
by reconsidering the relations \eqref{der R^t} and \eqref{dif - Tr}, we have that}
\begin{equation*}
\mathrm{ch}_p(\nabla^1) - \mathrm{ch}_p(\nabla^0) = \int_0^1\frac{d}{dt} \mathrm{ch}_p(\nabla^t)dt =
\int_0^1 \frac{d}{dt}\tr (R^{\nabla^t})^pdt =
 p\, \partial_E \int_0^1 \tr \Big(\frac{d \nabla^t}{dt}\odot (R^{\nabla^t})^{p-1}\Big)dt,
\end{equation*}
\emph{i.e.}
\begin{equation*}\label{cs(0,1)-cs(t)}
\partial_E \mathrm{cs}_p(\nabla^0, \nabla^1) = \partial_E \int_0^1 \tr\Big(\frac{d \nabla^t}{dt}\odot (R^{\nabla^t})^{p-1}\Big)dt.
\end{equation*}
\emph{The last equation means that a representative of $[\mathrm{cs}_p(\nabla^0, \nabla^1)]$ is the transgression ope\-ra\-tor $\displaystyle{\int_0^1 \tr\Big(\frac{d \nabla^t}{dt}\odot (R^{\nabla^t})^{p-1}\Big)}$. Motivated by the above, we define the} $p$-Chern-Simons transgression operators of second type $\mathrm{CS}_p(\nabla^t)$ of $\nabla^t$ \emph{by setting}
\begin{equation}\label{Chern-Simons dif op - t}
\mathrm{CS}_p(\nabla^t) = \int_0^1 \tr\Big(\frac{d \nabla^t}{dt}\odot (R^{\nabla^t})^{p-1}\Big)dt.
\end{equation}
\emph{This type of operators was already defined by Cueca and Mehta in \cite{CM2021}. Then, $[\mathrm{cs}_p(\nabla^0, \nabla^1)]= [\mathrm{CS}_p(\nabla^t)]$, i.e., the $p$-Chern-Simons
classes do not depend on the path connecting $\nabla^0$ and $\nabla^1$.}
\end{remark}

\vspace{3mm}
\noindent
The behavior of Chern classes $\mathrm{ch}_p(\nabla)$ under an automorphism $\Phi$ of $B$ covering the identity map on $M$ is studied below.
The automorphism $\Phi$ transforms
$\nabla$ into a new nonlinear $E$-connection $\nabla^\Phi$ on $B$ by setting, for any $e\in \Gamma(E)$,
\[\nabla_e^{\Phi} = \Phi \circ \nabla_e \circ \Phi^{-1}.\]
Then, for any $e_1,e_2 \in \Gamma(E)$, $R^{\nabla^\Phi}(e_1,e_2) = \Phi \circ R^\nabla (e_1,e_2)\circ \Phi^{-1}$ and
\[\underbrace{R^{\nabla^\Phi} \odot \ldots \odot R^{\nabla^\Phi}}_{p-times} =
\Phi \circ \underbrace{R^{\nabla} \odot \ldots \odot R^{\nabla}}_{p-times}\circ \, \Phi^{-1}.\]
So, because of the $\mathrm{Ad}(\mathrm{GL}(s,\R))$-invariance of $\tr$,
$[\mathrm{ch}_p(\nabla^\Phi)] = [\mathrm{ch}_p(\nabla)]$.

\begin{proposition}\label{Phi_t-nabla}
Let $\nabla$ be a nonlinear $E$-connection on $B$ and $\Phi_t$ a path of automorphisms of $B$ over the identity map on $M$,
through $\mathrm{Id}_B$ for $t=0$. Then, the $p$-Chern-Simons transgressions operators of second type $\mathrm{CS}_p(\nabla^{\Phi_t})$ are $\partial_E - exact$.
\end{proposition}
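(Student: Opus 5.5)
We set $\nabla^t:=\nabla^{\Phi_t}=\Phi_t\circ\nabla\circ\Phi_t^{-1}$, a path in $\mathfrak{C}_{nl}(E,B)$ from $\nabla^0=\nabla$ to $\nabla^1=\nabla^{\Phi_1}$. The plan is to compute $\frac{d\nabla^t}{dt}$ explicitly, write the integrand $\tr\big(\frac{d\nabla^t}{dt}\odot(R^{\nabla^t})^{p-1}\big)$ of \eqref{Chern-Simons dif op - t} as $\partial_E$ of a trace, and then use \eqref{int-D_t} to pass $\partial_E$ outside the integral.

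\textbf{Step 1: the derivative of the path.} Put $A_t:=\dot\Phi_t\circ\Phi_t^{-1}$. This is a smooth path in $\mathrm{End}(\Gamma(B))$, viewed as an element of $\mathcal{D}^0(\mathcal{E},\mathrm{End}(\Gamma(B)))$. Differentiating $\nabla^t_e=\Phi_t\nabla_e\Phi_t^{-1}$ and using $\frac{d}{dt}\Phi_t^{-1}=-\Phi_t^{-1}\dot\Phi_t\Phi_t^{-1}$, we get
\[
\frac{d\nabla^t_e}{dt}=A_t\circ\nabla^t_e-\nabla^t_e\circ A_t=-[\nabla^t_e,A_t]_{com}=-\tilde\nabla^t_eA_t .
\]
Since $d^{\tilde\nabla^t}$ on a $0$-cochain is just $e\mapsto\tilde\nabla^t_e$, by \eqref{dif napla tilde} this reads
\[
\frac{d\nabla^t}{dt}=-d^{\tilde\nabla^t}A_t .
\]
This is consistent with $\frac{d\nabla^t}{dt}$ lying in $\mathcal{D}^1_1(\mathcal{E},\mathrm{End}(\Gamma(B)))$, as required by Proposition \ref{set nl conn - affine}.

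\textbf{Step 2: the integrand is exact.} By the Leibniz rule \eqref{eq der dot} and the Bianchi identity \eqref{bianchi id} for $\nabla^t$, which gives $d^{\tilde\nabla^t}(R^{\nabla^t})^{p-1}=0$, and since $A_t$ has degree $0$, we obtain
\[
d^{\tilde\nabla^t}\big(A_t\odot(R^{\nabla^t})^{p-1}\big)=d^{\tilde\nabla^t}A_t\odot(R^{\nabla^t})^{p-1}.
\]
Applying $\tr$ and \eqref{dif - Tr} then yields
\[
\tr\Big(\frac{d\nabla^t}{dt}\odot(R^{\nabla^t})^{p-1}\Big)=-\partial_E\,\tr\big(A_t\odot(R^{\nabla^t})^{p-1}\big).
\]

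\textbf{Step 3: exchange $\partial_E$ and the integral.} The operator $D_t:=\tr(A_t\odot(R^{\nabla^t})^{p-1})\in\mathcal{D}^{2p-2}(\mathcal{E})$ depends smoothly on $t$, because $\Phi_t$ is smooth in $t$. Lemma \eqref{int-D_t} therefore gives
\[
\mathrm{CS}_p(\nabla^{\Phi_t})=-\partial_E\int_0^1\tr\big(A_t\odot(R^{\nabla^t})^{p-1}\big)\,dt,
\]
which is $\partial_E$-exact.

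\textbf{Main obstacle.} The delicate point is Step 2: we must check that \eqref{eq der dot} and \eqref{dif - Tr} apply when one factor is the degree-$0$ operator $A_t$. It is also worth confirming the sign conventions of $\odot$ for a degree-$0$ element, namely that $A\odot\Delta=A\circ\Delta+\Delta\circ A$ and that this introduces only a harmless factor. If this causes trouble, we would instead check Step 2 directly in a local frame, using \eqref{theta - transf - rule}. There $A_t$ becomes a matrix function $a_t$, and the identity reduces to trace cyclicity combined with the Bianchi identity \eqref{bianchi id - theta}.
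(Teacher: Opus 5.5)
Your proof is correct and follows essentially the same route as the paper's (which only sketches it). You write $\frac{d\nabla^{\Phi_t}}{dt}=-d^{\tilde{\nabla}^{\Phi_t}}(\dot\Phi_t\circ\Phi_t^{-1})$, use the Bianchi identity to eliminate the curvature term, and move $\partial_E$ outside the $t$-integral; your primitive $-\int_0^1\tr\big(A_t\odot(R^{\nabla^{\Phi_t}})^{p-1}\big)dt$ agrees with the paper's $-\int_0^1\tr\big(\Phi_t^{-1}\circ\frac{d\Phi_t}{dt}\circ(R^{\nabla})^{p-1}\big)dt$ via $R^{\nabla^{\Phi_t}}=\Phi_t\circ R^{\nabla}\circ\Phi_t^{-1}$ and cyclicity of the trace, up to a factor $2$ (from $\odot$ with a degree-$0$ element versus $\circ$) that does not affect exactness.
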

\begin{proof}
After some similar computations as above, we find
\[\mathrm{CS}_p(\nabla^{\Phi_t}) = -  \partial_E (\int_0^1 \tr\Big(\Phi_t^{-1}\circ \frac{d \Phi_t}{dt}\circ (R^{\nabla})^{p-1}\Big)dt),\]
which is exact.
\end{proof}

\subsection{Secondary characteristic classes of a Loday algebroid}
As mentioned in the literature, secondary characteristic classes of
Lie algebroids \cite{rui, cr-fer-sec-cl}, Courant algebroids \cite{CM2021},
and other structures appear as a consequence of the vanishing of the Chern characters of a representation.
The fact that the Chern characters also vanish in the context of Loday algebroids allows us to introduce secondary characteristic
classes for nonlinear $E$-connections on vector bundles using, in general, methods similar to those of the classical theory.

\vspace{1mm}
\noindent
Let $B$ equipped with a fiberwise nondegenerate symmetric bilinear form $g$ and $\nabla$ a nonlinear $E$-connection on $B$.
We define the conjugated nonlinear $E$-connection $\nabla^g$ on $B$ by setting, for any $e\in \Gamma(E)$ and $b_1,b_2 \in \Gamma(B)$,
\begin{equation}\label{nabla - g}
\rho(e)g(b_1,b_2) = g(\nabla_e^g b_1, b_2) + g (b_1, \nabla_eb_2).
\end{equation}

\begin{proposition}\label{proposition path nabla}
For any path $\nabla_s$ of nonlinear $E$-connections on $B$ and the corresponding path $\nabla^g_s$ of their conjugates on $B$, we have
\begin{equation*}
\mathrm{CS}_p(\nabla_s^g) = (-1)^p\mathrm{CS}_p(\nabla_s).
\end{equation*}
\end{proposition}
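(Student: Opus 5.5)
The plan is to reduce everything to the $g$-transpose. Let $\Phi\mapsto\Phi^{*}$ denote the adjoint of an endomorphism of $\Gamma(B)$ with respect to $g$, so that $g(\Phi^{*}b_1,b_2)=g(b_1,\Phi b_2)$. This map is $\C$-linear and anti-multiplicative, $(\Phi\circ\Psi)^{*}=\Psi^{*}\circ\Phi^{*}$, and it preserves the trace: $\tr(\Phi^{*})=\tr\Phi$, since in a local frame $\Phi^{*}=G^{-1}\Phi^{T}G$ with $G$ the Gram matrix of $g$.

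First, I compare the derivative of the path. Subtracting \eqref{nabla - g} for two connections shows that $\nabla^g$ is affine in $\nabla$: if $\nabla'-\nabla=A\in\mathcal{D}^1_1(\mathcal{E},\mathrm{End}(\Gamma(B)))$ (Proposition \ref{set nl conn - affine}), then $\nabla'^g-\nabla^g=-A^{*}$, where $A^{*}(e):=A(e)^{*}$. Hence
\[
\frac{d\nabla^g_s}{ds}=-\Big(\frac{d\nabla_s}{ds}\Big)^{*}.
\]

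Second, I compare the curvatures. Differentiating \eqref{nabla - g} twice, as in the proof of \eqref{dual curvature}, and using $\rho(\lcf e_1,e_2\rcf)=[\rho(e_1),\rho(e_2)]$ from \eqref{rho-homom}, I expect
\[
g\big(R^{\nabla^g}(e_1,e_2)b_1,b_2\big)=-g\big(b_1,R^{\nabla}(e_1,e_2)b_2\big),
\]
that is, $R^{\nabla^g_s}=-(R^{\nabla_s})^{*}$. In fact $\nabla^g$ is just $\nabla^\ast$ transported to $B$ by the isomorphism $g^\flat\colon B\to B^\ast$, so this identity is \eqref{dual curvature} rewritten via $g^\flat$.

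Third, I examine how $^{*}$ interacts with the products. For $\Delta_1\in\mathcal{D}^{p_1}$ and $\Delta_2\in\mathcal{D}^{p_2}$ with values in $\mathrm{End}(\Gamma(B))$, the formula \eqref{sh-prod-values endomorphisms} gives
\[
(\Delta_1\diamond\Delta_2)^{*}(e_1,\dots,e_{p_1+p_2})=\sum_{\tau}(-1)^{|\tau|}\,\Delta_2(\dots)^{*}\circ\Delta_1(\dots)^{*}.
\]
Reordering the shuffle blocks so that $\Delta_2$'s arguments come first introduces the sign $(-1)^{p_1p_2}$, which yields $(\Delta_1\diamond\Delta_2)^{*}=(-1)^{p_1p_2}\Delta_2^{*}\diamond\Delta_1^{*}$. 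Consequently, by \eqref{sym-odot-prod}, $(\Delta_1\odot\Delta_2)^{*}=\Delta_1^{*}\odot\Delta_2^{*}$: the $\odot$ product is compatible with $^{*}$, since the reversal of order is undone by graded commutativity. Extending by induction, $\odot$-words are preserved.

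Finally, I substitute into \eqref{Chern-Simons dif op - t}:
\[
\mathrm{CS}_p(\nabla^g_s)=\int_0^1\tr\Big(\big(-\tfrac{d\nabla_s}{ds}\big)^{*}\odot\big(-(R^{\nabla_s})^{*}\big)^{p-1}\Big)ds
=(-1)^p\int_0^1\tr\Big(\big(\tfrac{d\nabla_s}{ds}\odot(R^{\nabla_s})^{p-1}\big)^{*}\Big)ds .
\]
By trace invariance under $^{*}$, applied pointwise through \eqref{def-Tr}, this equals $(-1)^p\,\mathrm{CS}_p(\nabla_s)$.

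The main obstacle is the sign bookkeeping in the third step. I would check the reordering sign on the shuffles directly: swapping a $p_1$-block past a $p_2$-block multiplies the signature by $(-1)^{p_1p_2}$. As a sanity check, I would verify the case $p_1=p_2=1$ by hand.
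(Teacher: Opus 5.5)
Your proposal is correct and is essentially the paper's own argument. The paper likewise derives from \eqref{nabla - g} that $g^\flat\circ\frac{d\nabla^g_s}{ds}=-\big(\frac{d\nabla_s}{ds}\big)^T\circ g^\flat$, and from \eqref{dual curvature} that $g^\flat\circ R^{\nabla^g}=-{R^{\nabla}}^T\circ g^\flat$; these are your identities $\frac{d\nabla^g_s}{ds}=-\big(\frac{d\nabla_s}{ds}\big)^{*}$ and $R^{\nabla^g_s}=-(R^{\nabla_s})^{*}$, which it then substitutes into \eqref{Chern-Simons dif op - t}. Your checks that $(\Delta_1\diamond\Delta_2)^{*}=(-1)^{p_1p_2}\Delta_2^{*}\diamond\Delta_1^{*}$, that the adjoint therefore commutes with $\odot$, and that it preserves $\tr$ make explicit the sign bookkeeping the paper leaves implicit in that substitution.
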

\begin{proof}
From \eqref{nabla - g}, we take
that $g^\flat \circ \displaystyle{\frac{d \nabla^g_s}{ds}} + \displaystyle{\frac{d \nabla_s}{ds}}^T\circ g^\flat = 0$,
where $\displaystyle{\frac{d \nabla_s}{ds}}^T$ is the transpose of $\displaystyle{\frac{d \nabla_s}{ds}}$ and $g^\flat : \Gamma(B) \to \Gamma(B^\ast)$
is the isomorphism defined by $g$. Also, from \eqref{dual curvature}, we obtain $g^\flat \circ R^{\nabla^g} + {R^\nabla}^T \circ g^\flat = 0$,
where ${R^\nabla}^T$ denotes the transpose of $R^\nabla$ (as endomorphism of $B$). Hence,
by replacing the above expressions in \eqref{Chern-Simons dif op - t}, we obtain the announced relation.
\end{proof}

\begin{corol}
If $\nabla^0$, $\nabla^1$ are nonlinear $E$-connections on $B$, and $\nabla^{0,g}$, $\nabla^{1,g}$ their conjugated nonlinear connections with respect of $g$, then
\begin{equation}\label{cs_p(nabla 0, nabla 1)}
\mathrm{cs}_p(\nabla^{0,g}, \nabla^{1,g}) = (-1)^p\mathrm{cs}_p(\nabla^0, \nabla^1).
\end{equation}
\end{corol}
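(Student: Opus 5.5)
The plan is to apply Proposition \ref{proposition path nabla} to the straight-line path and then check that the conjugates of that path are again a straight-line path. By definition, for $k=1$,
\[\mathrm{cs}_p(\nabla^0,\nabla^1)=\int_0^1 \tr\Big(\frac{d\nabla^{0,1}}{dt}\odot (R^{\nabla^{0,1}})^{p-1}\Big)dt = \mathrm{CS}_p(\nabla_t),\]
where $\nabla_t=\nabla^{0,1}=(1-t)\nabla^0+t\nabla^1$. So $\mathrm{cs}_p(\nabla^0,\nabla^1)$ is the Chern--Simons operator of second type \eqref{Chern-Simons dif op - t} evaluated on this linear path.

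\textbf{Step 1: conjugation is affine.} The key step is that $g$-conjugation respects affine combinations:
\[\big((1-t)\nabla^0+t\nabla^1\big)^g=(1-t)\nabla^{0,g}+t\nabla^{1,g}.\]
To check this, write the defining relation \eqref{nabla - g} for $\nabla^0$ and for $\nabla^1$. Multiply the first by $(1-t)$ and the second by $t$, then add. The left-hand side stays $\rho(e)g(b_1,b_2)$, since $(1-t)+t=1$. The right-hand side becomes
\[g\big(((1-t)\nabla^{0,g}+t\nabla^{1,g})_e b_1,b_2\big)+g\big(b_1,\nabla_{t,e}b_2\big).\]
Nondegeneracy of $g$ means the conjugate is uniquely determined by \eqref{nabla - g}, so the two sides of the claimed identity agree. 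Consequently the conjugate path $\nabla_t^g$ is exactly the line segment $\nabla^{0,g,1,g}$ joining $\nabla^{0,g}$ and $\nabla^{1,g}$.

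\textbf{Step 2: apply Proposition \ref{proposition path nabla}.} With $s=t$, that proposition gives $\mathrm{CS}_p(\nabla_t^g)=(-1)^p\,\mathrm{CS}_p(\nabla_t)$. By Step 1 the left side equals $\mathrm{cs}_p(\nabla^{0,g},\nabla^{1,g})$, and the right side equals $(-1)^p\mathrm{cs}_p(\nabla^0,\nabla^1)$, which is \eqref{cs_p(nabla 0, nabla 1)}.

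\textbf{Expected obstacle.} The only real content is Step 1. One must confirm that $\nabla^g$ is a genuine nonlinear connection, with symbol $-g^{\flat-1}\circ\sigma^T\circ g^\flat$ as in the dual-connection computation. This matters because uniqueness via nondegeneracy of $g$ is what makes the affine-combination argument valid. Everything else is direct substitution, and the sign $(-1)^p$ is inherited entirely from Proposition \ref{proposition path nabla}.
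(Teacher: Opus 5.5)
Your proof is correct and follows the paper's implicit argument. The paper states this corollary without proof, as an immediate consequence of the preceding proposition applied to the segment $(1-t)\nabla^0+t\nabla^1$, for which $\mathrm{CS}_p$ coincides with $\mathrm{cs}_p$. Your Step 1 supplies the detail the paper leaves unstated: conjugation commutes with affine combinations, by uniqueness from the nondegeneracy of $g$, so the conjugate path is exactly the segment from $\nabla^{0,g}$ to $\nabla^{1,g}$.
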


\begin{proposition}
Let $\nabla$ be as above and $g$, $g'$ two fiberwise nondegenerate inner products on $B$. Then the $p$-Chern-Simons
transgression operator $\mathrm{cs}_p(\nabla^g, \nabla^{g'})$ is exact.
\end{proposition}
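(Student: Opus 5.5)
The natural approach is to connect the two inner products $g$ and $g'$ by an automorphism of $B$ and then use Proposition \ref{Phi_t-nabla}. Since $g$ and $g'$ are fiberwise nondegenerate symmetric forms, there is a unique $\Phi\in\mathrm{Aut}(B)$ covering the identity with $g'(b_1,b_2)=g(\Phi b_1,b_2)$, namely $\Phi=(g^\flat)^{-1}\circ g'^\flat$. A direct check from \eqref{nabla - g} then gives
\[
\nabla^{g'}=\Phi^{-1}\circ\nabla^g\circ\Phi=(\nabla^g)^{\Phi^{-1}} .
\]
Indeed, take $\rho(e)g'(b_1,b_2)=\rho(e)g(\Phi b_1,b_2)$ and expand it with the defining relation of $\nabla^g$ applied to the pair $(\Phi b_1,b_2)$. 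Comparing the result with the defining relation of $\nabla^{g'}$ and using nondegeneracy of $g'$ identifies $\Phi\nabla^{g'}_e b_1$ with $\nabla^g_e\Phi b_1$.

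If $g$ and $g'$ have the same signature at every point, I would pick a path $g_t$ of nondegenerate forms joining them, for instance through the convex combination after reducing to a common frame. The same identity then gives $\nabla^{g_t}=(\nabla^g)^{\Phi_t}$ with $\Phi_t$ a path of automorphisms and $\Phi_0=\mathrm{Id}_B$. Proposition \ref{Phi_t-nabla} shows that $\mathrm{CS}_p(\nabla^{g_t})=\mathrm{CS}_p((\nabla^g)^{\Phi_t})$ is $\partial_E$-exact.

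The next step uses Remark \ref{remark - cs - indepen of path}, which compares the straight-line operator $\mathrm{cs}_p(\nabla^g,\nabla^{g'})$ with $\mathrm{CS}_p$ along any path. That remark only gives equality of $\partial_E$ of the two operators, so I would upgrade it with Theorem \ref{theorem-chern-simons} for $k=2$ applied to the triangle of connections. For two paths with common endpoints, the difference of their transgressions equals $(p-1)\,\partial_E\mathrm{cs}_p$ of the $2$-simplex spanned by them, plus $\mathrm{cs}_p$ terms of degenerate edges, which vanish. I would first establish this for the straight segment versus the polygonal path $\nabla^{g_t}$, by approximating with a fine subdivision and summing the $k=2$ identities. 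Combining, $\mathrm{cs}_p(\nabla^g,\nabla^{g'})=\mathrm{CS}_p(\nabla^{g_t})+\partial_E(\cdots)$ is exact.

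The main obstacle is the comparison of the straight-line $\mathrm{cs}_p$ with a curved path at the level of operators rather than classes, since Remark \ref{remark - cs - indepen of path} is only stated modulo $\partial_E$-closed terms. As an alternative that avoids paths entirely, I would instead apply Theorem \ref{theorem-chern-simons} with $k=2$ to the triple $(\nabla^g,\nabla,\nabla^{g'})$. The expected obstacle there is showing that $\mathrm{cs}_p(\nabla^g,\nabla)-\mathrm{cs}_p(\nabla^{g'},\nabla)$ is exact using \eqref{cs_p(nabla 0, nabla 1)} and $(\nabla^{g})^{g}=\nabla$. I would try that route first for even $p$, where the sign $(-1)^p$ is favorable.

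A further subtlety is the signature: if $g$ and $g'$ have different signatures there is no path of nondegenerate forms. In that case I would instead use the fact that $\Phi$ lies in the centralizer structure and apply the direct conjugation formula $\mathrm{ch}$-invariance from the paragraph preceding Proposition \ref{Phi_t-nabla}, connecting $\Phi$ to $\mathrm{Id}$ only within $\mathrm{GL}$ components when possible.
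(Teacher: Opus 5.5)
Your main line is the paper's proof. You join $g$ to $g'$ by a path $g_t$, set $\Phi_t=g^{\flat^{-1}}\circ g_t^\flat$, and use $\nabla^{g_t}=\Phi_t^{-1}\circ\nabla^g\circ\Phi_t$; your derivation of this from \eqref{nabla - g} is correct. Proposition~\ref{Phi_t-nabla} then shows that $\mathrm{CS}_p(\nabla^{g_t})$ is $\partial_E$-exact.

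The paper finishes by quoting Remark~\ref{remark - cs - indepen of path} for $[\mathrm{cs}_p(\nabla^g,\nabla^{g'})]=[\mathrm{CS}_p(\nabla^{g_t})]$, and your objection to that step is well founded. In fact both operators are $\partial_E$-closed here, since $\mathrm{ch}_p(\nabla^g)=(-1)^p\mathrm{ch}_p(\nabla)=\mathrm{ch}_p(\nabla^{g'})$, so the identity proved in the remark just reads $0=0$.

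Your repair is the right idea but stops one step short. Sum the $k=2$ identities of Theorem~\ref{theorem-chern-simons} over the fan from $\nabla^g$ on an inscribed polygon. For each subdivision this gives the exact equation $\mathrm{cs}_p(\nabla^g,\nabla^{g'})=\sum_i\mathrm{cs}_p(\nabla^{g_{t_i}},\nabla^{g_{t_{i+1}}})+\partial_E Y_N$, with $Y_N=-(p-1)\sum_i\mathrm{cs}_p(\nabla^g,\nabla^{g_{t_i}},\nabla^{g_{t_{i+1}}})$. A limit of exact operators is exact only if the primitives $Y_N$ also converge, together with the first derivatives of their local coefficients. They do, because they are Chern--Simons integrals over the cone from $\nabla^g$ on the polygon.

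It is cleaner to run the proof of Theorem~\ref{theorem-chern-simons} directly on the two-parameter family $\nabla^{s,t}=(1-s)\nabla^g+s\nabla^{g_t}$, $(s,t)\in[0,1]^2$:
\begin{itemize}
\item the new terms coming from $\partial_s\partial_t\nabla^{s,t}\neq0$ cancel by symmetry of mixed partials;
\item the edges $s=0$ and $t=0$ are constant families and contribute nothing;
\item the edges $s=1$ and $t=1$ give $\mathrm{CS}_p(\nabla^{g_t})$ and $\mathrm{cs}_p(\nabla^g,\nabla^{g'})$.
\end{itemize}
Their difference is therefore $(p-1)\partial_E$ of an explicit operator. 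For $p=1$ nothing is needed, since $\mathrm{CS}_1$ along any path equals $\tr(\nabla^{g'}-\nabla^g)$.

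On the path itself: for positive definite inner products it is simply $g_t=(1-t)g+tg'$, with no reduction to a common frame. For indefinite forms, your claim that equal signature suffices is false globally. The isomorphism class of a maximal positive subbundle is constant along a path of nondegenerate forms. For example, on $S^1\times\R^2$ the constant form $\mathrm{diag}(1,-1)$ cannot be joined to a form of the same signature whose maximal positive subbundle is the M\"obius line bundle. The paper tacitly assumes such a path as well, but your last paragraph gives no argument for that case.

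Your triple route does better than you expect when $p$ is even. Apply \eqref{cs_p(nabla 0, nabla 1)} to $(\nabla,\nabla^g)$ and combine it with $(\nabla^g)^g=\nabla$ and $\mathrm{cs}_p(\nabla^1,\nabla^0)=-\mathrm{cs}_p(\nabla^0,\nabla^1)$. This forces $\mathrm{cs}_p(\nabla,\nabla^g)=0$, and likewise for $g'$. Then \eqref{identity-chern-simons} for $(\nabla^g,\nabla,\nabla^{g'})$ gives $\mathrm{cs}_p(\nabla^g,\nabla^{g'})=-(p-1)\,\partial_E\mathrm{cs}_p(\nabla^g,\nabla,\nabla^{g'})$, with no path and no signature hypothesis. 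For odd $p$ the same route only reduces the claim to $[\mathrm{cs}_p(\nabla,\nabla^g)]=[\mathrm{cs}_p(\nabla,\nabla^{g'})]$. The paper derives that from this very proposition, so using it there would be circular.
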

\begin{proof}
Let $g_t$ be a path of fiberwise nondegenerate inner products on $B$ connecting $g=g_0$ with $g'=g_1$.
Consider the family $\Phi_t = g^{\flat^{-1}}\circ g_t^\flat$ of automorphisms of $B$ over the identity of $M$,
which passes through $\mathrm{Id}_B$ for $t=0$. If $\nabla^{g_t}$ is the conjugated connection of $\nabla$
with respect to $g_t$, we have $\nabla_e^{g_t} = \Phi_t^{-1} \circ \nabla_e^g \circ \Phi_t$.
According to Proposition \ref{Phi_t-nabla}, the $p$-Chern-Simons transgressions operator of the second type
$\mathrm{CS}_p(\Phi_t^{-1} \circ \nabla_e^g \circ \Phi_t)$ is exact. Therefore, $\mathrm{CS}_p(\nabla_e^{g_t})$ is exact and
\begin{equation}\label{exacteness nabla^g nabla^g'}
[\mathrm{cs}_p(\nabla^g, \nabla^{g'})] = [\mathrm{CS}_p(\nabla^{g_t})] =0.
\end{equation}
\end{proof}

\begin{corol}
If $\nabla$ is a representation, then $\mathrm{cs}_p(\nabla, \nabla^g)$ is closed, and the
cohomology class $[\mathrm{cs}_p(\nabla, \nabla^g)]$ does not depend on the inner product $g$.
\end{corol}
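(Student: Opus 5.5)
The plan is to derive both claims from Theorem \ref{theorem-chern-simons} with $k=1$ and $k=2$, together with the exactness result \eqref{exacteness nabla^g nabla^g'} for conjugates under different inner products.

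\emph{Closedness.} Apply \eqref{identity-chern-simons} with $k=1$ to the pair $(\nabla,\nabla^g)$:
\[
p\,\partial_E \mathrm{cs}_p(\nabla,\nabla^g)=\mathrm{ch}_p(\nabla^g)-\mathrm{ch}_p(\nabla).
\]
Since $\nabla$ is a representation, $R^\nabla=0$, so $\mathrm{ch}_p(\nabla)=0$. By \eqref{dual curvature}, transported through $g^\flat$ as in the proof of Proposition \ref{proposition path nabla}, we have $g^\flat\circ R^{\nabla^g}=-(R^\nabla)^T\circ g^\flat=0$. Hence $\nabla^g$ is flat and $\mathrm{ch}_p(\nabla^g)=0$. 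Therefore $\partial_E\mathrm{cs}_p(\nabla,\nabla^g)=0$.

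\emph{Independence of $g$.} Let $g'$ be a second inner product. Apply \eqref{identity-chern-simons} with $k=2$ to the triple $(\nabla,\nabla^g,\nabla^{g'})$, which is allowed since $2\le 2p$:
\[
(p-1)\,\partial_E\mathrm{cs}_p(\nabla,\nabla^g,\nabla^{g'})=\mathrm{cs}_p(\nabla^g,\nabla^{g'})-\mathrm{cs}_p(\nabla,\nabla^{g'})+\mathrm{cs}_p(\nabla,\nabla^g).
\]
So $\mathrm{cs}_p(\nabla,\nabla^{g'})-\mathrm{cs}_p(\nabla,\nabla^g)$ equals $\mathrm{cs}_p(\nabla^g,\nabla^{g'})$ up to a $\partial_E$-exact term. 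The preceding proposition gives $[\mathrm{cs}_p(\nabla^g,\nabla^{g'})]=0$, which settles this case.

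\emph{The case $p=1$.} Here the coefficient $p-1$ vanishes, so the $k=2$ identity only yields
\[
\mathrm{cs}_1(\nabla,\nabla^{g'})-\mathrm{cs}_1(\nabla,\nabla^g)=\mathrm{cs}_1(\nabla^g,\nabla^{g'})
\]
exactly. The conclusion still follows from the exactness of $\mathrm{cs}_1(\nabla^g,\nabla^{g'})$.

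\emph{Main obstacle.} The delicate point is using \eqref{exacteness nabla^g nabla^g'} correctly. That statement identifies the class of the straight-segment operator $\mathrm{cs}_p(\nabla^g,\nabla^{g'})$ with the class of $\mathrm{CS}_p(\nabla^{g_t})$ along the path $g_t$, via Remark \ref{remark - cs - indepen of path}. That remark only gives equality of $\partial_E$-images, so I would check that the difference of the two transgressions is genuinely exact, not merely closed.

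My first attempt would apply the $k=2$ identity to the triangle formed by the segment and the path, approximating the path by affine pieces. The cleaner alternative is to note that Remark \ref{remark - cs - indepen of path} compares two transgressions between the same endpoints, and treat that as part of the accepted statement \eqref{exacteness nabla^g nabla^g'}, which asserts the exactness directly.
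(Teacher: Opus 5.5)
Your proposal is correct and follows the paper's own argument: closedness comes from the $k=1$ case of Theorem~\ref{theorem-chern-simons} together with $\mathrm{ch}_p(\nabla)=\mathrm{ch}_p(\nabla^g)=0$, and independence of $g$ comes from the $k=2$ case applied to $(\nabla,\nabla^g,\nabla^{g'})$ combined with the exactness of $\mathrm{cs}_p(\nabla^g,\nabla^{g'})$ from the preceding proposition. Your coefficient $p-1$ is the one the theorem actually gives (the paper's proof misprints it as $p+1$), and your caveat about Remark~\ref{remark - cs - indepen of path} is fair, since that remark only shows the two transgressions differ by a closed operator; like the paper, though, you may take the preceding proposition as given for this corollary.
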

\begin{proof}
Since $\nabla$ is a representation, $\mathrm{ch}_p(\nabla)=0$. Then,
\begin{equation*}
p \,\partial_E \mathrm{cs}_p(\nabla, \nabla^g) \stackrel{\eqref{identity-chern-simons}}{=} \mathrm{cs}_p(\nabla^g) - \mathrm{cs}_p(\nabla)
=  (-1)^p\mathrm{ch}_p(\nabla) - \mathrm{ch}_p(\nabla) =0,
\end{equation*}
that is, $\mathrm{cs}_p(\nabla, \nabla^g)$ is closed. Also, for $g$ and $g'$ as above, we have
\begin{equation*}
(p+1)\partial_E \mathrm{cs}_p(\nabla, \nabla^g, \nabla^{g'})
\stackrel{\eqref{identity-chern-simons}}{=} \mathrm{cs}_p(\nabla^g, \nabla^{g'}) -
\mathrm{cs}_p(\nabla, \nabla^{g'}) + \mathrm{cs}_p(\nabla, \nabla^g).
\end{equation*}
Because of the exactness of $\mathrm{cs}_p(\nabla^g, \nabla^{g'})$ (see \eqref{exacteness nabla^g nabla^g'}), we get
$[\mathrm{cs}_p(\nabla, \nabla^g)]=[\mathrm{cs}_p(\nabla, \nabla^{g'})]$.
\end{proof}

\begin{definition}
The \emph{secondary characteristic classes} of a representation $B$ with respect to a nonlinear $E$-connection $\nabla$ are the cohomology classes
\begin{equation*}
u_{2p-1}(E,B): = [\mathrm{cs}_p(\nabla, \nabla^g)] \in H^{2p-1}(\mathcal{E}).
\end{equation*}
\end{definition}

\begin{remark}
\emph{If there exists on $B$ a fiberwise nondegenerate inner product $g$ that is invariant with respect to $\nabla$, then $\nabla^g = \nabla$
and $\mathrm{cs}_p(\nabla, \nabla) = 0$. Hence, $u_{2p-1}(E,B)=0$. Thus, the classes $u_{2p-1}(E,B)$ can be considered as
obstructions to the existence of an invariant, with respect to $\nabla$, fiberwise nondegenerate inner product $g$ on $B$.}
\end{remark}

\begin{remark}\label{remark - cs - flat}
\emph{We are placed in the context of Remark \ref{remark - cs - indepen of path}. If furthermore, $\nabla^0$ and $\nabla^1$ are flat connections,
then the curvature of $\nabla^{0,1}$ is}
\begin{equation*}
R^{\nabla^{0,1}} = t(t-1)(\nabla^0 - \nabla^1)\diamond (\nabla^0 - \nabla^1).
\end{equation*}
\emph{Therefore,}
\begin{eqnarray*}
\mathrm{CS}_p(\nabla^{0,1})=\mathrm{cs}_p(\nabla^0, \nabla^1) & = & \int_0^1 \tr\Big(\frac{d \nabla^{0,1}}{dt}\odot (R^{\nabla^{0,1}})^{p-1}\Big)dt \nonumber \\
   &=& p(p-1)! \int_0^1 \tr\Big(\frac{d \nabla^{0,1}}{dt}\diamond \underbrace{R^{\nabla^{0,1}} \diamond \ldots \diamond R^{\nabla^{0,1}}}_{(p-1)-times} \Big)dt \nonumber \\
   & = & p(p-1)! \int_0^1 \tr\Big((\nabla^1-\nabla^0)\diamond [t(t-1)(\nabla^0 - \nabla^1)\diamond (\nabla^0 - \nabla^1)]^{p-1}\Big)dt \nonumber \\
   & = & -  p(p-1)! \big(\int_0^1t^{p-1}(t-1)^{p-1}dt\big )\tr(\nabla^0 - \nabla^1)_{\diamond}^{2p-1}.
\end{eqnarray*}
\emph{But, $\displaystyle{\int_0^1t^{p-1}(t-1)^{p-1}dt = (-1)^{p-1}\frac{(p-1)!(p-1)!}{(2p-1)!}}$, so}
\begin{equation*}
\mathrm{CS}_p(\nabla^{0,1})=\mathrm{cs}_p(\nabla^0, \nabla^1) = (-1)^p\frac{p!(p-1)!}{(2p-1)!}\tr(\nabla^0 - \nabla^1)_{\odot}^{2p-1}
\end{equation*}
\emph{and the class $u_{2p-1}(E,B)$ is represented by the above multidifferential operator (see Remark \ref{remark - cs - indepen of path})}.\footnote{The expression $(\nabla^0 - \nabla^1)_{\diamond}^{2p-1}$
means that the power of $\nabla^0 - \nabla^1$ is calculated with respect to to product $\diamond$. Similarly,
for the expression $(\nabla^0 - \nabla^1)_{\odot}^{2p-1}$ the power of $\nabla^0 - \nabla^1$ is calculated with respect to $\odot$.}
\end{remark}

\vspace{1mm}
\noindent
We obtain the following result which is analogous to Proposition 2 in \cite{cr-fer-sec-cl} and to Theorem 3.2 in \cite{balcerzak}.

\begin{theorem}\label{theorem - classes metric connect}
Let $(E,B,\nabla, g, \nabla^g)$ as above. We suppose that $\nabla$ is flat (then $\nabla^g$ is also flat) and
we consider a metric nonlinear $E$-connection $\nabla_m$ on $B$ that is compatible with $g$, in the sense that $\nabla^g_m  = \nabla_m$.
\begin{enumerate}
\item
If $p$ is even, then $u_{2p-1}(E,B)=0$.
\item
If $p$ is odd, then $\mathrm{cs}_p(\nabla, \nabla_m)$ is $\partial_E$-closed and $u_{2p-1}(E,B) = 2 [\mathrm{cs}_p(\nabla, \nabla_m)]$.
\end{enumerate}
\end{theorem}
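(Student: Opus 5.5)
The plan is to use the generalized Chern--Simons formula (Theorem \ref{theorem-chern-simons}) with $k=2$ on the triple $(\nabla, \nabla_m, \nabla^g)$, together with the conjugation identity \eqref{cs_p(nabla 0, nabla 1)}.

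\textbf{Step 1: closedness.} We first check that $\mathrm{cs}_p(\nabla,\nabla_m)$ is closed whenever $p$ is odd. By \eqref{identity-chern-simons} with $k=1$,
\[
p\,\partial_E\mathrm{cs}_p(\nabla,\nabla_m)=\mathrm{ch}_p(\nabla_m)-\mathrm{ch}_p(\nabla)=\mathrm{ch}_p(\nabla_m),
\]
since $\nabla$ is flat. Since $\nabla_m^g=\nabla_m$, the relation $g^\flat\circ R^{\nabla^g}+{R^{\nabla}}^T\circ g^\flat=0$ (from the proof of Proposition \ref{proposition path nabla}) gives $R^{\nabla_m}=-g^{\flat^{-1}}{R^{\nabla_m}}^Tg^\flat$. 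Taking the trace of the $p$-th $\odot$-power, with $\tr$ invariant under transpose and conjugation, yields $\mathrm{ch}_p(\nabla_m)=(-1)^p\mathrm{ch}_p(\nabla_m)$.

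One subtlety needs care here. Transposing a $\odot$-product reverses the order of the factors. Since $R$ has even degree 2, the product is graded-commutative, so no extra sign appears. The same check arises in Proposition \ref{proposition path nabla}, so I will simply reuse that argument. Hence $\mathrm{ch}_p(\nabla_m)=0$ for $p$ odd, and $\mathrm{cs}_p(\nabla,\nabla_m)$ is closed.

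\textbf{Step 2: the triangle identity.} Applying \eqref{identity-chern-simons} with $k=2$ to $(\nabla,\nabla_m,\nabla^g)$ gives
\[
(p-1)\,\partial_E\mathrm{cs}_p(\nabla,\nabla_m,\nabla^g)=\mathrm{cs}_p(\nabla_m,\nabla^g)-\mathrm{cs}_p(\nabla,\nabla^g)+\mathrm{cs}_p(\nabla,\nabla_m).
\]
Applying \eqref{cs_p(nabla 0, nabla 1)} to the pair $(\nabla,\nabla_m)$, whose conjugates are $(\nabla^g,\nabla_m)$, gives
\[
\mathrm{cs}_p(\nabla^g,\nabla_m)=(-1)^p\mathrm{cs}_p(\nabla,\nabla_m).
\]
I also need the antisymmetry $\mathrm{cs}_p(\nabla_m,\nabla^g)=-\mathrm{cs}_p(\nabla^g,\nabla_m)$. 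This follows from the definition \eqref{chern-simons dif op} with $k=1$: the substitution $t\mapsto 1-t$ flips the sign of $\partial_t\nabla^{0,1}$ while preserving the curvature term.

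Combining these, in cohomology (the term with $p-1$ is exact, and it vanishes trivially if $p=1$):
\[
u_{2p-1}=[\mathrm{cs}_p(\nabla,\nabla^g)]=[\mathrm{cs}_p(\nabla,\nabla_m)]-(-1)^p[\mathrm{cs}_p(\nabla,\nabla_m)].
\]
For $p$ odd this gives $u_{2p-1}=2[\mathrm{cs}_p(\nabla,\nabla_m)]$, and for $p$ even it gives $u_{2p-1}=0$.

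\textbf{Main obstacle.} For $p$ even, $\mathrm{cs}_p(\nabla,\nabla_m)$ need not be closed. The identity above is then an identity of cochains: the left side is exact and $\mathrm{cs}_p(\nabla,\nabla^g)$ is closed. So for $p$ even it reads $\mathrm{cs}_p(\nabla,\nabla^g)=\text{exact}$ directly, since the two $\mathrm{cs}_p(\nabla,\nabla_m)$ terms cancel at cochain level. This proves part 1 without needing closedness.

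The delicate points are therefore the sign bookkeeping in the conjugation and transposition identities and the orientation reversal. I expect checking that transposing a $\odot$-product of odd-degree factors in \eqref{cs_p(nabla 0, nabla 1)} introduces no additional sign to be the main technical obstacle. I would handle it by working pointwise in a $g$-orthonormal local frame, where transposition becomes matrix transpose.
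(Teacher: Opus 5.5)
Your proposal is correct and follows the paper's argument. The paper applies \eqref{identity-chern-simons} with $k=2$ to the triple ordered as $(\nabla,\nabla^g,\nabla_m)$, so \eqref{cs_p(nabla 0, nabla 1)} applies directly and your extra antisymmetry step $\mathrm{cs}_p(\nabla^1,\nabla^0)=-\mathrm{cs}_p(\nabla^0,\nabla^1)$ is not needed, though it is valid. Your Step 1 is also correct but unnecessary: for $p$ odd the cochain identity gives $\mathrm{cs}_p(\nabla,\nabla^g)=2\,\mathrm{cs}_p(\nabla,\nabla_m)+\partial_E(\cdots)$, and since $\mathrm{cs}_p(\nabla,\nabla^g)$ is already closed ($\nabla$ being a representation), closedness of $\mathrm{cs}_p(\nabla,\nabla_m)$ follows at once, which is exactly how the paper concludes.
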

\begin{proof}
According to the identity \eqref{identity-chern-simons}, for the triple $(\nabla, \nabla^g, \nabla_m)$ we have
\begin{eqnarray*}
(p-1)\partial_E \mathrm{cs}_p(\nabla, \nabla^g, \nabla_m) & = & \mathrm{cs}_p(\nabla^g, \nabla_m) - \mathrm{cs}_p(\nabla, \nabla_m) + \mathrm{cs}_p(\nabla, \nabla^g) \nonumber \\
& \stackrel{\eqref{cs_p(nabla 0, nabla 1)}}{=} & (-1)^p \mathrm{cs}_p(\nabla, \nabla_m) - \mathrm{cs}_p(\nabla, \nabla_m) + \mathrm{cs}_p(\nabla, \nabla^g).
\end{eqnarray*}
So, if $p$ is even, $(p-1)\partial_E \mathrm{cs}_p(\nabla, \nabla^g, \nabla_m) = \mathrm{cs}_p(\nabla, \nabla^g)$, that is, $\mathrm{cs}_p(\nabla, \nabla^g)$
is exact and the characteristic class $u_{2p-1}(E,B)=0$. If $p$ is odd, then
\[\mathrm{cs}_p(\nabla, \nabla^g) = 2 \mathrm{cs}_p(\nabla, \nabla_m) + (p-1)\partial_E \mathrm{cs}_p(\nabla, \nabla^g, \nabla_m),\]
whence we conclude that $\mathrm{cs}_p(\nabla, \nabla_m)$ is $\partial_E$-closed and
\[u_{2p-1}(E,B) =  2 [\mathrm{cs}_p(\nabla, \nabla_m)]. \]
\end{proof}

\begin{examples}
\end{examples}
\noindent
\emph{Secondary characteristic classes of a Loday algebra.} Let $(E, \lcf \cdot, \cdot \rcf)$ be a
Loday algebra equipped with the adjoint representation $\nabla^{\mathrm{ad}}$,
$g$ an inner product on $E$, and $\nabla^{\mathrm{ad}, g}$ the conjugate representation of $\nabla^{\mathrm{ad}}$ with respect to $g$.
Without loss of generality, we suppose that $g$ is represented by the identity matrix $I_r$ ($r = \dim E$) and
from \eqref{nabla - g}, for any $e\in E$, it follows that
\begin{equation*}
\nabla^{\mathrm{ad}, g}_e = - g^{\flat^{-1}}\circ \nabla_e^{{\mathrm{ad}}^T} \circ g^\flat = - \nabla_e^{{\mathrm{ad}}^T},
\end{equation*}
where $\nabla_e^{{\mathrm{ad}}^T}$ is the transpose of the map $\nabla^{\mathrm{ad}}_e : E \to E$. Then
\begin{equation*}
\mathrm{cs}_1(\nabla^{\mathrm{ad}}, \nabla^{\mathrm{ad}, g}) = \tr(\nabla^{\mathrm{ad}, g} - \nabla^{\mathrm{ad}}) = \tr(- \nabla_e^{{\mathrm{ad}}^T}
- \nabla_e^{{\mathrm{ad}}}) = -2\tr(\nabla_e^{{\mathrm{ad}}})
\end{equation*}
and
\begin{equation*}
u_1 (E,\nabla^{\mathrm{ad}}) = - 2\, \mathrm{mod}(E).
\end{equation*}
For $p>1$, since $\nabla_e^{{\mathrm{ad}}^T}$ is also flat, the secondary characteristic classes are given, up to a constant factor (see Remark \ref{remark - cs - flat}), by
\begin{equation*}
u_{2p-1}(E, \nabla^{\mathrm{ad}}) = [\mathrm{cs}_p(\nabla^{\mathrm{ad}}, \nabla^{\mathrm{ad}, g})] = c[\tr(\nabla^{\mathrm{ad}})^{2p-1}].
\end{equation*}

\vspace{2mm}
\noindent
\emph{Secondary characteristic classes of $T^\ast M$.} Consider the representation $T^\ast M$ of a
Loday algebroid $E$ with respect of $\nabla$ given in \eqref{conn on T*M}.
Assume that $M$ is endowed with a Riemannian metric $g$ and denote by $g'$ the associated inner product on $T^\ast M$.
Then, for any $e\in \Gamma(E)$ and $\eta \in \Gamma(T^\ast M)$,
\begin{equation*}
\nabla^{g'}_e\eta = \mathcal{L}_{\rho(e)}\eta + (g^\flat \circ (\mathcal{L}_{\rho(e)}g')^{\sharp})\eta.
\end{equation*}
Since $\nabla$ is flat, $\nabla^{g'}$ is also flat (see proof of Proposition \ref{proposition path nabla}), so
\begin{equation*}
u_{2p-1}(E, T^\ast M) = [\mathrm{cs}_p(\nabla, \nabla^{g'}) \stackrel{Remark \;\ref{remark - cs - flat}}{=}c[\tr(- g^\flat \circ (\mathcal{L}_{\rho(e)}g')^{\sharp})_{\odot}^{2p-1}].
\end{equation*}
Moreover, a metric connection $\nabla_m$ on $T^\ast M$ compatible with $g'$ is the dual of the Levi-Civita
connection $\triangle$ of $g$, i.e. $\triangle^\ast = g^\flat \circ \triangle\circ g^{\flat^{-1}}$. Then,
according to Theorem \ref{theorem - classes metric connect}, $u_{2p-1}(E, T^\ast M) = 0$ (if $p$ is even) and
$u_{2p-1}(E, T^\ast M) = 2[\mathrm{cs}_p(\nabla, \triangle^\ast)]$ (if $p$ is odd).

\vspace{1mm}
\noindent
The above results on characteristic classes of Loday algebroids generalize, up to a constant factor, all the previously known results concerning Lie and Courant algebroids to the most general setting of Loday algebroids, whose bracket is not skew-symmetric, and nonlinear $E$-connections on vector bundles.
For Lie algebroids, the relevant references are the works of Fernandes and Crainic \cite{rui, cr-fer-sec-cl}, while for Courant algebroids, the reference is the work of Cueca and Mehta \cite{CM2021}. In \cite{balcerzak}, Balcerzak represents the characteristic classes for $\mathbb{R}$-linear connections of Lie algebroids.

\section{Appendices}\label{section - appendices}
\subsection{Differential operators, jet bundles and jet modules}\label{app-Jet bundles}
Let $\mathbb{K}$ be a commutative ring with unit, $\mathcal{A}$ a commutative $\mathbb{K}$-algebra, $\mathcal{E}$ and $\mathcal{F}$ two
faithful $\mathcal{A}$-bimodules\footnote{An $\mathcal{A}$-module $\mathcal{E}$ is called \emph{faithful} if the set of elements $f\in \mathcal{A}$
such that, for all $e\in \mathcal{E}$, $fe=0$ reduces to $\{0\}$.}. For any $\mathbb{K}$-linear operator $D : \mathcal{E} \to \mathcal{F}$ and any $f\in \mathcal{A}$, we denote by
\begin{itemize}
\item[-]
$m_f^{\mathcal{E}} : \mathcal{E}\to \mathcal{E}$ the multiplication by $f$ in $\mathcal{E}$, $e \mapsto m_f^{\mathcal{E}}(e):= fe$;
\item[-]
$\sigma^D(f) : \mathcal{E} \to \mathcal{F}$ \emph{the symbol of $D$} given by
\[\sigma^D(f) = D \circ m_f^{\mathcal{E}} - m_f^{\mathcal{F}}\circ D.\]
\end{itemize}

\begin{definition}
We say that an element $D \in \mathrm{Hom}_{\mathbb{K}}(\mathcal{E},\mathcal{F})$ is a \emph{differential operator}
over $\mathcal{A}$ \emph{of order $\leq k$}, $k\in \N$, if $\sigma^D(f_1)\circ \ldots \circ \sigma^D(f_{k+1}) = 0$ for any set of $k+1$
elements $f_1, \ldots, f_{k+1}$ of $\mathcal{A}$.
\end{definition}
Denote by $\mathrm{Diff}_k(\mathcal{E}, \mathcal{F})$ the set of differential operators of order
$\leq k$ from $\mathcal{E}$ to $\mathcal{F}$ which is an $\mathcal{A}$-module.

\begin{definition}
A \emph{derivative endomorphism} $D$ of $\mathcal{E}$ is a first order differential operator on $\mathcal{E}$ whose
symbol is a scalar endomorphism of $\mathcal{E}$. That is, it is characterized by the following condition: for any $f\in \mathcal{A}$, there exists $f' \in \mathcal{A}$ such that
\[\sigma^D(f) = m_{f'}^{\mathcal{E}}\circ \mathrm{Id}_{\mathcal{E}}.\]
\end{definition}
As it is proved in \cite{grab-QD}, there exists a derivation $X_D$ of $\mathcal{A}$ such that $f'=X_D(f)$. The set $\mathrm{Der}(\mathcal{E})$
of derivative endomorphisms of $\mathcal{E}$ is an $\mathcal{A}$-submodule of $\mathrm{Diff}_1(\mathcal{E})$, the $\mathcal{A}$-module of differential operators of order
at most $1$ from $\mathcal{E}$ to $\mathcal{E}$. Furthermore, $(\mathrm{Der}(\mathcal{E}), \, [\cdot,\cdot]_{com})$
is a Lie algebra over $\mathbb{K}$ with respect to the commutator bracket and, for any $D_1, D_2 \in \mathrm{Der}(\mathcal{E})$,
\[X_{[D_1, D_2]_{com}} = [X_{D_1}, X_{D_2}].\]

\vspace{2mm}
\noindent
Let $\mathcal{E} = \Gamma(E)$ be the projective and finitely generated $\C$-module of smooth sections of a
vector bundle $E$ of rank $r$ over a smooth manifold $M$ of $\dim M =n$, $\mu_x = \{f\in \C \, / \, f(x)=0\}$ the
maximal ideal of smooth functions on $M$ that vanish at the point $x\in M$, and $\mu_x^{k+1}=\{f_1\cdot \ldots \cdot f_{k+1} \,/\, f_i\in \mu_x\}$
the $(k+1)^{st}$-power of $\mu_x$. The set $\mu_x^{k+1}\cdot \mathcal{E}$ is a submodule of
$\mathcal{E}$. Let $J^k_x \mathcal{E} : = \mathcal{E}/\mu_x^{k+1} \mathcal{E}$ be the quotient module,
which is also a vector space. The image of an element $e\in \mathcal{E}$ under the natural projection will be denoted by $[e]_x^k$.
The space $J^k\mathcal{E} = \bigsqcup_{x\in M}J^k_x \mathcal{E}$ is a vector bundle over $M$ \cite[14.20 Jet bundles]{jet-nestr}, called \emph{the bundle of jets
of order $k$} or \emph{$k$-jets} of the vector bundle $E$. The module of smooth sections $\Gamma(J^k\mathcal{E})$ of $J^k\mathcal{E}$ is
called the \emph{module of $k$-jets of $E$} and denoted by $\mathcal{J}^k(\mathcal{E})$.
Any element $e\in \mathcal{E}$ defines a section $j_k(e)$ of $J^k\mathcal{E}$ by setting $j_k(e)(x):=[e]_x^k$.
In a local frame $(e_1,\ldots, e_r)$ of smooth sections of $E$ on $U$, $(U,x)$ being a chart on $M$ and $r=\mathrm{rank}E$, any section $e\in \Gamma(E\vert_U)$
is written as $e = f^1e_1 + \ldots + f^re_r$, where $f^i\in C^\infty(U, \R)$. Then, in the corresponding coordinate system on $J^k\mathcal{E}$, the section $j_k(e)$
is represented by the vector
\[ j_k(e) = (f^m, \frac{\partial f^m}{\partial x^{i_1}}, \frac{\partial^2 f^m}{\partial x^{i_1} \partial x^{i_2}}, \ldots, \frac{\partial^k f^m}{\partial x^{i_1} \ldots
\partial x^{i_k}}),\]
with $m=1,\ldots,r$, $i_s = 1,\ldots, n$, $s=1,\ldots,k$.
In general, in local coordinates, $j_k(e)$ is given by $(f^m, \displaystyle{\frac{\partial^{|I|}f^m}{\partial x^{I}}})_{m=1,\ldots,r}$,
where $I$ is a multi-index of length $|I|$, $1\leq |I|\leq k$. The coordinate expression of $j_k(e)$ shows that,
as section of $J^k\mathcal{E}$, is smooth, and that the $\R$-linear map
\[j_k : \mathcal{E} \to \mathcal{J}^k(\mathcal{E}), \quad \quad e \mapsto j_k(e),\]
is a differential operator of order $\leq k$.

\vspace{1mm}
\noindent
According to Proposition 14.24 in \cite{jet-nestr}, since $\mathcal{E}$ is a projective $\C$-module,
there exists a finite set $\{e_1, \ldots, e_m\}$ of elements of $\mathcal{E}$ such that the
corresponding $k$-jets $j_k(e_1), \ldots, j_k(e_m)$ generate the $\C$-module $\mathcal{J}^k(\mathcal{E})$.

\vspace{1mm}
\noindent
Also, according to Theorem 14.25 in \cite{jet-nestr}, for any geometrical $\C$-module $\mathcal{F}$, the correspondence
\[\mathrm{Hom}_{_{\C}}(\mathcal{J}^k(\mathcal{E}), \mathcal{F})\ni h \mapsto h\circ j_k \in \mathrm{Diff}_k(\mathcal{E}, \mathcal{F})\]
defines a natural isomorphism of $\C$-modules
\begin{equation}\label{isom-diff-hom-1}
\mathrm{Hom}_{_{\C}}(\mathcal{J}^k(\mathcal{E}), \mathcal{F}) \cong \mathrm{Diff}_k(\mathcal{E}, \mathcal{F}).
\end{equation}
Thus,
\begin{equation}\label{isom-diff-hom-2}
\mathrm{Diff}_k(\mathcal{E}, \mathcal{F}) \cong (\mathcal{J}^k(\mathcal{E}))^\ast \otimes \mathcal{F},
\end{equation}
where $(\mathcal{J}^k(\mathcal{E}))^\ast$ is the dual $\C$-module of $\mathcal{J}^k(\mathcal{E})$ (see, \cite{bourbaki}).
Since the module of smooth sections of a vector bundle is a geometric module, we can apply the above result for $\mathcal{F} = \C$,
which is the module of smooth section of the vector bundle $M\times \R \to M$.
Hence, if $(\varepsilon^1, \ldots, \varepsilon^m)$ is the dual family of generators of $(\mathcal{J}^k(\mathcal{E}))^\ast$, with respect to $(j_k(e_1), \ldots, j_k(e_m))$, any element $D\in \mathrm{Diff}_k(\mathcal{E}, \C)$ is written locally as
\[ D = h_1\varepsilon^1 + \ldots h_m\varepsilon^m, \quad \quad h_j\in C^\infty(U,\R), \quad j=1, \ldots, m.\]

\subsection{Multidifferential operators}\label{app-multidiff}
\begin{definition}\cite{grab-k-pon}
Let $(\mathcal{E}_1, \ldots, \mathcal{E}_p)$ be a $p$-tuple of $\mathcal{A}$-(faithful) bimodules and $\mathcal{F}$ an $\mathcal{A}$-(faithful) bimodule.
A \emph{multidifferential operator} or \emph{$p$-differential operator} from $\mathcal{E}_1 \times \ldots \times \mathcal{E}_p$ to $\mathcal{F}$
is a $\mathbb{K}$-multilinear operator $D : \mathcal{E}_1 \times \ldots \times \mathcal{E}_p \to \mathcal{F}$ which is a differential operator with respect to each argument.

\vspace{1mm}
\noindent
We say that $D$ is \emph{a differential operator of order $\leq k$}, $k\in \N$, \emph{with respect to the $i^{th}$-argument} if,
for all $e_j \in \mathcal{E}_j$, $j\neq i$,
\[D(e_1, \ldots, e_{i-1}, \cdot, e_{i+1}, \ldots, e_p) : \mathcal{E}_i \to \mathcal{F}\]
is a differential operator of order $\leq k$. That means that the $i$-symbol $\sigma_i^D(f)$ of $D$\,\footnote{Term which is introduced in \cite{dubviol-mas}.}
given, for any $f\in \mathcal{A}$ and $(e_1, \ldots, e_i, \ldots, e_p) \in \mathcal{E}_1 \times \ldots \times \mathcal{E}_i \times \ldots \times \mathcal{E}_p$, by
\begin{eqnarray}\label{i-symbol}
\lefteqn{\sigma_i^D(f)(e_1, \ldots,e_{i-1}, e_i,e_{i+1}, \ldots, e_p): = }\nonumber \\
& & D(e_1, \ldots, e_{i-1}, fe_i, e_{i+1}, \ldots, e_p) - fD(e_1, \ldots, e_{i-1}, e_i, e_{i+1}, \ldots, e_p),
\end{eqnarray}
verifies the equation $\underbrace{\sigma_i^D(f)\circ \ldots \circ \sigma_i^D(f)}_{k+1 - times}=0$. Note that the operators $\sigma_i^D(f)$ and $\sigma_j^D(f)$ commute.

\vspace{1mm}
\noindent
We say that the operator $D$ is \emph{a multidifferential operator of order $\leq k$}, $k\in \N$, if it is of order $\leq k$ with respect to each entry, separately.

\vspace{1mm}
\noindent
We say that $D$ is \emph{a multidifferential operator of total order $\leq k$}, $k\in \N$, if, for any multi-index $(i_1,\ldots,i_{k+1})$ of
$\{1,\ldots, p\}$ of length $k+1$ and any $f_1,\ldots, f_{k+1} \in \mathcal{A}$, we have
\[\sigma_{i_1}^D(f_1) \circ \ldots \circ \sigma_{i_{k+1}}^D(f_{k+1}) = 0.\]
\end{definition}
As observed in \cite{grab-k-pon}, any multidifferential operator of total order $\leq k$ is a multidifferential operator
of order $\leq k$. Also, any $\mathbb{K}$-linear $p$-differential operator $D$ of order $\leq k$ is of total order $\leq pk$.
In fact, the assumption that $D$ is of order $\leq k$ means that, for any $i=1, \ldots, p$ and $f \in \mathcal{A}$,
$\underbrace{\sigma_i^D(f)\circ \ldots \circ \sigma_i^D(f)}_{k+1 - times}=0$ and
$\underbrace{\sigma_i^D(f)\circ \ldots \circ \sigma_i^D(f)}_{m - times}\neq 0$ for any $m\leq k$. On the other hand, any composition of type
\[\sigma^D_1(f)^{m_1}\circ \ldots \circ \sigma^D_p(f)^{m_p}\]
is identically zero if $m_i >k$, for $i=1,\ldots,p$. So, $m_1+\ldots + m_p \geq pk + 1$.

\vspace{1mm}
\noindent
By induction on $p\in \N$ and taking into account that the operators $\sigma_i^D(f)$ and $\sigma_j^D(f)$ commute,
we establish the following formula that generalises \eqref{i-symbol}:
\begin{eqnarray*}\label{multi-symbol}
\lefteqn{D(f_1e_1, \ldots, f_pe_p)  = }\nonumber \\
& & \sum_{k=0}^p\sum_{\tau \in Sh (k,\, p-k)}f_{\tau(1)}\ldots f_{\tau(k)}\sigma^D_{\tau(k+1)}(f_{\tau(k+1)})\circ \ldots \circ \sigma^D_{\tau(p)}(f_{\tau(p)})(e_1,\ldots,e_p),
\end{eqnarray*}
where $Sh(k,p-k)$ is the set of $(k,p-k)$-shuffle permutations of $(1,\ldots,p)$.

\vspace{2mm}
\noindent
For the purposes of this paper, we focus on the case where
$\mathcal{A}$ is the $\R$-algebra $\C$ of smooth real functions
on a smooth manifold $M$ and we consider the standard geometric model of (multi)differential operators acting on the $\C$-module $\mathcal{E}=\Gamma(E)$ of
smooth sections of a vector bundle $E$ over $M$, which is a projective and finitely generated $\C$-(faithful) bimodule.
As in the subsection \ref{subsection Loday cohomology}, we denote by $\mathcal{D}^p(\mathcal{E})$ the module of $p$-differential operators on $\mathcal{E}$ with values in $\C$.
In particular, we write $\mathcal{D}_{(k_1,\ldots,k_p)}^p(\mathcal{E})$ to emphasize that the differential
operators of this space is of order $k_i$ with respect to the $i$-entry, $i=1,\ldots,p$. Then, taking into account the isomorphisms
\eqref{isom-diff-hom-1} and \eqref{isom-diff-hom-2}, we can write
\begin{eqnarray}\label{tensor hom}
\lefteqn{\mathcal{D}^p_{(k_1,\ldots,k_p)}(\mathcal{E}) =
\mathrm{Diff}_{k_1}(\mathcal{E}, \mathrm{Diff}_{k_2}(\mathcal{E}, \ldots \mathrm{Diff}_{k_p}(\mathcal{E}, \C)\ldots ))} \nonumber \\
& \stackrel{\eqref{isom-diff-hom-1}}{\cong} & \mathrm{Hom}(\mathcal{J}^{k_1}(\mathcal{E}), \mathrm{Hom}(\mathcal{J}^{k_2}(\mathcal{E}), \ldots
\mathrm{Hom}(\mathcal{J}^{k_p}(\mathcal{E}), \C)\ldots )) \nonumber \\
&\cong & \mathrm{Hom}(\mathcal{J}^{k_1}(\mathcal{E})\otimes \ldots \otimes \mathcal{J}^{k_p}(\mathcal{E}), \C) \nonumber \\
&\stackrel{\eqref{isom-diff-hom-2}}{\cong} & (\mathcal{J}^{k_1}(\mathcal{E}))^\ast \otimes \ldots \otimes (\mathcal{J}^{k_p}(\mathcal{E}))^\ast \otimes \C.
\end{eqnarray}
Thus, if $(\varepsilon_i^1, \ldots, \varepsilon_i^{m_i})$ is a family of
generators of the $i$-term $(\mathcal{J}^{k_i}(\mathcal{E}))^\ast$, $i=1,\ldots,p$, of the tensor product \eqref{tensor hom}, then the family
$(\varepsilon_1^{j_1}\otimes \ldots \otimes \varepsilon_p^{j_p})_{(j_i = 1, \ldots, m_i, \, i=1,\ldots,p)}$ generates the space
$\mathcal{D}^p_{(k_1,\ldots,k_p)}(\mathcal{E})$ and any element $D$ of this space can be written, locally, as
\begin{equation}\label{model - D}
D = \sum_{j_1,\ldots,j_p}h_{j_1\ldots j_p}\varepsilon_1^{j_1}\otimes \ldots \otimes \varepsilon_p^{j_p}, \quad \quad h_{j_1\ldots j_p} \in C^\infty(U,\R).
\end{equation}

\vspace{5mm}
\noindent
\emph{Adresses}

\vspace{3mm}

\noindent
\emph{Raquel Caseiro}, CMUC, Department of Mathematics, University of Coimbra, Apartado 3008, 3001-501 Coimbra, Portugal

\vspace{1mm}
\noindent
\emph{raquel@mat.uc.pt}

\vspace{3mm}

\noindent
\emph{Fani Petalidou}, Department of Mathematics, Aristotle University of Thessaloniki, 54124 Thessaloniki, Greece

\vspace{1mm}
\noindent
\emph{petalido@math.auth.gr}


\begin{thebibliography}{60}

\bibitem{balcerzak}{Balcerzak, B.: Chern–Simons forms for $\R$-linear connections on Lie algebroids. Internat. J. Math. 29, 1850094, 13 pp (2018)}

\bibitem{bat-pet}{Batakidis, P., Petalidou, F.: Courant-Dorfman algebras of differential operators and Dorfman connections of Courant algebroids.
J. Geom. Phys. 199, Paper No. 105142 (2024)}

\bibitem{bi-sheng}{Bi, Y.H., Sheng, Y.H.: On higher analogues of Courant algebroids. Sci. China Math. 54, 437--447 (2011)}

\bibitem{bourbaki}{Bourbaki, N.: Elements of mathematics. Algebra, Part I, Chapters 1-3. Hermann, Paris; Addison-Wesley Publishing Co., Reading, MA, (1974)}


\bibitem{Cantr-Lan}{Cantrijn, F., Langerock, B.: Generalised connections over a vector bundle map. Diffe\-rential Geometry and its Applications 18, 295--317 (2003)}



\bibitem{chern-sim}{Chern, S.-S., Simons, J.: Characteristic forms and geometric invariants, Ann. of Math. 99, 48--69 (1974)}

\bibitem{cr-fer-sec-cl}{Crainic, M., Fernandes, R. L.: Secondary characteristic classes of Lie algebroids.
In \emph{Quantum field theory and noncommutative geometry}, Lecture Notes in Phys. 662, Springer-Verlag, Berlin, 157--176 (2005)}

\bibitem{dubviol-mas}{Dubois-Violette, M., Masson, Th.: On the First-Order Operators in Bimodules. Lett. Math. Phys. 37, 467--474 (1996)}

\bibitem{eil-ml}{Eilenberg, S., Mac Lane, S.: On the groups $H(\Pi, n). I$. Ann. of Math. 58, 55--106 (1953)}


\bibitem{CM2021}{Cueca, M., Mehta, R.A.: Courant Cohomology, Cartan Calculus, Connections, Curvature, Characteristic Classes. Commun. Math. Phys. 381, 1091--1113 (2021)}

\bibitem{evens-lu-weinstein}{Evens, S., Lu, J.-H., Weinstein, A.: Transverse measures, the modular class and a cohomology pairing for Lie algebroids.
Quart. J. Math. Oxford Ser. (2) 50, 417--436 (1999)}


\bibitem{rui}{Fernandes, R. L.: Lie algebroids, Holonomy and Characteristic Classes. Adv. Math. 170, 119–179 (2002)}

\bibitem{grab-QD}{Grabowski, J.: Quasi-derivations and $QD$-algebroids. Rep. Math. Phys. 52, 445--451 (2003)}


\bibitem{grab-k-pon}{Grabowski, J., Khudaverdyan, D., Poncin, N.: The supergeometry of Loday algebroids. J. Geom. Mech. 5, 185--213 (2013)}

\bibitem{hig-mck}{Higgins, P. J., Mackenzie, K.: Algebraic constructions in the category of Lie algebroids. Journal of Algebra 129, 194--230 (1990)}

\bibitem{husemoller}{Husemoller, D.: Fibre Bundles. Third edition. Grad. Texts in Math. 20, Springer-Verlag, New York, (1994)}

\bibitem{ILMP1999}{Ib\'a\~nez, R.,  Le\'on, M.,  Marrero, J. C., Padr\'on, E.: Leibniz algebroid associated with a Nambu-Poisson structure. J.  Phys A: Math. Gen.  32 (46), 8129, (1999)}



\bibitem{jurco-Vysoky}{Jur\v{c}o, B., Vysok\'y, J.: Leibniz algebroids, generalized Bismut connections and Einstein-Hilbert actions.
J. Geom. Phys. 97, 25--33 (2015)}

\bibitem{keller-waldmann}{Keller, F., Waldmann, St.: Deformation theory of Courant algebroids via the Rothstein algebra. J. Pure Appl. Algebra 219, 3391--3426 (2015)}

\bibitem{kob-nom}{Kobayashi, S., Nomizu, K.: Foundations of differential geometry. Vol. II, Wiley Classics Lib., Wiley-Intersci. Publ. John Wiley and Sons, Inc., New York, (1996)}

\bibitem{kos-cam-wein}{Kosmann-Schwarzbach, Y., Laurent-Gengoux, C., Weinstein, A.: Modular classes of Lie algebroid morphisms. Transform. Groups 13, 727--255 (2008)}



\bibitem{li-bland-meinr}{Li-Bland, D., Meinrenken, E.: Courant algebroids and Poisson geometry. Int. Math. Res. Not. IMRN, no. 11, 2106–2145 (2009)}

\bibitem{lod}{Loday, J. L.: Une version non commutative des alg\`ebres de Lie, les alg\`ebres de Leibniz. Enseign. Math. 39, 269--293 (1993)}

\bibitem{lod-pir}{Loday, J. L., Pirashvili, T.: Universal enveloping algebras of Leibniz algebras and (co)homology. Math. Annalen 296, 139--158 (1993)}

\bibitem{mang-sard}{Mangiarotti, L., Sardanashvily, G.: Connections in classical and quantum field theory. World Scientific Publishing Co., Inc., River Edge, NJ (2000)}

\bibitem{marle-coimbra}{Marle, C. M.: Differential calculus on a Lie algebroid and Poisson manifolds. The J. A. Pereira da Silva Birthday Schrift, Textos Mat. Ser. B 32, Departamento de matematica da Universidade de Coimbra, Portugal, 83--149 (2002)}

\bibitem{mck}{Mackenzie, K.C.H.: General theory of Lie groupoids and Lie algebroids. London Math. Soc. Lecture Note Series 213, Cambridge University Press, Cambridge (2005)}

\bibitem{milnor-stasheff}{Milnor, J. W., Stasheff, J. D.: Characteristic classes.
Ann. of Math. Stud. 76, Princeton University Press, Princeton, NJ; University of Tokyo Press, Tokyo (1974)}

\bibitem{jet-nestr}{Nestruev, J.: Smooth Manifolds and Observables. Second edition. Grad. Texts in Math. 220, Springer, Cham. (2020)}


\bibitem{popescu}{Popescu, P.: On generalized algebroids. New developments in differential geometry, Budapest 1996, Kluwer Academic Publishers, Dordrecht, 329--342 (1999)}






\bibitem{roy}{Roytenberg, D.: On the structure of graded symplectic supermanifolds and Courant
algebroids. In: Voronov, T. (ed.) Quantization, Poisson brackets and beyond (Manchester, 2001), pp. 169--185. Contemp. Math., 315, Amer. Math. Soc., Providence, RI, (2002)}





\bibitem{severa}{\v{S}evera, P.: Letters to Alan Weinstein about Courant algebroids. https://arxiv.org/abs/1707.00265, (2017)}

\bibitem{stienon-xu}{Sti\'enon, M., Xu, P.: Modular classes of Loday algebroids. C. R. Acad. Sci. Paris, Ser. I 346, 193--198, (2008)}


\bibitem{vaintrob}{Vaintrob, A. Yu.: Lie algebroids and homological vector fields. Russian Math. Surveys 52, 428--429 (1997)}


\bibitem{vai-nambu}{Vaisman, I.: A survey on Nambu-Poisson brackets. Acta Math. Univ. Comenian. (N.S.) 68, 213--241 (1999)}





\end{thebibliography}
\end{document}